\DeclareMathAlphabet{\EuRoman}{U}{eur}{sb}{n}       
\SetMathAlphabet{\EuRoman}{bold}{U}{eur}{sb}{n}   
\journal{}
\theoremstyle{plain}
  \newtheorem{thm}{Theorem}[section]
  \newtheorem{lem}[thm]{Lemma}
  \newtheorem{prop}[thm]{Proposition}
  \newtheorem{cor}[thm]{Corollary}
  \newtheorem{exa}[thm]{Example}
\theoremstyle{definition}
  \newtheorem{defn}[thm]{Definition}
  \newtheorem{rem}[thm]{Remark}
\def\ps@pprintTitle{%
 \let\@oddhead\@empty
 \let\@evenhead\@empty
 \def\@oddfoot{\centerline{\thepage}}%
 \let\@evenfoot\@oddfoot}
\newcommand{\from}{\colon}
\newcommand{\DefEq}{\coloneq}
\newcommand{\Defn}[1]{{\em #1}}
\DeclareMathOperator*{\colim}{colim}
\newcommand{\CoLim}{\mathrm{colim}}
\newcommand{\CoLimOfOver}[2]{\mathrm{colim}^{#2}(#1)}
\newcommand{\LimOf}[1]{\mathrm{lim}(#1)}
\newcommand{\OneMapOn}[1]{\mathrm{1}_{#1}}
\newcommand{\XRA}[1]{\xrightarrow{\ #1\ }}
\newcommand{\Ra}{\Rightarrow}
\newcommand{\lra}{\longrightarrow}
\newcommand{\Lra}{\Longrightarrow}
\newcommand{\Lla}{\Longleftarrow}
\newcommand{\al}{\alpha}
\newcommand{\be}{\beta}
\newcommand{\ep}{\epsilon}
\newcommand{\ga}{\gamma}
\newcommand{\lam}{\lambda}
\newcommand{\si}{\sigma}
\newcommand{\CB}{\mathcal{B}}
\newcommand{\CC}{\mathcal{C}}
\newcommand{\CD}{\mathcal{D}}
\newcommand{\CE}{\mathcal{E}}
\newcommand{\CF}{\mathcal{F}}
\newcommand{\CG}{\mathcal{G}}
\newcommand{\CI}{\mathcal{I}}
\newcommand{\CJ}{\mathcal{J}}
\newcommand{\CK}{\mathcal{K}}
\newcommand{\CL}{\mathcal{L}}
\newcommand{\CO}{\mathcal{O}}
\newcommand{\CS}{\mathcal{S}}
\newcommand{\CX}{\mathcal{X}}
\newcommand{\CY}{\mathcal{Y}}
\newcommand{\Cat}{{\sf Cat}}
\newcommand{\CAT}{{\sf CAT}}
\newcommand{\co}{{\mathrm co}}
\newcommand{\op}{\mathrm{op}}
\newcommand{\CatSymbA}[1]{\mathcal{#1}}
\newcommand{\CatSymbB}[1]{#1}
\newcommand{\SmallCats}{\EuRoman{C{\kern-0.17ex}a{\kern-0.12ex}t}}
\newcommand{\LocSmallCats}{\EuRoman{C{\kern-0.08ex}A{\kern-0.25ex}T}}
\newcommand{\Sets}{\EuRoman{S{\kern-0.12ex}e{\kern-0.07ex}t}}
\newcommand{\CleavageOf}[2]{\theta^{#1}_{#2}}
\newcommand{\GrothendieckCFibOf}[1]{\Pi^{#1}}
\begin{document}

\begin{frontmatter}



\title{Diagrams, Fibrations, and the Decomposition of Colimits}


\author{George Peschke}
\ead{gepe@ualberta.ca}
\address{Dept. of Mathematical and Statistical Sciences, University of Alberta, Edmonton, Alberta, Canada, T6G 2G1}
\author{Walter Tholen\fnref{A}}

\ead{tholen@yorku.ca}

\address{Department of Mathematics and Statistics, York University, Toronto, Ontario, Canada, M3J 1T6}
\address{\em }

\fntext[A]{The second author's research is supported by a Discovery Grant (no. 501260) of the Natural Sciences and Engineering Research Council of Canada (NSERC).}

\begin{abstract}
\indent The contributions of this paper are twofold. Within the framework of Grothendieck's fibrational category theory, we present a web of fundamental 2-adjunctions surrounding the formation of the category of all small diagrams in a given category and the formation of the Grothendieck category a functor into the category of small categories. We demonstrate the utility of these adjunctions, in part by deriving three formulae for (co-)limits: a `twisted' generalization of the well-known Fubini formula, as first established by Chach\'{o}lski and Scherer; a new `general colimit decomposition formula'; and a special case of the general formula, which actually initiated this work, and which was proved independently by Batanin and Berger. We give three proofs for this colimit decomposition formula, using methods that provide quite distinct insights.

The `base' of our web of 2-adjunctions extends earlier work of the Ehresmann school and Guitart and promises to be of independent interest. It involves forming  the diagram category of an arbitrary functor, seen as an object of the arrow category of the category of locally small categories, rather than that of a mere category. The left adjoint of the emerging generalized Guitart 2-adjunction factors through the 2-equivalence of split Grothendieck (co-)fibrations and strictly (co-)indexed categories, which we present here most generally by allowing 2-dimensional variation in the base categories. 

Two appendices to this work spell out in detail all needed tools of fibrational category theory, along with an easily applicable presentation of the formation of quotient categories and a novel criterion for the confinality of their projection functors.
\end{abstract}

\begin{keyword}
diagram category \sep colimit decomposition \sep twisted Fubini formula \sep (split) fibration \sep Grothendieck construction \sep (op)lax colimit \sep strictification of lax-commutative diagrams \sep free split cofibration \sep (confinal) quotient functor.


\MSC[2010] 	 18A30 \sep 18A35 \sep 18D30.
\end{keyword}

\end{frontmatter}

\newpage
{\footnotesize
\tableofcontents
}


\section{Introduction} 

\subsection{Motivation} An initial goal of this work was the development of techniques that may facilitate the computation of ``complicated'' objects, say in algebra or topology, from smaller and more easily computed ``pieces''. In fact, much of this work emanated from a colimit formula established early on, saying that a colimit of a diagram $X:\CK\to\CX$ in a cocomplete category $\CX$ may be computed ``piecewise'', whenever the small category $\CK$ is itself expressed as $\CK=\CoLim \Phi$ in the category $\SmallCats$ of small categories, for some functor $\Phi\from \CD\to\SmallCats$. That is, if $K_d:\Phi d\to \CK\,(d\in\CD)$ denotes the colimit cocone of $\Phi$, one has the following {\em Colimit Decomposition Formula}:
\begin{equation}\label{eqn:CDF}\tag{CDF}
\CoLimOfOver{X}{\CK} \cong \CoLimOfOver{\CoLimOfOver{XK_d}{\Phi d}}{d\in \CD}.
\end{equation}
\begin{center}
$\xymatrix{\Phi d\ar[r]^{K_d\qquad}\ar@{..}[d] & {\rm colim}\,\Phi=\CK\ar[r]^{\qquad X} & \CX\\
\Phi e\ar[ru]_{K_e} & & \\
}$
\hfil$\xymatrix{X(K_dx)\ar[r]\ar@{..}[d] & {\rm colim}\,XK_d\ar[r]\ar@{..}[d] & {\rm colim}\,X\\
X(K_dy)\ar[ru] & {\rm colim}\,XK_e\ar[ru] & \\
}$	
\end{center}
(Note that this visualization of the formula must not be misinterpreted as a kind of preservation of a colimit by $X$: the arrows on the left live in $\LocSmallCats$, while those on the right are in $\CX$.) 

The CDF (\ref{decomp formula})    and its limit analogue (\ref{decomposition dual}) were presented by the first author in May 2017 at the {\em Workshop on Categorical Methods in Non-Abelian Algebra} in Louvain-la-Neuve. The argument consisted of brute-force constructions in the category ${\sf Diag}^{\circ}(\CX)$ of small diagrams in $\CX$, see (\ref{diagramcats}), with a strong suspicion that fundamental properties of the evident functor ${\sf Diag}^{\circ}(\CX)\to \SmallCats$ would give a transparent and conceptual explanation. From this suspicion the present collaboration evolved.
After a talk presented by the second author at CT2019, it turned out that the CDF had actually appeared slightly earlier than the first author's 2017 presentation, as Lemma 7.13 in the paper \cite{BataninBerger} by Michael Batanin and Clemens Berger, who give credit to Steve Lack for the short proof they present. 

We found that Grothendieck's {\em fibrational category theory} provides a good framework for our purposes, and that it yields insights on categories of diagrams which are of interest in their own right. As hoped for, we obtain transparent and conceptual proofs for the CDF and its variations, see Sections \ref{sec:Twisted-Fubini} and  \ref{sec:CDF-Proofs}. Here is an outline.


\subsection{The Grothendieck construction and the Guitart adjunction}
Consider a Grothendieck fibration $P:\CE\to\CB$. By assigning to every object $b$ of the base category $\CB$  its fibre $\CE_b$ in $\CE$ one obtains a pseudo-functor $\CB^{\op}\to\LocSmallCats$ into the huge 2-category of all categories (see the end of this Introduction for notational conventions with respect to $\LocSmallCats$ and $\SmallCats$). Conversely, the Grothendieck construction produces for every pseudo-functor $\Phi:\CB^{\op}\to\LocSmallCats$ the total category of $\Phi$, here denoted by $\int^{\circ}\Phi$, which is fibred over $\CB$. In this way, fibred categories over $\CB$ are equivalently presented as contravariantly indexed categories, that is: as contravariant $\LocSmallCats$-valued pseudo-functors defined on $\CB$. Dually, {\em Grothendieck cofibrations} (nowadays more frequently called {\em opfibrations}\footnote{In this paper we adhere to Grothendieck's original terminology, but note that, in order to prevent confusion with the terminology used in topology, in recent years Grothendieck cofibrations have more commonly been referred to as opfibrations. However, this departure from the standard categorical procedure for dualizing terms causes further deviations from established standards, such as ``opcartesian" versus ``cocartesian". See also \cite{LoregianRiehl2019} for a brief discussion of this terminological difficulty .} 
) $\CE\to\CB$ correspond equivalently to covariantly indexed categories $\Phi:\CB\to\LocSmallCats$, via the dual Grothendieck construction, here denoted by $\int_{\circ}\Phi$. Under this equivalence, so-called split cofibrations correspond, by definition, to those pseudo-functors that are actually functors, and one may further restrict the equivalence to small-fibred split cofibrations and $\SmallCats$-valued (as opposed to $\LocSmallCats$-valued) functors.

These facts are well known and amply documented in the literature, albeit predominantly in a context that leaves the base $\CB$ fixed; in historical order, references include \cite{Grothendieck1961}, \cite{Giraud}, \cite{Gray1966}, \cite{Benabou1975} \cite{JohnstonePare1978}, \cite{Street1980},    \cite{Benabou1985}, \cite{Borceux1994}, \cite{JanelidzeTholen1994}, \cite{Fusco}, \cite{Jacobs1999}, \cite{Johnstone2002}, \cite{Streicher}, \cite{RosickyTholen2007}, \cite{Vistoli}, \cite{JohnsonYau2020}. Less known is the fact that the Grothendieck construction was studied very early on by Ehresmann \cite{Ehresmann1965} and his school, under the name {\em produit crois\'{e}}. In particular, Guitart (see \cite{Guitart1973}, \cite{Guitart1974}, \cite{GuitartVdB1977}) showed that the assignment $\Phi\longmapsto\int_{\circ}\Phi$ leads to a left-adjoint functor
$$\textstyle{\int}_{\circ}:\LocSmallCats/\SmallCats\lra\LocSmallCats$$
whose right adjoint, here denoted by ${\sf Diag}^{\circ}$, deserves independent interest. It assigns to a category $\CX$ the category ${\sf Diag}^{\circ}(\CX)$ of all small diagrams in $\CX$ which, curiously enough, may be thought of as arising via the Grothendieck construction. Indeed, with the functor $[\Box,\CX]=\CX^{\Box}:\SmallCats^{\op}\to\LocSmallCats$ one has
$${\sf Diag}^{\circ}(\CX)=\textstyle{\int}^{\circ}\CX^{\Box},$$
to be considered as a (fibred) category over $\SmallCats$; explicitly, a morphism $(F,\varphi):(\CI,X)\to (\CJ,Y)$ with small categories $\CI,\CJ$ is given by a lax-commutative diagram
\begin{center}
$\xymatrix{\CI\ar[rr]^F\ar[dr]_X^{\;\;\varphi:\Lra} & & \CJ\ar[dl]^{Y}\\
& \CX}$ 
\end{center}
in $\LocSmallCats$.

Since one actually has an adjunction of 2-functors (\ref{Guitart})  , the first nice, but immediate, consequence of the Guitart adjunction is the known characterization of the Grothendieck category $\int_{\circ}\Phi$ as a lax colimit of $\Phi:\CB\to\LocSmallCats$ in $\LocSmallCats$ (\ref{universalchar dualGr}).
Much less known, or expected, is the second consequence that we draw from the Guitart adjunction, as it gives us the connection with the CDF. Considering the functor $\Phi$ again as a diagram in $\Cat$ (and, thus, renaming the category $\CB$ as $\CD$ from this viewpoint), a diagram $T:\int_{\circ}\Phi\to\CX$ in a category $\CX$ corresponds to a $\CD$-shaped diagram in ${\sf Diag}^{\circ}(\CX)$, given by a family $T(d,-):\Phi d\to X\,(d\in\CD)$ of diagrams in $\CX$. If all these have colimits in $\CX$, then one has the
{\em Twisted Fubini Colimit Formula} (\ref{newFubini}):
\begin{equation}\tag{TFCF}
{\rm colim}^{(d,x)\in{\int}_{\circ}\!\Phi}\;T(d,x)\;\cong\;{\rm colim}^{d\in\CD}({\rm colim}^{x\in\Phi d}\;T(d,x)),
\end{equation}
with the colimit on either side existing if the one on the other side exists. This formula appears in the Appendix of the {\em Memoir} \cite{WChacholskiJScherer2002} by Wojciech Chach\'{o}lski and J\'{e}r$\hat{\rm o}$me Scherer. In Section \ref{sec:CDF-Proofs} we explain how the TFCF implies the CDF, using a general confinality criterion (\ref{quotientscofinal}) for quotient functors (= regular epimorphisms in $\LocSmallCats$).

\subsection{A network of global 2-adjunctions}
It seems peculiar that, in the Guitart adjunction, the left adjoint $\int_{\circ}$ does not keep track of the fact that, for the $\SmallCats$-valued functor $\Phi$, the total category $\int_{\circ}\Phi$ actually lives over $\SmallCats$. However, not forgetting this important fact, and still maintaining an adjunction, means that we should extend the functor ${\sf Diag}^{\circ}$, so that it operates not just on categories, but also on functors, generally to be considered as objects of the arrow category $\LocSmallCats^{\EuRoman{2}}$. Accordingly, a major undertaking in this paper is the replacement of $\LocSmallCats$ by $\LocSmallCats^{\EuRoman{2}}$ as the domain of the Guitart adjunction, as depicted on the right of the diagram
\begin{center}
$\xymatrix{\LocSmallCats\ar@/^0.5pc/[rr]^{!_{\Box}} & \top & \LocSmallCats^{\EuRoman 2}\ar@/^0.5pc/[rr]^{\sf Diag^{\circ}\quad}\ar@/^0.5pc/[ll]^{\rm Dom} & \top & \LocSmallCats/\SmallCats\;,\ar@/^0.5pc/[ll]^{\int_{\circ}\quad}
}$
\end{center}
If we compose this extended 2-adjunction on the right (\ref{fundamental}) with the rather trivial 2-adjunction on the left, one obtains back the original Guitart adjunction. The significance of the extended Guitart adjunction on the right is that it communicates well with the Grothendieck equivalence (\ref{GrothendieckEquivalence})  between split cofibrations with small fibres and $\SmallCats$-valued functors. In fact, the $2$-functor $\int_{\circ}$ above factors as
\begin{center}
$\xymatrix{\LocSmallCats^{\EuRoman{2}} & \EuRoman{SCoFIB}_{\rm sf}\ar[l]_{\rm incl} & \LocSmallCats//\SmallCats\ar[l]_{\int_{\circ}}^{\simeq} & \LocSmallCats/\SmallCats\ar[l]_{\rm incl}\\
},$
	\end{center}
	with the (non-full) subcategory $\EuRoman{SCoFIB}_{\rm sf}$ of small-fibred split cofibrations and their cleavage-preserving morphisms in $\LocSmallCats^{\EuRoman{2}}$, and with the left-adjoint (non-full) inclusion functor (\ref{comma})  of the comma category $\LocSmallCats/\SmallCats$ into the lax comma category $\LocSmallCats//\SmallCats$, all considered as 2-categories. (Objects and morphisms of $\LocSmallCats//\SmallCats$ are defined as in ${\sf Diag}^{\circ}(\SmallCats)$; just specialize $\CX$ to $\SmallCats$ above, except that there is no smallness requirement for $\CI,\CJ$.)
	
	Of course, dropping the requirement of small-fibredness, we may factor the 2-functor $\int_{\circ}$ of the extended Guitart adjunction also as
\begin{center}
$\xymatrix{\LocSmallCats^{\EuRoman{2}} & \EuRoman{SCoFIB}\ar[l]_{\rm incl} & \LocSmallCats//\LocSmallCats\ar[l]_{\int_{\circ}}^{\simeq} & \LocSmallCats/\SmallCats\ar[l]_{\rm incl}\\
}.$
	\end{center}
	Now the full inclusion on the left has become a right adjoint, with its left adjoint producing the {\em free split cofibration} generated by an arbitrary functor (\ref{Reducing dimension}). Either way, the extended Guitart adjunction factors through the classical Grothendieck equivalence between (certain) cofibrations and (certain) indexed categories.
	
\subsection{Organization of the paper and terminological conventions}
 To make the paper self-contained and to introduce notation, we collect background material on fibrational category theory and the Grothendieck construction in Appendix 1 (Section 9). In doing so, we clarify some essential details which don't seem to be documented explicitly in the literature. In Appendix 2 (Section 10) we present a characterization of regular epimorphisms in $\LocSmallCats$ (likely to be known in this or some similar form) and a confinality criterion for such functors (likely to be new), of which we make essential use in one of the proofs for the CDF.
 
 Section 2 introduces diagram categories, shows how to compute colimits and limits in them, identifies a colimit in $\SmallCats$ that is generic for the computation of colimits in terms of coproducts and coequalizers, and finally presents the Guitart adjunction, with the characterization of the Grothendieck construction as a lax colimit in $\LocSmallCats$ following from it. The Twisted Fubini Colimit Formula appears in Section 3, followed by three independent proofs for the Colimit Decomposition Formula in Section 4. The first one follows \cite{BataninBerger}, the second is based on the new \emph{General Colimit Decomposition Formula} \ref{general decomp formula}, and the third one uses the TFCF.
 
 In Sections 5--7 we present successively the extended Guitart adjunction, the Grothendieck equivalence, and finally their global interactions. In doing so, we restrict ourselves to the consideration of split (co)fibrations and functorially indexed categories, but take full account of the 2-categorical nature of the global correspondences. However, in the supplementary Section 8 we briefly show how diagram categories may be considered as 2-(co)fibered categories over $\SmallCats$. 
 The expectation is that there are richer or higher-dimensional contexts (such as those recently considered in \cite{Vasilakopoulou2018}, \cite{MoellerVasilakopoulou}, \cite{MoellerVasilakopoulou}, \cite{Lurie2009}, \cite{LoregianRiehl2019}, \cite{RiehlVerity2020}), in which this network of adjunctions, as well as the the decomposition formulae, may be established.

 Throughout the paper, the term {\em category} refers to an ordinary category, also called 1-category; when a category carries a higher-dimensional structure and is considered with it, we will say so explicitly. Categories may be large (so that their objects may form a proper class), but they are always assumed to be {\em locally small}, so that their hom-functors take values in $\Sets$. Categories whose object class is a set are called {\em small}. 
 $\SmallCats$ denotes the category of small categories, and $\LocSmallCats$ is the {\em huge category} of all (1-)categories, which contains $\Sets$ and $\SmallCats$ as particular objects. The huge category of classes and their maps is denoted by $\EuRoman{SET}$. These casual conventions may be made more precise and justified through the provision of Grothendieck universes.
 
{\em Acknowledgements.} We thank the organizers of the {\em Workshop on Categorical Methods in Non-Abelian Algebra}, Louvain la Neuve,
June 1-3 2017 for their hospitality and for providing a fruitful environment from which this work evolved. The second author thanks also Paolo Perrone for very helpful comments on parts of this work.

\section{Diagram categories}
\label{sec:DiagCats-GuitartAdjunction}

\subsection{Two types of diagram categories of a given category}
Let $\CX$ be a category. Since every small category $\CI$ is, via the formation of the functor category $\CX^{\CI}=[\CI,\CX]=\CAT(\CI,\CX)$, exponentiable in $\CAT$, one has the (internal hom-)functor
\begin{equation*}
\CX^{\Box}=[\Box,\CX]:\Cat^{\rm op}\to \CAT,\; (F:\CI\to\CJ)\longmapsto(F^*:\CX^{\CJ}\to\CX^{\CI},\;Y\mapsto YF).
\end{equation*}
Applying both, the Grothendieck construction and the dual Grothendieck construction (see Section \ref{GrothendieckConstruction}) to $\CX^{\Box}$ yields
 the {\em diagram categories} %
 \footnote{In the literature one finds the notation $\SmallCats//\CX$ for either of these categories. We use it later on in special cases.}%
\begin{equation*}
{\sf Diag}^{\circ}(\CX)=\textstyle{\int}^{\circ}\CX^{\Box}\qquad{\text{ and }\qquad {\sf Diag}_{\circ}(\CX)}=\textstyle{\int}_{\circ}\CX^{\Box}\ .
\end{equation*}
For the reader's convenience, here is a detailed description:

\begin{defn}\label{diagramcats}
The objects of the category ${\sf Diag}^{\circ}(\CX)$ are pairs $(\CI,X)$ with $X\from \CI\to\CX$ a functor of a small category $\CI$; a morphism $(F,\varphi):(\CI,X)\to(\CJ,Y)$ is given by a functor $F:\CI\to\CJ$ of small categories and a natural transformation $\varphi:X\to YF$, as depicted on the left of the diagram 
\begin{center}
$\xymatrix{\CI\ar[rr]^F\ar[dr]_X^{\;\;\varphi:\Lra} & & \CJ\ar[dl]^{Y}\\
& \CX}$ \hfil $\xymatrix{\CI\ar[dr]_X^{\;\;\varphi:\Lra} & & \CJ\ar[ll]_F\ar[dl]^{Y}\\
& \CX}$\end{center}
The category ${\sf Diag}_{\circ}(\CX)$ has the same objects as ${\sf Diag}^{\circ}(\CX)$, but a morphism $(F,\varphi):(\CI,X)\to(\CJ,Y)$ in ${\sf Diag}_{\circ}(\CX)$
is now given by a functor $F:\CJ\to\CI$ and a natural transformation $\varphi:XF\to Y$, as depicted on the right of the above diagram.

The composite of $(F,\varphi)$ followed by $(G,\psi): (\CJ,Y)\to(\CK,Z)$ in ${\sf Diag}^{\circ}(\CX){\text{ and }{\sf Diag}_{\circ}(\CX)}$ is respectively given by
$$(GF,\psi F\cdot\varphi)\qquad\text{ and }\qquad(FG,\psi\cdot\varphi G)\,.$$
\end{defn}
One has the obvious forgetful functors
$$D^{\CX}:{\sf Diag}^{\circ}(\CX)\to\SmallCats\qquad\text{ and }\qquad D_{\CX}:{\sf Diag}_{\circ}(\CX)\to\SmallCats^{\rm op}$$
which remember just the top rows of the above triangles; in the notation of  (\ref{GrothendieckConstruction}), they are precisely the functors $\Pi^{\CX^{\Box}}$ and $\Pi_{\CX^{\Box}}$, respectively. Consequently one has the following Proposition, which is also easily established ``directly''.

\begin{prop}\label{Diagasfibration} 
$D^{\CX}$ is a split fibration, with cleavage	$${\theta}^F_{(\CJ,Y)}=(F,1_{YF}):F^{*}(\CJ,Y)=(\CI,YF)\to (\CJ,Y)\;\text{ for }\; F\from \CI\to\CJ\text{ and }Y\in\CX^{\CJ}.$$
Dually, $D_{\CX}$ is a split cofibration, with cleavage $$\delta^F_{(\CI,X)}=(F,1_{XF}): (\CI,X)\to F_!(\CI,X)=(\CJ,XF)\;\text{ for }\; F\from \CJ\to\CI \text{ and } X\in\CX^{\CI}.$$
\end{prop}

Since the two types of Grothendieck constructions are dual to each other (as described in \ref{GrothendieckConstruction}), so are the two types of diagram categories. This fact one may easily see in a direct manner, by an application of the bijective 2-functor
$$\Box^{\rm op}:\LocSmallCats^{\co}\lra\LocSmallCats,\quad [\alpha:S\Lra T:\CC\to\CD]\longmapsto[\alpha^{\rm op}:S^{\rm op}\Lla T^{\rm op}:\CC^{\rm op}\to\CD^{\rm op}]$$ 
(which maps morphisms covariantly but 2-cells contravariantly) to the inscribed triangle on the right of the diagram of Definition \ref{diagramcats}. In this way one establishes an isomorphism between the dual of the (ordinary) category ${\sf Diag}_{\circ}(\CX)$ and the category
${\sf Diag}^{\circ}(\CX^{\rm op})$, coherently so with respect to the forgetful functors, as shown in the commutative diagram
\begin{center}
$\xymatrix{({\sf Diag}_{\circ}(\CX))^{\rm op}\ar[r]^{\Box^{\rm op}}_{\cong}\ar[d]_{(D_{\CX})^{\rm op}} & {\sf Diag}^{\circ}(\CX^{\rm op})\ar[d]^{D^{(\CX^{\rm op})}}\\
\SmallCats\ar[r]^{\Box^{\rm op}}_{\cong} & \SmallCats\,.
}$
\end{center}

The  objects and morphisms of $\CX$ may be considered as living in both, ${\sf Diag}^{\circ}(\CX)$ and ${\sf Diag}_{\circ}(\CX)$. Indeed, there are full embeddings
$$E^{\CX}:\CX\to {\sf Diag}^{\circ}(\CX)\qquad\text { and }\qquad E_{\CX}:\CX\to {\sf Diag}_{\circ}(\CX)$$
which interpret every object $X$ of $\CX$ as a functor ${\EuRoman 1}\to\CX$ of the terminal category $\EuRoman 1$ and every morphism $f:X\to Y$ in $\CX$ as a natural transformation, giving respectively the (hardly distinguishable) morphisms 
\begin{center}
$\xymatrix{{\EuRoman 1}\ar[rr]^{{\rm Id}_{\EuRoman 1}}\ar[dr]_X^{\;\;f:\Lra} & & {\EuRoman 1}\ar[dl]^{Y}\\
& \CX}$ \hfil $\xymatrix{{\EuRoman 1}\ar[dr]_X^{\;\;f:\Lra} & & {\EuRoman 1}\ar[ll]_{{\rm Id}_{\EuRoman 1}}\ar[dl]^{Y}\\
& \CX}$\end{center}
in ${\sf Diag}^{\circ}(\CX)$ and ${\sf Diag}_{\circ}(\CX)$. $E^{\CX}$ and $E_{\CX}$ cooperate with the dualization isomorphism for the diagram categories, as shown in the commutative diagram

\begin{center}
$\xymatrix{
& \CX^{\rm op}\ar[dl]_{(E_{\CX})^{\rm op}}\ar[dr]^{(E^{\CX})^{\rm op}} & \\
({\sf Diag}_{\circ}(\CX))^{\rm op}\ar[rr]^{\Box^{\rm op}}_{\cong} & & {\sf Diag}^{\circ}(\CX^{\rm op})
}$
\end{center}

With (co)completeness to be understood to mean that every small diagram comes with a choice of (co)limit, one has the following ``folklore'' proposition, whose routine proof we may skip.
\begin{prop}\label{functor colim}
\begin{enumerate}[(1)]
\item The category $\CX$ is functorially\footnote{This means that colimits in $\CX$ are assumed to be chosen for all small diagrams in $\CX$.} cocomplete if, and only if, $E^{\CX}$ is a reflective embedding (with left adjoint ${\rm colim}:{\sf Diag}^{\circ}(\CX)\to \CX$).
\item The category $\CX$ is functorially complete if, and only if, $E_{\CX}$
 is a coreflective embedding (with right adjoint ${\rm lim}:{\sf Diag}_{\circ}(\CX)\to\CX$).
 \end{enumerate}
\end{prop}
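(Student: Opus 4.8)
The plan is to prove (1) directly and to obtain (2) from it via the duality already recorded for the diagram categories. For (1), the decisive observation is the identification of the morphisms into the image of $E^{\CX}$. Given an object $(\CI,X)$ of ${\sf Diag}^{\circ}(\CX)$ and an object $C$ of $\CX$, a morphism $(\CI,X)\to E^{\CX}(C)=({\EuRoman 1},C)$ consists of the unique functor $\CI\to{\EuRoman 1}$ together with a natural transformation $\varphi\from X\to\Delta C$, where $\Delta C\from\CI\to\CX$ is the constant functor at $C$; that is, such a morphism is exactly a cocone on the diagram $X$ with vertex $C$. Under this identification one obtains a bijection
$${\sf Diag}^{\circ}(\CX)\big((\CI,X),E^{\CX}(C)\big)\;\cong\;\CX^{\CI}(X,\Delta C),$$
natural in $C$.

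First I would recall the standard criterion that a fully faithful functor---and $E^{\CX}$ is such, by construction as a full embedding---admits a left adjoint, equivalently is a reflective embedding, precisely when every object $(\CI,X)$ possesses a universal arrow to $E^{\CX}$. By the identification above, a universal arrow from $(\CI,X)$ is nothing but a cocone $\eta\from X\to\Delta C$ through which every cocone on $X$ factors uniquely, which is verbatim the universal property of a colimit of $X$ with vertex $C={\rm colim}\,X$. Hence a reflection of $(\CI,X)$ along $E^{\CX}$ exists if and only if ${\rm colim}\,X$ exists, and when it does the reflector sends $(\CI,X)$ to ${\rm colim}\,X$. Since the objects of ${\sf Diag}^{\circ}(\CX)$ are exactly the small diagrams in $\CX$, supplying such a reflection for every object is the same datum as a choice of colimit for every small diagram; assembling these pointwise universal arrows into a functor is the usual construction of an adjoint from universal arrows, so that ``$\CX$ is functorially cocomplete'' and ``$E^{\CX}$ is reflective with left adjoint ${\rm colim}$'' name the same thing.

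For (2) I would avoid repeating the argument and instead invoke the dualization isomorphism $\Box^{\op}\from({\sf Diag}_{\circ}(\CX))^{\op}\xrightarrow{\cong}{\sf Diag}^{\circ}(\CX^{\op})$, together with its compatibility with the embeddings recorded in the commutative triangle above, under which $(E_{\CX})^{\op}$ corresponds to the embedding $E^{(\CX^{\op})}$. Thus $E_{\CX}$ is a coreflective embedding (i.e.\ has a right adjoint) if and only if $(E_{\CX})^{\op}$ is a reflective embedding, if and only if $E^{(\CX^{\op})}$ is a reflective embedding, which by part (1) applied to $\CX^{\op}$ holds exactly when $\CX^{\op}$ is functorially cocomplete, that is, when $\CX$ is functorially complete; tracing the isomorphism through shows the resulting right adjoint is ${\rm lim}$.

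No step presents a genuine obstacle---this is why the authors call the proof routine---but the point deserving care is the bookkeeping in the first identification: one must verify that the natural transformation of a morphism $(\CI,X)\to E^{\CX}(C)$ points in the direction $X\to\Delta C$, so that it is a \emph{cocone} rather than a cone. This is precisely what distinguishes ${\sf Diag}^{\circ}$ from ${\sf Diag}_{\circ}$ and is what makes the colimit, rather than the limit, appear on this side.
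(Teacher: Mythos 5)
Your argument is correct and is precisely the routine proof the authors omit: the hom-set ${\sf Diag}^{\circ}(\CX)((\CI,X),E^{\CX}(C))$ is the set of cocones on $X$ with vertex $C$, so a reflection of $(\CI,X)$ is exactly a colimiting cocone, and the standard universal-arrow criterion together with the recorded dualization isomorphism $({\sf Diag}_{\circ}(\CX))^{\op}\cong{\sf Diag}^{\circ}(\CX^{\op})$ yields both parts. Nothing is missing; your care about the direction of the natural transformation (cocone versus cone) is exactly the point that distinguishes the two cases.
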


\begin{rem}\label{Ignoring2cells}
(1) We emphasize that, here, we are considering ${\sf Diag}^{\circ}(\CX)$ and ${\sf Diag}_{\circ}(\CX)$, just like $\CX$, as 1-categories and, thus, ignore their obvious 2-categorical structures which make
$D^{\CX}$ and $D_{\CX}$ 2-functors. In the case of ${\sf Diag}^{\circ}(\CX)$, a 2-cell $\alpha:(F,\varphi)\Longrightarrow(F',\varphi')$
is simply a natural transformation $\alpha:F\to F'$ with $Y\al\cdot\varphi=\varphi'$: 
\begin{center}
$\xymatrix{\CI\ar@/^0.7pc/[rr]^{(F,\varphi)}\ar@/_0.7pc/[rr]_{(F',\varphi')}\ar[dr]_X & {\scriptstyle \al}\!\Downarrow & \CJ\ar[dl]^{Y}\\
& \CX}$
\end{center}
As $D^{\CX}$ should preserve the horizontal and vertical compositions of 2-cells, there is no choice of how to define them in 
${\sf Diag}^{\circ}(\CX)$. All verifications proceed routinely.

(2) Likewise, at this point we ignore the obvious fact that the functor $\CX^{\Box}=[-,\CX]:\SmallCats^{\rm op}\to \LocSmallCats$, as considered at the beginning of this section, is actually a 2-functor: it maps every natural transformation $\alpha:F\Longrightarrow F'$ (covariantly) to the transformation $\alpha^{*}:F^{*}\Longrightarrow {F'}^*$ with $\alpha^{*}_Y=Y\al:YF\lra YF'$, preserving both the vertical and horizontal composition of natural transformations.

(3) With ${\EuRoman 1}$ denoting the terminal category we trivially have
 $${\sf Diag}^{\circ}(\emptyset)\cong{\sf Diag}_{\circ}(\emptyset)\cong\sf 1 \text{ and } {\sf Diag}^{\circ}(\EuRoman 1)\cong\SmallCats,\, {\sf Diag}_{\circ}(\EuRoman 1)\cong\SmallCats^{\rm op}.$$
Much less obvious is the fact that the (ordinary) category ${\sf Diag}^{\circ}(\SmallCats )$ is equivalent to a suitably defined category which has the split cofibrations of small categories as its objects, and that ${\sf Diag}_{\circ}(\SmallCats^{\rm op})$ is equivalent to the dual of the category of split fibrations of small categories, as one may conclude from the 2-categorical equivalence formulated in Corollary \ref{GrothendieckEquivalenceDual}.
\end{rem}	

\subsection{Limits and colimits in diagram categories}\label{colimitsinDiag}

We continue to work with a fixed category $\CX$ and now consider the question of how to form limits or colimits in its diagram categories. Using fibrational methods we first confirm the following assertions:
\begin{prop}\label{limsandcolims}
\begin{enumerate}[(1)]
\item ${\sf Diag}^{\circ}(\CX)$ has coproducts, and $D^{\CX}$ preserves them. For any small category $\CD$, the category ${\sf Diag}^{\circ}(\CX)$ has all $\CD$-shaped limits with $D^{\CX}$ preserving them if, and only if, $\CX$ has $\CD$-limits.
If ${\sf Diag}^{\circ}(\CX)$ has coequalizers, then so does $\CX$, and $E^{\CX}$ preserves them.
\item ${\sf Diag}_{\circ}(\CX)$ has products, and $D_{\CX}$ preserves them. For small $\CD$, the category  ${\sf Diag}^{\circ}(\CX)$ has all $\CD$-colimits with $D_{\CX}$ preserving them if, and only if, $\CX$ has $\CD$-colimits. If ${\sf Diag}_{\circ}(\CX)$ has equalizers, then so does $\CX$, and $E_{\CX}$ preserves them.
\end{enumerate}
\end{prop}

\begin{proof}
(2) follows from (1) by dualization	(see Proposition \ref{Diagasfibration}), and the first and last assertion of (1) are quite obvious. 

The assertion about the existence $\CD$-limits for a small category $\CD$ when these exist in $\CX$ follows from Theorem \ref{LiftingColimits}, as soon as we have demonstrated that the comma insertions 
$$I_{\CJ}:[\CJ,\CX]\longrightarrow \CJ\!\downarrow\! D^{\CX}, \;X\longmapsto ({\rm Id}_{\CI}\to D^{\CX}(\CI,X))$$     
preserve $\CD$-limits, for any small category $\CJ $. But this preservation condition just means that, given a $\CD$-shaped limit cone $\lambda_d:Y\to Y_d$ in $[\CJ,\CX]$ and any $\CD $-shaped cone $(F,\varphi_d):(\CI,X)\to (\CJ,Y_d)$ in ${\sf Diag}^{\circ}(\CX)$, the cone $\varphi_d:X\to Y_dF$ in $[\CI,\CX] $ factors through the cone $\lambda_dF:YF\to Y_dF$, with a uniquely determined transfomation $\varphi :X\to YF$. But this is an immediate consequence of the pointwise formation of limits in functor categories, which makes the cone $\lambda_dF:YF\to Y_dF$ a limit cone in $[\CI,\CX]$. 

We leave to the reader the proof that the existence of $\CD$-limits in ${\sf Diag}^{\circ}(\CX)$ forces the existence of such limits in $\CX$.
\end{proof}

The question of how to form coequalizers in ${\sf Diag}^{\circ}(\CX)$ (or, equivalently, equalizers in ${\sf Diag}_{\circ}(\CX)$) may also be addressed with fibrational methods, as follows. For every functor $F:\CI\to\CJ$ of small categories, the functor $F^{*}:\CX^{\CJ}\to\CX^{\CI}$ has, by definition, a left adjoint $F_!$ if, and only if, for every $X\in\CX^{\CI}$, a (chosen) {\em left Kan extension} $F_!X={\rm Lan}_FX$ of $X$ along $F$ exists. The existence of this extension is certainly guaranteed when $\CX$ is cocomplete; conversely, a colimit of $X:\CI\to\CX$ can be obtained as the left Kan extension of $X$ along the functor $\CI\to{\EuRoman 1}$ to the terminal category. Consequently, with Proposition \ref{Diagasfibration} and Theorem \ref{bifibrations} we obtain:

\begin{prop}\label{Diagbifibred}
The split fibration $D^{\CX}$ is a bifibration if, and only if, the category $\CX$ is (small-) cocomplete. Dually, the split cofibration $D_{\CX}$ is a bifibration if, and only if, $\CX$ is (small-)complete.
\end{prop}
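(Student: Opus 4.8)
The plan is to read the claim off directly from the Grothendieck-theoretic description of $D^{\CX}$ together with the standard criterion for when a fibration is a bifibration. By Proposition~\ref{DiagasGr}, $D^{\CX}$ is the split fibration arising from the indexed category $\CX^{\Box}\from\SmallCats^{\op}\to\LocSmallCats$ via the Grothendieck construction, so its reindexing functors are exactly the maps $F^{*}\from\CX^{\CJ}\to\CX^{\CI}$ attached to functors $F\from\CI\to\CJ$ in $\SmallCats$. Theorem~\ref{bifibrations} then reduces the bifibration property to a pointwise condition on these: $D^{\CX}$ is a bifibration if, and only if, every such $F^{*}$ admits a left adjoint $F_!$.

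It therefore remains to identify when all these left adjoints exist, and here I would use the remark recorded just above the statement: $F^{*}$ has a left adjoint precisely when every $X\in\CX^{\CI}$ admits a left Kan extension ${\rm Lan}_F X$ along $F$. For the ``if'' direction I would invoke the pointwise colimit formula for Kan extensions, so that when $\CX$ is cocomplete $({\rm Lan}_F X)(j)$ is computed as a colimit over the (small) comma category $F\!\downarrow\! j$; all the required extensions, and hence all the $F_!$, then exist. For the ``only if'' direction it is enough to test the criterion on the single family of terminal functors $!_{\CI}\from\CI\to{\EuRoman 1}$: the functor $(!_{\CI})^{*}$ is the constant-diagram functor $\CX\to\CX^{\CI}$, whose left adjoint, when present, is exactly $\colim\from\CX^{\CI}\to\CX$. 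As this holds for every small $\CI$, the category $\CX$ is cocomplete.

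For the dual statement I would not repeat the argument but transport it along the isomorphism $({\sf Diag}_{\circ}(\CX))^{\op}\cong{\sf Diag}^{\circ}(\CX^{\op})$ established earlier, under which $(D_{\CX})^{\op}$ is identified with the fibration $D^{(\CX^{\op})}$. Since a functor is a bifibration exactly when its opposite is, and since $\CX$ is complete if and only if $\CX^{\op}$ is cocomplete, applying the first half of the proposition to $\CX^{\op}$ delivers the second half formally.

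The genuine content of the result lives in Theorem~\ref{bifibrations}, and the present step supplies only the translation between adjoints of reindexing functors and the existence of colimits. The one place demanding mild care is the ``only if'' direction: one must resist proving the stronger and unnecessary assertion that all \emph{pointwise} Kan extensions exist, and instead observe that testing the adjunction criterion against the terminal functors $!_{\CI}$ already forces cocompleteness. Everything else is routine bookkeeping.
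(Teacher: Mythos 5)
Your argument is correct and coincides with the paper's own reasoning: the paper likewise combines Proposition~\ref{DiagasGr} and Theorem~\ref{bifibrations} to reduce the bifibration property to the existence of left adjoints $F_!$ to the reindexing functors $F^{*}$, notes that these exist when $\CX$ is cocomplete via left Kan extensions, and for the converse observes that a colimit of $X\from\CI\to\CX$ is precisely the left Kan extension of $X$ along $\CI\to{\EuRoman 1}$. Your handling of the dual statement via the isomorphism $({\sf Diag}_{\circ}(\CX))^{\op}\cong{\sf Diag}^{\circ}(\CX^{\op})$ is also exactly the dualization principle the paper sets up for this purpose.
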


With the help of Theorem \ref{LiftingLimits}, Corollary \ref{thm:BiFibration-LiftingColimits}, and Proposition \ref{Diagbifibred} we can now state:	

\begin{thm}\label{Diag bicomplete} {\em (1)} 
If $\CX$ is a cocomplete category, then
${\sf Diag}^{\circ}(\CX)$ is also cocomplete, and $D^{\CX}$ preserves all colimits. 
 
 {\em (2)} If $\CX$ is complete, then ${\sf Diag}_{\circ}(\CX)$ is also complete, and $D_{\CX}$ preserves all limits (so that $D_{\CX}$ transforms limits in ${\sf Diag}_{\circ}(\CX)$ into colimits in $\SmallCats$). 
 \end{thm}
\begin{proof}
(1) Cocompleteness of $\CX$ is needed to make $D^{\CX}$ a bifibration. Its fibres, {\em i.e.}, the functor categories of $\CX$, have the types of (co)limits that $\CX$ has. Since the  base category, $\Cat$, is bicomplete, the assertion follows with the statements referred to above.

(2) With the dualization principle as stated before Proposition \ref{functor colim}, item (2) follows from an application of (1) to $\CX^{\rm op}$, rather than to $\CX$. 
\end{proof}

\begin{rem}
Here is a brief illustration	 of the construction of standard (co)limits in ${\sf Diag}(\CX)$ as it follows from the proofs of Propositions \ref{limsandcolims} and \ref{Diagbifibred} and Theorem \ref{Diag bicomplete}:

{\em Products}: The product $(\CI, X)$ of the family $(\CI_{\nu},X_{\nu}),\; \nu\in N,$ is given by
the products $\CI=\prod_{\nu\in N}\CI_{\nu}$ in $\SmallCats$ and $X((i_{\nu})_{\nu})=\prod_{\nu\in N}X_{\nu}(i_{\nu})$ in $\CX$, with the obvious projections. 

{\em Equalizers}: The equalizer $(E,\eta): (\CE,Z)\to (\CI,X)$ of $(F,\varphi),(G,\psi):(\CI,X)\to(\CJ,Y)$ is given by the equalizer $E:\CE\hookrightarrow \CI$ of $F,G$ in $\SmallCats$ and the equalizers $\eta_i: Zi\to XI$ of $\varphi_i,\psi_i:Xi\to YFi=YGi$ in $\CX$, for all $i\in \CE$.

{\em Coproducts}: The coproduct $(\CI, X)$ of the family $(\CI_{\nu},X_{\nu}),\; \nu\in N,$ is given by
the coproduct $\CI=\coprod_{\nu\in N}\CI_{\nu}$ in $\SmallCats$ and $X$ determined by the functors $X_{\nu}$.

{\em Coequalizers}:  To construct the coequalizer $(H,\gamma H\cdot\kappa): (\CJ,Y)\to (\CK,Z)$ of $(F,\varphi),(G,\psi):(\CI,X)\to(\CJ,Y)$, one first forms the coequalizer $H:\CJ\to\CK$ of $F,G$ in $\SmallCats$ and then the left Kan extensions of $X$ along $HF=HG$ and of $Y$ along $H$, with associated universal transformations $\lambda$ and $\kappa$; now, with
$\alpha,\beta$ determined by $\varphi,\psi$ as in the commutative diagrams
\begin{center}
$\xymatrix{X\ar[d]_{\varphi}\ar[rr]^{\lambda\qquad} && ({\rm Lan}_{HF}X)HF\ar@{-->}[d]^{\alpha HF}\\
YF\ar[rr]^{\kappa F\qquad} && ({\rm Lan}_H Y)HF
}$
\hfil
$\xymatrix{X\ar[d]_{\psi}\ar[rr]^{\lambda\qquad} && ({\rm Lan}_{HG}X)HG\ar@{-->}[d]^{\beta HG}\\
YG\ar[rr]^{\kappa G\qquad} && ({\rm Lan}_H Y)HG
}$	
\end{center}
one forms the coequalizer
\begin{center}
$\xymatrix{{\rm Lan}_{HF}X\ar@/^3pt/[rr]^{\alpha}\ar@/_3pt/[rr]_{\beta} && {\rm Lan}_HY\ar[rr]^{\gamma} && Z\\
}$	
\end{center}
in $[\CK,\CX]$. This completes the construction of the coequalizer
$(H, \gamma H\cdot\kappa)$.
\end{rem}

For completeness, we record here without proof the following result that is due to Foltz \cite{Foltz1970}; it will not be used in this paper.
\begin{thm}
	Let $\CX$ has products. Then ${\sf Diag}^{\circ}(\CX)$ is cartesian closed with exponentials preserved by $D^{\CX}$ if, and only if, $\CX$ is cartesian closed, and then $E^{\CX}$ preserves exponentials.
	 \end{thm}

\subsection{Left Kan extensions are cocartesian over $\SmallCats$, invariance under the formation of colimits}
The argumentation given for Proposition \ref{Diagbifibred} suggests the following characterization of left Kan extensions with target category $\CX$, when viewed as morphisms in ${\sf Diag}^{\circ}(\CX)$:
\begin{prop}\label{Kaniscocart}
A morphism $(F,\varphi):(\CI,X)\to(\CJ,Y)$ in ${\sf Diag}^{\circ}(\CX)$ is $D^{\CX}$-cocartesian if, and only if, $\varphi:X\to YF$ exhibits $Y$ as a left Kan extension of $X$	 along $F$.
\end{prop}

\begin{proof}
By definition, $D^{\CX}$-cocartesianness of $(F,\varphi)$ means that, for all functors $G:\CJ\to\CK,\; Z:\CK\to\CX$, one has a natural bijective correspondence
\begin{center}
$\xymatrix{X\ar@{=>}[r]^{\psi} & ZGF & \leftrightsquigarrow &
Y\ar@{=>}[r]^{\chi} & ZG\\
}$	
\end{center}
that is facilitated by the condition $\chi F\cdot \varphi=\psi$. Such correspondence is certainly in place if $Y$ is a left Kan extension of $X$ along $F$, with universal transformation $\varphi$. But this sufficient condition is also necessary, since one may simply consider $G={\rm Id}_{\CJ}$.
\end{proof}

Proposition \ref{Kaniscocart}
lets us conclude that, \emph{for $\CX$ cocomplete, the reflector ${\rm colim}:{\sf Diag}^{\circ}(\CX)\to \CX$ of Proposition} \ref{functor colim} \emph{maps $D^{\CX}$-cocartesian morphisms to isomorphisms.} In fact, one has the following more refined statement that appeared in  essence in \cite{PerroneTholen2020} (see the remark after the proof of Theorem 5.6 in \cite{PerroneTholen2020}). Here we formulate it without the imposition of smallness or cocompleteness conditions on the participating categories:

\begin{prop}\label{colimofKan}
 If $Y\cong {\rm Lan}_FX$, then the canonical morphism ${\rm \colim} X\to{\rm colim Y}$ is an isomorphism, with either colimit existing when the other exists.	
\end{prop}
\begin{proof}
For every object $z$ in $\CX$, we need to establish a natural bijective correspondence
\begin{center}
$\xymatrix{X\ar@{=>}[r]^{\alpha} & \Delta z & \leftrightsquigarrow &
Y\ar@{=>}[r]^{\beta} & \Delta z\;.\\
}$	
\end{center}
But this correspondence is readily provided by the correspondence of the proof of Proposition \ref{Kaniscocart} where one has to consider $Z=\Delta z,\; G={\rm Id}_{\CJ}$ and observe the trivial identity $\Delta z=(\Delta z) F$.	
\end{proof}
Proposition \ref{colimofKan} suggests the viewpoint that {\em Kan extensions are partial colimits}, a claim that is pursued in greater depth in \cite{PerroneTholen2020}.

\subsection{Strictification of morphisms in diagram categories}\label{strictification1}

Other than containing $\CX$ as a full subcategory, ${\sf Diag}^{\circ}({\CX})$ contains, of course, also the ordinary comma category $\SmallCats  /\CX$ as a {\em non-full} subcategory.  In fact, for {\em small} (!) $\CX$ one has the following proposition, embedded in the proof of Lemma 7.13 of \cite{BataninBerger}, with credit given to S. Lack. For arbitrary $\CX$, see Remark \ref{2functorStrict}(2); we can still draw the essential conclusion of preservation of colimits (see Corollary \ref{colimit preservation}) that is needed in the first proof of the CDF (see Section 4).

\begin{prop}\label{LackLemma}
For a small category $\CX$, the inclusion functor $\SmallCats /\CX\to{\sf Diag}^{\circ}({\CX})$ has a right adjoint.
\end{prop}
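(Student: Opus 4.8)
The plan is to construct the right adjoint explicitly and to recognize it as a comma-category construction. Write $\iota\from\SmallCats/\CX\to{\sf Diag}^{\circ}(\CX)$ for the inclusion; it is the identity on objects and sends a strictly commuting triangle $F$ to the lax one $(F,1)$. Producing a right adjoint $R$ amounts to giving, for each object $(\CJ,Y)$, an object $R(\CJ,Y)$ of $\SmallCats/\CX$ together with a counit $\varepsilon_{(\CJ,Y)}\from\iota R(\CJ,Y)\to(\CJ,Y)$ enjoying the usual universal property, equivalently a natural bijection
$$
{\sf Diag}^{\circ}(\CX)\bigl(\iota(\CI,X),\,(\CJ,Y)\bigr)\;\cong\;(\SmallCats/\CX)\bigl((\CI,X),\,R(\CJ,Y)\bigr).
$$
The candidate I would take is the comma category: set $R(\CJ,Y)=\bigl((\CX\downarrow Y),\,P_Y\bigr)$, where $(\CX\downarrow Y)$ has as objects the triples $(c,j,u)$ with $c\in\CX$, $j\in\CJ$ and $u\from c\to Yj$ in $\CX$, a morphism $(c,j,u)\to(c',j',u')$ being a pair $(g\from c\to c',\,k\from j\to j')$ with $u'\cdot g=Yk\cdot u$, and where $P_Y\from(\CX\downarrow Y)\to\CX$ is the first projection $(c,j,u)\mapsto c$. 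The guiding idea is that lifting strictly into the comma category of $Y$ is the same datum as mapping into $(\CJ,Y)$ by a lax triangle.

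First I would record that $(\CX\downarrow Y)$ is a small category: since $\CX$ and $\CJ$ are small and $\CX$ is (locally) small, the triples $(c,j,u)$ form a set. This is exactly where the smallness hypothesis on $\CX$ is indispensable, and its failure for general $\CX$ is what forces the later, more delicate treatment alluded to in the statement. Next I would exhibit the counit $\varepsilon_{(\CJ,Y)}=(Q,\kappa)\from\bigl((\CX\downarrow Y),P_Y\bigr)\to(\CJ,Y)$, with $Q\from(\CX\downarrow Y)\to\CJ$ the second projection $(c,j,u)\mapsto j$, and $\kappa\from P_Y\to YQ$ the natural transformation whose component at $(c,j,u)$ is $u$ itself; its naturality at a morphism $(g,k)$ is precisely the comma-square condition $u'\cdot g=Yk\cdot u$.

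The crux is then the universal property of $\varepsilon$. Given any $(F,\varphi)\from\iota(\CI,X)\to(\CJ,Y)$ in ${\sf Diag}^{\circ}(\CX)$---so $F\from\CI\to\CJ$ with $\varphi\from X\to YF$---I would define $G\from(\CI,X)\to R(\CJ,Y)$ by $Gi=(Xi,Fi,\varphi_i)$ on objects and $G\alpha=(X\alpha,F\alpha)$ on morphisms $\alpha\from i\to i'$. The comma-square condition for $G\alpha$ is exactly the naturality square of $\varphi$ at $\alpha$, so $G$ is a well-defined functor; moreover $P_YG=X$ holds on the nose, so $G$ is a genuine morphism of $\SmallCats/\CX$. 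Computing with the composition law of ${\sf Diag}^{\circ}(\CX)$, the composite of $\iota(G)=(G,1_X)$ followed by $\varepsilon_{(\CJ,Y)}=(Q,\kappa)$ is $(QG,\kappa G\cdot 1_X)=(F,\varphi)$, since $QG=F$ and $(\kappa G)_i=\kappa_{Gi}=\varphi_i$. Conversely, any $G'$ in $\SmallCats/\CX$ with $\varepsilon_{(\CJ,Y)}\cdot\iota(G')=(F,\varphi)$ must have $P_YG'=X$ and have its $\CJ$- and $\CX$-components read off as $F$ and $\varphi$ respectively, which forces $G'=G$. This gives existence and uniqueness of the factorization, hence the adjunction.

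I do not expect a deep obstacle once the comma category is identified---the real content is the \emph{recognition} that $(\CX\downarrow Y)\to\CX$ is the object that converts lax triangles into strict lifts. The two points that genuinely need care are (i) the smallness of $(\CX\downarrow Y)$, the sole place the hypothesis on $\CX$ is used; and (ii) keeping the two composition conventions straight---vertical and whiskered composites of the $2$-cells in ${\sf Diag}^{\circ}(\CX)$ versus ordinary composition in $\SmallCats/\CX$---so that the displayed bijection comes out natural in both $(\CI,X)$ and $(\CJ,Y)$ (equivalently, so that the triangle identities for the unit and counit hold). Naturality of the bijection follows routinely once one checks that $\varepsilon$ is natural in $(\CJ,Y)$, the assignment $Y\mapsto(\CX\downarrow Y)$ being the evident functoriality of the comma construction.
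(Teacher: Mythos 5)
Your proposal is correct and follows essentially the same route as the paper: the right adjoint is the comma category $\CX\downarrow Y$ equipped with its domain (first-projection) functor, with the counit given by the second projection together with the tautological natural transformation whose components are the structure morphisms $u$. The paper leaves the verification of the natural bijection as a routine exercise, whereas you spell out the counit's universal property explicitly; the content is the same.
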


\begin{proof}
One defines a functor $${\rm Strict}:{\sf Diag}^{\circ}({\CX})\to\SmallCats /\CX,$$ which transforms lax commutative triangles into strictly commutative triangles. It assigns to an object $X$ in ${\sf Diag}^{\circ}(\CX)$ the comma category $\CX\downarrow X$, equipped with its domain functor which takes an object $(u:a\to Xi, \;i)$ in $\CX\downarrow X$ with $i\in\CI$ to the object $a\in\CX$. On morphisms 
it is defined by
\begin{center}
$\xymatrix{\CI\ar[rr]^F\ar[dr]_X^{\;\;\varphi:\Lra} & & \CJ\ar[dl]^{Y} & \longmapsto & \CX\downarrow X\ar[rr]^{{\rm Strict}(F,\varphi)}\ar[rd]_{{\rm dom}_X} & & \CX\downarrow Y\ar[dl]^{{\rm dom}_Y} \\
& \CX & & {\rm Strict} & & \CX & \\}$
\end{center}
where the functor ${\rm Strict}(F,\varphi)$ takes an object $(u:a\to Xi, \;i)$ in $\CX\downarrow X$ with $i\in\CI$ to the object $(\varphi_i\cdot u:a\to Y(Fi),\; Fi)$ in $\CX\downarrow Y$. It is now a routine exercise to establish a natural isomorphism
$$(\SmallCats /\CX)(X,{\rm dom}_Y)\cong{\sf Diag}^{\circ}({\CX})(X,Y)$$
in $\Sets$.
\end{proof}

\begin{rem}\label{2functorStrict}
(1) With 2-cells $\alpha:F\Longrightarrow F'$ given by natural transformations $\alpha$ satisfying $Y\alpha=1_X$,\begin{center}
$\xymatrix{\CI\ar@/^0.7pc/[rr]^{F}\ar@/_0.7pc/[rr]_{F'}\ar[dr]_X & {\scriptstyle \al}\!\Downarrow & \CJ\ar[dl]^{Y}\\
& \CX}$
\end{center}
$\SmallCats  /\CX$ becomes a 2-category. In fact, it is the 2-subcategory of ${\sf Diag}^{\circ}({\CX})$ whose 2-cells are described in Remark \ref{Ignoring2cells}(1). One easily confirms that the isomorphism at the end of the proof of Proposition \ref{LackLemma} actually lives in $\SmallCats$. Consequently, one has in fact a 2-adjunction
$${\rm Inclusion}\dashv {\rm Strict}: {\sf Diag}^{\circ}({\CX})\to\SmallCats /\CX.$$

(2)
When $\CX$ is large, {\em i.e.}, a $\LocSmallCats $-object, we still have a right adjoint, ${\rm Strict}$, to the inclusion 2-functor 
$$\LocSmallCats/\CX\to{\sf DIAG}^{\circ}(\CX),$$
where ${\sf DIAG}^{\circ}(\CX)$ is defined like ${\sf Diag}^{\circ}(\CX)$, except that the domain $\CI$ of an object $X:\CI\to\CX$ is not constrained to be small. We temporarily step into this higher universe to prove the following corollary, which will be used in the proof of Theorem \ref{decomp formula}.
\end{rem}

\begin{cor}\label{colimit preservation}
For any category $\CX$, the inclusion functor $\SmallCats/\CX\to{\sf Diag}^{\circ}(\CX)$ preserves all colimits.
\end{cor}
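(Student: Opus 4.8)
The guiding principle here is that left adjoints preserve all colimits, so the entire task is to produce, in a universe large enough to host a right adjoint, an adjunction whose left adjoint restricts to the inclusion in question, and then to descend the resulting colimit-preservation back to the original size. First I would dispose of the small case: when $\CX$ is small, Proposition \ref{LackLemma} already exhibits the inclusion $\SmallCats/\CX\to{\sf Diag}^{\circ}(\CX)$ as a left adjoint (to ${\rm Strict}$), and a left adjoint preserves whatever colimits exist. The reason the general statement needs separate treatment is that for large $\CX$ the candidate right adjoint ${\rm Strict}$ sends $(\CI,X)$ to the domain functor of $\CX\downarrow X$, which is no longer small — it has an object $(u\colon a\to Xi,\,i)$ for every $a\in\CX$ — so ${\rm Strict}$ lands outside $\SmallCats/\CX$ and there is no honest adjunction at the original size.

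To repair this I would step into the higher universe of Remark \ref{2functorStrict}(2), where the inclusion $\hat J\colon\LocSmallCats/\CX\to{\sf DIAG}^{\circ}(\CX)$ genuinely has the right adjoint ${\rm Strict}$ and therefore preserves all colimits. The two inclusions under consideration fit into a square of full embeddings
\[
\SmallCats/\CX\hookrightarrow\LocSmallCats/\CX,\qquad {\sf Diag}^{\circ}(\CX)\hookrightarrow{\sf DIAG}^{\circ}(\CX),
\]
around which the two evident composites agree. What remains is to transfer colimit-preservation from $\hat J$ down to $J\colon\SmallCats/\CX\to{\sf Diag}^{\circ}(\CX)$, and this is precisely where smallness must be controlled.

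The key observation I would use is that the domain functor creates colimits in each slice over $\CX$, so a colimit of a small diagram in $\SmallCats/\CX$ is computed by forming the colimit of the (small) domains in $\SmallCats$, equipped with the induced functor to $\CX$. Since $\SmallCats$ is cocomplete and a small colimit of small categories is again small, such colimits exist in $\SmallCats/\CX$, and the full inclusion $\SmallCats/\CX\hookrightarrow\LocSmallCats/\CX$ preserves them. Putting the pieces together: given a small diagram in $\SmallCats/\CX$, its colimit is also a colimit in $\LocSmallCats/\CX$; applying the colimit-preserving $\hat J$ yields a colimit in ${\sf DIAG}^{\circ}(\CX)$ whose underlying shape is this small colimit of small categories, hence small, so the colimit object together with its universal cocone already lives in ${\sf Diag}^{\circ}(\CX)$.

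The final descent is then formal: because ${\sf Diag}^{\circ}(\CX)\hookrightarrow{\sf DIAG}^{\circ}(\CX)$ is a full inclusion containing both the apex of the cocone and all of its legs, a cocone that is universal in the ambient category remains universal against the (fewer) test objects of the full subcategory. Hence $J$ sends the original colimit to a colimit, and since $\SmallCats/\CX$ is small-cocomplete this accounts for all colimits. I expect the main obstacle to be the bookkeeping of smallness — verifying that the colimit shapes stay small so that the universal object produced in the large universe descends into ${\sf Diag}^{\circ}(\CX)$ — rather than any delicate universal-property argument, which in the end reduces to the standard facts that left adjoints preserve colimits and that full inclusions detect colimits whose apex they contain.
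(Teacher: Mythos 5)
Your proposal is correct and follows essentially the same route as the paper: pass to the large version of Proposition \ref{LackLemma} to see that $\LocSmallCats/\CX\to{\sf DIAG}^{\circ}(\CX)$ is a left adjoint, note that $\SmallCats/\CX\to\LocSmallCats/\CX$ preserves colimits, and then descend the universal cocone back into the full subcategory ${\sf Diag}^{\circ}(\CX)$. Your extra bookkeeping about smallness of the colimit shapes is a sound elaboration of what the paper dismisses as trivial.
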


\begin{proof}
Trivially, the full embedding $\SmallCats \to\LocSmallCats$ 
preserves all colimits and, hence, so does
$\SmallCats/ \CX\to\LocSmallCats/\CX$. 
By (a large version of) 
Proposition \ref{LackLemma}, a colimit taken in $\SmallCats  /\CX$ is therefore also a colimit in ${\sf DIAG}^{\circ}(\CX)$, and it trivially maintains that role in its ``home'' category ${\sf Diag}^{\circ}({\CX})$.	
\end{proof}

\begin{rem}\label{Diagcirc}
We note that, in the notation of \ref{Grothendieck fibrations}, every small-fibred split fibration $P:\CE\to\CB$ comes with a {\em mate} 
$$P^{\leftarrow}:\CB\to {\sf Diag}_{\circ}(\CE),\;(u:a\to b)\mapsto ((u^*,\theta^u): J_a\to J_b),$$
which, when composed with the split fibration $D_{\CE}:{\sf Diag}_{\circ}(\CE)\to \LocSmallCats^{\rm op}  $, reproduces the functor $(\Phi^P)^{\rm op}:\CB\to\LocSmallCats^{\rm op}  $ of \ref{cofib and bifib}. Although $\Phi^P$ maintains sufficient information about $P$ to reproduce $P$ (as in Proposition \ref{equivalent}), the mate $P^{\leftarrow}$ may well be regarded as doing so more comprehensively.
\end{rem}

\subsection{The generic colimit for the standard construction of colimits}
Let $\CX$ have coproducts. As is well known, the \emph{standard construction} of the colimit of a small diagram $X:\CI\to\CX$ proceeds by forming the diagram 
\begin{center}
$\xymatrix{\coprod_{(u:i\to j)}Xi\ar@/^3pt/[rr]^f\ar@/_3pt/[rr]_g && \coprod_iXi\ar[rr]^q && C\\
Xi\ar[u]^{s_u}\ar[rr]_{Xu}\ar[rru]_{t_i} && Xj\ar[u]^{t_j}\ar[rru]_{\kappa_j}\\
}$
\end{center}	
where the left coproduct, with injections $s_u$, runs through all morphism $u$ of $\CI$ and the second, with injections $t_i$, through all objects $i$ of $\CI$; the morphisms $f$ and $g$ satisfy $f\cdot s_u=t_i$ and $g\cdot s_u=t_j\cdot Xu$, for all $u:i\to j$ in $\CI$. Then the colimit $C$ of $X$ with cocone $\kappa: X\to \Delta C$ exists in $\CX$ if, and only if, the coequalizer $q:\coprod_iX_i\to C$ of $f,g$ exists in $\CX$, with $\kappa$ and $q$ determining each other by $q\cdot t_i=\kappa_i$ for all $i\in\CI$.

\begin{exa}\label{generic} \emph{(Generic colimit)}
	For every small category $\CI$, considering the fibres of the codomain functor $\CI^{\EuRoman 2}\to\CI$ (see {\em\ref{Standard} (3)}), one has a coequalizer diagram
	\begin{center}
	$\xymatrix{\coprod_{(u:i\to j)}\CI/i\ar@/^3pt/[rr]^F\ar@/_3pt/[rr]_G && \coprod_i\CI/i\ar[rr]^Q && \CI	&\quad\quad(*)}$	
	\end{center}
in $\SmallCats$, where $Q$ restricts to the domain functor $Q_i$, for every summand $\CI/i$. \emph{Indeed, by direct inspection one sees that the cocone given by the domain functors exhibits $\CI$ as a colimit of the diagram $\Phi:\CI\longrightarrow\SmallCats,\; (u:i\to j)\longmapsto (u_!:\CI/i\to\CI/j)$, as follows: Given any cocone $(H_i:\CI/i\to\CC)_i$, a functor $H:\CI\to\CC$ with $HQ_i =H_i$ for all $i\in\CI$ must necessarily map every object $i$ to $H_i(1_i)$ and every morphism $u:i\to j$ to the morphism $H_j(u)$ in $\CC$, since $u$ in $\CI$ may be seen as the $Q_j$-image of the morphism $u:u\to 1_j$ in $\CI/j$, the domain $u$ of which may be written as the object $u=u_!(1_i)$ in $\CI/j$. Conversely, putting $Hu=H_ju$ one similarly confirms the functoriality of $H$.}
\end{exa}
Justifying its denomination, we now show in which sense Example \ref{generic} is generic:
\begin{thm}\label{colimpresentation}
The standard construction of the colimit of any small diagram $X:\CI\to \CX$ in a cocomplete category	 $\CX$ is the image under the functor ${\rm colim}:{\sf Diag}^{\circ}(\CX)\to\CX$ of a coequalizer diagram in ${\sf Diag}^{\circ}(\CX)$ whose underlying coequalizer diagram in $\SmallCats$ is given by diagram $(*)$ that depends only on the shape $\CI$ of the diagram $X$.
\end{thm}

\begin{proof}
Given $X$, the coequalizer diagram $(*)$	 may be lifted  trivially to become a coequalizer diagram in $\SmallCats/\CX$:
\begin{center}
$\xymatrix{\coprod_{(u:i\to j)}\CI/i\ar@/^3pt/[rr]^F\ar@/_3pt/[rr]_G\ar[rrd]_{XQF} && \coprod_i\CI/i\ar[rr]^Q\ar[d]^{XQ} && \CI\ar[lld]^X\\
&& \CX &&
}$	
\end{center}
By Corollary \ref{colimit preservation}, we actually have a coequalizer diagram in ${\sf Diag}^{\circ}(\CX)$ which, by Proposition \ref{functor colim}, is preserved by ${\rm colim}:{\sf Diag}^{\circ}(\CX)\to\CX$. We claim that the emerging coequalizer diagram in $\CX$,
\begin{center}
$\xymatrix{{\rm colim}XQF\ar@/^3pt/[rr]^f\ar@/_3pt/[rr]_g && {\rm colim}XQ \ar[rr]^q && {\rm colim}X\;,
}$
\end{center}	
coincides (up to isomorphism) with the standard construction of the colimit of $X$ in $\CX$. Indeed, in the notation of Example \ref{generic}, $XQ$ is the coproduct of the diagrams $XQ_i$ in ${\sf Diag}(\CX)$, so that we may compute
\begin{center}
	$\xymatrix{{\rm colim}(\coprod_i\CI/i\ar[r]^{XQ\qquad} & \CX)\quad\cong \quad\coprod_i{\rm colim}(\CI/i\ar[r]^{\quad XQ_i} & \CX)\quad\cong\quad \coprod_iXi\;,
	}$
\end{center}
where, for every object $i\in\CI$,  we were able to evaluate the colimit of $XQ_i$ trivially as $Xi$, since
$1_i$ is terminal in $\CI/i$. With the colimit of $XQF$ computed analogously, one now routinely checks that $\rm colim$ maps the functors $F,G, Q$ (seen as morphisms in ${\sf Diag}^{\circ}(\CX)$, and obtained by the standard construction of a colimit in $\SmallCats$) to the morphisms $f,g,q$ in $\CX$, as given by the standard construction of the colimit of $X$ in $\CX$.
\end{proof}	
\begin{rem}
Other than $(*)$ of Example \ref{generic}, there are alternative and quite natural presentations of a small category $\CI$ as a coequalizer in $\SmallCats$, to which one may then apply the same procedure for the computation of the colimit of a diagram $X:\CI\to\CX$ as the one used in the proof of Theorem \ref{colimpresentation}. Consider, for example the coequalizer diagrams
\begin{center}
	$\xymatrix{\CI^{\EuRoman 2}\times_{\CI}\CI^{\EuRoman 2}\ar@/^3pt/[rr]^{P_1}\ar@/_3pt/[rr]_{P_2} && \CI^{\EuRoman 2}\ar[rr]^P && \CI\;,	}$	
	\end{center}
\begin{center}
	$\xymatrix{\quad(\CI^{\EuRoman 2})^{\EuRoman 2}\ar@/^3pt/[rr]^M\ar@/_3pt/[rr]_{Q^{\EuRoman 2}} && \CI^{\EuRoman 2}\ar[rr]^P && \CI\;, }$	
	\end{center}
	where $P$ is the the codomain functor, and where $M$	 assigns to a commutative square in $\CI$, seen as an object of $(\CI^{\EuRoman 2})^{\EuRoman 2}$, its diagonal morphism, seen as an object in $\CI^{\EuRoman 2}$. Unlike $(*)$, both coequalizer diagrams are even contractible and, hence, absolute. However, the sections of $P, P_1, M$ making the coequalizers contractible are all easily seen to be confinal and, hence, under the procedure of the proof of Theorem \ref{colimpresentation}, produce just coequalizer diagrams consisting of isomorphisms, 
	for any diagram $X:\CI\to\CX$ that has a colimit in $\CX$.
	\end{rem}

\section{The Guitart  adjunction}

It is hardly surprising that ${\sf Diag}^{\circ}(\CX)$, constructed as a Grothendieck category over $\Cat $, behaves 2-functorially in the variable $\CX$. But it is a nice twist that the assignment $\CX\mapsto {\sf Diag}^{\circ}(\CX)$ (considered as a category over $\SmallCats   $) has a left adjoint, given again by the Grothendieck construction. This fact was stated by Guitart \cite{Guitart1974} (see also \cite{GuitartVdB1977}) in 1-categorical terms. In what follows, we give some details in 2-categorical terms. A generalization is formulated, and proved, as Theorem \ref{fundamental}.

\subsection{${\sf Diag}^{\circ}$ is right adjoint to the Grothendieck construction}

Considering $\SmallCats$ as a 1-category and $\LocSmallCats $ as a (huge) 2-category, containing $\SmallCats$ as one of its objects, we form the 2-category $\LocSmallCats/\SmallCats$ as in Remark \ref{2functorStrict}(1). Then 
the 2-functor
$${\sf Diag}^{\circ}:\LocSmallCats   \longrightarrow \LocSmallCats/\SmallCats $$
assigns to a category $\CX$ the fibration $D^{\CX}:{\sf Diag}^{\circ}(\CX)\to\Cat$; it extends a functor $T:\CX\to\CY$ from ordinary to  ``variable'' objects of $\CX$ by post-composition with $T$, that is: ${\sf Diag}^{\circ}$ assigns to $T$ the $\LocSmallCats/\SmallCats$-morphism $D^{\CX}\to D^{\CY}$, given by the functor 
$$T(-):{\sf Diag}^{\circ}(\CX)\longrightarrow{\sf Diag}^{\circ}(\CY),\quad[(F,\varphi):(\CI,X)\to (\CJ, Y)]\longmapsto [(F,T\varphi):(\CI,TX)\to (\CJ, TY)]\;;$$
and it assigns to a natural transformation $\tau:T\to T'$ the natural transformation
$\tau(-):T(-)\to T'(-)$, given by $({\rm Id}_{\CI},\tau X): (\CI, TX)\to(\CI, T'X)$ for all
objects $(\CI,X)$ in ${\sf Diag}^{\circ}(\CX)$.

The 2-functor
$$\LocSmallCats\longleftarrow\LocSmallCats/\SmallCats:\textstyle\int_{\circ}$$
assigns to a $\LocSmallCats/\SmallCats$-object $\Phi:\CB\to\SmallCats$ its dual Grothendieck category $\int_{\circ}\Phi$, and to a $\LocSmallCats/\SmallCats$-morphism $\Sigma:\Phi\to\Psi$ with $\Psi:\CC\to\SmallCats$ the functor
$$(\Sigma-,=):\textstyle{\int}_{\circ}\Phi\longrightarrow\textstyle{\int}_{\circ}\Psi,\quad[(u,f):(a,x)\to(b,y)]\longmapsto [(\Sigma u,f):(\Sigma a,x)\to(\Sigma b,y)].$$
A natural transformation $\si:\Sigma\to \Sigma'$ with 
$\Psi \sigma=1_{\Phi}$ is sent by $\int_{\circ}$ to the natural transformation 
$(\Sigma-,=)\to (\Sigma'-,=)$ whose component at an object $(a,x)$ in $\int_{\circ}\Phi$ is the morphism $(\si_a,1_{(\Phi a)x}):(\Sigma a,x)\to(\Sigma'a,x)$ in $\int_{\circ}\Psi$.

\begin{thm}\label{Guitart}
The 2-functor $\int_{\circ}$ is left adjoint to the 2-functor ${\sf Diag}^{\circ}$.
\end{thm}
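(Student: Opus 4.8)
The plan is to establish the adjunction by exhibiting, for every object $\Phi\from\CB\to\SmallCats$ of $\LocSmallCats/\SmallCats$ and every category $\CX$, an isomorphism of hom-categories
$$\LocSmallCats\big({\textstyle\int}_{\circ}\Phi,\CX\big)\;\cong\;(\LocSmallCats/\SmallCats)\big(\Phi,{\sf Diag}^{\circ}(\CX)\big),$$
2-natural in $\Phi$ and in $\CX$, where the right-hand entry denotes the slice object $D^{\CX}$. An object of the right-hand hom-category is a functor $\widehat{T}\from\CB\to{\sf Diag}^{\circ}(\CX)$ with $D^{\CX}\widehat{T}=\Phi$; since $D^{\CX}$ remembers only the shape, such a $\widehat{T}$ assigns to each $a\in\CB$ a diagram $(\Phi a,X_a)$ and to each $u\from a\to b$ a morphism $(\Phi u,\varphi_u)\from(\Phi a,X_a)\to(\Phi b,X_b)$, i.e.\ a natural transformation $\varphi_u\from X_a\Lra X_b\,\Phi u$, subject to the cocycle constraint $\varphi_{vu}=(\varphi_v\,\Phi u)\cdot\varphi_u$ and $\varphi_{1_a}=1$. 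First I would define the forward map: given a functor $T\from\int_{\circ}\Phi\to\CX$, set $X_a\DefEq T(a,-)\from\Phi a\to\CX$ and let $\varphi_u$ have components $(\varphi_u)_x\DefEq T(u,1_{(\Phi u)x})\from T(a,x)\to T(b,(\Phi u)x)$. Naturality of each $\varphi_u$ and the cocycle identity then follow directly from functoriality of $T$ applied to the defining composition law $(v,g)\cdot(u,f)=(vu,g\cdot(\Phi v)f)$ of $\int_{\circ}\Phi$.

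The backward map rests on the canonical factorization in the Grothendieck category: every morphism $(u,f)\from(a,x)\to(b,y)$ equals the composite $(1_b,f)\cdot(u,1_{(\Phi u)x})$ of a cocartesian lift of $u$ followed by a vertical morphism. Hence, given $\widehat{T}=(X_a,\varphi_u)$, I would set $T(a,x)\DefEq X_a(x)$ and $T(u,f)\DefEq X_b(f)\cdot(\varphi_u)_x$, and verify functoriality. That the two assignments are mutually inverse is then immediate: on objects it is definitional, and on morphisms it is precisely the statement that $T$ is determined by its restriction to vertical and cocartesian morphisms together with the factorization above. This is, at the 1-categorical level, the universal property of $\int_{\circ}\Phi$ as an (op)lax colimit of $\Phi$.

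Next I would treat the 2-cells. A 2-cell on the left is a natural transformation $\tau\from T\Lra T'$; on the right it is a natural transformation $\alpha\from\widehat{T}\Lra\widehat{T}'$ with $D^{\CX}\alpha=1_\Phi$, so that each component is of the form $\alpha_a=(1_{\Phi a},\psi_a)$ with $\psi_a\from X_a\Lra X'_a$. The correspondence sends $\tau$ to the family $(\psi_a)_a$ given by $(\psi_a)_x\DefEq\tau_{(a,x)}$. Here naturality of $\tau$ along vertical morphisms says exactly that each $\psi_a$ is a natural transformation, while naturality of $\tau$ along the cocartesian morphisms $(u,1)$ says exactly that $\alpha$ is natural in $a$; conversely $\tau$ is recovered from $(\psi_a)$ by the same factorization. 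One then checks that this bijection respects vertical and whiskered composition of 2-cells, so that the object- and 2-cell-level bijections assemble into an honest isomorphism of categories.

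Finally I would verify 2-naturality in the two arguments, matching the actions of the two 2-functors as spelled out just before the statement. Covariance in $\CX$ is routine: post-composing $T$ with $S\from\CX\to\CY$ corresponds under the bijection to post-composing $\widehat{T}$ with $S(-)$, by the very definition of ${\sf Diag}^{\circ}(S)$, and naturality in a transformation $\tau\from S\to S'$ is analogous. Contravariance in $\Phi$ requires checking that pre-composition with $\int_{\circ}\Sigma=(\Sigma-,=)$ for a slice morphism $\Sigma\from\Phi\to\Phi'$ corresponds to pre-composition with $\Sigma$ on the $\widehat{T}$ side, and likewise for slice 2-cells $\sigma$. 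I expect the main obstacle to be bookkeeping rather than conceptual: keeping the whiskering conventions consistent in the cocycle identity and in the 2-cell naturality squares, and confirming that all the compatibilities (both hom-bijections and their interaction with composition on each side) hold simultaneously and strictly. No single step is deep---the content is entirely the universal property of the Grothendieck construction---but upgrading the pointwise bijection to a strict isomorphism of 2-functors is what demands care.
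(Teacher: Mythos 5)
Your proposal is correct and follows essentially the same route as the paper's own proof: the identical hom-category bijection $\widehat{\Box}\dashv\widecheck{\Box}$ with $X_a=T(a,-)$, $(\varphi_u)_x=T(u,1_{(\Phi u)x})$ one way and $T(u,f)=X_b(f)\cdot(\varphi_u)_x$ via the (cocartesian, vertical) factorization the other way, plus the componentwise correspondence $(\psi_a)_x=\tau_{(a,x)}$ for 2-cells. The paper likewise leaves the naturality and compatibility verifications as routine.
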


\begin{proof}
It suffices to show that, for every category $\CX$ and every functor $\Phi:\CB\to\SmallCats$, one has bijective functors
$$\widehat{\Box}:\;\LocSmallCats  (\textstyle{\int}_{\circ}\Phi,\CX)\rightleftarrows(\LocSmallCats/\SmallCats)(\Phi,D^{\CX})\;:\widecheck{\Box}$$
that are natural in $\CX$ and $\Phi$. To this end, for a functor $T:\int_{\circ}\Phi\to \CX$, one lets the functor
 $\hat{T}:\CB\to{\sf Diag}^{\circ}(\CX)$ map an object $a\in\CB$ to the functor 
 $$T_a:\Phi a\to\CX,\quad(f:x\to x')\longmapsto [T(1_a,f):(T(a,x)\to T(a,x')].$$
 $\hat{T}$ maps a morphism $u:a\to b$ in $\CB$ to the ${\sf Diag}^{\circ}(\CX)$-morphism
 \begin{center}
$\xymatrix{{\Phi}a\ar[rr]^{\Phi u}\ar[dr]_{T_a}^{\;\;\varphi^u:\Lra} & & {\Phi}b \ar[dl]^{T_b}\\
& \CX\;, & }$
\end{center}
where the natural transformation $\varphi^u$ is defined by $\varphi^u_x	=T(u,1_{\Phi u(x)})$, for all $x\in \Phi a$; clearly then, $D^{\CX}\hat{T} =\Phi$. Also, for a natural transformation $\tau: T\to T'$ one has the 2-cell $\hat{\tau}:\hat{T}\to\hat{T'}$, the components of which are the ${\sf Diag}^{\circ}(\CX)$-morphisms
\begin{center}
$\xymatrix{\Phi a\ar[rr]^{{\rm Id}_{\Phi a}}\ar[dr]_{T_a}^{\;\;\hat{\tau}_a:\Lra} & & \Phi a\ar[dl]^{T'_a}\\
& \CX & ,}$
\end{center}
defined by $(\hat{\tau}_a)_x=\tau_{(a,x)}$, for all $a\in\CB, x\in\Phi a$.

Conversely, for a functor $\Sigma:\CB\to {\sf Diag}(\CX)$ with $D^{\CX}\Sigma=\Phi$, one defines the functor $\check{\Sigma}:\int_{\circ}\Phi\to\CX$, as follows. For $u:a\to b$ in $\CB$, 
writing the ${\sf Diag}(\CX)$-morphism $\Sigma u$ in the form $\Sigma u=(\Phi u,\varphi^u)$ (as in the triangle on the left of the following diagram), one lets $\check{\Sigma}$ map a morphism $(u,f):(a,x)\to(b,y)$ in $\int_{\circ}\Phi$ to the composite morphism of the triangle on the right:
\begin{center}
$\xymatrix{\Phi a \ar[rr]^{\Phi u}\ar[dr]_{\Sigma a}^{\;\;\varphi^u:\Lra} & & \Phi b \ar[dl]^{\Sigma b}\\
& \CX & }$
\hfil
$\xymatrix{& (\Sigma b)(\Phi u)x\ar[rd]^{(\Sigma b)f} &\\
(\Sigma a)x=\check{\Sigma}(a,x)\ar[ru]^{\varphi^u_x}\ar[rr]^{\check{\Sigma}(u,f)} & & \check{\Sigma}(b,y)=(\Sigma b)y\;. 
}$
\end{center}
Not surprisingly now, a natural transformation $\si:\Sigma\to \Sigma'$ with $D^{\CX}\si=1_{\Phi}$ gives the natural transformation $\check{\sigma}:\check{\Sigma}\to\check{\Sigma}'$, defined by $\check{\sigma}_{(a,x)}=(\si_a)_x$, for all $(a,x)\in \int_{\circ}\Phi$.

All verifications proceed in a standard manner.
\end{proof}

Let us make explicit how, for a functor $\Phi:\CB\to\SmallCats$, Theorem \ref{Guitart} provides an effective characterization of the category $\int_{\circ}\Phi$ in the $2$-category $\LocSmallCats$. 
A {\em lax cocone} over $\Phi$ with vertex $\CX$ is given by a family of functors $\Sigma_a:\Phi a\to\CX\; (a\in \CB)$ and a family of natural transformations $\varphi^u:\Sigma_a\to \Sigma_b(\Phi u)\;(u:a\to b\text{ in }\CB)$, satisfying the conditions
$$\varphi^{1_a}=1_{\Sigma_a}\;\text{ and }\;\varphi^{v\cdot u}=\varphi^v(\Phi u)\cdot\varphi^u,$$
for all $u:a\to b,\,v:b\to c$ in $\CB$. We recall that the category $\int_{\circ}\Phi$ is the vertex of the lax cocone over $\Phi$, given by the functors 
$$J_a:\Phi a\to \textstyle{\int}_{\circ}\Phi,\quad(h:x\to x')\mapsto(1_a,h):(a,x)\to(a,x'),$$ and the natural transformations
$$\delta^u:J_a\to J_b(\Phi u),\;\text{ with } \delta^u_x=(u,1_{\Phi u(x)}):(a,x)\to(b,(\Phi u)x),$$
for all $u:a\to b$ in $\CB$ and $x\in \Phi a$ (see \ref{cofib and bifib}). This lax cocone is initial amongst all lax cocones over $\Phi$, in the following sense:

\begin{cor}[Lax Colimit Characterization of $\int_{\circ}\Phi$]\label{universalchar dualGr} For every lax cocone over $\Phi$, given by $(\Sigma_a:\Phi a\to\CX)_{a\in \CB},\,(\varphi^u:\Sigma_a\to \Sigma_b(\Phi u))_{u:a\to b}$, there is a uniquely determined functor $T:\int_{\circ}\Phi\to \CX$ with $T J_a=\Sigma_a$ and $T \delta^u=\varphi^u$, for all $u:a\to b$ in $\CB$.
\begin{equation*}
\xymatrix@R=5ex@C=4em{
\Phi a\ar[rrrr]^{\Phi u}\ar[drr]^{\quad\quad J_a\qquad\quad\delta^u:\Lra}\ar[dddrr]_{\Sigma_a}^{\quad\varphi^u:\Lra} & & & & \Phi b\ar[dll]_{J_b}\ar[dddll]^{\Sigma_b}\\
& & \int_{\circ}\Phi\ar@{-->}[dd]^{T} & &\\
& & & & \\
& & \CX & & 
}
\end{equation*}
\end{cor}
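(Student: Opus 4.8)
The plan is to read the statement off directly from the Guitart adjunction (Theorem \ref{Guitart}), without any fresh construction. The key observation is that a lax cocone over $\Phi$ with vertex $\CX$ is literally the same datum as a morphism $\Phi\to D^{\CX}$ in $\LocSmallCats/\SmallCats$, and that the distinguished cocone $(J_a,\delta^u)$ on $\int_{\circ}\Phi$ is precisely the unit of that adjunction at $\Phi$. Once this dictionary is set up, the existence-and-uniqueness assertion is exactly the universal property of the unit, i.e. a verbatim instance of the hom-bijection $\widehat{\Box}$.

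First I would unwind the definitions to confirm the dictionary. A morphism $\Sigma\from\Phi\to D^{\CX}$ in $\LocSmallCats/\SmallCats$ is a functor $\Sigma\from\CB\to{\sf Diag}^{\circ}(\CX)$ with $D^{\CX}\Sigma=\Phi$. Since $D^{\CX}$ remembers only the top row, each object $\Sigma a$ must have the form $(\Phi a,\Sigma_a)$ for a functor $\Sigma_a\from\Phi a\to\CX$, and each morphism $\Sigma u$ must have the form $(\Phi u,\varphi^u)$ for a natural transformation $\varphi^u\from\Sigma_a\to\Sigma_b(\Phi u)$. Invoking the composition law $(G,\psi)\circ(F,\varphi)=(GF,\psi F\cdot\varphi)$ of ${\sf Diag}^{\circ}(\CX)$, functoriality of $\Sigma$ translates exactly into the two cocone axioms $\varphi^{1_a}=1_{\Sigma_a}$ and $\varphi^{v\cdot u}=\varphi^v(\Phi u)\cdot\varphi^u$. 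Thus the lax-cocone data $(\Sigma_a)_a,(\varphi^u)_u$ and the functor $\Sigma$ over $\SmallCats$ are one and the same object.

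Next I would match the distinguished cocone with the adjunction. Evaluating the formula for $\widehat{\Box}$ from the proof of Theorem \ref{Guitart} at $T=\mathrm{Id}_{\int_{\circ}\Phi}$ gives $T_a=J_a$ and $\varphi^u_x=(u,1_{\Phi u(x)})=\delta^u_x$, so the image $\hat T$ of the identity is precisely the lax cocone $(J_a,\delta^u)$; in other words $(J_a,\delta^u)$ is the unit at $\Phi$. More generally, for any $T\from\int_{\circ}\Phi\to\CX$ the same formula yields $(\hat T)_a=TJ_a$ (with $(\hat T)_a(f)=T(1_a,f)$) and an attached transformation with components $T(u,1_{\Phi u(x)})=T(\delta^u_x)$. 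Hence the equations $TJ_a=\Sigma_a$ and $T\delta^u=\varphi^u$, ranging over all $a$ and all $u$, say precisely that $\hat T=\Sigma$, where $\Sigma$ is the functor corresponding under the dictionary to the given lax cocone.

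With this in hand the conclusion is immediate from bijectivity of $\widehat{\Box}$: given the lax cocone, let $\Sigma$ be the associated $\LocSmallCats/\SmallCats$-morphism and set $T=\check\Sigma$; then $\hat T=\Sigma$, that is $TJ_a=\Sigma_a$ and $T\delta^u=\varphi^u$, giving existence. If $T'$ satisfies the same equations then $\hat{T'}=\Sigma=\hat T$, whence $T'=T$ by injectivity of $\widehat{\Box}$, giving uniqueness. I expect the only genuine work—and thus the main, if modest, obstacle—to be the bookkeeping of the second paragraph: checking carefully that the cocone axioms correspond to functoriality under the composition law of ${\sf Diag}^{\circ}(\CX)$, so that the correspondence between cocones and functors over $\SmallCats$ is exact and loses no coherence datum. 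Everything after that is a transcription of Theorem \ref{Guitart}.
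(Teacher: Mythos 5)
Your proposal is correct and is essentially the paper's own argument: the proof of Corollary \ref{universalchar dualGr} likewise identifies the lax cocone $(J_a,\delta^u)$ with the unit $J\from\CB\to{\sf Diag}^{\circ}(\int_{\circ}\Phi)$ of the Guitart adjunction and reads the statement off as the universal property of that unit via the bijection $\widehat{\Box}$ of Theorem \ref{Guitart}. Your extra bookkeeping (matching the cocone axioms to functoriality of $\Sigma$ under the composition law of ${\sf Diag}^{\circ}(\CX)$) is exactly the detail the paper leaves implicit.
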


\begin{proof}
The lax cocone $(J_a),(\delta^u)$ describes the adjunction unit $J:\CB\to{\sf Diag}^{\circ}(\int_{\circ}\Phi)$, and the corollary just paraphrases its universal property, as indicated by the diagram
\begin{center}
$\xymatrix{\CB\ar[r]^{J\quad\quad\;}\ar[rd]_{\Sigma} & {\sf Diag}^{\circ}(\int_{\circ}\Phi)\ar[d]^{T(-)} & \int_{\circ}\Phi\ar@{-->}[d]_{T}^{(=\widecheck{\Sigma})}\\
& {\sf Diag}^{\circ}(\CX) & \CX\,.\\ }$
\end{center}
\end{proof}

The dualization of Corollary \ref{universalchar dualGr} for a functor $\Phi:\CB^{\rm op}\to \Cat$ reads as follows:
\begin{cor}[Oplax Colimit Characterization of $\int^{\circ}\Phi$]\label{universalchar Gr} For every oplax cocone over $\Phi$, given by $(\Sigma_a:\Phi a\to\CX)_{a\in \CB},\,(\varphi^u:\Sigma_a(\Phi u)\to \Sigma_b)_{u:a\to b}$, there is a uniquely determined functor $T:\int^{\circ}\Phi\to \CX$ with $T J_a=\Sigma_a$ and $T \theta^u=\varphi^u$, for all $u:a\to b$ in $\CB$.	\end{cor}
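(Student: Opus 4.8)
The plan is to obtain this statement from Corollary~\ref{universalchar dualGr} by the $(-)^{\op}$ dualization principle already exploited around Definition~\ref{diagramcats} and in~\ref{GrothendieckConstruction}, rather than by redoing the adjunction argument from scratch. The device is to pass to fibrewise opposites: for $\Phi\from\CB^{\op}\to\Cat$ I would form $\Psi\DefEq(\ )^{\op}\circ\Phi\from\CB^{\op}\to\Cat$, where $(\ )^{\op}\from\Cat\to\Cat$ is the (variance-preserving) op-endofunctor, so that $\Psi a=(\Phi a)^{\op}$ and $\Psi u=(\Phi u)^{\op}$. Read as a covariant functor on the base $\CB^{\op}$, the functor $\Psi$ is exactly of the type to which Corollary~\ref{universalchar dualGr} applies, now with base $\CB^{\op}$ in place of $\CB$. (Equivalently, one could dualize the Guitart adjunction of Theorem~\ref{Guitart} to an adjunction $\int^{\circ}\dashv{\sf Diag}_{\circ}$ and paraphrase the universal property of its unit, as in the proof of Corollary~\ref{universalchar dualGr}; I find the fibrewise-op route more transparent.)

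The key step is the canonical isomorphism
\begin{equation*}
\left(\textstyle\int^{\circ}\Phi\right)^{\op}\;\cong\;\textstyle\int_{\circ}\Psi
\end{equation*}
over $\CB^{\op}$, which is the instance for $\Phi$ of the duality between the two Grothendieck constructions recorded in~\ref{GrothendieckConstruction}; it is the identity on objects and sends the morphism $(u,f)\from(a,x)\to(b,y)$ of $\int^{\circ}\Phi$ to the morphism of $\int_{\circ}\Psi$ with the same underlying data, only reread in the opposite fibres. Under this isomorphism the oplax-colimit leg $J_a\from\Phi a\to\int^{\circ}\Phi$ corresponds to $(J_a)^{\op}$, and the structure cells $\theta^u$ correspond to the lax-cocone cells $\delta^{u^{\op}}$ of $\int_{\circ}\Psi$. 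I would also record the matching statement at the level of cocones: applying $(\ )^{\op}$ turns an oplax cocone $((\Sigma_a)_a,(\varphi^u)_u)$ over $\Phi$ with vertex $\CX$ into a lax cocone $((\Sigma_a^{\op})_a,((\varphi^u)^{\op})_u)$ over $\Psi$ with vertex $\CX^{\op}$, and this assignment is a bijection because $(\ )^{\op}$ is an involution.

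With these correspondences in place the conclusion is immediate: Corollary~\ref{universalchar dualGr} produces a unique $T'\from\int_{\circ}\Psi\to\CX^{\op}$ realizing the transported lax cocone, and $T\DefEq(T')^{\op}$, carried back across the isomorphism above, is the unique functor $\int^{\circ}\Phi\to\CX$ with $TJ_a=\Sigma_a$ and $T\theta^u=\varphi^u$. The only real work---and the place where slips are easiest---is the bookkeeping of variances: one must verify that the op-endofunctor on fibres, the reversal of the base from $\CB$ to $\CB^{\op}$, and the global $(-)^{\op}$ on total categories assemble coherently, so that the universal data $(J_a,\theta^u)$ for $\int^{\circ}\Phi$ is genuinely the $(-)^{\op}$-image of the universal data $(J_a,\delta^{u^{\op}})$ for $\int_{\circ}\Psi$. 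Once that coherence is confirmed, both existence and uniqueness transfer verbatim from Corollary~\ref{universalchar dualGr}.
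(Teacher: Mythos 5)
Your proposal is correct and follows the paper's own route: the paper obtains this corollary precisely as the dualization of Corollary~\ref{universalchar dualGr}, leaving the fibrewise-op bookkeeping (the isomorphism $(\int^{\circ}\Phi)^{\op}\cong\int_{\circ}\Psi$ from~\ref{GrothendieckConstruction} and the matching of $\theta^u$ with $\delta^{u^{\op}}$) implicit, and you have filled in exactly those details consistently.
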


\begin{rem}(1) A ``direct'' proof of Corollary \ref{universalchar Gr} makes essential use of the (vertical, $\Pi^{\Phi}$-cartesian)--factorization $(u,f)=(u,1_{\Phi u(y)})\cdot (1_a,f)=\theta^u_y\cdot J_af$ of every morphism $(u,f):(a,x)\to(b,y)$ in $\int^{\circ}\Phi$.
Likewise for Corollary \ref{universalchar dualGr}.

(2) Of course, Corollaries \ref{universalchar dualGr} and \ref{universalchar Gr} remain valid {\em verbatim} if the functor $\Phi$ is $\CAT$-valued (rather than $\Cat$-valued).
\end{rem}

As another important consequence of Theorem \ref{Guitart} we note:
\begin{cor}
${\sf Diag}^{\circ}:\CAT\to \CAT/\Cat$ preserves all (weighted) limits, and 	its left adjoint $\int_{\circ}$ preserves all (weighted) colimits.
\end{cor}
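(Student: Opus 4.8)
The statement is an immediate consequence of Theorem~\ref{Guitart} together with the general principle of enriched category theory that, in a $\mathcal{V}$-enriched adjunction, the right adjoint preserves weighted limits and the left adjoint preserves weighted colimits; here the enriching base is $\mathcal{V}=\Cat$, so that ``weighted'' refers to the $2$-categorical (weighted) limits and colimits. The plan is therefore, first, to observe that the adjunction of Theorem~\ref{Guitart} is genuinely $\Cat$-enriched, and second, to run the standard representable computation.

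For the first point, I would note that Theorem~\ref{Guitart} does not merely produce bijections of hom-\emph{sets}: the maps $\widehat{\Box}$ and $\widecheck{\Box}$ are mutually inverse \emph{functors} between the hom-categories $\LocSmallCats(\int_{\circ}\Phi,\CX)$ and $(\LocSmallCats/\SmallCats)(\Phi,D^{\CX})$, and they are natural in both $\CX$ and $\Phi$. This is precisely the data of a $\Cat$-enriched (i.e.\ $2$-) adjunction $\int_{\circ}\dashv{\sf Diag}^{\circ}$.

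For the second point, recall that a weighted limit $\{W,D\}$ of a $2$-diagram $D\from\CJ\to\CK$ with weight $W\from\CJ\to\Cat$ is characterized by a $\Cat$-natural isomorphism $\CK(-,\{W,D\})\cong[\CJ,\Cat]\bigl(W,\CK(-,D-)\bigr)$ of hom-categories. Given such a limit in $\LocSmallCats$, I would verify that ${\sf Diag}^{\circ}\{W,D\}$ satisfies the same universal property in $\LocSmallCats/\SmallCats$ by the chain
\begin{align*}
(\LocSmallCats/\SmallCats)(\Phi,{\sf Diag}^{\circ}\{W,D\})
&\cong\LocSmallCats(\textstyle\int_{\circ}\Phi,\{W,D\})\\
&\cong[\CJ,\Cat]\bigl(W,\LocSmallCats(\textstyle\int_{\circ}\Phi,D-)\bigr)\\
&\cong[\CJ,\Cat]\bigl(W,(\LocSmallCats/\SmallCats)(\Phi,{\sf Diag}^{\circ}D-)\bigr),
\end{align*}
where the first and third isomorphisms come from the $2$-adjunction and the middle one from the defining property of $\{W,D\}$. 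Since every step is an isomorphism of hom-categories natural in $\Phi$, the composite exhibits ${\sf Diag}^{\circ}\{W,D\}$ as the weighted limit $\{W,{\sf Diag}^{\circ}D\}$ in $\LocSmallCats/\SmallCats$, which proves preservation. The statement for $\int_{\circ}$ is obtained by the formally dual computation, replacing limits by colimits and using that $\int_{\circ}$ is the left adjoint.

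The argument involves no genuine obstacle; the only point that requires care is the bookkeeping of the two-variable $\Cat$-naturality, ensuring that the isomorphisms above are compatible with the action of the weight $W$ and are not merely pointwise. This compatibility is guaranteed by the naturality of $\widehat{\Box}$ and $\widecheck{\Box}$ asserted in Theorem~\ref{Guitart}, and is the routine verification underlying the enriched adjoint-functor preservation theorem.
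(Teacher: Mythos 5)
Your proposal is correct and coincides with the paper's (implicit) argument: the corollary is stated there without proof as an immediate consequence of Theorem~\ref{Guitart}, precisely because the adjunction established there is an adjunction of $2$-functors, so the standard enriched/representable computation you spell out is exactly what is intended. No further comment is needed.
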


\subsection{${\sf Diag}^{\circ}$ has the structure of a normal pseudomonad on ${\LocSmallCats}$}\label{pseudomonad}
That ${\sf Diag}^{\circ}$ belongs to a (pseudo-)monad on $\LocSmallCats$ was already observed in \cite{Guitart1973}. But since a detailed exposition of this claim, even in a 1-categorical form, does not seem to be readily accessible, we outline the construction of the monad; a more detailed exposition and proof of the claim as given by this section's header appears in \cite{PerroneTholen2020}. 

Our  2-functor
$${\sf Diag}^{\circ}:\LocSmallCats\longrightarrow\LocSmallCats,\quad \CX\longmapsto{\sf Diag}^{\circ}(\CX),$$
arises by post-composing the right-adjoint of Theorem \ref{Guitart} with the forgetful 2-functor $\LocSmallCats/\SmallCats\to\LocSmallCats$. The full embedding $E^{\CX}:\CX\to{\sf Diag}^{\circ}(\CX)$ of \ref{colimitsinDiag} may then be considered as the $\CX$-component of a (strictly) 2-natural transformation
$$E: {\rm Id}_{\LocSmallCats}\longrightarrow{\sf Diag}^{\circ}$$
since, as one routinely confirms, every natural transformation $\alpha:F\Rightarrow F':\CX\to\CY$ satisfies $${\sf Diag}^{\circ}(F)E^{\CX}=E^{\CY}F\quad{\text{ and }}\quad{\sf Diag}^{\circ}(\alpha)E^{\CX}=E^{\CY}\alpha.$$
 In order to establish ${\sf Diag}^{\circ}$ as the carrier of a pseudo-monad, we now define for every category $\CX$ a functor
$$M^{\CX}:{\sf Diag}^{\circ}({\sf Diag}^{\circ}(\CX))\longrightarrow{\sf Diag}^{\circ}(\CX).$$
 An object in ${\sf Diag}^{\circ}({\sf Diag}^{\circ}(\CX))$ is a functor $\Sigma:\CB\to{\sf Diag}^{\circ}(\CX)$ with $\CB$ small so that, with $\Phi:=D^{\CX}\Sigma:\CB\to{\SmallCats}$, for every object $a$ in $\CB$ one has a functor $\Sigma a:\Phi a\to\CX$, and for every morphism $u:a\to b$ in $\CB$ a morphism $(\Phi u,\sigma^u):\Sigma a\to\Sigma b$ in ${\sf Diag}^{\circ}(\CX)$.
Considering $\Sigma$ as a lax cocone with vertex $\CX$, by Corollary \ref{universalchar dualGr} we may represent $\Sigma$ equivalently as a functor
$$M^{\CX}(\Sigma):=\check{\Sigma}:\;\textstyle{\int}_{\circ}(\Phi)\longrightarrow\CX,\quad[(u,f):(a,x)\to(b,y)]\longmapsto[\Sigma b(f)\cdot\sigma^u_x: \Sigma a(x)\to\Sigma b(y)].$$
A ${\sf Diag}^{\circ}({\sf Diag}^{\circ}(\CX))$-morphism $(S,\tau):\Sigma \to \Xi$ with codomain  $\Xi:\CC\to{\sf Diag}^{\circ}(\CX)$ 
is given by a functor $S:\CB\to\CC $ of small categories and a natural transformation $\tau:\Sigma\to\Xi S$ whose component at $a\in\CB$ is, in turn, given by a ${\sf Diag}^{\circ}(\CX)$-morphism $(R_a,\rho^a):\Sigma a\to\Xi a$, as in
\begin{center}
$\xymatrix{{\Phi a}\ar[rr]^{R_a}\ar[rd]_{\Sigma a}^{{\quad\rho^a}:\Longrightarrow} & & \Psi(Sa)\ar[ld]^{\Xi a}\\
& \CX & \\
}$	
\end{center}
where $\Psi:=D^{\CX}\Xi$. Now one lets $M^{\CX}$ assign to $(S,\tau)$ the ${\sf Diag}^{\circ}(\CX)$-morphism $(\check{S},\check{\tau})$, as shown by
\begin{center}
$\xymatrix{\CB\ar[rr]^S\ar[dr]_{\Sigma}^{\qquad\tau:\Lra} & & {\CC  }\ar[dl]^{\Xi} & \longmapsto & {\int}_{\circ}\Phi\ar[rr]^{\check{S}=\int_{\circ}(S,D^{\CX}\tau)}\ar[rd]_{\check{\Sigma}}^{\quad\check{\tau}:\Longrightarrow } & & {\int}_{\circ}\Psi\ar[dl]^{\check{\Xi}} \\
& {\sf Diag}^{\circ}(\CX) & & & & \CX & \\}$
\end{center}
where the functor $\check{S}$ and the natural transformation $\check{\tau}$ are defined by 
$$\check{S}(u,f)=(Su,R_bf) \quad{\text{ and }}\quad\check{\tau}_{(a,x)}=\rho^a_x,$$ for all morphisms 
$(u,f):(a,x)\to(b,y)$ in ${\int}_{\circ}\Phi$.

\section{The twisted Fubini formula and cofibred colimits}
\label{sec:Twisted-Fubini}
\subsection{The twisted Fubini formulae for colimits and limits}

We now exploit the adjunction of Theorem \ref{Guitart} for the computation of colimits of those diagrams in a category $\CX$ whose shape is the dual Grothendieck category of a functor $\Phi:\CD\to\SmallCats  $. Such a diagram $T:\int_{\circ}\Phi\to\CX$ in $\CX$ corresponds equivalently to a diagram $\hat{T}:\CD\to {\sf Diag}^{\circ}(\CX)$ in ${\sf Diag}^{\circ}(\CX)$ with $D^{\CX}\hat{T}=\Phi$. As we will show in Theorem \ref{newFubini}, the colimit of $\hat{T}$ facilitates the computation of the colimit of $T$. The essence of its proof lies in the next lemma, for which we use the following notation.
Given $\Phi:\CD\to \SmallCats$ and $\CX$, every functor $F:\CD\to\CX$ gives us trivially the functor 
$$\tilde{F}:\CD\to{\sf Diag}^{\circ}(\CX),\quad(u:d\to e)\longmapsto (\xymatrix{\Phi d \ar[rr]^{\Phi u}\ar[dr]_{\Delta Fd}^{\;\Delta Fu:\Lra} & & \Phi e \ar[dl]^{\Delta Fe}\\
& \CX & })$$
with $D^{\CX}\tilde{F}=\Phi$. (As usual, we use $\Delta$ for constant-value functors and transformations.) For a natural transformation $\al:F\to F'$, one defines a natural transformation $\tilde{\al}:\tilde{F}\to\tilde{F'}$ with $D^{\CX}\tilde{\al}=1_{\Phi}$ whose components are $\tilde{\al}_d=({\rm Id}_{\Phi d},\Delta\al_d)$. This defines the functor
$$\widetilde{\Box}:\LocSmallCats(\CD,\CX)\to(\LocSmallCats/\SmallCats)(\Phi,D^{\CX}).$$
\begin{lem}\label{constant} 
For every functor $\Phi:\CD\to\SmallCats$ and every category $\CX$, the functor $\widetilde{\Box}$ makes the diagram
\begin{center}
$\xymatrix{\LocSmallCats(\int_{\circ}\Phi,\CX)\ar[r]^{\widehat{\Box}\quad}_{\cong\quad} & (\LocSmallCats  /\SmallCats)(\Phi,D^{\CX})\\
\CX\ar[u]^{\Delta}\ar[r]^{\Delta\quad\quad} & \LocSmallCats(\CD,\CX)\ar[u]_{\widetilde{\Box}}  }$	
\end{center}
commute. If, for all $d\in\CD$, the category $\CX$ is $\Phi d$-cocomplete, then $\widetilde{\Box}$ has a left adjoint.
\end{lem}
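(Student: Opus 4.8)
The statement bundles two claims: commutativity of the square, and the existence of a left adjoint to $\widetilde{\Box}$. Commutativity I would dispatch by unravelling definitions. Given $A\in\CX$, the left edge sends it to the constant functor $\Delta A\colon\int_{\circ}\Phi\to\CX$, for which every structure morphism $\Delta A(1_d,f)$ and $\Delta A(u,1)$ is an identity. Reading off $\widehat{\Box}(\Delta A)$ from the recipe in the proof of Theorem \ref{Guitart}, each $(\Delta A)_d\colon\Phi d\to\CX$ is then the constant functor at $A$ and each $\varphi^u$ is the constant transformation $\Delta(1_A)$. This is exactly $\widetilde{\Delta A}$, the value under $\widetilde{\Box}$ of the constant functor $\Delta A\colon\CD\to\CX$, which is what the other composite produces. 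The same comparison on a morphism $g\colon A\to B$ shows both composites send $g$ to the $2$-cell with all components $\Delta g$, so the square commutes on the nose.

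For the left adjoint, I would write an object of $(\LocSmallCats/\SmallCats)(\Phi,D^{\CX})$ as a functor $\Sigma\colon\CD\to{\sf Diag}^{\circ}(\CX)$ with $D^{\CX}\Sigma=\Phi$; concretely this is a family of diagrams $\Sigma d\colon\Phi d\to\CX$ together with connecting transformations $\sigma^u\colon\Sigma d\Rightarrow(\Sigma e)(\Phi u)$ (for $u\colon d\to e$) satisfying the lax-cocone identities. Since $\CX$ is $\Phi d$-cocomplete, the plan is to set $L\Sigma(d):=\mathrm{colim}(\Sigma d)$, with chosen colimit cocone $\kappa^d\colon\Sigma d\Rightarrow\Delta L\Sigma(d)$. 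For $u\colon d\to e$ I define $L\Sigma(u)\colon L\Sigma(d)\to L\Sigma(e)$ as the unique map induced by the cocone $(\kappa^e_{\Phi u(x)}\cdot\sigma^u_x)_{x\in\Phi d}$ on $\Sigma d$ (a cocone by naturality of $\sigma^u$ and the cocone property of $\kappa^e$); the lax-cocone identities for $(\sigma^u)$ give functoriality of $L\Sigma$ by uniqueness, and a $2$-cell $\Sigma\to\Sigma'$ over $1_\Phi$ is sent by $L$ to the natural transformation $L\Sigma\to L\Sigma'$ induced on colimits. This defines a functor $L\colon(\LocSmallCats/\SmallCats)(\Phi,D^{\CX})\to\LocSmallCats(\CD,\CX)$.

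It then remains to produce a bijection $\LocSmallCats(\CD,\CX)(L\Sigma,F)\cong(\LocSmallCats/\SmallCats)(\Phi,D^{\CX})(\Sigma,\widetilde{F})$, natural in $\Sigma$ and $F$, which exhibits $L\dashv\widetilde{\Box}$. Given $\beta\colon L\Sigma\to F$, I assign the $2$-cell $\Sigma\to\widetilde{F}$ whose $d$-component is $(1_{\Phi d},\gamma_d)$ with $(\gamma_d)_x:=\beta_d\cdot\kappa^d_x$; conversely a $2$-cell $\Sigma\to\widetilde{F}$ restricts at each $d$ to a cocone $\gamma_d\colon\Sigma d\Rightarrow\Delta Fd$, which factors uniquely through $\kappa^d$, defining $\beta_d\colon L\Sigma(d)\to Fd$. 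These assignments are mutually inverse by the universal property of $\mathrm{colim}(\Sigma d)$. The step I expect to be the main obstacle is checking that the two side-conditions match: naturality of $\beta$ in $d$, namely $Fu\cdot\beta_d=\beta_e\cdot L\Sigma(u)$, must be equivalent to the $2$-cell condition for $\Sigma\to\widetilde{F}$, which (using the composition law of Definition \ref{diagramcats} and $\widetilde{F}(u)=(\Phi u,\Delta Fu)$) unwinds to $(\gamma_e)_{\Phi u(x)}\cdot\sigma^u_x=Fu\cdot(\gamma_d)_x$ for all $x\in\Phi d$. Precomposing the naturality equation with the jointly epimorphic cocone $\kappa^d$ and substituting the defining relation $L\Sigma(u)\cdot\kappa^d_x=\kappa^e_{\Phi u(x)}\cdot\sigma^u_x$ converts one condition into the other, so the bijection respects both side-conditions; naturality in $\Sigma$ and $F$ is then a routine verification.
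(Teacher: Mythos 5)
Your proposal is correct and follows essentially the same route as the paper: the left adjoint is defined by $\Sigma\mapsto(d\mapsto\mathrm{colim}(\Sigma d))$, and the adjunction bijection sends $\beta$ to the transformation with components given by composing the colimit cocones $\kappa^d$ with $\beta_d$, exactly as in the paper's construction of $\alpha^{\sharp}$. You simply carry out in detail the verifications (commutativity of the square, functoriality of the left adjoint, and the matching of the two naturality conditions) that the paper declares routine.
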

\begin{proof}
Checking the commutativity of the diagram is a routine matter. In order to construct a left adjoint $\widebar{\Box }\dashv\widetilde{\Box }$, assuming $\CX$ to be $\Phi d$-cocomplete and using the same notation as in the proof of Theorem \ref{Guitart},
for a functor $\Sigma:\CD\to{\sf Diag}^{\circ}(\CX)$ with $D^{\CX}\Sigma=\Phi$, we define $\bar{\Sigma}:\CD\to\CX$ on objects by
$$\bar{\Sigma }d={\rm colim}(\Sigma_d:\Phi d	\to\CX);$$
this definition extends canonically to morphisms. (Of course, for $\CX$ cocomplete, $\bar{\Sigma}$ is the composite functor ${\rm colim} \circ\Sigma$, with 
${\colim}\dashv E_{\CX}:\CX\to {\sf Diag}^{\circ}(\CX)$, as in Proposition \ref{functor colim}.) For every functor $F:\CD\to\CX$ one now obtains a natural bijection
$$\LocSmallCats(\CD,\CX)(\bar{\Sigma},F)\to (\CAT/\Cat)(\Phi,D^{\CX})(\Sigma,\tilde{F}),$$
which associates with a natural transformation $\al:\bar{\Sigma}\to F$ its mate $\al^{\sharp}:\Sigma\to\tilde{F}$, as follows: for every $d\in\CD$, the natural transformation $\al_d^{\sharp}:S_d\to \Delta Fd:\Phi d\to \CX$ is simply the composite transformation
\begin{center}
$\xymatrix{\Sigma_d\ar[r] & \Delta({\rm colim}(\Sigma_d))\ar[r]^{\quad\quad\Delta\al_d} &  \Delta Fd \,.}$	
\end{center}
This confirms the adjunction.
\end{proof}
With the notation used in the proofs of Theorem \ref{Guitart} and of Lemma \ref{constant} one now obtains a general Fubini-type colimit formula that seems to have been proved first by  Chach\'{o}lski and Scherer  \cite[40.2]{WChacholskiJScherer2002}, as follows:
\begin{thm}[Twisted Fubini Colimit Formula]\label{newFubini}
 For a functor $\Phi:\CD\to \Cat$, let the category $\CX$ be $\Phi d$-cocomplete, for all $d\in \CD$. Then the colimit of any diagram $T:\int_{\circ}\Phi\to \CX$ exists in $\CX$ if, and only if, the colimit of the diagram $\CD\to\CX,\; d\mapsto{\rm colim}(\hat{T}d),$ exists in $\CX$, and in that case the two colimits coincide:
$${\rm colim}^{(d,x)\in{\int}_{\circ}\!\Phi}\;T(d,x)\;\cong\;{\rm colim}^{d\in\CD}({\rm colim}^{x\in\Phi d}\;T(d,x)).$$
\end{thm}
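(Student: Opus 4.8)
The plan is to reduce the statement to the representability of two naturally isomorphic presheaves of cocones on $\CX$. Recall that for any diagram $G\colon\CA\to\CX$, its colimit exists in $\CX$ precisely when the presheaf $X\mapsto\LocSmallCats(\CA,\CX)(G,\Delta X)$ on $\CX$ is representable, in which case the representing object is ${\rm colim}\,G$, the representing isomorphism $\CX({\rm colim}\,G,X)\cong\LocSmallCats(\CA,\CX)(G,\Delta X)$ encoding the universal cocone. I would therefore apply this criterion on both sides of the asserted formula: to $T\colon\int_{\circ}\Phi\to\CX$ on the left, and to the diagram $\widebar{\hat{T}}\colon\CD\to\CX$, $d\mapsto{\rm colim}(\hat{T}d)$, on the right. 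The whole theorem then follows once I produce an isomorphism of presheaves, natural in $X\in\CX$,
\[
\LocSmallCats(\textstyle\int_{\circ}\Phi,\CX)(T,\Delta X)\;\cong\;\LocSmallCats(\CD,\CX)(\widebar{\hat{T}},\Delta X),
\]
since two naturally isomorphic presheaves are simultaneously representable and then share a representing object.

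To build this isomorphism I would chain together the three facts already established. First, Theorem \ref{Guitart} supplies the bijection $\widehat{\Box}$, natural in $\CX$, identifying $\LocSmallCats(\int_{\circ}\Phi,\CX)$ with the hom-category $(\LocSmallCats/\SmallCats)(\Phi,D^{\CX})$ and sending $T$ to $\hat{T}$; in particular $\LocSmallCats(\int_{\circ}\Phi,\CX)(T,\Delta X)\cong(\LocSmallCats/\SmallCats)(\Phi,D^{\CX})(\hat{T},\widehat{\Delta X})$. Second, the commuting square of Lemma \ref{constant} reads $\widehat{\Box}\circ\Delta=\widetilde{\Box}\circ\Delta$, the two $\Delta$'s being the constant-diagram functors into $\LocSmallCats(\int_{\circ}\Phi,\CX)$ and $\LocSmallCats(\CD,\CX)$ respectively, so that $\widehat{\Delta X}=\widetilde{\Delta X}$; this is the crucial point at which constant diagrams on $\int_{\circ}\Phi$ are recognised as constant diagrams on $\CD$. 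Third, since $\CX$ is $\Phi d$-cocomplete for every $d$, Lemma \ref{constant} provides the left adjoint $\widebar{\Box}\dashv\widetilde{\Box}$, whose adjunction bijection at the object $\hat{T}$ gives $(\LocSmallCats/\SmallCats)(\Phi,D^{\CX})(\hat{T},\widetilde{\Delta X})\cong\LocSmallCats(\CD,\CX)(\widebar{\hat{T}},\Delta X)$. Splicing these three bijections yields the desired natural isomorphism.

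It then remains to unwind the representing objects. On the right, $\widebar{\hat{T}}$ is by construction the functor $\CD\to\CX$ with $\widebar{\hat{T}}d={\rm colim}(\hat{T}d)={\rm colim}^{x\in\Phi d}T(d,x)$, so representability of $\LocSmallCats(\CD,\CX)(\widebar{\hat{T}},\Delta X)$ is exactly the existence of ${\rm colim}^{d\in\CD}({\rm colim}^{x\in\Phi d}T(d,x))$; on the left, representability is the existence of ${\rm colim}^{(d,x)\in\int_{\circ}\Phi}T(d,x)$. The natural isomorphism of presheaves thus gives simultaneously the equivalence of the two existence statements and the claimed isomorphism of colimits.

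I expect the main obstacle to be the uniform bookkeeping of naturality in $X$ through the three-step chain, rather than any single hard idea. The $\widehat{\Box}$ of Theorem \ref{Guitart} is asserted natural in $\CX$, and the square of Lemma \ref{constant} commutes as a diagram of functors, so $\widehat{\Delta X}=\widetilde{\Delta X}$ holds naturally as $X$ varies; the only genuine care is needed for the last step, where one must observe that the adjunction bijection $\widebar{\Box}\dashv\widetilde{\Box}$ is natural in its $\LocSmallCats(\CD,\CX)$-variable, and then precompose with the functor $\Delta\colon\CX\to\LocSmallCats(\CD,\CX)$ to convert naturality in $\Delta X$ into naturality in $X$. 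Once these compatibilities are checked, the Yoneda argument closing the proof is automatic.
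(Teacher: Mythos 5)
Your proposal is correct and follows essentially the same route as the paper's own proof: both reduce the statement to the natural bijection between cocones $T\Rightarrow\Delta X$ and cocones $\widebar{\hat{T}}\Rightarrow\Delta X$ obtained by splicing the Guitart bijection $\widehat{\Box}$, the commutative square of Lemma \ref{constant}, and the adjunction $\widebar{\Box}\dashv\widetilde{\Box}$, and then conclude by representability. You merely spell out in more detail the three-step chain and the naturality bookkeeping that the paper compresses into two sentences.
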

\begin{proof}
By the commutative diagram and the adjunction established in Lemma \ref{constant}, cocones $T\Lra \Delta X:\int_{\circ}\Phi\to \CX$ correspond bijectively to cocones 
	$\bar{\hat{T}}\Lra\Delta X:\CD\to\CX$, and naturally so in $X\in\CX$. Consequently, the universal representation of either type of cocone exists if the other does, and they then coincide, up to a canonical isomorphism.
\end{proof}
\begin{rem}
(1) Note that, since all $\Phi d\;(d\in\CD)$ are small, also $\int_{\circ}\Phi$ is small when $\CD$ is small.

(2) It is not hard to prove Theorem \ref{newFubini} ``directly'', without recourse to Theorem \ref{Guitart} and Lemma \ref{constant}: one may simply prove that the composite cocone
$$T(d',-)\longrightarrow {\rm colim}_{x\in\Phi d'}\,T(d',x)\longrightarrow {\rm colim}_{d\in\CD}({\rm colim}_{x\in\Phi d}\,T(d,x))$$
(as given by the right-hand side of the formula) is well defined and serves as a colimit cocone for $T$, and conversely.

(3) As shown in \cite{PerroneTholen2020}, the Twisted Fubini Colimit Formula for a cocomplete category $\CX$ is equivalently expressed by the fact that, for the pseudo-algebra $(\CX,\,{\rm colim}:{\sf Diag}^{\circ}(\CX)\to\CX)$
with respect to the pseudo-monad ${\sf Diag}^{\circ}$ (see \ref{pseudomonad}), the diagram 
\begin{center}
$\xymatrix{{\sf Diag}^{\circ}({\sf Diag}^{\circ}(\CX))\ar[rr]^{\quad{\sf Diag}^{\circ}({\rm colim})}\ar[d]_{M^{\CX}} & & {\sf Diag}^{\circ}(\CX)\ar[d]^{\rm colim}\\
{\sf Diag}^{\circ}(\CX)\ar[rr]^{\qquad\rm colim} && \CX\\
}$
\end{center}
commutes up to isomorphism.

(4) Although every colimit may be described as a coend (and conversely), we do not see a convenient way of deriving Theorem \ref{newFubini} from the interchange formula for coends (``Fubini'') for coends, as recorded in \cite{SMacLane1998},  Section IX.8, or \cite{Loregian2020}, Section 1.3.
\end{rem}

The dualization of Theorem \ref{newFubini} reads as follows:
\begin{cor}[Twisted Fubini Limit Formula] For a functor $\Phi:\CD^{\rm op}\to \Cat$, let the category $\CX$ be $\Phi d$-complete, for all $d\in \CD$. Then the limit of any diagram $T:\int^{\circ}\Phi\to \CX$ exists in $\CX$ if, and only if, the limit of the diagram $\CD\to\CX,\; d\mapsto{\rm lim}\,T(d,-),$ exists in $\CX$, and in that case the two limits coincide:
$${\rm lim}^{(d,x)\in\int^{\circ}\!\Phi}\;T(d,x)\;\cong\;{\rm lim}^{d\in\CD}({\rm lim}^{x\in\Phi d}\;T(d,x)).$$
\end{cor}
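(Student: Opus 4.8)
The plan is to obtain the Limit Formula from the already proven Twisted Fubini Colimit Formula (Theorem~\ref{newFubini}) by the dualization principle, applied to $\CX^{\op}$ in place of $\CX$, exactly as part~(2) of Theorem~\ref{Diag bicomplete} was deduced from part~(1). The first task is to express the relevant data over $\CX^{\op}$. I set $\Psi\from\CD^{\op}\to\Cat$ to be $\Phi$ followed by fibrewise dualization, $\Psi d=(\Phi d)^{\op}$ and $\Psi u=(\Phi u)^{\op}$ (that is, $\Psi=\Box^{\op}\circ\Phi$ at the level of $1$-cells). Using the duality of the two Grothendieck constructions recorded in~\ref{GrothendieckConstruction}, I would establish a canonical isomorphism $(\int^{\circ}\Phi)^{\op}\cong\int_{\circ}\Psi$ over $\CD^{\op}$: on objects both sides are the pairs $(d,x)$ with $d\in\CD$ and $x\in\Phi d$, while a morphism $(d,x)\to(e,y)$ is on the one side an arrow $(e,y)\to(d,x)$ of $\int^{\circ}\Phi$, i.e.\ a pair $(u\from e\to d,\;g\from y\to(\Phi u)x)$, and on the other a $\int_{\circ}\Psi$-morphism $(u\from e\to d\text{ in }\CD^{\op},\;f\from(\Psi u)x\to y\text{ in }\Psi e)$; since $\Psi e=(\Phi e)^{\op}$ turns $f$ into $g$, and the composition laws of Definition~\ref{diagramcats} correspond under this reversal, the two categories are identified, compatibly with the forgetful functors as in the square preceding Proposition~\ref{functor colim}.

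Through this identification a diagram $T\from\int^{\circ}\Phi\to\CX$ becomes a diagram $T^{\op}\from\int_{\circ}\Psi\to\CX^{\op}$. The hypothesis that $\CX$ be $\Phi d$-complete for every $d\in\CD$ is equivalent to $\CX^{\op}$ being $\Psi d$-cocomplete for every $d$, because a colimit of shape $(\Phi d)^{\op}$ in $\CX^{\op}$ is precisely a limit of shape $\Phi d$ in $\CX$. Hence Theorem~\ref{newFubini}, applied to the triple $(\CX^{\op},\,\Psi\from\CD^{\op}\to\Cat,\,T^{\op})$, yields both the existence equivalence and the isomorphism
$$ {\rm colim}^{(d,x)\in\int_{\circ}\Psi}\,T^{\op}(d,x)\;\cong\;{\rm colim}^{d\in\CD^{\op}}\big({\rm colim}^{x\in\Psi d}\,T^{\op}(d,x)\big) $$
in $\CX^{\op}$.

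It remains to read this equation back in $\CX$, where every colimit in $\CX^{\op}$ is the corresponding limit. The left-hand colimit, taken over $\int_{\circ}\Psi\cong(\int^{\circ}\Phi)^{\op}$, is ${\rm lim}_{(d,x)\in\int^{\circ}\Phi}\,T(d,x)$; the inner colimit over $\Psi d=(\Phi d)^{\op}$ is ${\rm lim}_{x\in\Phi d}\,T(d,x)$; and the outer colimit over $\CD^{\op}$ is ${\rm lim}_{d\in\CD}$ of the diagram so obtained. Substituting gives the asserted formula, and the ``exists if and only if'' clause is inherited directly from Theorem~\ref{newFubini}. I expect the only delicate step to be the bookkeeping behind the isomorphism $(\int^{\circ}\Phi)^{\op}\cong\int_{\circ}\Psi$: one must carry out three independent reversals at once---of the base ($\CD\rightsquigarrow\CD^{\op}$), of the fibres ($\Phi d\rightsquigarrow(\Phi d)^{\op}$), and of the target ($\CX\rightsquigarrow\CX^{\op}$)---and check that they are mutually coherent. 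This coherence is, however, exactly the content of the duality between $\int^{\circ}$ and $\int_{\circ}$ already fixed in~\ref{GrothendieckConstruction}, so no genuinely new difficulty arises.
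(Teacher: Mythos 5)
Your proposal is correct and follows exactly the route the paper intends: the corollary is stated there as the dualization of Theorem~\ref{newFubini}, with no further proof given, and you have simply made that dualization explicit via the identification $(\int^{\circ}\Phi)^{\op}\cong\int_{\circ}\Psi$ from~\ref{GrothendieckConstruction}. The only blemish is a small notational slip in describing the $\int_{\circ}\Psi$-morphism (the arrow $u$ should be $d\to e$ in $\CD^{\op}$, i.e.\ $e\to d$ in $\CD$), which does not affect the argument.
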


 Theorem \ref{newFubini} implies the ``untwisted'' Fubini formula that is recorded in Mac Lane's book \cite{SMacLane1998}:

\begin{cor}[Fubini (Co)Limit Formula]
\label{cor:FubiniForProduct-CoLims}
For every functor $T:\CD\times \CE\to \CX$ into an $\CE$-cocomplete category $\mathcal{X}$, the colimit of $T$ exists in $\CX$ if, and only if, the colimit of the $\CD$-indexed diagram $d\mapsto {\rm colim}_{\CE}\,T(d,-)$ exists in $\CX$, and then the two colimits coincide:
\begin{equation*}
{\rm colim}^{(d,e)\in\CD\times\CE}\,T(d,e)\;\cong\;{\rm colim}^{d\in\CD}({\rm colim}^{e\in\CE}\,T(d,e)).
\end{equation*}
Likewise for limits.
\end{cor}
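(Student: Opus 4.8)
The plan is to exhibit Corollary \ref{cor:FubiniForProduct-CoLims} as the special case of the Twisted Fubini Colimit Formula (Theorem \ref{newFubini}) obtained when the indexing functor is constant. Concretely, I would take $\Phi$ to be the constant functor $\Delta\CE:\CD\to\SmallCats$ sending every object of $\CD$ to the (small) category $\CE$ and every morphism of $\CD$ to ${\rm Id}_\CE$. With this choice all transition functors $\Phi u$ are identities, so the hypothesis that $\CX$ be $\Phi d$-cocomplete for every $d\in\CD$ becomes precisely the assumption that $\CX$ be $\CE$-cocomplete.

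The one point requiring genuine care is the identification $\int_{\circ}(\Delta\CE)\cong\CD\times\CE$. Recalling the description of the dual Grothendieck construction, an object of $\int_{\circ}(\Delta\CE)$ is a pair $(d,x)$ with $d\in\CD$ and $x\in\CE$, and a morphism $(u,f):(a,x)\to(b,y)$ consists of $u:a\to b$ in $\CD$ together with $f:(\Phi u)x\to y$ in $\Phi b$; since $\Phi u={\rm Id}_\CE$ and $\Phi b=\CE$, this is just a morphism $f:x\to y$ in $\CE$. Composition in $\int_{\circ}(\Delta\CE)$ is $(v,g)\cdot(u,f)=(v\cdot u,\,g\cdot(\Phi v)f)=(v\cdot u,\,g\cdot f)$, which is exactly the componentwise composition of the product category. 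Thus the cofibration structure collapses and $\int_{\circ}(\Delta\CE)$ is literally $\CD\times\CE$, with the associated split cofibration playing the role of the projection onto $\CD$. Under this isomorphism a diagram $T:\int_{\circ}\Phi\to\CX$ is nothing but a functor $T:\CD\times\CE\to\CX$, and the inner colimit appearing in Theorem \ref{newFubini} is ${\rm colim}^{x\in\Phi d}\,T(d,x)={\rm colim}^{e\in\CE}\,T(d,e)={\rm colim}_\CE\,T(d,-)$.

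With these translations in hand the result follows immediately: Theorem \ref{newFubini} asserts that the colimit of $T$ over $\int_{\circ}\Phi=\CD\times\CE$ exists if and only if the colimit over $\CD$ of $d\mapsto{\rm colim}_\CE\,T(d,-)$ exists, and that in that case the two coincide, which is exactly the claimed formula; because the transition functors are identities, the ``twist'' of the general formula is trivial here, so one recovers the classical untwisted Fubini formula. The statement for limits is obtained by the dual argument applied to the Twisted Fubini Limit Formula, taking $\Phi=\Delta\CE:\CD^{\rm op}\to\SmallCats$ constant and using the analogous identification $\int^{\circ}(\Delta\CE)\cong\CD\times\CE$. The main (and only non-formal) obstacle is precisely the verification that the Grothendieck category of the constant functor collapses to the product, as carried out above; everything else is substitution into Theorem \ref{newFubini}.
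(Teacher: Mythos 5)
Your proposal is correct and is exactly the paper's own argument: take $\Phi:\CD\to\SmallCats$ constant with value $\CE$, observe that $\int_{\circ}\Phi=\CD\times\CE$, and apply Theorem \ref{newFubini}. The paper merely states the identification $\int_{\circ}\Phi=\CD\times\CE$ without proof, whereas you spell out the (routine but worthwhile) verification; otherwise the two proofs coincide.
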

\begin{proof}
Let $\Phi:\CD\to \Cat$ be the functor which has constant value $\CE$ (formally assumed to be a small category). Then $\int_{\circ}\Phi=\CD\times\CE$, and the assertion of the corollary follows from Theorem \ref{newFubini}.
\end{proof}

\subsection{Cofibred colimits}
The shape of the diagram $T$ considered in Theorem \ref{newFubini} is the dual Grothendieck category of a $\SmallCats$-valued functor $\Phi$. But by the (well-known) dual of Theorem \ref{FixedBaseGrothEquivalence}(i), we may replace $\int_{\circ}\Phi$ isomorphically by the domain of any (split) cofibred category over the domain of $\Phi$, which we now call $\CB$ (rather than $\CD$ as in Theorem \ref{newFubini}). Hence, instead of $\Phi$ we consider a split cofibration $P:\CE\to\CB$ with (in the notation of Section \ref{cofib and bifib}) cocleavages $\delta^u:J_a\to J_bu_!$, for all $u:a\to b$ in $\CB$. The Twisted Fubini Colimit Formula then tells us that we may evaluate the colimit of an $\CE$-shaped diagram in any (sufficiently cocomplete) category by first computing the colimits ``fibre by fibre'' and then the colimit of the emerging $\CB$-shaped diagram in $\CX$. This means:

\begin{thm}[Cofibred Colimit Theorem]\label{cofibredcolimit} 
Let $P:\CE\to\CB$ be a (split) cofibration and $T:\CE \to \CX$ be a diagram	such that the colimits of all restricted diagrams $TJ_b:\CE_b\to\CX\;(b\in\CB)$ exist in $\CX$. Then the
left Kan extension ${\rm Lan}_PT:\CB\to\CX$ exists, given by
$({\rm Lan}_PT)b={\rm colim}^{x\in\CE_b}Tx$ for all $b\in\CB$, and one has the formula
$${\rm colim}^{x\in \CE}Tx\cong {\rm colim}^{b\in\CB}({\rm colim}^{x\in\CE_b}Tx)\;,$$
with the colimit on the left existing in $\CX$ if, and only if, the one on the right exists in $\CX$.
\end{thm}
\begin{proof}
By the dual of Theorem \ref{FixedBaseGrothEquivalence} (i), with $\Phi_P:b\mapsto\CE_b$ we have the bijective functor
\begin{equation*}
\xymatrix@R=0.2ex@C=4em{
\int_{\circ}\Phi_P \ar[r]^{K_P} \ar[ddd]_{\Pi_{\Phi_P}} &
	\CatSymbA{E} \ar[ddd]^{P} &
	(b,y) \ar@{|->}[r] & y \\
	&& [(u,f)\from (a,x)\to (b,y)] \ar@{|->}[r] &
			[ f \cdot \delta^u_x\from x\to y] \\ \\
\CatSymbA{B} \ar@{=}[r] &
	\CatSymbA{B} 
}
\end{equation*}
which ``commutes over $\CB$''. An application of Theorem \ref{newFubini} to $\Pi_{\Phi_P}$ and $TK_P$ establishes precisely the claimed colimit formula, up to the negligible isomorphism $K_P$. But we   must still confirm the additional claim that the functor
$$L:\CB\to \CX,\; b\mapsto {\rm colim}TJ_b,$$
constitutes a left Kan extension of $T$ along $P$ which, then, provides another proof of the. stated colimit formula, by Proposition \ref{colimofKan}.

  Indeed, in the notation of the proof of Theorem \ref{Guitart}, we have $L={\colim}\widehat{TK_P}$; explicitly, for $u:a\to b$ in $\CB$, the morphism $Lu$ is defined by the commutative diagram
\begin{equation*}
\xymatrix{TJ_a\ar[r]^{\lambda^a}\ar[d]_{T\delta^u} & \Delta La\ar@{-->}[d]^{\Delta Lu}\\
TJ_bu_!\ar[r]^{\lambda^bu_!} & \Delta Lb\\
}	
\end{equation*}
where $\lambda^a$ denotes the colimit cocone of $TJ_a$. Let us verify that putting $\lambda_x:=\lambda^{Px}_x$ for all objects $x\in\CE$ defines a natural transformation $\lambda: T\to LP$. Indeed, for any morphism $f:x\to y$ in $\CE$, with $u=Tf$ and the (cocartesian,vertical) factorization of $f=\nu_f\cdot \delta^u_x$, we have the commutative diagram
\begin{equation*}
\xymatrix{Tx\ar[r]^{\lambda^{Px}_x}\ar@/_30pt/[dd]_{Tf}\ar[d]_{T\delta^u_x} &  LPx\ar[d]^{ LPf}\\
Tu_!x\ar[r]^{\lambda^{Py}_{u_!x}}\ar[d]_{T\nu_f} & LPy\\
Ty\ar[ur]_{\lambda_y^{Py}} & \\
}	
\end{equation*}	
where the triangle commutes by naturality of the cocone $\lambda^{Py}$. This confirms the naturality of $\lambda$. Given any functor $M:\CE\to\CX$ and a natural transformation $\psi:T\to MP$, then for every object $a\in \CB$ we have the cocone $\psi J_a: TJ_a\to MPJ_a=\Delta Ma$, which determines the morphism $\chi_a:La\to Ma$ with $\Delta\chi_a\cdot\lambda^a=\psi J_a$. Its naturality in $a$ is easily checked, and it is clearly uniquely determined by $\psi$ under the condition $\chi P\cdot \lambda=\psi$.
\end{proof}
Using the standard construction of colimits in a cocomplete category, one concludes:

\begin{cor}
Given a cofibration $P:\CE\to\CB$ of small categories, one has a presentation of the colimit of any diagram $T:\CE\to\CX$ in a cocomplete category $\CX$ in terms of a coegalisator diagram
\begin{center}
	$\xymatrix{\coprod_{(u:a\to b)}{\rm colim}TJ_a\ar@/^3pt/[rr]^f\ar@/_3pt/[rr]_g && \coprod_a{\rm colim}TJ_a\ar[rr]^q && {\rm colim}T\;,}$	
	\end{center}
where $TJ_a:\CE_a\to\CX$ is the restriction	of $T$ to the fibre of $P$ at $a\in\CB$.

\end{cor}

\begin{exa}
\emph{Returning to the diagram of the generic colimit of Example \ref{generic}	, $\Phi:\CI\to\SmallCats, i\to\CI/i$, with a small category $\CI$, then $\int_{\circ}\Phi$ is the arrow category $\CI^{\EuRoman 2}$ of $\CI$, considered as a cofibred category over $\CI$ (see \ref{Standard} (3)). Keeping in mind the trivial existence and evaluation of $(\CI/i)$-shaped colimits in any category,} the Twisted Fubini Colimit Formula implies that the colimit of any diagram $T:\CI^{\EuRoman 2}\to\CX$ may be computed by restricting $T$ to the ``diagonal'' of $\CI^{\EuRoman 2}$, as in
$${\rm colim}^{(i,x)\in \CI^{\EuRoman 2}}T(i,x)\cong{\rm colim^{i\in\CI}T(i,1_i)},$$
with either colimit existing when the other exists. 

\emph{Alternatively, this isomorphism of colimits follows also from the fact that the ``diagonal'' embedding $\CI\to\CI^{\EuRoman 2}, i\mapsto(i,1_i),$ is confinal.}
\end{exa}

\section{The Colimit Decomposition Formula: three proofs}
\label{sec:CDF-Proofs}

\subsection{The basic formula and its short first proof}\label{basicCDF}

Given a (small) diagram $\Phi:\CD\to\SmallCats   $, we let $$K_d:\Phi d\lra\CK={\rm colim}\,\Phi\quad (d\in\CD)$$ denote its colimit cocone in the 1-category $\SmallCats$. We re-state the formula given in the Introduction and first provide its short proof as given by Batanin and Berger in Lemma 7.13 of \cite{BataninBerger} (modulo a small correction):

\begin{thm}[Colimit Decomposition Formula]
\label{decomp formula}
For every diagram $X:\CK\to\CX$ in a cocomplete category $\CX$, the $\CK$-shaped colimit of $X$ may be computed as the $\CD$-shaped colimit of the diagram given by the $\Phi d$-shaped colimits of $XK_d$, for every $d\in\CD$:
$${\rm colim}^{\CK}X\cong{\rm colim}^{d\in\CD}({\rm colim}^{\Phi d}XK_d).$$
\end{thm}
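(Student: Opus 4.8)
The plan is to lift the computation into the diagram category ${\sf Diag}^{\circ}(\CX)$, where the object $(\CK,X)$ will be exhibited as a $\CD$-shaped colimit, and then to push everything down by the functor ${\rm colim}\colon{\sf Diag}^{\circ}(\CX)\to\CX$, which exists because $\CX$ is cocomplete and which, being the left adjoint of $E^{\CX}$ by Proposition~\ref{functor colim}(1), preserves all colimits. Once the colimit on the right is identified, the formula falls out by reading off both sides.

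First I would package the data into a single diagram. Since $\CK={\rm colim}\,\Phi$ is the $1$-categorical colimit, its cocone $K_d\colon\Phi d\to\CK$ commutes strictly: $K_e\,\Phi u=K_d$ for every $u\colon d\to e$ in $\CD$, whence $XK_e\,\Phi u=XK_d$. Therefore the assignment $d\mapsto(XK_d\colon\Phi d\to\CX)$, $u\mapsto\Phi u$, is a genuine functor
$$\Xi\colon\CD\longrightarrow\SmallCats/\CX$$
into the \emph{strict} comma category of Section~\ref{strictification1}, and not merely into ${\sf Diag}^{\circ}(\CX)$. This is precisely where the use of the $1$-categorical colimit of $\Phi$, with its on-the-nose commuting cocone, is essential.

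The key step is to identify ${\rm colim}\,\Xi$ in $\SmallCats/\CX$. As already noted in the proof of Corollary~\ref{colimit preservation}, the embedding $\SmallCats\to\LocSmallCats$ preserves colimits, so a colimit in $\SmallCats/\CX$ is computed by forming the colimit of the underlying diagram of small categories and endowing it with the canonically induced functor to $\CX$. Hence ${\rm colim}\,\Xi$ has underlying category ${\rm colim}^{d\in\CD}\Phi d=\CK$, with structure functor the one induced on $\CK$ by the cocone $(XK_d)_d$; since $XK_d=X\,K_d$, this induced functor is $X$ itself. Thus ${\rm colim}\,\Xi\cong(X\colon\CK\to\CX)=(\CK,X)$ in $\SmallCats/\CX$. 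Now Corollary~\ref{colimit preservation} says the inclusion $\SmallCats/\CX\to{\sf Diag}^{\circ}(\CX)$ preserves this colimit, so in ${\sf Diag}^{\circ}(\CX)$ we obtain
$$(\CK,X)\;\cong\;{\rm colim}^{d\in\CD}(\Phi d,XK_d).$$
Applying ${\rm colim}\colon{\sf Diag}^{\circ}(\CX)\to\CX$ and using that it commutes with colimits yields
$${\rm colim}^{\CK}X\;\cong\;{\rm colim}^{d\in\CD}\bigl({\rm colim}(\Phi d,XK_d)\bigr)\;=\;{\rm colim}^{d\in\CD}\bigl({\rm colim}^{\Phi d}XK_d\bigr),$$
which is the asserted Colimit Decomposition Formula.

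I expect the only genuinely verifiable point---and presumably the ``small correction'' to Lack's argument alluded to before the statement---to be the handling of size. When $\CX$ is large, the comma category $\SmallCats/\CX$ and the functor ${\rm colim}$ still make sense, but the identification of ${\rm colim}\,\Xi$ as a colimit inside ${\sf Diag}^{\circ}(\CX)$ must be run through the large variant ${\sf DIAG}^{\circ}(\CX)$ of Remark~\ref{2functorStrict}(2), exactly as Corollary~\ref{colimit preservation} is established; the colimit, once seen to live in $\SmallCats/\CX$, retains its universal role after inclusion into its ``home'' category ${\sf Diag}^{\circ}(\CX)$. Everything else is the formal preservation of colimits by left adjoints, so I anticipate no further obstacle.
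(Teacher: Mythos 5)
Your proposal is correct and is essentially the paper's own first proof: both arguments observe that the strictly commuting colimit cocone $(K_d)$ lives in $\SmallCats/\CX$ and is a colimit there (because the domain functor to $\SmallCats$ reflects/creates it), then invoke Corollary~\ref{colimit preservation} to transport it into ${\sf Diag}^{\circ}(\CX)$, and finally apply the colimit-preserving left adjoint ${\rm colim}$ of Proposition~\ref{functor colim}(1). Your added care about identifying ${\rm colim}\,\Xi\cong(\CK,X)$ and about the size issue via ${\sf DIAG}^{\circ}(\CX)$ matches the paper's handling of these points.
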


\begin{proof}[Proof 1 of the CDF] 
Since, the domain functor $\SmallCats  /\CX\to\SmallCats$ reflects colimits, the given colimit in $\SmallCats$ gives us the colimit cocone 
\begin{center}
$\xymatrix{\Phi d\ar[rr]^{K_d}\ar[rd]_{XK_d} & & \CK\ar[ld]^{X}\\
& \CX & }$
\end{center}
in $\SmallCats/\CX$. By Corollary \ref{colimit preservation}, the inclusion functor $\SmallCats/\CX\to\sf{Diag}^{\circ}(\CX)$ preserves this colimit, and then, by Proposition \ref{functor colim}, it is again preserved by the left-adjoint functor ${\rm colim}:{\sf Diag}(\CX)\to \CX$. But this is precisely the claim of the theorem.
\end{proof}

The dualization of the theorem reads as follows:

\begin{cor}[Limit Recomposition Formula]\label{decomposition dual}
As above, let $\CK$ be the colimit of $\Phi$ in $\SmallCats$, with colimit injections $K_d$. Then, for a diagram $X:\CK\to\CX$ in a complete category $\CX$, the limit of $X$ in $\CX$ may be computed stepwise, according to the formula
$${\rm lim}^{\CK}X\,\cong\,{\rm lim}^{d\in\CD}({\rm lim}^{\Phi d}XK_d).$$
\end{cor}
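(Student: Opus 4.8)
The plan is to obtain Corollary \ref{decomposition dual} by a single application of the Colimit Decomposition Formula (Theorem \ref{decomp formula}) to the opposite category $\CX^{\op}$, exactly in the spirit of the dualization already used for Theorem \ref{Diag bicomplete}(2). Since $\CX$ is complete, $\CX^{\op}$ is cocomplete, so Theorem \ref{decomp formula} is available there; a diagram $X\from\CK\to\CX$ becomes $X^{\op}\from\CK^{\op}\to\CX^{\op}$, and, as objects, the limit $\lim_{\CK}X$ in $\CX$ is precisely the colimit ${\rm colim}^{\CK^{\op}}X^{\op}$ computed in $\CX^{\op}$.

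First I would supply the shape data that Theorem \ref{decomp formula} requires over $\CX^{\op}$. The key point is that the dualization $(\Box)^{\op}\from\SmallCats\to\SmallCats$ is an isomorphism of categories and hence preserves the colimit $\CK={\rm colim}\,\Phi$: writing $\Phi^{\op}:=(\Box)^{\op}\circ\Phi\from\CD\to\SmallCats$ (so that $\Phi^{\op}d=(\Phi d)^{\op}$), one gets $\CK^{\op}={\rm colim}\,\Phi^{\op}$ with colimit cocone the opposites $K_d^{\op}\from(\Phi d)^{\op}\to\CK^{\op}$ of the original injections. Feeding $\Phi^{\op}$, $\CK^{\op}$, the injections $K_d^{\op}$, and the diagram $X^{\op}$ into Theorem \ref{decomp formula} yields, in $\CX^{\op}$, the isomorphism
$${\rm colim}^{\CK^{\op}}X^{\op}\;\cong\;{\rm colim}^{d\in\CD}\big({\rm colim}^{(\Phi d)^{\op}}\,X^{\op}K_d^{\op}\big).$$

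It then remains to translate every term back into $\CX$. On the left, as noted, ${\rm colim}^{\CK^{\op}}X^{\op}=\lim_{\CK}X$. For the inner term on the right, $X^{\op}K_d^{\op}=(XK_d)^{\op}$, and a colimit in $\CX^{\op}$ of a diagram of the form $(XK_d)^{\op}\from(\Phi d)^{\op}\to\CX^{\op}$ is, as an object, the limit $\lim_{\Phi d}XK_d$ in $\CX$. The one step that warrants care---and that I expect to be the only genuine obstacle---is the bookkeeping of variance in the outer (co)limit. A covariant diagram $\CD\to\CX^{\op}$ is the same datum as a covariant diagram $\CD^{\op}\to\CX$, and its colimit in $\CX^{\op}$ is the limit over $\CD^{\op}$ in $\CX$; concretely, the cocone identity $K_e\circ\Phi u=K_d$ forces the assignment $d\mapsto\lim_{\Phi d}XK_d$ to be \emph{contravariant} in $\CD$ (for $u\from d\to e$ one gets the canonical restriction map $\lim_{\Phi e}XK_e\to\lim_{\Phi d}XK_d$), so that the symbol ${\rm lim}_{d\in\CD}$ in the statement is correctly read as this $\CD^{\op}$-shaped limit. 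With this understood, the right-hand side becomes exactly ${\rm lim}_{d\in\CD}({\rm lim}_{\Phi d}XK_d)$, the isomorphism is the asserted one, and everything beyond the variance check is the purely formal passage from colimits in $\CX^{\op}$ to limits in $\CX$, which needs no further comment.
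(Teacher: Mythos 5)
Your proposal is correct and is essentially identical to the paper's own proof, which reads in full: ``Apply Theorem \ref{decomp formula} to $X^{\rm op}:\CK^{\rm op}\cong{\rm colim}^{d\in\CD}(\Phi d)^{\rm op}\lra\CX^{\rm op}$.'' You have merely spelled out the same dualization in more detail, including the (correct) variance bookkeeping for the outer limit that the paper leaves implicit.
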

\begin{proof}
Apply Theorem \ref{decomp formula} to $X^{\rm op}:\CK^{\rm op}\cong{\rm colim}^{d\in\CD}(\Phi d)^{\rm op}\lra\CX^{\rm op}$.
\end{proof}

\subsection{A generalized colimit decomposition formula and the second proof of the CDF}
Our second proof of Theorem \ref{decomp formula} is based on (what turns out to be) a generalization of the decomposition formula. This generalization follows from the lifting of colimits along a bifibration with cocomplete fibres, as given in Corollary \ref{LiftingColimits}. By Proposition \ref{Diagbifibred}, for $\CX$ cocomplete, we may apply this corollary to the bifibration $D^{\CX}:{\sf Diag}^{\circ}(\CX)\to\SmallCats  $, keeping in mind that cocartesian liftings are given by left Kan extensions in this case. Hence, in order to obtain the colimit of a diagram $T:\CD\to{\sf Diag}^{\circ}(\CX)$, we follow the dualization of the construction given in the proof of Theorem \ref{LiftingLimits} and, with
$$\Phi=D^{\CX}T:\CD\to \SmallCats  ,$$
form the colimit $\CK$ of $\Phi$ in $\SmallCats  $, as in \ref{basicCDF}. Then, for every $u:d\to e$ in $\CD$, writing the ${\sf Diag}^{\circ}(\CX)$-object $Td$ as $(\Phi d,\,X_d:\Phi d\to\CX)$ and the morphism $Tu$ as $(\Phi u,\varphi^u)$, as in the triangle on the left,
\begin{center}
$\xymatrix{\Phi d\ar[rr]^{\Phi u}\ar[dr]_{X_d}^{\;\;\varphi^{u}:\Lra} & & \Phi e\ar[dl]^{X_e}\\
& \CX\,,}$
\hfil
$\xymatrix{\Phi d\ar[rr]^{K_d}\ar[dr]_{X_d}^{\;\;\kappa_d:\Lra} & & \CK\ar[dl]^{L_d}\\
& \CX\,,}$
\end{center}
we form the left Kan extensions $L_d:={\rm Lan}_{K_d}X_d$. These extensions come with diagram morphisms as in the triangle on the right and form a $\CD$-shaped diagram in $\CX^{\CK}$ (the fibre of $D^{\CX}$ at $\CK$). Its
colimit $X:={\rm colim}^{d\in\CD}L_d$ has colimit injections $\lambda_d:L_d\to X$. Finally then, the composite ${\sf Diag}^{\circ}(\CX)$-morphisms 
$$(K_d,\lambda_dK_d\cdot\kappa_d):(\Phi d,X_d)\to (\CK,X)$$
 present $(\CK,X)$ as a colimit of $T$ in ${\sf Diag}^{\circ}(\CX)$.

 \begin{rem}\label{rem:joint Kan}
 In the construction above, we may think of $X$ as the {\em joint left Kan extension} of the functors $X_d$ along $K_d$, characterized by the universal property that, for every functor $Y:\CK\to\CX$ and any family $(\mu_d)_{d\in\CD}$ of natural transformations $\mu_d:X_d\to YK_d$ with $\mu_e(\Phi u)\cdot\varphi^u=\mu_d$ for all $u:d\to e$ in $\CD$, there is a unique natural transformation $\beta:X\to Y$ with
$\beta K_d\cdot\lambda K_d\cdot\kappa_d=\mu_d$, for all $d\in\CD$.
\end{rem}
Since the left adjoint functor ${\rm colim}$ of Proposition \ref{functor colim}(1) preserves colimits, we obtain:

\begin{thm}[General Colimit Decomposition Formula]
\label{general decomp formula}
For a cocomplete category $\CX$ and any diagram $T:\CD\to{\sf Diag}^{\circ}(\CX)$ with $D^{\CX}T=\Phi$, writing $Td$ as $(\Phi d,\,X_d:\Phi d\to\CX)$ for all $d\in\CD$, one has
$${\rm colim}^{\CK}X\,\cong\,{\rm colim}^{d\in\CD}({\rm colim}^{\Phi d}\,X_d)$$
in $\CX$, where $\CK={\rm colim}^{d\in\CD}\Phi d$ with colimit injections $K_d$ in $\SmallCats  $ and $X={\rm colim}^{d\in\CD}({\rm Lan}_{K_d}X_d)$ is a colimit of left Kan extensions in the functor category $\CX^{\CK}$.
\end{thm}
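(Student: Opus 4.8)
The plan is to recognize that the construction carried out immediately before the statement already does the essential work: it exhibits the pair $(\CK,X)$ as a colimit of the diagram $T$ in the category ${\sf Diag}^{\circ}(\CX)$, with colimit injections $(K_d,\lambda_d K_d\cdot\kappa_d):(\Phi d,X_d)\to(\CK,X)$. Granting this, the formula reduces to a single application of a colimit-preserving functor, exactly as the sentence preceding the theorem announces.

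First I would invoke Proposition \ref{functor colim}(1): since $\CX$ is cocomplete, the embedding $E^{\CX}$ is reflective, so its left adjoint ${\rm colim}:{\sf Diag}^{\circ}(\CX)\to\CX$ exists and, being a left adjoint, preserves all colimits. Next I would record how this functor acts on the objects at issue: by definition of the reflection it sends $Td=(\Phi d,X_d)$ to ${\rm colim}^{\Phi d}X_d$, and it sends the colimit object $(\CK,X)$ to ${\rm colim}^{\CK}X$. Applying the colimit-preserving functor ${\rm colim}$ to the colimit cocone $(Td)_{d\in\CD}\to(\CK,X)$ then produces a colimit cocone in $\CX$ from the diagram $d\mapsto{\rm colim}^{\Phi d}X_d$ with vertex ${\rm colim}^{\CK}X$, yielding precisely the claimed isomorphism
$${\rm colim}^{\CK}X\;\cong\;{\rm colim}^{d\in\CD}({\rm colim}^{\Phi d}X_d),$$
with the functor $X={\rm colim}^{d\in\CD}({\rm Lan}_{K_d}X_d)$ being the joint left Kan extension of Remark \ref{rem:joint Kan}.

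The genuine content, and hence the main obstacle, lies not in this final step but in the preliminary identification of the colimit in ${\sf Diag}^{\circ}(\CX)$. That identification rests on the fact that $D^{\CX}$ is a bifibration with cocomplete fibres (Proposition \ref{Diagbifibred}) together with the lifting of colimits along such a bifibration (Corollary \ref{LiftingColimits}): one must correctly read the cocartesian liftings as the left Kan extensions $L_d={\rm Lan}_{K_d}X_d$, form the fibrewise colimit $X={\rm colim}^{d\in\CD}L_d$ in the fibre $\CX^{\CK}$ over $\CK={\rm colim}^{d\in\CD}\Phi d$, and verify universality of the composed cocone $(K_d,\lambda_d K_d\cdot\kappa_d)$. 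Once this construction is in hand, nothing further is needed, since transporting the resulting colimit along the left adjoint ${\rm colim}$ is automatic. I would therefore present the argument as: colimit in the total category $\longmapsto$ apply the colimit-preserving reflection $\longmapsto$ read off the iterated-colimit formula, emphasizing that the bifibrational lifting is what supplies the outer $\CD$-indexed colimit while the reflection supplies the inner $\Phi d$-indexed ones.
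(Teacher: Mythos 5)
Your proposal is correct and follows exactly the route the paper takes: the paragraph preceding the theorem uses Proposition \ref{Diagbifibred} and the bifibrational lifting of colimits to exhibit $(\CK,X)$, with injections $(K_d,\lambda_dK_d\cdot\kappa_d)$, as the colimit of $T$ in ${\sf Diag}^{\circ}(\CX)$, and the stated formula is then obtained by applying the colimit-preserving left adjoint ${\rm colim}$ of Proposition \ref{functor colim}(1). You have also correctly located where the real work lies, namely in that preliminary fibrational construction rather than in the final one-line application of the reflection.
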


\begin{rem}
Although the formula given in Theorem \ref{general decomp formula} may formally look similar to the CDF of Theorem \ref{decomp formula}, there is a crucial difference between the two statements: whereas in Theorem \ref{general decomp formula} $X$ is formed with the help of the given diagrams $X_d$, in Theorem \ref{decomp formula} one proceeds the other way around and defines $X_d$ with the help of $X$ as $XK_d$. 	
\end{rem}

Here is the dualization of Theorem \ref{general decomp formula}, obtainable with the dualization procedure given after Proposition \ref{DiagasGr}.

\begin{cor}[General Limit Recomposition Formula]\label{general formula dual}
For a complete category $\CX$ and any diagram $T:\CD\to{\sf Diag}_{\circ}(\CX),\;d\mapsto (\Phi d,\,X_d:\Phi d\to\CX),$ with $D_{\CX}T=\Phi^{\op}$ and $\Phi:\CD^{\op}\to \SmallCats$, one has
$${\rm lim}^{\CK}X\,\cong\,{\rm lim}^{d\in\CD}({\rm lim}^{\Phi d}\,X_d)$$
in $\CX$, where $\CK={\rm colim}^{d\in\CD}\,\Phi d$ with colimit injections $K_d$ in $\SmallCats   $, and where $X={\rm lim}_{d\in\CD}({\rm Ran}_{K_d}\,X_d)$ is a limit of right Kan extensions in $\CX^{\CK}$.
\end{cor}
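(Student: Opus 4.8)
The plan is to obtain Corollary \ref{general formula dual} from the General Colimit Decomposition Formula (Theorem \ref{general decomp formula}) by a single application of the dualization procedure introduced after Proposition \ref{DiagasGr}, in the same spirit in which Corollary \ref{decomposition dual} was deduced from Theorem \ref{decomp formula} --- except that here the duality must be applied to the whole $\CD$-shaped diagram $T$, not merely to a single functor. The tool is the bijective $2$-functor $\Box^{\op}$, which yields the isomorphism $({\sf Diag}_{\circ}(\CX))^{\op}\cong{\sf Diag}^{\circ}(\CX^{\op})$ coherently over the forgetful functors, i.e. $D^{(\CX^{\op})}\circ\Box^{\op}=\Box^{\op}\circ(D_{\CX})^{\op}$.

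First I would transport the data to the cocomplete category $\CX^{\op}$ (cocomplete because $\CX$ is complete). From $T\colon\CD\to{\sf Diag}_{\circ}(\CX)$ I form $S\DefEq\Box^{\op}\circ T^{\op}\colon\CD^{\op}\to{\sf Diag}^{\circ}(\CX^{\op})$, to which Theorem \ref{general decomp formula} applies. Setting $\Psi\DefEq D^{(\CX^{\op})}S$, the coherence identity gives $\Psi=\Box^{\op}\circ(D_{\CX}T)^{\op}=\Box^{\op}\circ\Phi$, so that $\Psi d=(\Phi d)^{\op}$; and computing $S$ object by object shows $Sd=((\Phi d)^{\op},X_d^{\op})$, so the fibre functors are $Y_d\DefEq X_d^{\op}\colon(\Phi d)^{\op}\to\CX^{\op}$.

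Next I would run Theorem \ref{general decomp formula} on $S$ and translate each ingredient back across the opposite. Since $\Box^{\op}\colon\SmallCats\to\SmallCats$ is an isomorphism of categories it preserves the colimit cocone of $\Phi$, so the base colimit $\CL\DefEq{\rm colim}^{\CD^{\op}}\Psi$ is exactly $\CK^{\op}$, with injections $L_d=K_d^{\op}$. Each inner ${\rm colim}^{(\Phi d)^{\op}}X_d^{\op}$ in $\CX^{\op}$ is the limit ${\rm lim}^{\Phi d}X_d$ in $\CX$, the outer ${\rm colim}^{\CD^{\op}}$ over $\CX^{\op}$ reads as ${\rm lim}^{\CD}$ over $\CX$, and the left-hand ${\rm colim}^{\CL}$ reads as ${\rm lim}^{\CK}$; this delivers the displayed formula. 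For the description of $X$ I would combine the Kan-extension duality ${\rm Lan}_{K_d^{\op}}X_d^{\op}=({\rm Ran}_{K_d}X_d)^{\op}$ with the isomorphism $(\CX^{\op})^{\CK^{\op}}\cong(\CX^{\CK})^{\op}$: under these, the colimit of left Kan extensions that Theorem \ref{general decomp formula} produces for $S$ in $(\CX^{\op})^{\CL}$ becomes precisely the limit ${\rm lim}_{d\in\CD}({\rm Ran}_{K_d}X_d)$ in $\CX^{\CK}$.

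All the substantive content --- the bijections of morphisms, the universal properties, the existence statements --- is inherited verbatim from Theorem \ref{general decomp formula}, so no genuinely new argument is required. The step I expect to demand the most care is purely the bookkeeping of variances: $\Phi$ is contravariant on $\CD$ (a functor on $\CD^{\op}$), while the decomposition is indexed by $\CD$, and one must verify at all three nested levels --- the base category, the inner fibre (co)limits, and the ambient functor category $\CX^{\CK}$ --- that ``colimit over $\CD^{\op}$ in $\CX^{\op}$'' consistently reads as ``limit over $\CD$ in $\CX$'', and that left Kan extension dualizes to \emph{right} Kan extension rather than the reverse. Once these identifications are nailed down, both the formula and the description of $X$ fall out immediately.
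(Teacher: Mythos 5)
Your proposal is correct and follows essentially the same route as the paper, which obtains Corollary \ref{general formula dual} from Theorem \ref{general decomp formula} precisely by the dualization procedure given after Proposition \ref{DiagasGr} (just as Corollary \ref{decomposition dual} is deduced from Theorem \ref{decomp formula}). Your careful bookkeeping of the variances --- $\CL=\CK^{\op}$, ${\rm Lan}_{K_d^{\op}}X_d^{\op}=({\rm Ran}_{K_d}X_d)^{\op}$, and colimits over $\CD^{\op}$ in $\CX^{\op}$ reading as limits over $\CD$ in $\CX$ --- is exactly the content the paper leaves implicit.
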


Let us now show how the General Colimit Decomposition Formula may be used to derive the CDF of Theorem \ref{decomp formula}:

\begin{proof}[Proof 2 of the CDF]   We are given the diagrams $\Phi:\CD\to\SmallCats$ and $X:\CK\to\CX$, where 
$\CK$ is the colimit of $\Phi$, with colimit injections $K_d:\Phi d\to\CK$.
They allow us to form the diagram $T_X:\CD\to {\sf Diag}^{\circ}(\CX)$, sending $u:d\to e$ in $\CD$ to the ${\sf Diag}^{\circ}(\CX)$-morphism
\begin{center}
$\xymatrix{\Phi d\ar[rr]^{\Phi u}\ar[dr]_{X_d=XK_d}^{\;\;1:\Lra} & & \Phi e\ar[dl]^{XK_e=X_e}\\
& \CX}$
\end{center}
which actually lives in $\SmallCats  /\CX$; so, $T_X$ factors through $\SmallCats   /\CX$. The assertion of Theorem \ref{decomp formula} will follow from an application of Theorem \ref{general decomp formula} to $T_X$, once we have shown the following lemma, formulated in the terminology introduced in Remark \ref{rem:joint Kan}.
\end{proof}

\begin{lem}\label{thm:joint Kan}
$X$ is the joint left Kan extension of the functors $X_d$ along $K_d,\,d\in\CD$.
\end{lem}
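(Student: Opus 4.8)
We are given the diagram $X\from\CK\to\CX$, the functor $\Phi\from\CD\to\SmallCats$ with colimit cocone $K_d\from\Phi d\to\CK$, and the associated diagram $T_X\from\CD\to{\sf Diag}^{\circ}(\CX)$ whose value at $d$ is $(\Phi d,\,X_d=XK_d)$ and whose structure $2$-cells $\varphi^u$ are all identities (since $X_e\cdot\Phi u=XK_e\cdot\Phi u=XK_d=X_d$, using $K_e\cdot\Phi u=K_d$ from the colimit cocone). The goal is to identify the object $X$ itself with the joint left Kan extension of the family $(X_d)$ along $(K_d)$, in the precise sense of Remark \ref{rem:joint Kan}.

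**Plan of proof.**
The plan is to verify directly that $X\from\CK\to\CX$ together with the family of \emph{identity} transformations $\mu_d^0\from X_d=XK_d\to XK_d$ (i.e.\ the natural transformations exhibiting $X_d$ as $X$ restricted along $K_d$) satisfies the universal property stated in Remark \ref{rem:joint Kan}. First I would record that the compatibility condition $\mu_e(\Phi u)\cdot\varphi^u=\mu_d$ becomes, in this situation, simply $\mu_e(\Phi u)=\mu_d$, because $\varphi^u=1$; so a compatible family $(\mu_d)_{d\in\CD}$ of natural transformations $\mu_d\from XK_d\to YK_d$ is exactly a family that is constant along the transition maps $\Phi u$. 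The key observation is then that, since $\CK$ is the colimit of $\Phi$ in $\SmallCats$ with injections $K_d$, giving such a compatible family is \emph{the same datum} as giving a single natural transformation $\beta\from X\to Y$, recovered by $\beta K_d=\mu_d$.

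**Carrying it out.**
Concretely, I would argue the bijection between compatible families $(\mu_d)$ and transformations $\beta\from X\to Y$ pointwise on objects and then check naturality on morphisms. For an object $k\in\CK$, the colimit presentation of $\CK$ means $k=K_d(x)$ for some $d\in\CD,\,x\in\Phi d$, and any two such presentations are linked by a zig-zag of transition morphisms $\Phi u$; the constancy condition $\mu_e(\Phi u)=\mu_d$ guarantees that the components $(\mu_d)_x\from X(K_d x)\to Y(K_d x)$ glue to a well-defined family $\beta_k\from Xk\to Yk$ independent of the chosen presentation. Conversely any $\beta$ restricts to a compatible family by $\mu_d:=\beta K_d$. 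Naturality of $\beta$ in $k$ follows because every morphism of $\CK$ is, up to the colimit identifications, the image under some $K_d$ of a morphism of $\Phi d$, on which naturality of $\mu_d$ applies. This establishes the required unique $\beta$ and hence the universal property; comparing with the construction of $X$ as ${\rm colim}^{d\in\CD}({\rm Lan}_{K_d}X_d)$ in Theorem \ref{general decomp formula}, where here each ${\rm Lan}_{K_d}X_d$ reduces to $X$ itself because the $\varphi^u$ are identities, then identifies the two descriptions of $X$.

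**Main obstacle.**
The delicate point is not any Kan-extension computation but the \emph{well-definedness} of $\beta$ on objects and morphisms of $\CK$ that admit several presentations $K_d(x)=K_e(x')$: one must verify that the colimit relations generating $\CK$ in $\SmallCats$ (which are not merely the transition maps $\Phi u$, but all identifications forced by coequalizers in the colimit) are exactly matched by the compatibility conditions on $(\mu_d)$. The cleanest way to discharge this is to avoid an explicit description of $\CK$ altogether and instead phrase the correspondence as an application of the universal property of ${\rm colim}\,\Phi$ in $\SmallCats$ to the functor-valued cocone determined by the pairs $(X_d,\mu_d)$, so that the bijection $(\mu_d)\leftrightarrow\beta$ is obtained formally rather than by hand; I expect this reformulation, rather than any residual calculation, to be where the real content of the lemma lies.
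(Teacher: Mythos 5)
Your final formulation is essentially the paper's proof: the well-definedness problem you flag is discharged exactly as you propose, by never describing $\CK$ explicitly and instead using that the cocone $(K_d\from\Phi d\to\CK)$ becomes, under $\CX^{(-)}\from\SmallCats^{\op}\to\LocSmallCats$, a limit cone $(K_d^*\from\CX^{\CK}\to\CX^{\Phi d})$ --- a consequence of the self-adjointness of $\CX^{(-)}$, i.e.\ of the cartesian closedness of $\Cat$; this gives the bijection between compatible families $(\mu_d)$ and transformations $\beta$ at one stroke (equivalently, one applies the universal property of $\colim\Phi$ to the cocone of functors $\Phi d\to\CX^{\EuRoman 2}$ classifying the $\mu_d$, as you suggest). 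Two cautions: the intermediate ``pointwise'' paragraph is not salvageable as written, since a morphism of a colimit in $\SmallCats$ is in general an equivalence class of composable strings of images $K_d(f)$, not the image of a single morphism of some $\Phi d$, so both well-definedness and naturality genuinely require the formal route rather than a zig-zag check on generators; and for large $\CX$ the universal property of $\colim\Phi$ in $\SmallCats$ does not literally apply to $\CX^{\EuRoman 2}$, so one must (as the paper notes, \emph{pro forma}) pass to a higher universe to run the same argument.
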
 

\begin{proof}
We have to check the relevant universal property, as described in Remark \ref{rem:joint Kan}. To this end we consider a functor $Y:\CK\to\CX$ and a family of natural transformations $(\mu_d:XK_d\to YK_d)_{d\in\CD}$ with $\mu_e (\Phi u)=\mu_d$ for all $u:d\to e$ in $\CD$ and must present $\mu_d$ as $\mu_d=\beta K_d\,(d\in\CD)$, for a unique natural transformation $\beta:X\to Y$. But this follows immediately from the fact that the functor 
$\CX^{(-)}:\SmallCats  ^{\rm op}\to\LocSmallCats  $ transforms the colimit cocone $(K_d:\Phi d\to \CK)$ in $\SmallCats  $ into a limit cone $(K_d^*:\CX^{\CK}\to\CX^{\Phi d})$ in $\LocSmallCats   $. Indeed, for $\CX$ small, as a consequence of the cartesian closedness of $\Cat$, this fact follows from the self-ajointness of $\CX^{(-)}:\SmallCats   ^{\rm op}\to\SmallCats  $ (see, for example, Proposition 27.7 in \cite{AHS1990}); for $\CX$ large, {\em pro forma} one has to step temporarily into the colossal category $\mathbb{CAT}$ to generate the needed natural bijective correspondence between families of natural transformations $\mu_d$ and natural transformations $\beta$, in the same way as in the small case.
\end{proof}

\subsection{Obtaining the Colimit Decomposition Formula via Fubini: the third proof of the CDF}
Our third proof of Theorem \ref{decomp formula} takes advantage of the twisted Fubini formula of Theorem \ref{newFubini} and the confinality criterion of Theorem \ref{quotientscofinal} for regular epimorphisms in $\LocSmallCats$.
\begin{proof}[Proof 3 of the CDF] 
Once again, we are given the diagrams $\Phi:\CD\to\SmallCats   $ and $X:\CK\to\CX$, where $\CK$ is the colimit of $\Phi$, with colimit injections $K_d:\Phi d\to\CK$. Via Theorem \ref{thm:Grothendieck-To-StrictColimit}, which is of independent interest, we have the confinal functor $Q\colon \textstyle{\int}_{\circ}\Phi\lra\CK,\;(d,x)\mapsto K_dx$. With Theorem \ref{newFubini}, one concludes
$${\rm colim}^{\CK}X\cong{\rm colim}^{{\int}_{\circ}\Phi}\,XQ\cong {\rm colim}^{d\in\CD}({\rm colim}^{\Phi d}\,XK_d),$$
for every diagram $X:\CK\to \CX$ in a cocomplete category $\CX$.
\end{proof}

\begin{thm}
\label{thm:Grothendieck-To-StrictColimit}
For a functor $\Phi\from \CD\to \SmallCats$, with $\CD$ small, the comparison functor
\begin{equation*}
Q\colon \textstyle{\int}_{\circ}\Phi\lra\CK=\CoLim\ \Phi,\;((u,f):(d,x)\to(e,y))\longmapsto (K_ef:K_dx=K_e(\Phi u)x\to K_ey).
\end{equation*}
from the lax to the strict colimit of $\Phi$ (see Corollary {\em \ref{universalchar dualGr}})     is a confinal regular epimorphism in $\SmallCats$.
\end{thm}

\begin{proof}
Let $\coprod\Phi$ denote the coproduct of the categories $\Phi d$, $d\in \CD$, in $\SmallCats$, with injections $I_d:\Phi d\to \coprod\Phi$. As in any cocomplete category, the colimit $\CK$ of $\Phi$ may be constructed as the joint coequalizer $C:\coprod\Phi\to \CK$ of the family of pairs $(I_d,\;I_e\Phi u)$, indexed by all the morphisms $u:d\to e$ in $\CD$. In accordance with the notation of Proposition \ref{quotient}, the equivalence relation induced by $C$ on the set of objects of $\coprod\Phi$ is the least equivalence relation $\approx_C$ with $I_d(x)\approx_C I_e\Phi u(x)$, for all $u:d\to e$ and objects $x\in\Phi d$. Hence, one has $I_dx\approx_C I_ey$ in $\coprod \Phi$ precisely when there are morphisms $u_i:d_i\to e_i,\; v_i:d_{i+1}\to e_i$ in $\CD$ and objects $x_i\in\Phi d_i$ with
\begin{center}
$\xymatrix{d=d_0\ar[r]^{u_0} & e_0 & d_1\ar[l]_{v_0}\ar[r]^{u_1} & e_1 &...\;d_n\ar[l]_{v_1}\ar[r]^{u_n}& e_n=e\\
}$	
\end{center}
and $x=x_0,\;\Phi u_i(x_i)=\Phi v_i(x_{i+1})\;\;(i=0, ..., n-1)
,\;\Phi u_n(x_n)=y$.

Since $CI_d=K_d$ for all objects $d\in\CD$, the equivalence relation $\approx_Q$ induced by $Q$ on the set of objects of $\textstyle{\int}_{\circ}\Phi$ is described by
$$ (d,x)\approx_Q(e,y)\iff I_d(x)\approx_C I_e(y).$$
Hence, any two objects $(d,x),(e,y)$ in the same fibre of $Q$ are linked by a string of morphisms
\begin{center}
$\xymatrix{(d,x)=(d_0,x_0)\ar[r]^{\delta_{x_0}^{u_0}} &  (e_0,\Phi u_0(x_0)) & (d_1,x_1)\ar[l]_{\quad\delta_{x_1}^{v_0}}\ar[r]^{\delta_{x_1}^{u_1}} & ...\ar[r]^{\delta_{x_n}^{u_n}\qquad\qquad} & (e_n,\Phi u_n(x_n))=(e,y)
}$	
\end{center}
in $\textstyle{\int}_{\circ}\Phi$. These morphisms actually live in the same fibre of $Q$ since $Q\delta_{x_i}^{u_i}=K_{d_i}(1_{\Phi u_i(x_i)})$ and $Q\delta_{x_{i+1}}^{v_i}=K_{d_{i+1}}(1_{\Phi v_i(x_{i+1})})$ are identity morphisms in $\CK$. Consequently, the fibres of $Q$ are connected.

In order for us to conclude with Theorem \ref{quotientscofinal} that $Q$ is confinal, we must also confirm that $Q$ is actually a regular epimorphism. But this is a generally valid consequence of the fact that $Q$ is a second factor of the regular epimorphism $C$, with the first factor $E$ being easily recognized as an epimorphism in $\SmallCats$:
\begin{center}
$\xymatrix{& \textstyle{\int}_{\circ}\Phi\ar[rd]^Q & \\
\coprod\Phi\ar[ru]^E\ar[rr]^C && \CK\;.\\
}$	
\end{center}
 Explicitly, $E$ is the functor with $EI_d=J_d:\Phi d\to\textstyle{\int}_{\circ}\Phi,\;x\mapsto (d,x),$ for all $d\in\CD$ and $x\in\Phi d$.
\end{proof}

\begin{rem}
We note that, unlike $Q$, the regular epimorphism $C$ as above fails to be confinal, as soon as there exists a morphism with distinct domain and codomain in $\CD$.
\end{rem}

\section{Extending the Guitart adjunction, making Grothendieck a left adjoint}
  We return to the Guitart adjunction
  \begin{center}
  $\xymatrix{\LocSmallCats\ar@/^0.5pc/[rr]^{\sf Diag^{\circ}\quad} & \top & \LocSmallCats/\SmallCats\;.\ar@/^0.5pc/[ll]^{\int_{\circ}\quad}
}$
\end{center}
of Theorem \ref{Guitart}. Realizing that $\int_{\circ}\Phi$ is cofibred over the domain of any functor $\Phi:\CB\to\SmallCats$, so that the 2-functor $\int_{\circ}$ actually takes values in the morphism category $\LocSmallCats^{\EuRoman{2}}$ of $\LocSmallCats$, in this section we indicate how to extend the 2-functor ${\sf Diag}^{\circ}$ and, in fact, the entire 2-adjunction, from $\LocSmallCats$ to $\LocSmallCats^{\EuRoman{2}}$. We also make precise that the 2-functor $\int_{\circ}$ may actually be defined on the ``lax slice'' $\LocSmallCats//\SmallCats$, rather than on its subcategory $\LocSmallCats/\SmallCats$.

\subsection{The diagram category ${\sf Diag}^{\circ}(P)$ of a functor $P:\CE\to\CB$.}\label{Diag(P)}

We start by giving a fibered version of the formation of the category ${\sf Diag}^{\circ}(\CX)$, replacing $\CX$ by a functor $P$, in such a way that the original construction is described as ${\sf Diag}^{\circ}(\CX\to {\EuRoman 1})$, with $\EuRoman 1$ the terminal category. Hence, for any functor $P:\CE\to\CB$ we define the category
${\sf Diag}^{\circ}(P),$
as follows:
\begin{itemize}
\item 
{\em objects} are triples $(a,\CI,X)$, with $a$ an object in $\CB$ and $X:\CI\to\CE_a=P^{-1}(a)$ a functor of a small category $\CI$;
\item a {\em morphism} $(u,F,\varphi):(a,\CI,X)\to(b,\CJ,Y)$ is given by a morphism $u:a\to b$ in $\CB$, a functor $F:\CI\to\CJ$, and a natural transformation $\varphi:J_aX\lra J_bYF$ with $P\varphi=\Delta u$ (the constant transformation with value $u$); $J_a, J_b$ are inclusion functors, as in
 \begin{center}
 $\xymatrix{\CI\ar[rr]^{F}\ar[d]_{X} & & \CJ\ar[d]^{Y} &\\
\CE_a\ar[dr]_{J_a} & \scriptstyle{\varphi:\Lra} & \CE_b\ar[dl]^{J_b} &\\
& \CE & &\CE\ar[d]_{P}\\
a\ar[rr]^{u} & & b & \CB\\
}$
\end{center}
\item {\em composition}: $(v,G,\psi)\cdot(u,F,\varphi) = (v\cdot u,GF, \psi F\cdot\varphi)$.
\end{itemize}
The category ${\sf Diag}^{\circ}(P)$ comes equipped with the obvious functors

\medskip

\begin{tabular}{l}
$B^P: {\sf Diag}^{\circ}(P)\to\CB,\quad (u,F,\varphi)\mapsto u,$\\
$D^P:{\sf Diag}^{\circ}(P)\to \SmallCats  ,\quad (u,F,\varphi)\mapsto F,$\\
$E^P:\CE\to {\sf Diag}^{\circ}(P), \quad x\mapsto (Px,\EuRoman 1,\Delta x\!:\!{\EuRoman 1}\to\CE_{Px}),\; (f:x\to y)\mapsto (Pf,{\rm Id}_{\EuRoman 1},\Delta f).$\\
\end{tabular}

\medskip
\noindent $E^P$ is a full embedding which makes the diagram
\begin{center}
$\xymatrix{\CE\ar[rr]^{E^P}\ar[rd]_{P} & & {\sf Diag}^{\circ}(P)\ar[ld]^{B^P}\\
& \CB & \\
}$	
\end{center}
commute. For $D^P$ and $B^P$ one easily proves:

\begin{prop}
 {\rm (1)} $D^P$ is a split fibration.
 
 {\rm (2)} $B^P$ is a (split) (co)fibration if, and only if, $P$ has the corresponding property.
\end{prop}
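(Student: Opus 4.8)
The plan is to handle the two parts separately, deriving (1) in exact analogy with Proposition \ref{DiagasGr}, and obtaining (2) by transporting (co)cartesian liftings back and forth between $\CE$ and ${\sf Diag}^{\circ}(P)$ along the embedding $E^P$.

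For (1) I would exhibit an explicit cleavage. Given $F\from\CI\to\CJ$ in $\SmallCats$ and an object $(b,\CJ,Y)$, set $F^{*}(b,\CJ,Y)=(b,\CI,YF)$ and take as candidate cartesian lifting the morphism $(1_b,F,1)\from(b,\CI,YF)\to(b,\CJ,Y)$, whose third component is the identity transformation of $J_b(YF)=(J_bY)F$ (legitimate since $P(1)=\Delta 1_b$). The verification that this is cartesian is the same bookkeeping as in Proposition \ref{DiagasGr}: any morphism whose $D^P$-image factors through $F$ factors uniquely through $(1_b,F,1)$, the first and third components being forced. Splitness is immediate from $Y(FF')=(YF)F'$ together with $(1_b,F,1)\cdot(1_b,F',1)=(1_b,FF',1)$, so no genuine obstacle arises here.

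For the ``if'' direction of (2), suppose $P$ is a fibration with chosen cleavage $u\mapsto u^{*}\from\CE_b\to\CE_a$ for $u\from a\to b$. I would lift this to $B^P$ by setting $u^{*}(b,\CJ,Y)=(a,\CJ,u^{*}\!\circ Y)$ with cartesian morphism $(u,\mathrm{Id}_{\CJ},\varphi^u)$ whose $j$-th component is the $P$-cartesian arrow $\theta^u_{Yj}\from u^{*}Yj\to Yj$; naturality of $\varphi^u$ and its cartesianness over $B^P$ both follow pointwise from the cartesianness of the $\theta^u_{Yj}$ in $\CE$, and splitness is inherited from that of $P$. The cofibration case is strictly dual, built from the opcartesian liftings of $P$ componentwise.

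The substantive direction is the converse in (2). My approach is to exploit that $E^P\from\CE\to{\sf Diag}^{\circ}(P)$ is a full embedding over $\CB$ (i.e. $B^PE^P=P$) which both preserves and reflects cartesian morphisms: a direct check shows that $E^Pf$ is $B^P$-cartesian precisely when $f$ is $P$-cartesian, the factorisations on either side corresponding under the full faithfulness of $E^P$. Granting this, it suffices to show that whenever $B^P$ is a fibration the $B^P$-cartesian lifting of an object $E^Pe$ along $u$ can be chosen inside the image of $E^P$, equivalently that this lifting $(a,\CI,X)$ has diagram shape $\CI\cong\EuRoman 1$. Since the fibre of $B^P$ over $a$ is ${\sf Diag}^{\circ}(\CE_a)$, testing the universal property of the lifting against the fibre objects $E^{\CE_a}z=(a,\EuRoman 1,\Delta z)$ rewrites the hom-set $\{g\from z\to e:Pg=u\}$ in $\CE$ as $\coprod_{i}\CE_a(z,Xi)$; the desired reduction is therefore exactly the statement that the presheaf $z\mapsto\{g\from z\to e:Pg=u\}$ on $\CE_a$ is representable, with representing object the sought reindexing $u^{*}e$. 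I expect this representability to be the main obstacle: it is where the fibration property of $P$ is genuinely recovered, and it must be extracted from the full (not merely fibrewise) universal property of the $B^P$-lifting, testing additionally against non-discrete shapes $(a,\CK,Z)$ to collapse $\CI$ to a point. The cofibration half is entirely dual, with representability replaced by the existence of an initial object in the corresponding comma category and the roles of cartesian and opcartesian liftings interchanged.
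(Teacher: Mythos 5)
Parts (1) and the ``if'' half of (2) are correct and are essentially the paper's own argument: the cleavage $(1_b,F,1_{J_bYF})$ for $D^P$, and the pointwise lifting $(u,{\rm Id}_{\CJ},\theta^{u}Y)$ for $B^P$ whose cartesianness is verified componentwise from that of the $\theta^u_{Yj}$, are exactly what the paper writes down.

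The converse half of (2) is where your proposal stops, and the step you leave as an ``expectation'' is a genuine gap --- in fact your own reduction shows why it cannot be filled. Testing the $B^P$-cartesian lift $(u,F,\varphi)\from(a,\CI,X)\to(b,{\EuRoman 1},\Delta y)$ against the objects $(c,{\EuRoman 1},\Delta z)$ gives precisely the decomposition $\{g\from z\to y\;|\;Pg=uv\}\cong\coprod_{i\in\CI}\{g'\from z\to Xi\;|\;Pg'=v\}$ that you describe, and testing against non-discrete shapes only adds functoriality of this decomposition; nothing forces $\CI$ to collapse to a point. A concrete obstruction: let $\CB$ have two objects $a,b$ and a single non-identity arrow $u\from a\to b$, and let $\CE$ have two objects $x_0,x_1$ over $a$, one object $y$ over $b$, and exactly one morphism $s_i\from x_i\to y$ over $u$ for each $i$, with no morphisms between $x_0$ and $x_1$. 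Then $P$ has no cartesian lift of $u$ at $y$, yet $B^P$ is a split fibration: the cartesian lift of $u$ at $(b,\CJ,\Delta y)$ has domain $(a,\CJ\sqcup\CJ,X')$ with $X'$ constant at $x_0$ on one summand and at $x_1$ on the other, and the universal property is immediate because the fibre $\CE_a$ is discrete. So the ``only if'' assertion for fibrations fails, and you should know that the paper's proof of this direction is no more complete than your sketch: it consists of the single claim that a $P$-cartesian lift of $u$ at $y$ ``may be realized as'' the $B^P$-cartesian lift at $(b,{\EuRoman 1},\Delta y)$, which tacitly assumes that the domain of that lift lies in the image of $E^P$ --- exactly the representability you flagged. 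A further caution: your closing remark that the cofibration half is ``entirely dual'' is also too quick, since ${\sf Diag}^{\circ}(P)$ is not self-dual (its dual construction is ${\sf Diag}_{\circ}$); the cocartesian analysis replaces your coproduct by a cone condition over $\CJ$ and has to be examined separately.
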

 
 \begin{proof}
 (1) The $D^P$-cartesian lifting of $F:\CI\to\CJ$ at $(b,\CJ,Y)$ may be taken to be $(1_b,F,1_{J_bYF}):(b,\CI,YF)\to(b,\CJ,Y).$
 
 (2) For a (split) fibration $P$, with $P$-cartesian liftings denoted by $\theta$, we claim that the $B^P$-cartesian lift of $u:a\to b$ at $(b,\CJ,Y)$	
may be taken to be $(u, {\rm Id}_{\CJ},\theta^{u}Y)$. Indeed, the required universal property, as depicted by the left side of the diagram below, follows from a pointwise application of the corresponding $P$-cartesian property, as depicted on the right side:
\begin{center}
$\xymatrix{\CK\ar@{-->}[rr]_{H}\ar@/^1.5pc/[rrrr]^{H}\ar[d]_Z & & \CJ\ar[rr]_{{\rm Id}_{\CJ}}\ar[d]^{u^*Y} & & \CJ\ar[d]^{Y} \\
\CE_c\ar[dr]_{J_c} & \scriptstyle{\psi:\Lra} & \CE_a\ar[dl]^{J_a}\ar[dr]_{J_a} & \scriptstyle{\theta^uY:\Lra }& \CE_b\ar[dl]^{J_b}  \\
& \CE\ar@/_1.5pc/[rr]^{{\rm Id}_{\CE}} & \scriptstyle{\chi:\Lra} & \CE & \\
c\ar[rr]^{v}\ar@/_1.5pc/[rrrr]^{w\quad} & & a\ar[rr]^{u} & & b }$
\hfil	
$\xymatrix{& & \\
Zk\ar@{-->}[r]^{\psi_k\quad}\ar@/_1.5pc/[rr]_{\chi_k} & u^*(YHk)\ar[r]^{\quad\theta^u_{YHk}} & YHk \\
& & \\
c\ar[r]^{v}\ar@/_1.5pc/[rr]^{w\;} & a\ar[r]^{u} & b}$
\end{center}
Conversely, if $B^P$ is a (split) fibration, a $P$-cartesian lifting of $u:a\to b$ at $y$ may be realized as a $B^P$-cartesian lifting of $u:a\to b$ at $(b,\EuRoman 1,\Delta y:{\EuRoman 1}\to\CE_B)$.

When $P$ or $B^P$ is a (split) opfibration, the proof proceeds analogously.
\end{proof}

We should clarify further the interdependency of the diagram constructions for categories and for functors. Trivially, for a category $\CX$, one has ${\sf Diag}^{\circ}(\CX)\cong {\sf Diag}^{\circ}(\CX\to{\EuRoman 1})$. Less trivially, when $P:\CE\to\CB$ is a split cofibration, with the help of the Grothendieck construction we may build ${\sf Diag}^{\circ}(P)$ from the categories ${\sf Diag}^{\circ}(\CE_a)\;(a\in\CB)$, as follows. Consider the functor
$$\Theta_P:\CB\to \LocSmallCats,\;(u:a\to b)\mapsto [u_!(-):{\sf Diag}_{\circ}(\CE_a)\to{\sf Diag}_{\circ}(\CE_b)],$$  
 where the functor $u_!(-)$ maps $(F,\psi):(\CI,X)\to (\CJ,X')$ in ${\sf Diag}^{\circ}(\CE_a)$ to $(F,u_!\psi)$ in ${\sf Diag}^{\circ}(\CE_b)$:
 \begin{center}
 $\xymatrix{\CI\ar[rr]^{F}\ar[rd]_{X}^{\;\;\psi:\Lra}\ar@/^1.0pc/[rrr]^{\rm Id} & & \CJ\ar[ld]^{X'}\ar@/^1.0pc/[rrr]^{\rm Id} & \CI\ar[rr]^{F}\ar[rd]_{u_!X}^{\;u_!\psi:\Lra} & & \CJ\ar[ld]^{u_!X'}\\
 & \CE_a\ar[rrr]^{u_!} & & & \CE_b & 
 }$	
 \end{center}
 \begin{prop}\label{Diag as Gr}
If $P:\CE\to\CB$ is a split cofibration, then, as a cofibred category over $\CB$, the category ${\sf Diag}^{\circ}(P)$ is isomorphic to the dual Grothendieck category $\int_{\circ}\Theta_P$, by an isomorphism that maps objects identically.
 \end{prop}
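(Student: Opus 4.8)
The plan is to build an isomorphism $\Omega\from\textstyle\int_{\circ}\Theta_P\to{\sf Diag}^{\circ}(P)$ over $\CB$ that is the identity on objects, reading off its action on morphisms from the universal property of the cocartesian liftings $\delta^u$ of the split cofibration $P\from\CE\to\CB$. On objects there is nothing to do: an object of $\int_{\circ}\Theta_P$ is a pair $(a,(\CI,X))$ with $a\in\CB$ and $(\CI,X)$ an object of $\Theta_P(a)={\sf Diag}^{\circ}(\CE_a)$, that is, a functor $X\from\CI\to\CE_a$; this is literally the object $(a,\CI,X)$ of ${\sf Diag}^{\circ}(P)$, which is why the isomorphism acts identically on objects.

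For morphisms, a map $(a,\CI,X)\to(b,\CJ,Y)$ in $\int_{\circ}\Theta_P$ is a pair $(u,(F,\psi))$ with $u\from a\to b$ in $\CB$ and $(F,\psi)\from u_!(\CI,X)=(\CI,u_!X)\to(\CJ,Y)$ a morphism of ${\sf Diag}^{\circ}(\CE_b)$; thus $F\from\CI\to\CJ$ and $\psi\from u_!X\to YF$ is a vertical natural transformation in $\CE_b$. I would send this to the triple $(u,F,\varphi)$ whose comparison transformation $\varphi\from J_aX\to J_bYF$ has components $\varphi_i=\psi_i\cdot\delta^u_{Xi}\from Xi\to Y(Fi)$, where $\delta^u_{Xi}\from Xi\to u_!(Xi)$ is the chosen cocartesian lift. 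Since $P\varphi_i=u$ for all $i$, this is a legitimate morphism of ${\sf Diag}^{\circ}(P)$. Conversely, given $(u,F,\varphi)$ with $P\varphi=\Delta u$, the cocartesian universal property of each $\delta^u_{Xi}$ (with $w=u$, $v=1_b$, $vu=w$) produces a unique vertical $\psi_i$ with $\psi_i\cdot\delta^u_{Xi}=\varphi_i$, and naturality of $\psi$ follows from naturality of $\varphi$ together with the uniqueness clause of that universal property. This yields the desired bijection on each hom-set, visibly fixing the $\CB$-component $u$, so $\Omega$ commutes with $B^P$ and the projection $\Pi_{\Theta_P}$.

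The only genuine work is functoriality, and this is precisely where splitness of $P$ is indispensable. Preservation of identities uses normality, $\delta^{1_a}=1$ (whence $(1_a)_!={\rm Id}$ and $\psi=\varphi$). For composites I would write the $\int_{\circ}\Theta_P$-composite of $(u,(F,\psi))$ and $(v,(G,\omega))$ as $(vu,(GF,\omega F\cdot v_!\psi))$, using that $\Theta_P$ is a \emph{strict} functor so that $(vu)_!=v_!u_!$, and compare it component-wise with the ${\sf Diag}^{\circ}(P)$-composite $(vu,GF,\chi F\cdot\varphi)$, where $\varphi_i=\psi_i\cdot\delta^u_{Xi}$ and $\chi_j=\omega_j\cdot\delta^v_{Yj}$. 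The identity to be checked, $(\chi F\cdot\varphi)_i=(\omega F\cdot v_!\psi)_i\cdot\delta^{vu}_{Xi}$, reduces via the split cocycle law $\delta^{vu}_{Xi}=\delta^v_{u_!(Xi)}\cdot\delta^u_{Xi}$ together with the naturality square $\delta^v_{Y(Fi)}\cdot\psi_i=v_!(\psi_i)\cdot\delta^v_{u_!(Xi)}$ of the cocartesian lift $\delta^v$ to a single equality in $\CE_c$. I expect this interlocking of the two split-structure equations (normality and the cocycle law) with the naturality of the $\delta$'s to be the crux; everything else is bookkeeping.

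Finally, to upgrade the isomorphism over $\CB$ to an isomorphism of cofibred categories, I would check that $\Omega$ matches the cocartesian liftings on both sides. The cocartesian lift of $u\from a\to b$ at $(a,\CI,X)$ in $\int_{\circ}\Theta_P$ is $(u,{\rm Id})$, sent by $\Omega$ to $(u,{\rm Id}_{\CI},\varphi)$ with $\varphi_i=\delta^u_{Xi}$, which is exactly the $B^P$-cocartesian lift furnished (dually) in the preceding Proposition characterising when $B^P$ is a cofibration. Hence $\Omega$ preserves and reflects cocartesianness and is the asserted isomorphism of cofibred categories over $\CB$.
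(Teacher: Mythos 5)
Your construction is exactly the paper's: the morphism correspondence $\psi\mapsto J_b\psi\cdot\delta^uX$ (componentwise $\varphi_i=\psi_i\cdot\delta^u_{Xi}$), inverted via the cocartesian universal property of $\delta^u$, identical on objects and compatible with the projections to $\CB$. The only difference is one of exposition — the paper packages functoriality into the naturality of the bijection $\CE_b(u_!x,y)\cong\CE_u(x,y)$, whereas you spell out the composite check via the cocycle law and the naturality of $\delta^v$ — so this is the same proof, correctly carried out.
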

 
 \begin{proof} For every morphism $u:a\to b$ in $\CB$, where $a=Px$ and $b=Py$ with $x,y\in\CE$, one has the natural bijection
 $$\CE_{b}(u_!x,y)\to \CE_u(x,y),\;(f:u_!x\to y)\mapsto f\cdot \delta^u_x,$$	
 where $\CE_u(x,y)=\CE(x,y)\cap P^{-1}(u)$ (see \ref{Grothendieck fibrations}). Given functors $X:\CI\to\CE_a,\;Y:\CJ\to\CE_b,\;F:\CI\to\CJ$, exploiting the above bijection for $x=Xi,\;y=YFi\;(i\in\CI)$, one obtains the natural bijection
 $$\{\psi\;|\;\psi :u_!X\to YF \text{ nat.tr.}\}\to\{\varphi\;|\;\varphi: J_aX\to J_bYF\text{ nat.tr.}, P\varphi=\Delta u\},\;\psi\mapsto J_b\psi\cdot\delta^uX.$$
 Equivalently, writing $(a,X)$ instead of $(a,(\CI,X))$, we have the natural bijection
 $$\{\psi\;|\;(u,F,\psi)\in(\textstyle{\int}_{\circ}\Theta_P)((a,X),(b,Y))\}\to \{\varphi\;|\;(u,F,\varphi)\in{\sf Diag}^{\circ}(P)((a,X),(b,Y))\},\;\psi\mapsto J_b\psi\cdot\delta^uX.$$
 With objects kept fixed, this defines a bijective functor $\int_{\circ}\Theta_P\to {\sf Diag}^{\circ}(P)$ which obviously commutes with the $\CB$-valued split cofibrations:
 \begin{center}
$\xymatrix{\int_{\circ}\Theta_P\ar[rd]_{\Pi_{\Theta_P}}\ar[rr]^{\cong} &  &{\sf Diag}^{\circ}(P)\ar[ld]^{B^P}\\
& \CB & \\
}$

 \end{center}
 \end{proof}
 In the same way as one arrives at the definition of morphisms of ${\sf Diag}_{\circ}(\CX)$ once those of ${\sf Diag^{\circ}}(\CX)$ have been defined, one may also define the morphisms of the category ${\sf Diag}_{\circ}(P)$; that is: keeping the same objects, but inverting the direction of the functor $F$ while keeping the direction of the natural transformation $\varphi$ in the definition of a morphism $(u,F,\varphi)$ in ${\sf Diag}^{\circ}(P)$, one defines the morphisms of the category ${\sf Diag}_{\circ}(P)$. The dualization of Proposition \ref{Diag as Gr} then says that, when $P$ is a split fibration, ${\sf Diag}_{\circ}(P)$ is isomorphic to $\int^{\circ}\Theta^P$ as a fibred category over $\CB$, with
 $$\Theta^P:\CB^{\op}\to \LocSmallCats,\;(u:a\to b \text{ in }\CB)\mapsto [u^*(-):{\sf Diag}_{\circ}(\CE_b)\to{\sf Diag}_{\circ}(\CE_a)]\;.$$
 
\subsection{Review of the 2-categories $\LocSmallCats^{\EuRoman 2},\; \LocSmallCats//\SmallCats$ and $\LocSmallCats/\SmallCats$.}\label{2categories} 
 In order to extend the transitions
 $$(P:\CE\to\CB)\mapsto (D^P:{\sf Diag}^{\circ}(P)\mapsto\SmallCats),\qquad (\Phi:\CB\to\SmallCats)\mapsto(\Pi_{\Phi}:\textstyle{\int}_{\circ}\Phi\to \CB),$$
 2-functorially, we form the 2-categories $\LocSmallCats^{\EuRoman 2},\; \LocSmallCats//\SmallCats$ and $\LocSmallCats/\SmallCats$ in a standard manner:
 \begin{itemize}
 	\item The objects of $\LocSmallCats^{\EuRoman 2}$ are functors $P:\CE\to\CB$ of 1-categories (= $\LocSmallCats$-objects); a morphism $(S,T):P\to Q$ is given by functors  that make the square on the left of the diagram
 	\begin{center}
 	$\xymatrix{\CE\ar[d]_{P}\ar[r]^{T} & \CF\ar[d]^{Q}\\
 	\CB\ar[r]^{S} & \CC}
 	\hfil\xymatrix{\CE\ar[d]_{P}\ar@/^0.7pc/[rr]^{T}\ar@/_0.7pc/[rr]_{T'} & {\scriptstyle{\beta}}\Downarrow\quad &\CF\ar[d]^{Q}\\
 	\CB\ar@/^0.7pc/[rr]^{S}\ar@/_0.7pc/[rr]_{S'} & {\scriptstyle{\alpha}}\Downarrow\quad&\CC
 	}$
 	\end{center}
 commutative; and a 2-cell $(\alpha,\beta):(S,T)\Lra(S',T')$ is a pair of natural transformations $\alpha: S\to S',\;\beta:T\to T'$ with $Q\beta=\alpha P$; their horizontal and vertical compositions are inherited from the 2-category $\LocSmallCats$ in each of the two components.	
    \item The objects of $\LocSmallCats//\SmallCats$ are functors $\Phi:\CB\to\SmallCats$ of 1-categories; for $\Psi:\CC\to\SmallCats$, a
    morphism $(\Sigma,\tau):\Phi\to\Psi	$ is given by a functor $\Sigma:\CB\to\CC$ and a natural transformation $\tau:\Phi\to\Psi\Sigma$; a 2-cell $(\sigma,\mu):(\Sigma,\tau)\Lra(\Sigma',\tau')$ is a natural transformation $\sigma:\Sigma\to\Sigma'$ together with a modification\footnote{For this term to make sense here, we consider the ordinary category $\CB$ as a discrete 2-category ({\em i.e.}, as having identical 2-cells, so that $\Phi, \Psi \Sigma^{(')}$ become 2-functors and $\tau,\tau'$ 2-natural transformations, for free.)}\;$\mu:\Psi\sigma\cdot\tau\to\tau'$; this means that, for every object $a\in\CB$, we have a natural transformation $\mu_a:(\Psi\sigma_a)\tau_a\to\tau'_a$, such that, for every morphism $u:a\to b$ in $\CB$, the following two natural transformations coincide:
    $$(\Psi\Sigma'u)\mu_a: (\Psi\Sigma'u)(\Psi\sigma_a)\tau_a\to(\Psi\Sigma'u)\tau'_a\quad\text{and} \quad \mu_b(\Phi u):(\Psi \sigma_b)\tau_b(\Phi u)\to\tau'_b(\Phi u).$$
    (These two transformations have the same domain and codomain, by the naturality of $\sigma,\tau$.)
 \begin{center}   
$\xymatrix{\CB\ar[rr]^{\Sigma}\ar[dr]_{\Phi}^{\;\;\tau:\Lra} & & \CC\ar[dl]^{\Psi}\\
& \SmallCats}
\hfil \xymatrix{\CB\ar@/^0.7pc/[rr]^{(\Sigma,\tau)}\ar@/_0.7pc/[rr]_{(\Sigma',\tau')}\ar[dr]_{\Phi} & {\scriptstyle (\sigma,\mu)}\!\Downarrow & \CC\ar[dl]^{\Psi}\\
& \SmallCats}$
\end{center}
The horizontal and vertical compositions are such that the $\LocSmallCats$-valued assignment $(\Sigma,\tau)\mapsto\Sigma$ becomes a 2-functor.

{\em Important Note:}
  $\LocSmallCats//\SmallCats$ has a richer 2-categorical structure than ${\sf DIAG}^{\circ}(\CX)$ (as defined in Remark \ref{2functorStrict}(2)),
which is due to the fact that, when considering 2-cells in $\LocSmallCats//\SmallCats$, we are envoking the 2-categorical structure of $\SmallCats$, in order to form modifications. These are all identities when $\SmallCats$ is replaced by a 1-category $\CX$, considered as a 2-category with identity 2-cells.
\item $\LocSmallCats/\SmallCats$, as already defined in Remark \ref{2functorStrict}(1),  
is the sub-2-category of 
    $\LocSmallCats//\SmallCats$ whose morphisms $(\Sigma,\tau):\Phi\to\Psi$ satisfy $\tau=1_{\Phi}$, so that  $\Phi=\Psi\Sigma$; consequently, a 2-cell $\sigma:\Sigma\Lra\Sigma'$ in $\LocSmallCats/\SmallCats$ is just a natural transformation satisfying $\Psi\sigma=1_{\Phi}$. 
    \end{itemize}
    
    \begin{prop}\label{2functorsDiagGr}
  The transitions $P\mapsto D^P$  and $\Phi\mapsto \Pi_{\Phi}$ are the object assignments of 2-functors
  $${\sf Diag}^{\circ}:\LocSmallCats^{\EuRoman 2}\to\LocSmallCats/\SmallCats\quad\text{ and }\quad \textstyle{\int}_{\circ}:\LocSmallCats//\SmallCats\to \LocSmallCats^{\EuRoman 2}.$$   	
    \end{prop}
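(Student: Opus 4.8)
The plan is to specify each of the two 2-functors on $1$-cells and $2$-cells, to check that these assignments land in the stated target $2$-categories, and finally to verify the (largely routine) preservation of identities, of horizontal composition of $1$-cells, and of both compositions of $2$-cells.

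For ${\sf Diag}^{\circ}$, I would first note that a $1$-cell $(S,T)\colon P\to Q$ of $\LocSmallCats^{\EuRoman 2}$ satisfies $QT=SP$, so $T$ restricts to fibrewise functors $T_a\colon\CE_a\to\CF_{Sa}$ with $TJ_a=J_{Sa}T_a$. Hence I set ${\sf Diag}^{\circ}(S,T)(a,\CI,X)=(Sa,\CI,T_aX)$ and ${\sf Diag}^{\circ}(S,T)(u,F,\varphi)=(Su,F,T\varphi)$; functoriality is immediate from functoriality of $T$, and since $D^P$ and $D^Q$ read off the middle component $F$ one gets $D^Q\circ{\sf Diag}^{\circ}(S,T)=D^P$, i.e. a genuine $\LocSmallCats/\SmallCats$-morphism. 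On a $2$-cell $(\alpha,\beta)\colon(S,T)\Ra(S',T')$ (so $Q\beta=\alpha P$) I would define the natural transformation ${\sf Diag}^{\circ}(\alpha,\beta)$ whose component at $(a,\CI,X)$ is $(\alpha_a,{\rm Id}_{\CI},\beta X)$, where $\beta X$ is the whiskering of $\beta$ with $J_aX$; the identity $Q\beta=\alpha P$ guarantees $Q(\beta X)=\Delta\alpha_a$, so this is a legitimate ${\sf Diag}^{\circ}(Q)$-morphism, and its $D^Q$-image is ${\rm Id}_{\CI}$, making it a $2$-cell of the strict slice $\LocSmallCats/\SmallCats$.

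For $\int_{\circ}$, a $1$-cell $(\Sigma,\tau)\colon\Phi\to\Psi$ of $\LocSmallCats//\SmallCats$ is sent to the pair $(\Sigma,\int_{\circ}(\Sigma,\tau))$, with $\int_{\circ}(\Sigma,\tau)(a,x)=(\Sigma a,\tau_a x)$ and $\int_{\circ}(\Sigma,\tau)(u,f)=(\Sigma u,\tau_bf)$ for $(u,f)\colon(a,x)\to(b,y)$. Here naturality of $\tau$ gives $\tau_b\circ\Phi u=\Psi\Sigma u\circ\tau_a$, which both makes $\tau_bf$ have the correct domain $(\Psi\Sigma u)(\tau_ax)$ and yields functoriality; the construction lies over $\Sigma$, so it is a $\LocSmallCats^{\EuRoman 2}$-morphism. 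On a $2$-cell $(\sigma,\mu)\colon(\Sigma,\tau)\Ra(\Sigma',\tau')$ I would take base component $\sigma$ and total component the transformation $\beta$ with $\beta_{(a,x)}=(\sigma_a,(\mu_a)_x)$; the equation $\Pi_{\Psi}\beta=\sigma\Pi_{\Phi}$ then holds by inspection of first components.

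The step I expect to be the main obstacle---and the one that explains why the domain of $\int_{\circ}$ must be the \emph{lax} slice $\LocSmallCats//\SmallCats$---is the naturality of this $\beta$. For $(u,f)\colon(a,x)\to(b,y)$ the naturality square in $\int_{\circ}\Psi$ reduces, on second components, to the equation
\[
(\mu_b)_y\cdot(\Psi\sigma_b)(\tau_bf)=\tau'_bf\cdot(\Psi\Sigma'u)\bigl((\mu_a)_x\bigr).
\]
I would establish this by first applying the ordinary naturality of the transformation $\mu_b$ to the morphism $f\colon(\Phi u)x\to y$, which replaces the left-hand side by $\tau'_bf\cdot(\mu_b)_{(\Phi u)x}$, and then invoking the modification axiom on $\mu$, namely $(\Psi\Sigma'u)\mu_a=\mu_b(\Phi u)$, whose $x$-component is exactly $(\Psi\Sigma'u)((\mu_a)_x)=(\mu_b)_{(\Phi u)x}$. (The base components match by naturality of $\sigma$.) Once naturality of $\beta$ is in hand, preservation of identities, of horizontal composites of $1$-cells, and of the vertical and horizontal composites of $2$-cells for both $2$-functors are straightforward verifications using functoriality of $T$, naturality of $\tau$, and the interchange law, which I would carry out componentwise.
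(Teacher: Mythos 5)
Your proposal is correct and follows essentially the same route as the paper: the same assignments on $1$-cells ($(Su,F,T\varphi)$ and $(\Sigma u,\tau_b f)$) and on $2$-cells ($(\alpha_a,{\rm Id}_{\CI},\beta J_aX)$ and $(\sigma_a,(\mu_a)_x)$), with the same well-definedness checks. Your explicit verification of the naturality of $\beta$ via the modification axiom is a detail the paper leaves to the reader, but it is not a different argument.
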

    
    \begin{proof} We just describe the assignments for morphisms and 2-cells and leave all routine verifications to the reader. ${\sf Diag}^{\circ}$ assigns to a morphism $(S,T):P\to Q$ the functor
    $$\Sigma={\sf Diag}^{\circ}(S,T):{\sf Diag}^{\circ}(P)\to{\sf Diag}^{\circ}(Q)$$ which, in turn, is given by the morphism assignment
    $$((u,F,\varphi):(a,\CI,X)\to(b,\CJ,Y))\quad\mapsto\quad ((Su,F,T\varphi):(Sa,\CI,T_aX)\to(Sb,\CJ,T_bY));$$
    here $T_a$ is the restriction of $T$ that makes the square of the diagram below commute.
    \begin{center}
    $\xymatrix{\CE_a\ar[r]^{T_a}\ar[d]_{J_a} & \CF_{Sa}\ar[d]^{J_{Sa}}\\
    \CE\ar[r]^{T} & \CF\\
    }\hfil
    \xymatrix{{\sf Diag}^{\circ}(P)\ar[rr]^{\Sigma}\ar[rd]_{D^P} & & {\sf Diag}^{\circ}(Q)\ar[ld]^{D^Q}\\
    & \SmallCats &\\}$	
    \end{center}
    Trivially, the triangle on the right commutes as well, so that $\Sigma$ is indeed a morphism in $\LocSmallCats/\SmallCats$. For a 2-cell $(\alpha,\beta):(S,T)\Lra(S',T')$, one defines the natural transformation
    $$\sigma={\sf Diag}^{\circ}(\alpha,\beta):\Sigma={\sf Diag}^{\circ}(S,T)\lra \Sigma'={\sf Diag}^{\circ}(S',T')\quad\text{by}$$
    $$\sigma_{(a,\CI,X)}=(\alpha_a,{\rm Id}_{\CI},\beta J_aX):\Sigma(a,\CI,X)=(Sa,\CI,T_aX)\lra \Sigma'(a,\CI,X)=(Sa',\CI,T_a'X),$$
    for all objects $(a,\CI,X)$ in ${\sf Diag}^{\circ}(P)$. Note that $\sigma_{(a,\CI,X)}$ is well defined since $Q(\beta J_aX)=\alpha PJ_aX=\Delta\alpha_a$.
    
    \begin{center}
 $\xymatrix{\CI\ar[rr]^{{\rm Id}_{\CI}}\ar[d]_{X} & & \CI\ar[d]^{X} &\\
\CE_a\ar[r]^{J_a}\ar[d]_{T_a} & \CE\ar@/_1.0pc/[d]_{T}^{\beta:\Lra}\ar@/^1.0pc/[d]^{T'}& \CE_a\ar[l]_{J_a}\ar[d]^{T'_a} &\\
 \CF_{Sa}\ar[r]_{J_{Sa}} & \CF & \CF_{S'a}\ar[l]^{J_{S'a}} &\CF\ar[d]_{Q}\\
Sa\ar[rr]^{\alpha_a} & & S'a & \CC \\
}$
\end{center}       	 
     $\int_{\circ}$ assigns to the ($\LocSmallCats//\SmallCats$)-
     morphism $(\Sigma,\tau):\Phi\to\Psi$ 
     the   $\LocSmallCats^{\EuRoman 2}$-morphism given by the square
    \begin{center}
    $\xymatrix{\int_{\circ}\Phi\ar[r]^{T}\ar[d]_{\Pi_{\Phi}} & \int_{\circ}\Psi\ar[d]^{\Pi_{\Psi}}\\
    \CB\ar[r]^{\Sigma} & \CC\;,\\
    }$	
    \end{center}
    where the functor $T$ maps a morphism $(u,f):(a,x)\to (b,y)$ to 
    $$T(u,f)=(\Sigma u,\tau_b(f)): T(a,x)= (\Sigma a, \tau_a(x))\lra T(b,y)=(\Sigma b,\tau_b(y));$$
    note that the naturality of $\tau$ makes $\tau_b(f)$ have the correct domain, namely  $\tau_b(\Phi u(x))=\Psi(\Sigma u)(\tau_a(x))$.
    
     Given a 2-cell $(\sigma,\mu):(\Sigma, \tau)\Lra (\Sigma',\tau')$ in $\LocSmallCats//\SmallCats$, we need to define a natural transformation $\beta:T\to T'$, where $(\Sigma,T)=\int_{\circ}(\Sigma,\tau),\; (\Sigma',T')=\int_{\circ}(\Sigma',\tau')$, that satisfies $\Pi_{\Psi}\beta=\sigma\Pi_{\Phi}$. To this end, for $(a,x)\in\int_{\circ}\Phi$, we put $\beta_{(a,x)}=(\sigma_a,(\mu_a)_x)$, which is a well-defined  morphism $T(a,x)\to T'(a,x)$ in $\int_{\circ}\Psi$ since $(\mu_a)_x$ is a morphism $\Psi\sigma_a(\tau_a(x))\to\tau'_a(x)$ in $\Psi(\Sigma'a)$.
\end{proof}

\subsection{The extended Guitart adjunction}

We are now ready to prove that the restriction of the 2-functor $\int_{\circ}:\LocSmallCats//\SmallCats\to\LocSmallCats^{\EuRoman{2}}$ to $\LocSmallCats/\SmallCats$ is left adjoint to ${\sf Diag}^{\circ}$ of Proposition \ref{2functorsDiagGr}:

\begin{thm}\label{fundamental}\; $\int_{\circ}\dashv {\sf Diag}^{\circ}:\LocSmallCats^{\EuRoman 2}\to \LocSmallCats/\SmallCats$ is an adjunction of 2-functors.
\end{thm}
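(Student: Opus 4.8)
The plan is to exhibit, $2$-naturally in $\Phi\in\LocSmallCats/\SmallCats$ and $P\in\LocSmallCats^{\EuRoman 2}$, an isomorphism of hom-categories
$$\LocSmallCats^{\EuRoman 2}\bigl(\textstyle{\int}_{\circ}\Phi,\,P\bigr)\;\cong\;(\LocSmallCats/\SmallCats)\bigl(\Phi,\,D^P\bigr),$$
thereby generalizing the bijective functors $\widehat{\Box},\widecheck{\Box}$ of Theorem \ref{Guitart} (which is recovered upon taking $P=(\CX\to\EuRoman 1)$). Writing $P:\CE\to\CB$ and $\Phi:\CD\to\SmallCats$, an object of the left-hand category is a commuting square $(S,T):\Pi_{\Phi}\to P$, with $S:\CD\to\CB$, $T:\int_{\circ}\Phi\to\CE$ and $PT=S\Pi_{\Phi}$; an object of the right-hand category is a functor $\Sigma:\CD\to{\sf Diag}^{\circ}(P)$ with $D^P\Sigma=\Phi$.

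First I would describe the two mutually inverse passages on objects, by the same formulae as in the proof of Theorem \ref{Guitart}, now also recording the base component. Given $\Sigma$, write $\Sigma a=(Sa,\Phi a,X_a)$ with $Sa:=B^P\Sigma a$ and $X_a:\Phi a\to\CE_{Sa}$, and $\Sigma u=(Su,\Phi u,\varphi^u)$ for $u:a\to b$, where $\varphi^u$ lies over $Su$, i.e. $P\varphi^u=\Delta(Su)$. Then I set $\check{\Sigma}:=T$ by $T(a,x)=X_a x$ and $T(u,f)=X_b(f)\cdot\varphi^u_x$ for $(u,f):(a,x)\to(b,y)$, together with $S=B^P\Sigma$; since $\varphi^u_x$ lies over $Su$ and $X_b(f)$ over $1_{Sb}$, one has $PT=S\Pi_{\Phi}$. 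Conversely, given $(S,T)$, I define $\hat{T}:=\Sigma$ by $\Sigma a=(Sa,\Phi a,T_a)$ with $T_a x=T(a,x)$, $T_a(h)=T(1_a,h)$, and $\Sigma u=(Su,\Phi u,\varphi^u)$ with $\varphi^u_x=T(\delta^u_x)=T(u,1_{\Phi u(x)})$. The two passages are mutually inverse because every morphism of $\int_{\circ}\Phi$ factors as $(u,f)=J_b(f)\cdot\delta^u_x$ (the cocartesian–vertical factorization used in Corollary \ref{universalchar dualGr}), which forces $T(u,f)=X_b(f)\cdot\varphi^u_x$ and closes the loop.

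Next I would treat the $2$-cells. A $2$-cell $\sigma:\Sigma\Rightarrow\Sigma'$ in $\LocSmallCats/\SmallCats$ has components $\sigma_a=(\alpha_a,1_{\Phi a},\rho^a)$, the middle entry being the identity since $D^P\sigma=1_{\Phi}$, with $\alpha_a:Sa\to S'a$ and $\rho^a$ over $\alpha_a$, i.e. $P\rho^a=\Delta\alpha_a$; no modification data intervenes, because in the sub-$2$-category $\LocSmallCats/\SmallCats$ the $\tau$-component is trivial. I would send $\sigma$ to the $2$-cell $(\alpha,\beta)$ with $\alpha=B^P\sigma$ and $\beta_{(a,x)}=\rho^a_x$; the identity $P\rho^a=\Delta\alpha_a$ yields exactly $P\beta=\alpha\Pi_{\Phi}$, and naturality of $\beta$ follows from naturality of $\sigma$ via the same factorization. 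The inverse passage reads $\alpha$ and the $\rho^a_x$ off $(\alpha,\beta)$, and I would check that both assignments respect vertical composition and identities of the hom-categories, so that we obtain a genuine isomorphism of categories, not merely a bijection on objects.

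The remaining work, and the main obstacle, is the bookkeeping: the functoriality of $\check{\Sigma}$, the naturality of $\varphi^u$ and of $\beta$, and above all the $2$-naturality of the displayed isomorphism in both variables, i.e. its compatibility with $\int_{\circ}$ and ${\sf Diag}^{\circ}$ acting on morphisms and $2$-cells as in Proposition \ref{2functorsDiagGr}. I expect no conceptual difficulty, since all formulae specialize to ones already validated in Theorem \ref{Guitart}; the one genuinely new point is that $P$ is an \emph{arbitrary} functor, so---unlike in Proposition \ref{Diag as Gr}---we cannot identify ${\sf Diag}^{\circ}(P)$ with a Grothendieck category and must argue directly, using only the defining condition $P\varphi=\Delta u$ on morphisms of ${\sf Diag}^{\circ}(P)$, which dovetails precisely with the square condition $PT=S\Pi_{\Phi}$ defining morphisms in $\LocSmallCats^{\EuRoman 2}$.
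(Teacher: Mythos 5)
Your proposal is correct and follows essentially the same route as the paper's proof: the same hom-category isomorphism $\widehat{\Box},\widecheck{\Box}$ generalizing Theorem \ref{Guitart}, the same formulae $T(u,f)=\Sigma b(f)\cdot\varphi^u_x$ and $\varphi^u=T\delta^u$, and the same use of the ($\Pi_{\Phi}$-cocartesian, vertical)-factorization $(u,f)=J_b(f)\cdot\delta^u_x$ to establish mutual inverseness, with the 2-cell assignments $\sigma_a=(\alpha_a,{\rm Id}_{\Phi a},\rho^a)\leftrightarrow\beta_{(a,x)}=\rho^a_x$ handled identically. The remaining verifications you defer are exactly the ones the paper also labels routine.
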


\begin{proof}
In generalization of the adjunction established in the proof of Theorem \ref{Guitart}, for all functors $\Phi: \CB\to\SmallCats,\;Q:\CF\to\CC$ we must, naturally in  $\Phi$ and $Q$,  establish functors
\begin{center}
$\xymatrix{\LocSmallCats^{\EuRoman 2}({\Pi_{\Phi},Q)}\ar@/^0.2pc/[r]^{\widehat{\Box}\quad} & (\LocSmallCats/\SmallCats)(\Phi,D^Q)\ar@/^0.2pc/[l]^{\widecheck{\Box}\quad }\\
}$ 	
\end{center}
that are inverse to each other. In doing so, we follow the notation used in the proof of Proposition \ref{2functorsDiagGr}, with slight adjustments. In particular, we write $(c,Z)$ instead of $(c,\CK,Z)$ for objects of ${\sf Diag}^{\circ}(Q)$.

``$\to$'': First, given the commutative square on the left, we must define the functor $\Sigma=\widehat{(S,T)}$ of the commutative triangle on the right:
\begin{center}
$\xymatrix{\int_{\circ}\Phi\ar[d]_{\Pi_{\Phi}}\ar[r]^{\; T} & \CF\ar[d]^{Q} & & \CB\ar[rr]^{\Sigma\quad}\ar[rd]_{\Phi} & & {\sf Diag}^{\circ}(Q)\ar[dl]^{D^Q}\\
\CB\ar[r]^{S} & \CC & & & \SmallCats & \\
}$	
\end{center}
$\Sigma$ sends an object $a\in\CB$ to the ${\sf Diag}^{\circ}(Q)$-object $(Sa,T_a)$, with the functor 
$$T_a:\Phi a\to \CF_{Sa}\;,\;\;(f:x\to x')\mapsto (T(1_a,f):T(a,x)\to T(a,x')),$$
and a morphism $u:a\to b$ in $\CB$ is sent to the ${\sf Diag}^{\circ}(Q)$-morphism $$\Sigma u=(Su, \Phi u,T\delta^u): \Sigma a=(Sa,T_a)\longrightarrow\Sigma b=(Sb,T_b),$$
where $\delta^u_x=(u,1_{\Phi u(x)}):(a,x)\to(b,\Phi u(x))$ is the $\Pi_{\Phi}$-cocartesian lift of $u$ at $x\in\Phi a$. The commutativity of the square above guarantees $Q(T\delta^u)=S\Pi_{\Phi}\delta^u=\Delta Su$, as required.
\begin{center}
$\xymatrix{\Phi a\ar[rr]^{\Phi u}\ar[d]_{T_a}\ar[rd]_{J_a}^{\qquad\delta^u:\Lra} & & \Phi b\ar[d]^{T_b}\ar[ld]^{J_b}\\
\CF_{Sa}\ar[rd]_{J_{Sa}} & {\int}_{\circ}\Phi\ar[d]^{T} & \CF_{Sb}\ar[ld]^{J_{Sb}}\\
& \CF & \\
}$	
\end{center}
We note that the emerging functor $\Sigma$ satisfies $D^Q\Sigma=\Phi$, as required. To establish the functoriality of $\widehat{\Box}$,
for a 2-cell $(\alpha,\beta):(S,T)\Lra(S',T')$ one defines the natural transformation 
$$\sigma=\widehat{(\alpha,\beta)}: \Sigma=\widehat{(S,T)}\lra\Sigma'=\widehat{(S',T')}$$ by
$\sigma_a=(\alpha_a, {\rm Id}_{\Phi a},\beta_{(a,-)}):(Sa,T_a)\lra(S'a,T'_a)$ in ${\sf Diag}^{\circ}(Q)$, with $\beta_{(a,x)}:T_ax\to T'_ax\;(x\in \Phi a)$. Note that one has $D^Q\sigma=1_{\Phi}$, as required. 
\medskip

``$\leftarrow$'': Conversely now, given the functor $\Sigma$ of the commutative triangle on the right of the above diagram, we must define the pair of functors $(S,T)=\widecheck{\Sigma}$, making the square on the left commute. With $B^Q: {\sf Diag}^{\circ}(Q)\to \CC$ as in \ref{Diag(P)}, we put $S=B^Q\Sigma$. For objects $a\in \CB$ and $x\in\Phi a$, having the functor $\Sigma a: \Phi a\to\CF_{Sa}$, we put $T(a,x)=\Sigma a(x)$. For a morphism $u:a\to b$ in $\CB$, we may write the ${\sf Diag}^{\circ}(Q)$-morphism $\Sigma u:\Sigma a\to \Sigma b$ as
$$\Sigma u= (Su,\Phi u,\varphi^u): \Sigma a\to\Sigma b, \;\text{with}\; \varphi^u:J_{Sa}\,\Sigma a\to J_{Sb}\,\Sigma b\,\Phi u\;\text{and}\; Q\varphi^u=\Delta Su.$$
For $(u,f):(a,x)\to(b,y)$ in $\int_{\circ}\Phi$, we can now define $T(u,f):T(a,x)\to T(b,y)$ as the composite arrow
\begin{center}
$\xymatrix{\Sigma a(x)\ar[r]^{\varphi^u_x\quad} & \Sigma b(\Phi u(x))\ar[r]^{\quad\Sigma b(f)} & \Sigma b(y)\;.
}$	
\end{center}
Its second morphism is $Q$-vertical since the functor $\Sigma b$ takes its values in $\CF_{Sb}$. Consequently,
$$QT(u,f)=Q(\varphi^u_x)=Su=S\Pi_{\Phi}(u,f),$$
so that we indeed have $QT=S\Pi_{\Phi}$. Clearly, $\widecheck{\Box}$ becomes a functor since, for a 2-cell $\sigma:\Sigma\Lra\Sigma'$, we may define $(\alpha,\beta)=\widecheck{\sigma}:(S,T)\!=\!\widecheck{\Sigma}\Lra (S',T')\!=\!\widecheck{\Sigma'}$, by putting $\alpha=B^Q\sigma$, then writing $\sigma_a$ as $(\alpha_a,\,{\rm Id}_{\Phi a},\,\beta^a\!:\!J_{Sa}\Sigma a\to J_{S'a}\Sigma' a)$ and finally setting $\beta_{(a,x)}=\beta^a_x$, for all objects $(a,x)\in {\sf Gr}_{\circ}(\Phi)$.

\medskip

Finally, we must confirm that the functors $\widehat{\Box},\;\widecheck{\Box}$ are inverse to each other. First, given $(S,T):\Pi_{\Phi}\to Q$ in $\LocSmallCats^{\EuRoman 2}$, let $\Sigma =\widehat{(S,T)}$ and $(\tilde{S},\tilde{T})=\widecheck{\Sigma}$. Then, trivially, $\tilde{S}=B_Q\Sigma=S$, and for all $(a,x)\in\int_{\circ}\Phi$ one has $\tilde{T}(a,x)=\Sigma a(x)=T_a(x)=T(a,x)$; likewise, $\tilde{T}(1_a,h)=T(1_a,h)$ for every morphism $h$ in $\Phi a$.
For an arbitrary morphism $(u,f):(a,x)\to (b,y)$ in $\int_{\circ}(\Phi)$, with its (cocartesian,vertical)-factorization $(u,f)=(1_b,f)\cdot(u,1_{\Phi u(x)})=J_b(f)\cdot\delta^u_x$, and with $\varphi^u$ as in ``$\leftarrow$'', one obtains
$$\tilde{T}(u,f)=\Sigma b(f)\cdot \varphi^u_x=T_b(f)\cdot T\delta^u_x=T(J_b(f)\cdot\delta^u_x)=T(u,f).$$
This shows $\tilde{T}=T$.
 Conversely, given $\Sigma:\Phi\to D^Q$ in $\LocSmallCats/\SmallCats$, one argues very similarly that the transitions $\Sigma\mapsto \widecheck{\Sigma}=(S,T)\mapsto\tilde{\Sigma}=\widehat{(S,T)}$ actually return $\Sigma$. Indeed, having $u: a\to b$ and writing $\Sigma u=(S,\Phi u, \varphi^u)$ as above, we deduce
 $$T\delta^u_x=T(u,1_{\Phi u(x)})=\Sigma b(1_{\Phi u(x)})\cdot \varphi^u_x=\varphi^u_x,$$ 
 for all $x\in\Phi a$ and, hence, $\tilde{\Sigma} u=(Su, \Phi u,T\delta^u)=(Su,\Phi u, \varphi^u)=\Sigma u$.
 
  This then shows that $\widehat{\Box},\, \widecheck{\Box}$ are inverse to each other on the objects of their (co)domains. Showing that the same happens for the morphisms ({\em i.e.}, the 2-cells in $\LocSmallCats^{\EuRoman 2}$ and $\LocSmallCats/\SmallCats$) involves only easy routine checks.
\end{proof}
\begin{rem}
The Guitart adjunction of Theorem \ref{Guitart}
follows from the extended Guitart adjunction of Theorem \ref{fundamental}, with the help of (the quite trivial) adjunction
$${\rm Dom}\dashv \;!_{\Box}:\LocSmallCats\to\LocSmallCats^{\EuRoman 2},$$
where the right adjoint to the domain functor ${\rm Dom}$ (which exhibits $\LocSmallCats^{\EuRoman 2}$ as fibered over $\LocSmallCats)$ assigns to a category $\CX$ the functor $!_{\CX}:\CX\to{\EuRoman 1}$ (which happens to be a bifibration), considered as an object of $\LocSmallCats^{\EuRoman 2}$. Post-composing this adjunction of 2-functors with the extended Guitart adjunction produces the Guitart adjunction, as the composite adjunction 
\begin{center}
$\xymatrix{\LocSmallCats\ar@/^0.5pc/[rr]^{!_{\Box}} & \top & \LocSmallCats^{\EuRoman 2}\ar@/^0.5pc/[rr]^{\sf Diag^{\circ}\quad}\ar@/^0.5pc/[ll]^{\rm Dom} & \top & \LocSmallCats/\SmallCats\;.\ar@/^0.5pc/[ll]^{\int_{\circ}\quad}
}$
\end{center}	
\end{rem}

\section{The Grothendieck equivalence via the extended Guitart adjunction}
In this section we show how the 2-equivalence of split cofibrations $P:\CE\to\CB$ and $\LocSmallCats$-valued functors $\Phi:\CB\to\LocSmallCats$, with functorial and natural changes of the base category $\CB$ permitted, may be obtained from the fundamental adjunction of Theorem \ref{fundamental}. Initially we will restrict ourselves to the consideration of split cofibrations with small fibres. We also formulate the dualized statement for split fibrations.

\subsection{Strictification of lax-commutative diagrams}\label{laxintostrict}
As the Grothendieck equivalence for strict cofibrations with small fibres involves the 2-category $\LocSmallCats//\SmallCats$, rather than its subcategory $\LocSmallCats/\SmallCats$, our first goal is to map the latter 2-category into the former, with a right adjoint to the inclusion functor. To explain the importance of this step we start with the observation, that the left-adjoint 2-functor $\int_{\circ}:\LocSmallCats/\SmallCats\to \LocSmallCats^{\EuRoman 2}$ of Theorem \ref{fundamental}, assigning to the functor $\Phi:\CB\to\SmallCats$ the split cofibration $\Pi_{\Phi}:{\sf Gr}_{\circ}(\Phi)\to\CB$, actually takes values in the (non-full) sub-2-category
$$\EuRoman{SCoFIB}_{\rm sf}$$
of $\LocSmallCats^{\EuRoman 2}$. Its objects are split cofibrations with small fibres, and its morphisms $(S,T):P\to Q$ are morphisms of split cofibrations $P:\CE\to\CB,\;Q:\CF\to\CC$, {\em i.e.,} $\LocSmallCats^{\EuRoman 2}$-morphisms that respect the cocleavages: 
$$T_bu_!=(Su)_!T_a\quad\text{and}\quad T\delta^u=\delta^{Su}T_a,$$ for all $u:a\to b$ in $\CB$, where $T_a:\CE_a\to\CF_{Sa}$ is a restriction of $T$; 2-cells are as in $\LocSmallCats^{\EuRoman 2}$ (see \ref{2categories}).
As a consequence, the functor $T:\CE\to\CF$ must transform the designated ($P$-cocartesion,\;$P$-vertical)-factorization of a morphism $f:x\to y$ in $\CE$ into the designated ($Q$-cocartesian,\;$Q$-vertical)-factorization of $Tf:Tx\to Ty$ in $\CF$:
\begin{center}
$\xymatrix{T[\,x\ar[r]^{\delta^{Pf}_x\quad} & (Pf)_!(x)\ar[r]^{\nu_f\quad} & y\,]\quad=\quad[\;Tx\ar[r]^{\delta^{SPf}_{Tx}} & (SPf)_!(Tx)\ar[r]^{\quad\nu_{Tf}} & Ty\;].
}$	
\end{center}
Even when we consider the extension of $\int_{\circ}$ to $\LocSmallCats//\SmallCats$ as in Proposition \ref{2functorsDiagGr}, the values still lie in $\EuRoman{SCoFIB}_{\rm sf}$, as one confirms easily. So, we have the commutative diagram
\begin{center}
$\xymatrix{ & \LocSmallCats//\SmallCats\ar[ld]_{\int_{\circ}} & \\
\EuRoman{SCoFIB}_{\rm sf} & & \LocSmallCats/\SmallCats\ar[ll]^{\int_{\circ}}\ar[ul]_{\rm Inclusion}
}$	
\end{center}
in which the bottom 2-functor has a right adjoint, $\sf{Diag}^{\circ}$, as a consequence of Theorem \ref{fundamental}. We want to show that the extended Guitart adjunction factors through $\LocSmallCats//\SmallCats$, leading to a non-trivial factorization of $\sf{Diag}^{\circ}$ as a composite of right-adjoint 2-functors. To this end, we now prove:

\begin{prop}\label{Reducing dimension}
The inclusion 2-functor $\LocSmallCats/\SmallCats\to \LocSmallCats//\SmallCats$ has a right adjoint, $\mathbb S{\rm trict}$, given by the strictification of lax-commutative diagrams.	
\end{prop}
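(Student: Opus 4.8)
The plan is to construct the right adjoint $\mathbb{S}\mathrm{trict}\from\LocSmallCats//\SmallCats\to\LocSmallCats/\SmallCats$ explicitly on objects and morphisms, and then verify the universal property directly by exhibiting a natural bijection on hom-categories. The key observation is that strictifying a lax-commutative square is, at bottom, the very same procedure used to strictify a lax-commutative triangle in Proposition~\ref{LackLemma}: given an object $\Phi\from\CB\to\SmallCats$ of $\LocSmallCats//\SmallCats$, I would define $\mathbb{S}\mathrm{trict}(\Phi)\from\CB'\to\SmallCats$ by replacing each fibre $\Phi a$ with a comma-type category that absorbs the lax data, in such a way that a \emph{lax} morphism $(\Sigma,\tau)\from\Phi\to\Psi$ (with the genuine natural transformation $\tau\from\Phi\to\Psi\Sigma$) into the strict object $\mathbb{S}\mathrm{trict}(\Psi)$ corresponds to a \emph{strict} morphism $\Phi\to\Psi$ in $\LocSmallCats/\SmallCats$, i.e.\ one for which the triangle commutes on the nose.

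First I would pin down the object assignment. Since objects of $\LocSmallCats//\SmallCats$ are functors $\Phi\from\CB\to\SmallCats$, and the inclusion keeps the base fixed while forcing $\tau=1$, the natural guess is that $\mathbb{S}\mathrm{trict}(\Psi)$ should be built so that its base category is enlarged (or its fibres are replaced) to encode all the lax comparison data of maps \emph{into} $\Psi$. Concretely, I expect $\mathbb{S}\mathrm{trict}$ to take $\Psi\from\CC\to\SmallCats$ to the functor $\int_{\circ}\Psi\to\SmallCats$ whose value records the strictified fibres --- morally the mate construction of Remark~\ref{Diagcirc} applied fibrewise --- so that a strict square over $\mathbb{S}\mathrm{trict}(\Psi)$ unwinds precisely to the modification data $(\sigma,\mu)$ of a lax morphism over $\Psi$. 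I would then define the action on the lax morphisms $(\Sigma,\tau)$ and on $2$-cells $(\sigma,\mu)$ of $\LocSmallCats//\SmallCats$, checking that the modification $\mu$ is exactly what is needed to make the assignment functorial.

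Next I would establish the adjunction. The cleanest route is to produce, for every strict object $\Phi\in\LocSmallCats/\SmallCats$ and every lax object $\Psi\in\LocSmallCats//\SmallCats$, an isomorphism of hom-categories
\begin{equation*}
(\LocSmallCats//\SmallCats)(\mathrm{Incl}\,\Phi,\Psi)\;\cong\;(\LocSmallCats/\SmallCats)(\Phi,\mathbb{S}\mathrm{trict}\,\Psi),
\end{equation*}
natural in both variables and compatible with the full $2$-categorical structure (so that it is an isomorphism of hom-\emph{categories}, not merely of hom-sets). I would exhibit the two passages directly: given a lax morphism $(\Sigma,\tau)\from\Phi\to\Psi$, the component $\tau_a\from\Phi a\to\Psi\Sigma a$ together with $\Sigma$ assembles into a strict morphism $\Phi\to\mathbb{S}\mathrm{trict}\,\Psi$ (the lax comparison $\tau$ being swallowed into the enlarged codomain), and conversely a strict morphism unpacks into such a pair. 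On $2$-cells, a pair $(\sigma,\mu)$ on the lax side must correspond to a single natural transformation $\sigma'$ satisfying $(\mathbb{S}\mathrm{trict}\,\Psi)\sigma'=1$ on the strict side, with the modification $\mu$ being precisely the extra datum that the strict $2$-cell condition records once the base has been enlarged.

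The main obstacle I anticipate is \emph{not} the bijection on objects of the hom-categories, which should follow the pattern of Proposition~\ref{LackLemma} almost verbatim, but rather the bookkeeping on $2$-cells. Because $\LocSmallCats//\SmallCats$ has the genuinely richer $2$-categorical structure flagged in the \emph{Important Note} of Section~\ref{2categories} --- its $2$-cells $(\sigma,\mu)$ carry a modification $\mu$ invoking the $2$-cells of $\SmallCats$ --- I must check that this modification data is faithfully and fully transported to the strict side, where $2$-cells are merely natural transformations $\sigma'$ with $(\mathbb{S}\mathrm{trict}\,\Psi)\sigma'=1_\Phi$. Verifying that the compatibility condition on $\mu$ (the coincidence of the two transformations $(\Psi\Sigma'u)\mu_a$ and $\mu_b(\Phi u)$ displayed in Section~\ref{2categories}) corresponds exactly to naturality of $\sigma'$ over the enlarged base is the delicate point, and it is here that the precise choice of $\mathbb{S}\mathrm{trict}$ on objects must be made to fit. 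Once the correspondence is seen to respect vertical and horizontal composition of $2$-cells, the triangle identities for the unit and counit are routine, and the $2$-adjunction follows.
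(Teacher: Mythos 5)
Your overall strategy---upgrade the strictification of Proposition~\ref{LackLemma} from the $1$-category $\CX$ to the $2$-category $\SmallCats$, then verify an isomorphism of hom-categories in which the modification $\mu$ becomes the fibre-direction component of a strict $2$-cell---is exactly the paper's, and your diagnosis of where the delicacy lies (transporting the $2$-cells $(\sigma,\mu)$) is also on target. However, your concrete object assignment is wrong, and the argument cannot be run with it. You propose that $\mathbb{S}{\rm trict}(\Psi)$ should be a functor with domain $\int_{\circ}\Psi$. Test this against the universal property with $\CB={\EuRoman 1}$ and $\Phi\from{\EuRoman 1}\to\SmallCats$ picking out a small category $\CI$: a strict morphism $\Phi\to\mathbb{S}{\rm trict}(\Psi)$ is then just an object of the domain of $\mathbb{S}{\rm trict}(\Psi)$ lying over $\CI$, whereas a lax morphism $\Phi\to\Psi$ is a pair consisting of an object $c\in\CC$ and a functor $\CI\to\Psi c$. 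So the fibre of $\mathbb{S}{\rm trict}(\Psi)$ over $\CI$ must be (isomorphic to) the category of all such pairs, for \emph{every} small $\CI$; the category $\int_{\circ}\Psi$, whose objects are pairs $(c,x)$ with $x\in\Psi c$, supplies only the case $\CI={\EuRoman 1}$ and is far too small.

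The correct choice is the lax comma category $\SmallCats\!\Downarrow\!\Psi$: objects are triples $(c,\CI,X)$ with $X\from\CI\to\Psi c$, morphisms $(u,F,\varphi)$ are squares commuting up to a natural transformation $\varphi\from(\Psi u)X\to YF$, and the structure functor is ${\rm dom}_{\Psi}\from\SmallCats\!\Downarrow\!\Psi\to\SmallCats$, $(c,\CI,X)\mapsto\CI$. (Equivalently, in the spirit of Proposition~\ref{Diag as Gr}, this is ${\sf Diag}^{\circ}(\Pi_{\Psi})$, i.e.\ a Grothendieck construction applied not to $\Psi$ itself but to $c\mapsto{\sf Diag}^{\circ}(\Psi c)$.) With that correction the correspondence works just as you describe: $(\Sigma,\tau)$ goes to $a\mapsto(\Sigma a,\Phi a,\tau_a)$, which lies strictly over $\Phi$ since ${\rm dom}_{\Psi}$ returns the domain $\Phi a$ of $\tau_a$, and a $2$-cell $(\sigma,\mu)$ goes to the natural transformation with components $(\sigma_a,{\rm Id}_{\Phi a},\mu_a)$, the modification condition on $\mu$ being precisely its naturality. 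Your appeal to the mate construction of Remark~\ref{Diagcirc} is a red herring here; the relevant precedent is Proposition~\ref{LackLemma} with lax squares in place of strictly commuting triangles.
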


\begin{proof}
As an ordinary functor, $\mathbb S{\rm trict}$ may be described by a slight adjustment of the 2-functor ${\rm Strict}$ established in Proposition \ref{LackLemma}, where the ordinary category $\CX$ is now taken to be the 2-category $\SmallCats$. As a result, the strictification needs to account for the greater supply of 2-cells in $\LocSmallCats//\SmallCats$ than that in ${\sf DIAG}^{\circ}(\SmallCats)$. The action of $\mathbb S{\rm trict}$ on objects and morphisms is now visualized by
\begin{center}
$\xymatrix{\CB\ar[rr]^S\ar[dr]_{\Phi}^{\;\;\tau:\Lra} & & \CC\ar[dl]^{\Psi} & \longmapsto & \SmallCats\! \Downarrow\! \Phi\ar[rr]^{\mathbb S{\rm trict}(S,\tau)}\ar[rd]_{{\rm dom}_{\Phi}} & & \SmallCats\! \Downarrow\! \Psi\ar[dl]^{{\rm dom}_{\Psi}} \\
& \SmallCats & & \mathbb S{\rm trict} & & \SmallCats & \\}$
\end{center}
with the {\em lax comma category} $\SmallCats\!\Downarrow\!\Phi$ replacing the ordinary comma category $\SmallCats\!\downarrow\! \Phi$ that (with $\CX$ instead of $\SmallCats $) was considered in Proposition \ref{LackLemma}. We write the objects of $\SmallCats\!\Downarrow\!\Phi$ in the form $(a,\CI,X)$ with $a\in\CB$ and $X:\CI\to\Phi a$ in $\SmallCats$ and let the functor $\mathbb S{\rm trict}(S,\tau)$ map a morphism  $(u,F,\varphi):(a,\CI,X)\to (b,\CJ,Y)$ as is indicated by 
\begin{center}
$\xymatrix{\CI\ar[r]^F\ar[d]_X^{\quad \varphi:\Longrightarrow}& \CJ\ar[d]^Y & \longmapsto & \CI\ar[r]^F\ar[d]_{\tau_aX}^{\quad\tau_b\varphi:\Longrightarrow} & \CJ\ar[d]^{\tau_bY}\\
\Phi a\ar[r]^{\Phi u} & \Phi b & \mathbb S{\rm trict}(S,\tau) & \Psi(Sa)\ar[r]^{\Psi(Su)} & \Psi(Sb)\\
}$
\end{center}
$\mathbb S{\rm trict}$ maps a 2-cell $(\sigma,\mu):(S,\tau)\to(S',\tau')$ to the natural transformation $\mathbb S{\rm trict}(\sigma,\mu):{\mathbb S\rm trict}(S,\tau)\to\mathbb S{\rm trict}(S',\tau')$, defined at the object $(a,\CI,X)\in \SmallCats\!\Downarrow\!\Phi$ as the morphism 
$$(\mathbb S{\rm trict}(\sigma,\mu))_{(a,\CI,X)}=(Sa,{\rm Id}_{\CI},\mu_aX):(Sa,\CI,\tau_aX)\to(S'a,\CI,\tau'_aX)$$ in $\SmallCats\!\Downarrow\!\Psi$.
For any functors $\Phi, \Psi$ as above,
the needed adjunction isomorphism
$$(\LocSmallCats//\SmallCats)(\Phi,\Psi)\cong(\LocSmallCats/\SmallCats)(\Phi,{\rm dom}_{\Psi})$$
of categories associates with $(S,\tau):\Phi\to\Psi$ the functor 
$$(S,\tau)^{\sharp}:\CB\to\SmallCats\!\Downarrow\!\Psi,\quad (u:a\to b)\mapsto ((Su,\Phi u,1_{\tau_b\Phi u}):(Sa,\Phi a,\tau_a)\to(Sb,\Phi b,\tau_b))\;;$$
a 2-cell $(\sigma,\mu):(S,\tau)\to(S',\tau')$ corresponds to the natural transformation $(\sigma,\mu)^{\sharp}$, defined at every $a\in\CB$ by
$$(\sigma,\mu)^{\sharp}_a=(\sigma_a, {\rm Id}_{\Phi a},\mu_a):(Sa,\Phi a,\tau_a)\to(S'a,\Phi a,\tau'_a).$$
Note that $(\sigma,\mu)^{\sharp}$ is indeed a 2-cell in $\LocSmallCats/\SmallCats$ since ${\rm dom}_{\Psi}(\sigma,\mu)^{\sharp}=1_{\Phi}$. We omit the details of all the lengthy, but routine verifications.
\end{proof}

\subsection{Replacing diagrams by fibres}

When $P:\CE\to\CB$ is a split cofibration with small fibres, considered as an object of ${\EuRoman{SCoFIB}}_{\rm sf}$, rather than mapping it with the 2-functor ${\sf Diag}^{\circ}$, we may now map $P$ to its (covariant) {\em fibre decomposition functor}
$${\sf Fib}_{\circ}(P)=\Phi_P: \CB\to\SmallCats, \quad u_!:\CE_a\to\CE_b=P^{-1}(b),$$
considered as an object of $\LocSmallCats//\SmallCats$. 

\begin{prop}\label{2functorFib}
The assignment $P\longmapsto {\sf Fib}_{\circ}(P)$ extends to a
	 2-functor
${\sf Fib}_{\circ}:\EuRoman{SCoFIB}_{\rm sf}\lra\LocSmallCats//\SmallCats.$
\end{prop}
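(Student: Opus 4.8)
The plan is to define ${\sf Fib}_{\circ}$ on 1-cells and 2-cells by reading off, from a cocleavage-preserving morphism, its action on fibres together with the \emph{vertical parts} of its (possibly non-vertical) components, and then to check the 2-functoriality axioms, every one of which reduces to the uniqueness of cocartesian/vertical factorizations in a split cofibration. On a 1-cell $(S,T)\from P\to Q$ of $\EuRoman{SCoFIB}_{\rm sf}$ (with $P\from\CE\to\CB$, $Q\from\CF\to\CC$) I set ${\sf Fib}_{\circ}(S,T)\DefEq(S,\tau)$, where $\tau_a\DefEq T_a\from\CE_a\to\CF_{Sa}$ is the fibre restriction of $T$. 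That $\tau$ is a natural transformation $\Phi_P\to\Phi_Q S$ is \emph{exactly} the first cocleavage-preservation identity $T_b u_!=(Su)_!T_a$ recorded in \ref{laxintostrict}, so $(S,\tau)$ is a well-defined $\LocSmallCats//\SmallCats$-morphism.

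On a 2-cell $(\al,\be)\from(S,T)\Ra(S',T')$ (so $\al\from S\to S'$, $\be\from T\to T'$ with $Q\be=\al P$) I put $\sigma\DefEq\al$ and define the required modification $\mu\from\Phi_Q\sigma\cdot\tau\to\tau'$ componentwise: for $x\in\CE_a$ the component $\be_x\from Tx\to T'x$ lies over $\al_a$, hence factors uniquely through the $Q$-cocartesian lift $\delta^{\al_a}_{T_ax}$ as $\be_x=(\mu_a)_x\cdot\delta^{\al_a}_{T_ax}$, with $(\mu_a)_x$ a $Q$-vertical arrow $(\al_a)_!T_ax\to T'_ax$ in $\CF_{S'a}$. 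These $(\mu_a)_x$ are the prescribed components of $\mu_a$.

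I would then carry out the verifications in the following order. \textbf{(i)} Each $\mu_a$ is natural in $x$: for vertical $h\from x\to x'$ one compares the two factorizations of $\be_{x'}\cdot Th=T'h\cdot\be_x$, using naturality of the cocleavage $\delta^{\al_a}$ and left-cancellability against the cocartesian $\delta^{\al_a}_{T_ax}$. \textbf{(ii)} $\mu$ is a modification in the sense of \ref{2categories}: for $u\from a\to b$ the whiskerings $\Phi_Q(S'u)\,\mu_a$ and $\mu_b\,\Phi_P(u)$ agree. Here one first checks, via naturality of $\al$ together with functoriality and splitness of the cocleavages, that both have domain $(\al_b)_!T_bu_!$ and codomain $T'_bu_!$, and then establishes equality pointwise by factoring $\be$ at a cocartesian lift $\delta^u_x$ in $\CE$ and invoking that both $T$ and $T'$ preserve cocartesian lifts (second cocleavage condition of \ref{laxintostrict}), closing the argument by uniqueness of factorizations in $\CF$. \textbf{(iii)} Preservation of identities and of composition of 1-cells: the identity 1-cell gives $\tau_a=\mathrm{Id}_{\CE_a}$, the identity 2-cell gives $\be_x=1_{Tx}$, whose vertical part is trivial because $\delta^{1_{Sa}}_{Tx}=1_{Tx}$ by normality; and $(T'T)_a=T'_{Sa}T_a$ yields $\tau''_a=\tau'_{Sa}\tau_a$, matching composition in $\LocSmallCats//\SmallCats$. \textbf{(iv)} Preservation of vertical and horizontal composition of 2-cells: for verticals one rewrites $\be'_x\cdot\be_x=\bigl[(\mu'_a)_x\cdot(\al'_a)_!((\mu_a)_x)\bigr]\cdot\delta^{\al'_a\cdot\al_a}_{T_ax}$ (using naturality of $\delta^{\al'_a}$ and $\delta^{\al'_a}_{(\al_a)_!T_ax}\cdot\delta^{\al_a}_{T_ax}=\delta^{\al'_a\cdot\al_a}_{T_ax}$), so that the vertical part is the composite modification prescribed by $\LocSmallCats//\SmallCats$; the horizontal case is analogous.

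The main obstacle is step \textbf{(ii)}, the modification axiom: it is the single place where all three ingredients---naturality of $\be$, cocleavage preservation by both $(S,T)$ and $(S',T')$, and uniqueness of the cocartesian/vertical factorization---must be combined, and where the bookkeeping of domains and codomains through the naturality of $\al$ and the splitness of the cocleavages is genuinely needed. Steps \textbf{(i)}, \textbf{(iii)}, and \textbf{(iv)} are then routine instances of the same factorization/uniqueness principle, and I would relegate their explicit calculations to the reader.
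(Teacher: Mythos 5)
Your proposal is correct and follows essentially the same route as the paper's proof: the 1-cell assignment by fibre restriction (with naturality of $\tau$ coming from cocleavage preservation), the 2-cell assignment by extracting the $Q$-vertical factor of $\beta_x$ through $\delta^{\alpha_a}_{T_ax}$, and the modification axiom verified by combining naturality of $\alpha$ and $\beta$ with preservation of cocleavages and uniqueness of the (cocartesian, vertical)-factorization. The remaining functoriality checks are likewise relegated to routine verification in the paper, so nothing essential is missing.
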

\begin{proof}
Keeping the notation of \ref{laxintostrict}, we map a morphism $(S,T):P\to Q$ in ${\EuRoman{SCoFIB}}_{\rm sf}$  to the $\LocSmallCats//\SmallCats$-morphism
$${\sf Fib}_{\circ}(S,T)=(S,\tau):\Phi_P\to\Phi_Q,$$
where we define the natural transformation $\tau:\Phi_P\to\Phi_QS$ by restricting the functor $T$, via $\tau_a=T_a:\CE_a\to \CF_{Sa}$; the naturality of $\tau$ follows from $(S,T)$ being a morphism of split cofibrations. For a 2-cell $(\alpha,\beta):(S,T)\to(S',T')$ in $\EuRoman{SOFIB}_{\rm sf}$, we define the 2-cell $${\sf Fib}_{\circ}(\alpha,\beta)=(\alpha,\mu):(S,\tau)\to (S',\tau')={\sf Fib}_{\circ}(S',T')$$ in
$\LocSmallCats//\SmallCats$ by specifying the modification $\mu:\tau\to\tau'$, as follows: for every $a\in \CB$, we define the natural transformation $\mu_a: (\alpha_a)_!\tau_a\to\tau'_a$, by letting $(\mu_a)_x$ be the $Q$-vertical factor of the canonical ($Q$-cocartesian, $Q$-vertical)-factorization of $\beta_x$, for every $x\in\CE_a$:
\begin{center}
$\xymatrix{ & (\alpha_a)_!(Tx)\ar[d]^{(\mu_a)_x} &\\
Tx\ar[r]_{\beta_x}\ar[ru]^{\delta_{Tx}^{\alpha_a}} &T'x & \CF\ar[d]^Q \\
Sa\ar[r]^{\alpha_a} &S'a & \CC  
}$		
\end{center}
Naturality of every $\mu_a$ follows easily from the naturality of $\beta$ and $\delta^{\alpha_a}$; indeed, for every $f:x\to x'$ in $\CE_a$ one has 
\begin{align*}
	T'f\cdot(\mu_a)_x\cdot\delta_{Tx}^{\alpha_a} &= T'f\cdot\beta_x & (\text{definition of }(\mu_a)_x)\\
	& =\beta_{x'}\cdot Tf & (\text{naturality of }\beta)\\
	& = (\mu_a)_{x'}\cdot \delta_{Tx'}^{\alpha_a}\cdot Tf & (\text{definition of }(\mu_a)_{x'})\\
	& =(\mu_a)_{x'}\cdot (\alpha_a)_!(Tf)\cdot\delta_{Tx}^{\alpha_a} & (\text{naturality of }\delta^{\alpha_a}),\\
\end{align*}
which implies the desired equality $T_a'f\cdot(\mu_a)_x=(\mu_a)_{x'}\cdot(\alpha_a)_!(T_af)$ in $\CF_{Sa}$. 

For $\mu$ to qualify as a modification, we must verify that the natural transformations $(S'u)_!\mu_a$ and $\mu_bu_!$ coincide, for all $u:a\to b$ in $\CB$. Indeed, by the naturality of $\alpha$ and the preservation of cocartesian liftings by $T$ and $T'$, they have the common domain $(S'u)_!(\alpha_a)_!T_a=(\alpha_b)_!(Su)_!T_a=(\alpha_b)_!T_bu_!$ and the common codomain $(S'u)_!T'_a=T'_bu_!$. 
Hence,
it remains to be shown that, for all $x\in \CE_a$, we have the equality 
$(S'u)_!((\mu_a)_x)=(\mu_b)_{u_!x}$ in $\CF_{Sa}$, which follows from the following sequence of equalities that may be traced by chasing around this diagram:
\begin{center}
$\xymatrix{Tx\ar[ddd]_{1_{Tx}}\ar[rr]^{\delta_{Tx}^{\alpha_a}}\ar[dr]_{\delta_{Tx}^{Su}=T(\delta_x^u)}\ar[drrr]^{\delta_{Tx}^{S'u\cdot\alpha_a}}_{=\delta_{Tx}^{\alpha_b\cdot Su}} & & (\alpha_a)_!(Tx)\ar[ddd]^{(\mu_a)_x}\ar[rd]^{\delta^{S'u}_{(\alpha_a)_!(Tx)}} & \\
& (Su)_!(Tx)=T(u_!x)\ar[ddd]_{1_{T(u_!x)}}\ar[rr]_{\delta_{T(u_!x)}^{\alpha_b}\qquad\qquad\qquad\qquad}& & (S'u)_!(\alpha_a)_!(Tx)=(\alpha_b)_!(T(u_!x))\ar@/_0.5pc/[ddd]_{(S'u)_!((\mu_a)_x)}\ar@/^0.5pc/[ddd]^{(\mu_b)_{u_!x}}\\
& & &  \\
Tx\ar[rr]^{\qquad\qquad\beta_x}\ar[rd]^{\delta_{Tx}^{Su}=T(\delta_x^u)} & & T'x\ar[rd]^{\delta^{S'u}_{T'x}={T'(\delta^u_x)}} & \\
& (Su)_!(Tx)=T(u_!x)\ar[rr]^{\beta_{u_!x}\qquad\qquad} & & (S'u)_!(T'x)=T'(u_!x)\\ 
Sa\ar[rr]^{\qquad\qquad\alpha_a}\ar[rd]^{Su} & & S'a\ar[rd]^{S'u} &\\
& Sb\ar[rr]^{\alpha_b\qquad\qquad} & & S'b\\
}$	
\end{center}

\begin{align*}
(S'u)_!((\mu_a)_x)\cdot\delta_{Tx}^{S'u\cdot\alpha_a} & =(S'u)_!((\mu_a)_x)\cdot \delta^{S'u}_{(\alpha_a)_!(Tx)}\cdot\delta^{\alpha_a}_{Tx} & (\text{composition of cocleavages})\\
& =\delta^{S'u}_{T'x}\cdot(\mu_a)_x\cdot\delta_{Tx}^{\alpha_a} & (\text{naturality of }\delta^{S'u})\\
& = T'(\delta_x^u)\cdot\beta_x & (T' \text{ preserves cocleavages; def. of }(\mu_a)_x)\\
& =\beta_{u_!x}\cdot T(\delta^u_x) & (\text{naturality of }\beta) \\
& =(\mu_b)_{u_!x}\cdot\delta_{T(u_!x)}^{\alpha_b}\cdot\delta_{Tx}^{Su}& (\text{def. of }(\mu_b)_{u_!x}; T\text{ preserves cocleavages})\\
& = (\mu_b)_{u_!x}\cdot \delta_{Tx}^{\alpha_b\cdot Su}& (\text{composition of cocleavages})\\
& =(\mu_b)_{u_!x}\cdot\delta_{Tx}^{S'u\cdot\alpha_a} & (\text{naturality of }\alpha).\\	
\end{align*}
The remaining lengthy verifications for the 2-functoriality of ${\sf Fib}_{\circ}$ may be left to the reader.
\end{proof}

\subsection{The Grothendieck Equivalence Theorem for split cofibrations}
We are now ready to formulate the following ``folklore'' theorem, a sufficiently elaborate proof of which does not seem to be easily available in the literature, at least not for a variable base category:

\begin{thm}\label{GrothendieckEquivalence}
The 2-functors $\int_{\circ}\dashv {\sf Fib}_{\circ}:{\EuRoman{SCoFIB}}_{\rm sf}\lra\LocSmallCats//\SmallCats$ are adjoint 2-equivalences.
\end{thm}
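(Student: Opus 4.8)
The plan is to exhibit the 2-adjunction $\int_{\circ}\dashv{\sf Fib}_{\circ}$ explicitly, by a construction that parallels the proof of Theorem \ref{fundamental}, and then to verify that both its unit and its counit are invertible; an adjunction whose unit and counit are isomorphisms is automatically an adjoint 2-equivalence. The conceptual heart is the classical Grothendieck reconstruction of a split cofibration from its fibres, so the object-level content is standard; the real work lies in the 2-categorical bookkeeping.

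First I would set up the hom-correspondence. Fix $\Phi:\CB\to\SmallCats$ in $\LocSmallCats//\SmallCats$ and a split cofibration $Q:\CF\to\CC$ with small fibres in $\EuRoman{SCoFIB}_{\rm sf}$. A morphism $(S,T):\int_{\circ}\Phi=\Pi_{\Phi}\to Q$ in $\EuRoman{SCoFIB}_{\rm sf}$ consists of $S:\CB\to\CC$ together with a cocleavage-preserving $T:\int_{\circ}\Phi\to\CF$ over $S$. Restricting $T$ to the fibres $(\int_{\circ}\Phi)_a\cong\Phi a$ yields functors $\tau_a:=T_a:\Phi a\to\CF_{Sa}$, and the cocleavage-preservation $T_b u_!=(Su)_!T_a$ is exactly the naturality of $\tau:\Phi\to({\sf Fib}_{\circ}Q)\circ S$; this produces $\widehat{(S,T)}=(S,\tau)\in(\LocSmallCats//\SmallCats)(\Phi,{\sf Fib}_{\circ}Q)$. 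Conversely, given $(S,\tau)$ I would reconstruct $T$ on the (cocartesian, vertical)-factorization $(u,f)=J_b(f)\cdot\delta^u_x$ of a morphism of $\int_{\circ}\Phi$ by $T(u,f)=\tau_b(f)\cdot\delta^{Su}_{\tau_a(x)}$, the target being well defined since $(Su)_!\tau_a(x)=\tau_b(\Phi u(x))$ by naturality of $\tau$; this $T$ preserves cocleavages by construction. These assignments are mutually inverse, and — as in Theorem \ref{fundamental} — I would check that they are natural in $\Phi$ and $Q$ and extend to the 2-cells (a 2-cell $(\alpha,\beta)$ of $\EuRoman{SCoFIB}_{\rm sf}$ corresponding to a pair $(\alpha,\mu)$ whose modification $\mu$ has components the $Q$-vertical factors of the $\beta_x$, exactly as computed in Proposition \ref{2functorFib}). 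This establishes the 2-adjunction $\int_{\circ}\dashv{\sf Fib}_{\circ}$, which one may equally regard as a corestriction of the extended Guitart adjunction of Theorem \ref{fundamental}.

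Next I would read off the unit and counit and show they are isomorphisms. A direct computation gives that ${\sf Fib}_{\circ}(\int_{\circ}\Phi)$ is canonically isomorphic to $\Phi$: the fibre of $\Pi_{\Phi}$ over $a$ is $\Phi a$, and the cocleavage pushforward along $u$ restricts to $\Phi u$; hence the unit $\eta_{\Phi}:\Phi\to{\sf Fib}_{\circ}\int_{\circ}\Phi$ is the canonical identification, an isomorphism. For the counit $\epsilon_P:\int_{\circ}{\sf Fib}_{\circ}P=\int_{\circ}\Phi_P\to P$ I would invoke the reconstruction: over $S=\mathrm{Id}_{\CB}$, the comparison functor sends $(a,x)$ with $x\in\CE_a$ to $x\in\CE$, which is a bijection on objects because $a=Px$; on morphisms it uses the natural bijection $\CE_b(u_!x,y)\cong\CE_u(x,y)$ of Proposition \ref{Diag as Gr} — the defining cocartesian property, available precisely because $P$ is a split cofibration — to identify the morphisms $(u,f)$ of $\int_{\circ}\Phi_P$ with the morphisms $x\to y$ of $\CE$ lying over $u$. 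Thus $\epsilon_P$ is an isomorphism of split cofibrations, 2-naturally in $P$.

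Since unit and counit are both (2-natural) isomorphisms, the adjunction is an adjoint 2-equivalence; the triangle identities either emerge from the explicit formulas above or may be invoked to upgrade the equivalence automatically. The dual statement for split fibrations is then obtained by the $\Box^{\op}$-dualization described after Proposition \ref{DiagasGr}, passing from $\Phi:\CB\to\SmallCats$ to functors on $\CB^{\op}$ and from cocartesian to cartesian liftings. I expect the main obstacle to be not the object-level reconstruction but the 2-categorical verification: showing that the hom-correspondence is bijective on 2-cells, i.e. that modifications $\mu$ in $\LocSmallCats//\SmallCats$ match 2-cells $(\alpha,\beta)$ of $\EuRoman{SCoFIB}_{\rm sf}$ compatibly with horizontal and vertical composition, and that the counit is natural with respect to these 2-cells — precisely the cocartesian/vertical factorization calculus already carried out in the proof of Proposition \ref{2functorFib}.
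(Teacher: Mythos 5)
Your proposal is correct and follows essentially the same route as the paper: the counit is the composition functor $(u,f)\mapsto f\cdot\delta^u_x$ and the unit is the trivial identification $x\mapsto(a,x)$, both invertible, with the 2-cell correspondence carried by the $Q$-vertical factors of the $\beta_x$ exactly as in Proposition \ref{2functorFib}. The only presentational difference is that you first spell out the hom-correspondence in the style of Theorem \ref{fundamental} and then extract unit and counit, whereas the paper constructs the two invertible 2-natural transformations $\kappa$ and $\Lambda$ directly and verifies their 2-naturality.
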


\begin{proof}
We first establish an invertible 2-natural transformation $\kappa:\int_{\circ}\circ\;{\sf Fib}_{\circ}\to 1_{{\rm Id}_{{\sf SOFIB}_{\rm sf}}}$. For a split cofibration $P:\CE\to\CB$ with small fibres, the $\LocSmallCats^{\EuRoman 2}$-morphism  $\kappa_P=(K_P,{\rm Id}_{\CB})$ as depicted by 
\begin{center}
$\xymatrix{\int_{\circ}\Phi_P\ar[r]^{\, K_P}\ar[d]_{\Pi_{\Phi_P}} & \CE\ar[d]^{P} \\
\CB\ar[r]^{{\rm Id}_{\CB}} & \CB \\
}$	
\end{center}
is given by the ``composition functor''
$$K_P:((u,f):(a,x)\to(b,y))\mapsto (f\cdot \delta_x^u: x\to y)$$
which, in the dual situation, is displyed in Theorem \ref{equivalent}. By design, $\kappa_P$ is a morphism of split cofibrations and, quite trivially, invertible. To confirm its 2-naturality, we consider, in the notation of Section \ref{2categories}, a 2-cell $(\alpha,\beta):(S,T)\Lra(S',T'):P\lra Q$ in ${\EuRoman{SCoFIB}}_{\rm sf}$ and the following diagram:
\begin{center}
$\xymatrix{\Pi_{\Phi_P}\ar@/^0.7pc/[rr]^{(S,\tilde{T})}\ar@/_0.7pc/[rr]_{(S',\tilde{T'})}\ar[d]_{\kappa_P=(K_P,{\rm Id}_{\CB})}  & {\scriptstyle{(\alpha,\tilde{\beta})}}\Downarrow & \Pi_{\Phi_Q}\ar[d]^{(K_Q,{\rm Id}_{\CC})=\kappa_Q}\\
P\ar@/^0.7pc/[rr]^{(S,T)}\ar@/_0.7pc/[rr]_{(S',T')}  & {\scriptstyle{(\alpha,\beta)}}\Downarrow & Q \\
}$		
\end{center}
Here, the functors $\tilde{T}, \tilde{T'}$ and the natural transformation $\tilde{\beta}:\tilde{T}\to\tilde{T'}$ are obtained by applying to the 2-cell $(\alpha,\beta)$ first ${\sf Fib}_{\circ}$ and then $\int_{\circ}$, with both 2-functors leaving the ``base'' transformation $\alpha$ unchanged. According to the definitions given in the proofs of Propositions \ref{2functorsDiagGr} and \ref{2functorFib}, one has
$$\tilde{T}:\textstyle{\int}_{\circ}\Phi_P\to\textstyle{\int}_{\circ}\Phi_Q,\quad[(u,f):(a,x)\to(b,y)]\longmapsto[(Su,Tf):(Sa,Tx)\to(Sb,Ty)],$$
$$\tilde{\beta}_{(a,x)}=(\alpha_a,(\mu_a)_x):(Sa,Tx)\to(S'a,T'x),\text{ with }(\mu_a)_x\cdot\delta_{Tx}^{\alpha_a}=\beta_x.$$
Now, from $K_Q(\tilde{\beta}_{(a,x)})=(\mu_a)_x\cdot\delta^{\alpha_a}_{Tx}=\beta_x=\beta_{K_P(a,x)}$ for all $(a,x)\in \int_{\circ}\Phi_P$ one has $K_Q\tilde{\beta}=\beta K_P$, which is the crucial ingredient to concluding the equality
$$\kappa_Q\cdot \textstyle{\int}_{\circ}({\sf Fib}_{\circ}(\alpha,\beta))=(\alpha,\beta)\cdot\kappa_P,$$
{\em i.e.,} the 2-naturality of $\kappa$.

Next, we establish an invertible 2-natural transformation $\Lambda:1_{{\rm Id}_{\LocSmallCats//\SmallCats}}\to {\sf Fib}_{\circ}\circ\int_{\circ}$ which, at the $\LocSmallCats//\SmallCats$-object $\Phi:\CB\to\SmallCats$,
 is the morphism $\Lambda_{\Phi}=({\rm Id}_{\CB},\lambda^{\Phi}):\Phi\to\Phi_{\Pi_{\Phi}}$,
  where $\lambda^{\Phi}_a:\Phi a\to (\int_{\circ}\Phi)_a$
   is the trivial bijective functor $x\mapsto(a,x)$ (see Theorem \ref{equivalent} in the dual situation). We check the 2-naturality of $\Lambda$ and, in the notation of \ref{2categories}, consider a 2-cell $(\sigma,\mu):(\Sigma,\tau)\Lra(\Sigma',\tau'):\Phi\lra\Psi$ in $\LocSmallCats//\SmallCats$. An examination of the definitions given in the proofs of Propositions \ref{2functorsDiagGr} and \ref{2functorFib}, show that, up to the identifications $\lambda^{\Phi},\lambda^{\Psi}$, the composite functor ${\sf Fib}_{\circ}\circ\int_{\circ}$ maps $(\sigma,\mu)$ to itself. As a consequence one obtains the needed equality
   $$\Lambda_{\Psi}\cdot(\sigma,\mu)={\sf Fib}_{\circ}(\textstyle{\int}_{\circ}(\sigma,\mu))\cdot\Lambda_{\Phi}.$$
   This completes the proof.
\end{proof}

The proof of Theorem \ref{GrothendieckEquivalence} remains intact if we drop the condition of small-fibredness and consider the 2-category ${\EuRoman{SCoFIB}}$ with objects all split cofibrations $P:\CE\to\CB$. 
These then correspond to $\LocSmallCats$-valued functors, rather than to $\SmallCats$-valued 
functors. Hence, one has to define the 2-category\footnote{The notation $\LocSmallCats//\LocSmallCats$
 is to be understood as analogous to the standard lax-comma category notation $\LocSmallCats//\SmallCats$. While the latter is legitimate (as $\SmallCats$ is an object of $\LocSmallCats$), 
 the former is not; rather, $\LocSmallCats//\LocSmallCats$ has to be considered as a full subcategory of ${\mathbb{CAT}}//\LocSmallCats $, with some higher-universe $\mathbb{CAT}$ that contains $\LocSmallCats$ as an object.} 
 $\LocSmallCats//\LocSmallCats$ just as $\LocSmallCats//\SmallCats$ 
 has been defined, to obtain the {\em{Grothendieck Equivalence Theorem}} \cite{Grothendieck1961} for split cofibrations:

\begin{cor}\label{BigGrothendieckEquivalence}
The 2-functors $\int_{\circ}\dashv {\sf Fib}_{\circ}:{\EuRoman{SCoFIB}}\lra\LocSmallCats//\LocSmallCats$ are adjoint 2-equivalences.	
\end{cor}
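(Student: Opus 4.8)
The plan is to observe that the proof of Theorem~\ref{GrothendieckEquivalence} never makes essential use of the small-fibredness hypothesis, and to run it again \emph{verbatim} after the necessary change of size context. First I would fix a higher universe $\mathbb{CAT}$ containing $\LocSmallCats$ as an object, and define $\LocSmallCats//\LocSmallCats$ exactly as $\LocSmallCats//\SmallCats$ was defined in \ref{2categories}, now realized as a full sub-2-category of $\mathbb{CAT}//\LocSmallCats$ (objects: functors $\Phi\colon\CB\to\LocSmallCats$; morphisms $(\Sigma,\tau)$; 2-cells $(\sigma,\mu)$ with $\mu$ a modification). Simultaneously I would note that the 2-functors of Propositions~\ref{2functorsDiagGr} and \ref{2functorFib} extend without modification: $\int_{\circ}$ accepts any $\LocSmallCats$-valued $\Phi$ and produces a split cofibration $\Pi_{\Phi}$ whose fibres $(\int_{\circ}\Phi)_a\cong\Phi a$ may now be large, while dually ${\sf Fib}_{\circ}$ sends a split cofibration $P$ to the $\LocSmallCats$-valued functor $\Phi_P$ with $\Phi_P(a)=\CE_a$, the defining formulae for morphisms and 2-cells being entirely insensitive to the size of the fibres.

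Next I would reproduce the two invertible 2-natural transformations. For every split cofibration $P\colon\CE\to\CB$ the component $\kappa_P=(K_P,{\rm Id}_{\CB})$ is again given by the composition functor $K_P\colon(u,f)\mapsto f\cdot\delta^u_x$; it is a morphism of split cofibrations and it is invertible because every morphism of $\CE$ has a unique $(P\text{-cocartesian},\,P\text{-vertical})$-factorization, a fact that holds for large fibres just as for small ones. On the other side, $\Lambda_{\Phi}=({\rm Id}_{\CB},\lambda^{\Phi})$ with $\lambda^{\Phi}_a\colon\Phi a\to(\int_{\circ}\Phi)_a$, $x\mapsto(a,x)$, is again a bijective functor on each fibre and hence an isomorphism in $\LocSmallCats//\LocSmallCats$. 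The 2-naturality computations---the identity $K_Q\tilde{\beta}=\beta K_P$ on the $\kappa$-side, and the fact that ${\sf Fib}_{\circ}\circ\int_{\circ}$ returns a 2-cell $(\sigma,\mu)$ to itself up to the identifications $\lambda^{\Phi},\lambda^{\Psi}$ on the $\Lambda$-side---are literally the ones carried out in the proof of Theorem~\ref{GrothendieckEquivalence}, and none of them invokes smallness at any point.

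The only genuine work, and the step I expect to be the main obstacle, is the size bookkeeping. One must check that $\LocSmallCats//\LocSmallCats$ is a legitimate 2-category: as flagged in the footnote, it is \emph{not} of the form $\mathcal{A}//\mathcal{A}$ with $\mathcal{A}$ an object of itself, and so must be constructed inside $\mathbb{CAT}$. One must also verify that $\int_{\circ}$ and ${\sf Fib}_{\circ}$ are well-defined $\mathbb{CAT}$-sized 2-functors between $\EuRoman{SCoFIB}$ and $\LocSmallCats//\LocSmallCats$, and that no circularity is introduced by allowing the fibres of a cofibration over an $\LocSmallCats$-object to themselves be $\LocSmallCats$-objects. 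Once the provision of the Grothendieck universe is made precise as indicated in the terminological conventions of the Introduction, these checks are routine, and the two invertible 2-natural transformations $\kappa,\Lambda$ exhibit $\int_{\circ}\dashv{\sf Fib}_{\circ}$ as an adjoint 2-equivalence.
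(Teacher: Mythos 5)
Your proposal is correct and follows the paper's own argument: the paper likewise obtains the corollary by observing that the proof of Theorem \ref{GrothendieckEquivalence} uses small-fibredness nowhere essential, so it carries over verbatim once $\LocSmallCats//\LocSmallCats$ is legitimized as a full sub-2-category of $\mathbb{CAT}//\LocSmallCats$ for a suitable higher universe. Your identification of the size bookkeeping as the only genuine new content matches the paper's footnote exactly.
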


We can finally compose the 2-equivalence of Theorem \ref{GrothendieckEquivalence} with the 2-adjunction of Proposition \ref{Reducing dimension}, to obtain an alternative proof for the left-adjointness of the restricted functor $\int_{\circ}$ of \ref{laxintostrict}, without recourse to the fundamental adjunction of Theorem \ref{fundamental}. However, the advantage of having established the fundamental adjunction of Theorem \ref{fundamental} first is that we may {\em conclude} that the right adjoints ${\sf Diag}^{\circ}$ and $\mathbb S{\rm trict}\circ{\sf Fib}_{\circ}$ (with $\mathbb S{\rm trict}$ as in Proposition \ref{Reducing dimension}) coincide, up to 2-natural isomorphism---a fact that is a bit cumbersome to confirm when pursued directly. Either way, we have established the following important fact:

\begin{cor}\label{Diag factors through Fib}
The diagram 
\begin{center}
$\xymatrix{ & \LocSmallCats//\SmallCats\ar@/^0.3pc/[ld]^{\!\int_{\circ}}\ar@/^0.3pc/[dr]^{\mathbb S{\rm trict}} & \\
\EuRoman{SCoFIB}_{\rm sf}\ar@/^0.3pc/[ru]^{{\sf Fib}_{\circ}}\ar@/^0.5pc/[rr]^{{\sf Diag}^{\circ}} & \top & \LocSmallCats/\SmallCats\ar@/^0.5pc/[ll]^{\int_{\circ}}\ar@/^0.3pc/[ul]^{\rm Incl}
}$	
\end{center}
of adjunctions of 2-functors commutes. In particular, the 2-functor ${\sf Diag}^{\circ}$ factors through the Grothendieck equivalence ${\sf Fib}_{\circ}$.
\end{cor}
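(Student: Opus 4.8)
The plan is to deduce the commutativity of the diagram from the essential uniqueness of adjoints, so that beyond recognising that two of its legs are literally the same 2-functor almost no computation is needed. The diagram records three 2-adjunctions: the 2-equivalence $\int_{\circ}\dashv{\sf Fib}_{\circ}$ of Theorem \ref{GrothendieckEquivalence}, the adjunction $\rm Incl\dashv\mathbb S{\rm trict}$ of Proposition \ref{Reducing dimension}, and the fundamental adjunction $\int_{\circ}\dashv{\sf Diag}^{\circ}$ of Theorem \ref{fundamental}. Saying that it commutes amounts to saying that the triangle of left adjoints commutes and that the triangle of right adjoints commutes up to an invertible 2-natural transformation; I would derive the latter from the former.

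First I would check the triangle of left adjoints. By the construction of $\int_{\circ}$ on $\LocSmallCats//\SmallCats$ in Proposition \ref{2functorsDiagGr} and the observation in \ref{laxintostrict} that it lands in the non-full subcategory $\EuRoman{SCoFIB}_{\rm sf}$, its restriction along the inclusion $\rm Incl\colon\LocSmallCats/\SmallCats\to\LocSmallCats//\SmallCats$ is, on the nose, the corestricted left adjoint $\int_{\circ}\colon\LocSmallCats/\SmallCats\to\EuRoman{SCoFIB}_{\rm sf}$. Thus $\int_{\circ}=\int_{\circ}\circ\,\rm Incl$ holds strictly, and this triangle commutes.

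Next I would compose the two 2-adjunctions sitting on the outer legs. Since a composite of adjunctions of 2-functors is again one, with unit and counit the evident pastings and triangle identities inherited from the factors, we obtain $\int_{\circ}\circ\,\rm Incl\dashv\mathbb S{\rm trict}\circ{\sf Fib}_{\circ}$. Combined with the first step this exhibits $\mathbb S{\rm trict}\circ{\sf Fib}_{\circ}$ as a right 2-adjoint of the same 2-functor $\int_{\circ}\colon\LocSmallCats/\SmallCats\to\EuRoman{SCoFIB}_{\rm sf}$ of which ${\sf Diag}^{\circ}$ is the right adjoint. As a right 2-adjoint is unique up to invertible 2-natural isomorphism, I conclude ${\sf Diag}^{\circ}\cong\mathbb S{\rm trict}\circ{\sf Fib}_{\circ}$, which is precisely the commutativity of the triangle of right adjoints and the advertised factorisation of ${\sf Diag}^{\circ}$ through ${\sf Fib}_{\circ}$.

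The point demanding care---and the main obstacle---is the transition to the non-full subcategory $\EuRoman{SCoFIB}_{\rm sf}$: one must confirm that the fundamental adjunction, whose right adjoint ${\sf Diag}^{\circ}$ was manufactured over all of $\LocSmallCats^{\EuRoman 2}$, really does descend to an adjunction over $\EuRoman{SCoFIB}_{\rm sf}$ paired with the corestricted $\int_{\circ}$, so that the uniqueness principle is applied to one and the same left adjoint; and one must ensure the resulting comparison is 2-natural in $\EuRoman{SCoFIB}_{\rm sf}$, not merely an isomorphism objectwise. Should the formal route prove delicate, the identity ${\sf Diag}^{\circ}\cong\mathbb S{\rm trict}\circ{\sf Fib}_{\circ}$ can alternatively be established by hand, building on Proposition \ref{Diag as Gr}: for a split cofibration $P$ it identifies ${\sf Diag}^{\circ}(P)$ with $\int_{\circ}\Theta_P$, hence with $\SmallCats\!\Downarrow\!\Phi_P=\mathbb S{\rm trict}({\sf Fib}_{\circ}(P))$ via $\psi\mapsto J_b\psi\cdot\delta^uX$, after which only naturality in $P$ remains to be verified.
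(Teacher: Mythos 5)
Your proof is essentially the paper's own: the paragraph preceding the corollary likewise composes the 2-equivalence $\int_{\circ}\dashv{\sf Fib}_{\circ}$ of Theorem~\ref{GrothendieckEquivalence} with ${\rm Incl}\dashv\mathbb S{\rm trict}$ of Proposition~\ref{Reducing dimension}, notes that the composite left adjoint is the corestricted $\int_{\circ}$, and appeals to uniqueness of right 2-adjoints against Theorem~\ref{fundamental}. The one point you rightly single out as delicate---that the fundamental adjunction, established over all of $\LocSmallCats^{\EuRoman 2}$, must first be seen to pair the corestricted $\int_{\circ}$ with ${\sf Diag}^{\circ}$ over the \emph{non-full} subcategory $\EuRoman{SCoFIB}_{\rm sf}$ before the uniqueness principle applies---is passed over in silence by the paper as well, and your fallback via Proposition~\ref{Diag as Gr}, identifying ${\sf Diag}^{\circ}(P)\cong\int_{\circ}\Theta_P\cong\SmallCats\!\Downarrow\!\Phi_P=\mathbb S{\rm trict}({\sf Fib}_{\circ}(P))$ directly, is the reliable way to secure the conclusion.
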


\subsection{The Grothendieck Equivalence Theorem for split fibrations}

It's time for us to dualize Theorem \ref{GrothendieckEquivalence} and Corollary \ref{BigGrothendieckEquivalence} and to consider the sub-2-category
$${\EuRoman{SFIB}}$$
of $\LocSmallCats^{\EuRoman 2}$, which is defined just like ${\EuRoman{SCoFIB}}$ in \ref{laxintostrict}, except that its objects 
$P:\CE\to\CB$ are now split fibrations (rather than split cofibrations), and that its morphisms $(S,T):P\to Q$ preserve cleavages, so that
$$T_au^*=(Su)^*T_b\quad\text{and}\quad T\vartheta^u=\vartheta^{Su}T_b,$$
for all $u:a\to b$ in $\CB$, where $T_a:\CE_a\to\CF_{Sa}$ is a restriction of $T$; 2-cells are as in $\LocSmallCats^{\EuRoman 2}$ (see \ref{2categories}). 

An application of the bijective 2-functor $\Box^{\rm op}:\LocSmallCats^{\rm co}\to \LocSmallCats$ to objects, morphisms and 2-cells of $\LocSmallCats^{\EuRoman 2}$  gives rise to the bijective 2-functor $\Box^{\rm op}:(\LocSmallCats^{\EuRoman 2})^{\rm co}\to \LocSmallCats^{\EuRoman 2}$ with
$$
[(\alpha,\beta):(S,T) \Rightarrow(S',T') :P\to Q ]\longmapsto [(\alpha^{\rm op},\beta^{\rm op}):(S^{\rm op},T^{\rm op})\Leftarrow ((S')^{\rm op},(T')^{\rm op}):P^{\rm op} \to Q^{\rm op} ].
$$
It maps morphisms covariantly but 2-cells contravariantly, and it restricts to a bijective 2-functor
$$\Box^{\rm op}:{\EuRoman{SFIB}}^{\rm co}\lra{\EuRoman{SCoFIB}}.$$
The bijective 2-functor $\Box^{\rm op}:\LocSmallCats^{\rm co}\to \LocSmallCats$ gives also rise to the bijective 2-functor
$$\Box^{\rm op}(-):\LocSmallCats//\LocSmallCats\lra\LocSmallCats//\LocSmallCats,$$
which post-composes every object, morphism and 2-cell with the functor $\Box^{\rm op}$:
$$[(\si,\mu)\!:\!(\Sigma,\tau)\Rightarrow(\Sigma',\tau')\!:\!\Phi\to\Psi]\longmapsto[( \Box^{\rm op}\si,\Box^{\rm op}\mu)\!:\!(\Box^{\rm op}\Sigma,\Box^{\rm op}\tau)\Rightarrow(\Box^{\rm op}\Sigma',\Box^{\rm op}\tau')\!:\!\Box^{\rm op}\Phi\to\Box^{\rm op}\Psi].$$
Now we may define the 2-functor ${\sf Fib}^{\circ}$ as the dualization of the 2-functor ${\sf Fib}_{\circ}$ of Corollary \ref{BigGrothendieckEquivalence}, that is: as the composite 2-functor given by the commutative diagram
\begin{center}
$\xymatrix{{\EuRoman{SFIB}}^{\rm co}\ar[r]^{{\sf Fib}^{\circ}}\ar[d]_{\Box^{\rm op}} & \LocSmallCats//\LocSmallCats\\
{\EuRoman{SCoFIB}}\ar[r]^{{\sf Fib}_{\circ}} & \LocSmallCats//\LocSmallCats.\ar[u]_{\Box^{\rm op}(-)}
}$	
\end{center}
Chasing a split fibration $P:\CE\to\CB$ around the lower path of the diagram shows that, as expected, ${\sf Fib}^{\circ}$ maps $P$ to its fibre representation $\Phi^{P}:\CB^{\rm op}\to\LocSmallCats$ (as in \ref{subsec:GrothendieckFibrations}), and morphisms and 2-cells of ${\EuRoman{SFIB}}$ get mapped as indicated by
\begin{center}
$\xymatrix{\CE\ar[d]_{P}\ar@/^0.7pc/[rr]^{T}\ar@/_0.7pc/[rr]_{T'} & {\scriptstyle{\beta}}\Downarrow\quad &\CF\ar[d]^{Q}\\
 	\CB\ar@/^0.7pc/[rr]^{S}\ar@/_0.7pc/[rr]_{S'} & {\scriptstyle{\alpha}}\Downarrow\quad&\CE
 	}\hfil\longmapsto\hfil \xymatrix{\CB^{\rm op}\ar@/^0.7pc/[rr]^{(S^{\rm op},\tau^{\rm op})}\ar@/_0.7pc/[rr]_{((S')^{\rm op},(\tau')^{\rm op})}\ar[dr]_{{\Phi}^P} & {\scriptstyle (\al^{\rm op},\mu^{\rm op})}\!\Uparrow & \CC^{\rm op}\ar[dl]^{{\Phi}^Q}\\
& \SmallCats}$
 	\end{center}
Here the natural transformations $\tau:\Phi^P\to\Phi^QS^{\op},\;\tau':\Phi^P\to\Phi^Q(S')^{\op }$, analogously to the respective definitions for ${\sf Fib}_{\circ}$ in the proof of Proposition \ref{2functorFib}, are determined as the restrictions $\tau_a=T_a:\CE_a\to\CF_{Sa},\;\tau'_a=T'_a:\CE_a\to\CF_{S'a}$ of $T$ for all $a\in\CB$, while the transformations $\mu_a:\tau_a\to (\alpha_a)^*\tau'_a$ comprising the modification $\mu:\tau\to\tau'$ 	are defined as the $Q$-vertical factors in the factorization $\beta_x=\theta^{\al_a}_{T'x}\cdot(\mu_a)_x$, for all $x\in\CE_a$.
 	
Now, since ${\sf Fib}_{\circ}$ is an equivalence of 2-categories, its dualization, $\sf Fib^{\circ}$ is one as well. Moreover, one obtains its quasi-inverse, $\int^{\circ}$, from the 2-functor $\int_{\circ}$, by the same dualization procedure that has produced ${\sf Fib}^{\circ}$ from ${\sf Fib}_{\circ}$. Indeed, the dualization diagram
 	
 \begin{center}
$\xymatrix{{\EuRoman{SFIB}}^{\rm co}\ar@/^0.2pc/[r]^{{\sf Fib}^{\circ}}\ar[d]_{\Box^{\rm op}} & \LocSmallCats//\LocSmallCats\ar@/^0.2pc/[l]^{{\int}^{\circ}}\\
{\EuRoman{SCoFIB}}\ar@/^0.2pc/[r]^{{\sf Fib}_{\circ}} & \LocSmallCats//\LocSmallCats.\ar@/^0.2pc/[l]^{{\int}_{\circ}}\ar[u]_{\Box^{\rm op}(-)}
}$	
\end{center}	
commutes at both, the ${\sf Fib}$- and the ${\int}$-level.	 Hence, with Theorem \ref{GrothendieckEquivalence} and Corollary \ref{BigGrothendieckEquivalence} we conclude:
\begin{cor}\label{GrothendieckEquivalenceDual}
The 2-functors ${\int}^{\circ}\dashv{\sf Fib}^{\circ}: {\EuRoman{SFIB}}^{\sf co}\lra\LocSmallCats//\LocSmallCats$ are adjoint 2-equivalences. By restriction to the small-fibred split fibrations they give the 2-equivalences ${\int}^{\circ}\dashv{\sf Fib}^{\circ}: ({\EuRoman{SFIB}}_{\rm sf})^{\rm co}\lra\LocSmallCats//\SmallCats$.	
\end{cor}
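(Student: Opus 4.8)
The plan is to obtain the corollary purely formally, by \emph{transporting} the adjoint $2$-equivalence of Corollary \ref{BigGrothendieckEquivalence} along the two dualizing maps $\Box^{\rm op}$, using the commutative squares that were just set up to \emph{define} ${\sf Fib}^{\circ}$ and $\int^{\circ}$. The underlying principle is that an adjoint $2$-equivalence is stable under pre- and post-composition with isomorphisms of $2$-categories; once this is in place, the statement for the small-fibred case follows by the same argument, fed instead by Theorem \ref{GrothendieckEquivalence}.

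First I would record that both dualizing maps are genuine isomorphisms of $2$-categories. The $2$-functor $\Box^{\rm op}\colon\EuRoman{SFIB}^{\rm co}\to\EuRoman{SCoFIB}$ is bijective on objects, morphisms and $2$-cells by its very construction, hence strictly invertible; likewise $\Box^{\rm op}(-)\colon\LocSmallCats//\LocSmallCats\to\LocSmallCats//\LocSmallCats$ is bijective, its strict inverse being a second application of $\Box^{\rm op}(-)$, since passage to opposite categories is a strict involution on the underlying data $((\CC^{\rm op})^{\rm op}=\CC$, $(F^{\rm op})^{\rm op}=F$, $(\alpha^{\rm op})^{\rm op}=\alpha)$. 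The defining commutative squares then read ${\sf Fib}^{\circ}=\Box^{\rm op}(-)\circ{\sf Fib}_{\circ}\circ\Box^{\rm op}$ and, at the $\int$-level, $\int^{\circ}=(\Box^{\rm op})^{-1}\circ\int_{\circ}\circ\,\Box^{\rm op}(-)$, so that the pair $({\sf Fib}^{\circ},\int^{\circ})$ is precisely the conjugate of the equivalence pair $({\sf Fib}_{\circ},\int_{\circ})$ by these isomorphisms.

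The core step is then to whisker the adjoint-equivalence data. From the proof of Corollary \ref{BigGrothendieckEquivalence} we have the invertible $2$-natural transformations $\kappa\colon\int_{\circ}\circ{\sf Fib}_{\circ}\to{\rm Id}$ and $\Lambda\colon{\rm Id}\to{\sf Fib}_{\circ}\circ\int_{\circ}$. Composing (whiskering) $\kappa$ and $\Lambda$ with $\Box^{\rm op}$, $\Box^{\rm op}(-)$ and their inverses produces candidate unit and counit for $\int^{\circ}\dashv{\sf Fib}^{\circ}$. Because a bijective $2$-functor carries invertible $2$-cells to invertible $2$-cells and preserves the triangle identities \emph{verbatim}, the transported $2$-cells are again invertible $2$-natural transformations obeying the triangle laws. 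Hence $\int^{\circ}$ and ${\sf Fib}^{\circ}$ are mutually quasi-inverse and form an adjoint $2$-equivalence.

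The one point demanding care — and the main, though mild, obstacle — is the variance bookkeeping, since $\Box^{\rm op}$ \emph{reverses} $2$-cells. This is exactly why the source must be taken to be $\EuRoman{SFIB}^{\rm co}$ rather than $\EuRoman{SFIB}$, and why the modification components of $2$-cells in $\LocSmallCats//\LocSmallCats$ flip direction under $\Box^{\rm op}(-)$. I would check that these two reversals compensate one another, so that the transported unit and counit point in the directions required by the statement. Here it is convenient to observe that an equivalence is simultaneously a left and a right adjoint to its quasi-inverse, so the stated direction $\int^{\circ}\dashv{\sf Fib}^{\circ}$ is automatic once the equivalence itself is in hand. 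Finally, for the small-fibred statement I would merely restrict the whole diagram: $\Box^{\rm op}$ carries $(\EuRoman{SFIB}_{\rm sf})^{\rm co}$ bijectively onto $\EuRoman{SCoFIB}_{\rm sf}$, and $\Box^{\rm op}(-)$ restricts to $\LocSmallCats//\SmallCats$ (the opposite of a small category being small), so the identical transport argument, now applied to Theorem \ref{GrothendieckEquivalence}, yields the $2$-equivalence $\int^{\circ}\dashv{\sf Fib}^{\circ}\colon(\EuRoman{SFIB}_{\rm sf})^{\rm co}\to\LocSmallCats//\SmallCats$.
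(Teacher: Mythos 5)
Your proposal is correct and follows essentially the same route as the paper: the authors likewise define ${\sf Fib}^{\circ}$ and $\int^{\circ}$ by conjugating ${\sf Fib}_{\circ}$ and $\int_{\circ}$ with the bijective (involutive) 2-functors $\Box^{\rm op}$ and $\Box^{\rm op}(-)$, and then transport the adjoint 2-equivalences of Corollary \ref{BigGrothendieckEquivalence} and Theorem \ref{GrothendieckEquivalence} across the resulting commutative dualization diagram. Your extra attention to whiskering the unit/counit and to the variance bookkeeping (the ${}^{\rm co}$ on the fibration side) only makes explicit what the paper leaves implicit.
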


\section{A left adjoint to the Grothendieck construction: free split cofibrations}

In this section we give a novel proof of the essentially known fact\footnote{See ncatlab.org, ``Grothendieck construction'', for a proof of a corresponding statement with fixed base category.} that the composite 2-functor
\begin{center}
$\xymatrix{\LocSmallCats//\LocSmallCats\ar[r]^{{\int}_{\circ}} & {\EuRoman{SCoFIB}}\ar[r]^{\rm Incl} & \LocSmallCats^{\EuRoman 2}}$	
\end{center}
has a left adjoint. Indeed, since ${\int}_{\circ}$ is a 2-equivalence (by Corollary \ref{BigGrothendieckEquivalence}), it suffices to show that the inclusion 2-functor has a left adjoint, {\em i.e.}, that $\EuRoman{SCoFIB}$ is 2-reflective in $\LocSmallCats^{\EuRoman 2}$. This means that we must show {\em how an arbitrary functor $P$ can be freely ``made'' into a split cofibration}, compatibly so with the relevant 2-categorical structures. 
\subsection{Free split cofibrations}\label{Free Split Cofib}

We define the 2-functor
$${\sf  Free}:\LocSmallCats^{\EuRoman 2}\to{\EuRoman{SCoFIB}},$$
as follows. For every functor $P:\CE\to\CB$, the dual Grothendieck construction applied to the trivial slice functor $P/\Box :\CB\to\LocSmallCats$ gives us (in generalization of Example \ref{Standard}(3)) the split cofibration 
$${\sf Free}(P):=\Pi_{P/\Box}={\rm cod}_P:\textstyle{\int}_{\circ}P/\Box=P\downarrow\CB\longrightarrow\CB $$
of the comma category $P\downarrow\CB$. Here we therefore write an object in $P\downarrow\CB$ as a pair $(h,x)$ with $x\in\CE$ and $h:Px\to a$ in $\CB$; a morphism $(u,f):(h,x)\to(k,y)$ is given by the commutative square on the left of the diagram 
\begin{center}
$\xymatrix{Px\ar[r]^{Pf}\ar[d]_h & Py\ar[d]^k & = & Px\ar[r]^{P1_x}\ar[d]_h & Px\ar[r]^{Pf}\ar[d]_{u\cdot h=}^{k\cdot Pf} &Py\ar[d]^{k} \\
a\ar[r]^u &b & & a\ar[r]^u & b\ar[r]^{1_b} & b \;.\\
}$	
\end{center}
The right part of the diagram describes the designated $({\rm cod}_P\text{-cocartesian},{\rm cod}_P\text{-vertical})$-factorization of $(u,f)$, so that one has:

\medskip
$u_!(h,x)=(u\cdot h,x), \quad\delta^u_{(h,x)}=(u,1_x):(h,x)\to (u\cdot h,x),\quad \nu_{(u,f)}=(1_b,f):(u\cdot h
,x)\to (k,y).$
\medskip

The definition of ${\sf  Free}$ on morphisms and 2-cells is also straightforward. In the notation of \ref{2categories}
, the action of ${\sf Free}$ is described by
$$[(\alpha,\beta):(S,T)\Lra(S',T'):P\lra Q]\quad\longmapsto\quad[(\alpha,\bar{\beta}):(S,\bar{T})\Lra(S',\bar{T'}):{\rm cod}_P\lra {\rm cod}_Q],$$
where 
$$\bar{T}:P\downarrow\CB\lra Q\downarrow\CC,\quad ((u,f):(h,x)\to(k,y))\longmapsto((Su,Tf):(Sh,Tx)\to(Sk,Ty)),$$
$$\bar{\beta}_{(h,x)}=(\alpha_a,\beta_x):\bar{T}(h,x)=(Sh,Tx)\lra\bar{T'}(h,x)=(S'h,T'x),$$
for all objects $(h\!:\!Px\to a,\, x)$ in $P\downarrow\CB$. Trivially, $\bar{T}$ transforms ${\rm cod}_P$-cocleavages into ${\rm cod_Q}$-cocleavages.
Generalizing a corresponding result by \cite{Gray1966} (in a fixedp-based and one-dimensional context), we now prove:

\begin{thm}\label{comma}
The 2-functor ${\sf Free}$ is left adjoint to the inclusion $\EuRoman{SCoFIB}\lra\LocSmallCats^{\EuRoman 2}$.
\end{thm}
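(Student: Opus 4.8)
The plan is to exhibit the adjunction through its unit together with the associated universal property, carrying out every step at the level of hom-categories so as to obtain a genuine $2$-adjunction. For a functor $P\colon\CE\to\CB$, I take the unit component $\eta_P\colon P\to{\rm cod}_P$ in $\LocSmallCats^{\EuRoman 2}$ to be the pair $({\rm Id}_{\CB},H_P)$, where $H_P\colon\CE\to P\downarrow\CB$ sends $x\mapsto(1_{Px},x)$ and $f\mapsto(Pf,f)$; the defining square of $(Pf,f)$ commutes trivially, and ${\rm cod}_P\circ H_P=P$, so $\eta_P$ is a legitimate $\LocSmallCats^{\EuRoman 2}$-morphism. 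The single observation driving the whole argument is that every object of $P\downarrow\CB$ is a cocartesian image of an object in the range of $H_P$: indeed $(h\colon Px\to a,\,x)=h_!(1_{Px},x)$ with cocartesian lift $\delta^h_{(1_{Px},x)}=(h,1_x)$, and every morphism of $P\downarrow\CB$ admits its designated $({\rm cod}_P\text{-cocartesian},{\rm cod}_P\text{-vertical})$-factorization as recorded in \ref{Free Split Cofib}.

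Next I verify the universal property of $\eta_P$ on $1$-cells. Given a split cofibration $Q\colon\CF\to\CC$ and an $\LocSmallCats^{\EuRoman 2}$-morphism $(S,T)\colon P\to Q$, I claim there is a unique $\EuRoman{SCoFIB}$-morphism $(\bar S,\bar T)\colon{\rm cod}_P\to Q$ with $(\bar S,\bar T)\circ\eta_P=(S,T)$. Since $\eta_P$ has identity base, necessarily $\bar S=S$; and since $\bar T$ must preserve the cocleavage while restricting to $T$ along $H_P$, the identity $(h,x)=h_!(1_{Px},x)$ forces $\bar T(h,x)=(Sh)_!(Tx)$ (which is well typed because $Q(Tx)=S(Px)$), and the cocartesian/vertical factorization forces the value of $\bar T$ on every morphism. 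This yields uniqueness; for existence one takes these formulas as a definition and checks that $\bar T$ is an honest functor strictly preserving the designated cocleavage.

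Then I treat $2$-cells. Given a $2$-cell $(\alpha,\beta)\colon(S,T)\Rightarrow(S',T')$ in $\LocSmallCats^{\EuRoman 2}(P,Q)$, I produce the unique $2$-cell $(\alpha,\bar\beta)\colon(S,\bar T)\Rightarrow(S',\bar T')$ in $\EuRoman{SCoFIB}({\rm cod}_P,Q)$ whose whiskering with $\eta_P$ returns $(\alpha,\beta)$. Here $\bar\beta_{(h,x)}\colon(Sh)_!(Tx)\to(S'h)_!(T'x)$ is defined as the unique arrow over $\alpha_a$ supplied by the cocartesian universal property of $\delta^{Sh}_{Tx}$ applied to $\delta^{S'h}_{T'x}\cdot\beta_x$; the latter lives over $S'h\cdot\alpha_{Px}=\alpha_a\cdot Sh$ by naturality of $\alpha$, so the comparison arrow exists and is unique over $\alpha_a$. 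Naturality of $\bar\beta$ and the compatibility $Q\bar\beta=\alpha\,{\rm cod}_P$ follow from the same universal property, while the whiskering identity $\bar\beta_{(1_{Px},x)}=\beta_x$ is immediate since $\delta^{{\rm id}}$ is an identity. Assembling these steps, precomposition with $\eta_P$ gives an isomorphism of categories
$$(-)\circ\eta_P\colon\EuRoman{SCoFIB}({\sf Free}(P),Q)\ \xrightarrow{\ \cong\ }\ \LocSmallCats^{\EuRoman 2}(P,Q),$$
and a routine diagram check shows it is $2$-natural in $P$ and $Q$, which is precisely the asserted $2$-adjunction.

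The main obstacle is the existence half of the $1$-cell step: verifying that the forced assignment $\bar T(h,x)=(Sh)_!(Tx)$ underlies a genuine functor that preserves the cocleavage \emph{on the nose}. This is exactly where the hypothesis that $Q$ is split is indispensable, since the required composability $(Su)_!(Sh)_!=(S(u\cdot h))_!$ and the strict functoriality of the lifts $\delta^{(-)}$ hold strictly only for split cofibrations; for a merely cloven $Q$ one would obtain pseudofunctoriality and the sharp universal property would fail. Once this is in place, the $2$-cell correspondence of the third step is a clean consequence of the defining property of cocartesian liftings, and the remaining naturality verifications are routine.
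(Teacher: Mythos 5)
Your proposal is correct and follows essentially the same route as the paper: the same unit $({\rm Id}_{\CB},H_P)$, the same observation that $(h,x)=h_!(1_{Px},x)$ together with the (cocartesian, vertical) factorization forces $\bar T$ on objects and morphisms, and the same use of the cocartesian universal property of $\delta^{Sh}_{Tx}$ to construct and characterize $\bar\beta_{(h,x)}$. Your closing remark correctly isolates the point the paper also labours over, namely that strict functoriality of $\bar T$ and on-the-nose cocleavage preservation rest on $Q$ being split.
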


\begin{proof}
Given a $\LocSmallCats^{\EuRoman 2}$-object $P:\CE\to\CB$, we consider the functor 
$$H_P:\CE\to P\downarrow\CB,\quad(f:x\to y)\longmapsto(Pf,f):(1_{Px},x)\to(1_{Py},y)$$	
and claim that $({\rm Id}_{\CB},H_P):P\to{\rm cod}_P$ serves as the unit at $P$ of the 2-adjunction ${\sf  Free}\dashv {\rm Incl}$. Hence, we show that, for every split cofibration $Q:\CF\to\CC$, the precomposition with $({\rm Id}_{\CB},H_P)$ provides, naturally in $P$ and $Q$, a bijective functor
$$(-)\cdot({\rm Id}_{\CB},H_P):{\EuRoman{SCoFIB}}({\rm cod}_P,Q)\lra \LocSmallCats^{\EuRoman 2}(P,Q).$$
To establish its bijectivity on objects, we consider any $\LocSmallCats^{\EuRoman 2}$-morphism $(S,T):P\to Q$ and show that there is only one cocleavage-preserving functor $\tilde{T}:P\downarrow\CB\to\CF$ with $Q\,\tilde{T}=S\,{\rm cod}_P$ and $\tilde{T}\,H_P=T$. First, we observe that, for every morphism $(u,f):(h,x)\to(k,y)$ in $P\downarrow\CB$ (as in the diagram above), one has the following:
\begin{itemize}
\item[1.] $(u,f)=(1_b,f)\cdot(u,1_x)=\nu_{(u,f)}\cdot\delta_{(h,x)}^u$.
\item[2.] With the functor $h_!:(P\downarrow\CB)_{Px}=P/Px\lra(P\downarrow\CB)_a=P/a$,  the object $(h,x)\in P/a$ may be written as $h_!(1_{Px},x)$, where $(1_{Px},x)\in P/Px$.
\item[3.] Likewise, with the functor $k_!:P/Py\lra P/b$, the object $(k,y)\in P/b$ may be written as $k_!(1_{Py},y)$, where $(1_{Py},y)\in P/Py$, and the morphism $\nu_{(u,f)}$ in $P/b$ may be written as $k_!(1_{Py},y)$, with the morphism $(1_{Py},f):(Pf,x)\to(1_{Py},y)$ in $P/Py$.
\item[4.] The morphism $(1_{Py},f)$ in $P/Py$ as in 3. may be written as $(1_{Py},f)=\nu_{H_Pf}$.
\end{itemize}
Consequently, for any functor $\tilde{T}:P\downarrow\CB\to\CF$ satisfying the above properties, one necessarily has
\begin{align*}
\tilde{T}(u,f) & =\tilde{T}(k_!(\nu_{H_Pf}))\cdot	\tilde{T}(\delta^u_{h_!(1_{Px},x)})\\
& = (Sk)_!(\tilde{T}(\nu_{H_Pf}))\cdot\delta_{\tilde{T}(h_!(H_Px))}^{Su}\\
& = (Sk)_!(\nu_{\tilde{T}(H_Pf)})\cdot\delta_{{(Sh)}_!(\tilde{T}(H_Px))}^{Su}\\
& = (Sk)_!(\nu_{Tf})\cdot\delta^{Su}_{(Sh)_!(Tx)}\,,\\
\end{align*}
as in
\begin{center}
	$\xymatrix{ (Sh)_!(Tx)\ar[r]^{\delta^{Su}_{(Sh)_!(Tx)}\qquad\qquad\qquad} & (Su)_!(Sh)_!(Tx)=(Sk)_!(Q(Tf))_!(Tx)\ar[r]^{\qquad\qquad\qquad(Sk)_!(\nu_{Tf})} & (Sk)_!(Ty) & \CF\ar[d]^Q\\
	Sa\ar[r]^{Su} & Sb\ar[r]^{1_{Sb}} & Sb &\CC}$
\end{center}
Therefore, $\tilde{T}$ is unique. Conversely, setting
$$
\tilde{T}(u,f)= (Sk)_!(\nu_{Tf})\cdot\delta^{Su}_{(Sh)_!(Tx)},
$$
one has  to verify the needed properties for $\tilde{T}$. Showing that $\tilde{T}$ preserves the composition requires a careful application of the formulae for the $(Q\text{-cocartesian},\,Q\text{-vertical})$-factorization of composite arrows (see Section \ref{cofib and bifib}). Indeed, using the definition of $\tilde{T}$ for $(u,f):(h,x)\to(k,y),\;(v,g):(k,y)\to(\ell,z)$ in $P\downarrow\CB$ and the naturality of the transformation $\delta^{Sv}$, we obtain
\begin{align*}
\tilde{T}(v\cdot u,g\cdot f) & = (S\ell)_!(\nu_{Tg\cdot Tf})\cdot\delta^{Sv\cdot Su}	_{(Sh)_!(Tx)}\\
& = (S\ell)_![\nu_{Tg}\cdot (S(Pg))_!(\nu_{Tf})]\cdot \delta^{Sv}_{(Su)_!(Sh)_!(Tx)}\cdot \delta^{Su}_{(Sh)_!(Tx)}\\
& = (S\ell)_!(\nu_{Tg})\cdot (Sv)_!(Sk)_!(\nu_{Tf})\cdot\delta^{Sv}_{(Sk)_!(S(Pf))_!(Tx)}\cdot\delta^{Su}_{(Sh)_!(Tx) }   \\
& = (S\ell)_!(\nu_{Tg})\cdot\delta^{Sv}_{(Sk)_!(Ty)}\cdot(Sk)_!(\nu_{Tf})\cdot\delta^{Su}_{(Sh)_!(Tx) }   \\
& = \tilde{T}(v,g)\cdot\tilde{T}(u,f).\\
\end{align*}
The verification of the other needed properties of $\tilde{T}$ is straightforward.

It remains to be shown that $(-)\cdot({\rm Id}_{\CB},H_P)$ is fully faithful. Given a 2-cell
$$(\alpha,\beta):(S,T)\Lra(S',T'):P\lra Q$$ in $\LocSmallCats^{\EuRoman 2}$ as above, we should find a natural transformation $\tilde{\beta}:\tilde{T}\to\tilde{T'}$, unique with $Q\tilde{\beta}=\alpha\,{\rm cod}^P$ and $\tilde{\beta}H_P=\beta$.
 Since any object $(h,x)$ in $P\downarrow \CB$ gives rise to the morphism $$\delta^h_x=(h,1_x):H_Px=(1_{Px},x)\lra (h,x)$$ in $P\downarrow \CB$, the naturality of any such $\tilde{\beta}$ and the preservation of cocleavages by $T$ and $T'$ force
 $$
 \tilde{\beta}_{(h,x)}\cdot\delta^{Sh}_{Tx}  = \tilde{\beta}(h,x)\cdot\tilde{T}(\delta^h_x)
 = \tilde{T'}(\delta^h_x)\cdot \tilde{\beta}_{H_Px}
 = \delta^{Sh}_{T'x}\cdot\beta_x\, .
 $$
Since, with the naturality of $\alpha$, one has $$Q(\tilde{\beta}_{(h,x)}\cdot\delta^{Sh}_{Tx})= \alpha_x\cdot Sh=S'h\cdot\alpha_{Px}=Q(\delta^{S'h}_{T'x}\cdot\beta_x)\, ,$$
we see that, by the $Q$-cocartesianess of $\delta^{Sh}_{Tx}$, the morphism 
$\tilde{\beta}_{(h,x)}$ is necessarily the only $\CF$-morphism with $\tilde{\beta}_{(h,x)}\cdot\delta^{Sh}_{Tx}=\delta^{S'h}_{T'x}\cdot\beta_x$ 
and $Q(\tilde{\beta}_{(h,x)})=\alpha_{Px}$. Conversely, taking this as the definition of $\tilde{\beta}_{(h,x)}$, one routinely shows that $\tilde{\beta}$ has the required properties.
\end{proof}
We may now compose the 2-adjunction of Theorem \ref{comma} with the Grothendieck equivalence of Corollary \ref{BigGrothendieckEquivalence}, as in
\begin{center}
$\xymatrix{ & \EuRoman{SCoFIB}\ar@/^0.3pc/[ld]^{\rm Incl}_{\bot}\ar@/^0.5pc/[dr]^{{\sf Fib}_{\circ}} &\\
\LocSmallCats^{\EuRoman 2}\ar@/^0.5pc/[ru]^{\sf Free}\ar@/^0.5pc/[rr]^{} & \bot & \LocSmallCats//\LocSmallCats\ar@/^0.5pc/[ll]^{{\int}_{\circ}\quad}\ar@/^0.3pc/[ul]^{{\int}_{\circ}}_{\simeq}\,.
}$	
\end{center}
Since the 2-functor ${\sf Fib}_{\circ}\circ {\sf Free}$ assigns to the $\LocSmallCats^{\EuRoman 2}$-object $P:\CE\to\CB$ the fibre representation functor of the functor ${\rm cod}^P:P\downarrow\CB\to\CB$, the fibres of which are the slice categories $P/b\;(b\in\CB)$, we conclude:
\begin{cor}\label{localcomma}
The 2-functor ${\int}_{\circ}:\LocSmallCats//\LocSmallCats\lra
\LocSmallCats^{\EuRoman 2}$ has a left adjoint which maps the $\LocSmallCats$-object $P:\CE\to\CB$ to the $\LocSmallCats//\LocSmallCats$-object $P/\Box:\CB\to\LocSmallCats,\,b\mapsto P/b$ (considered in \ref{Free Split Cofib}).
\end{cor}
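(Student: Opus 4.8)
The plan is to obtain the asserted left adjoint without any fresh construction, by pasting together two adjunctions that are already at our disposal. The key observation is that the 2-functor $\int_{\circ}:\LocSmallCats//\LocSmallCats\lra\LocSmallCats^{\EuRoman 2}$ of the statement factors as the inclusion ${\rm Incl}:\EuRoman{SCoFIB}\to\LocSmallCats^{\EuRoman 2}$ precomposed with the 2-equivalence $\int_{\circ}:\LocSmallCats//\LocSmallCats\to\EuRoman{SCoFIB}$ of Corollary \ref{BigGrothendieckEquivalence}. Since a left adjoint of a composite of 2-functors is the composite of their left adjoints taken in the reverse order, it suffices to record the left adjoints of the two factors and then assemble them.

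First I would invoke Theorem \ref{comma}, which supplies the left-adjoint 2-functor ${\sf Free}$ to the inclusion ${\rm Incl}$; it sends $P:\CE\to\CB$ to the codomain split cofibration ${\rm cod}_P:P\downarrow\CB\to\CB$. Next, because the 2-functor $\int_{\circ}:\LocSmallCats//\LocSmallCats\to\EuRoman{SCoFIB}$ of Corollary \ref{BigGrothendieckEquivalence} is an \emph{adjoint 2-equivalence}, its quasi-inverse ${\sf Fib}_{\circ}$ serves simultaneously as its right and its left adjoint. Combining these two facts, the sought-after left adjoint of $\int_{\circ}:\LocSmallCats//\LocSmallCats\to\LocSmallCats^{\EuRoman 2}$ is the composite 2-functor ${\sf Fib}_{\circ}\circ{\sf Free}$.

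It then remains only to compute this composite on an object $P:\CE\to\CB$, and this is the one place that requires genuine inspection. By the description of ${\sf Free}$ in \ref{Free Split Cofib}, the functor ${\sf Fib}_{\circ}\circ{\sf Free}$ returns the fibre decomposition functor of ${\rm cod}_P:P\downarrow\CB\to\CB$. An object of the fibre of ${\rm cod}_P$ over $b\in\CB$ is exactly a pair $(h,x)$ with $x\in\CE$ and $h:Px\to b$, and its morphisms are the ${\rm cod}_P$-vertical ones; these data are precisely the objects and morphisms of the slice category $P/b$. Hence $({\sf Fib}_{\circ}\circ{\sf Free})(P)$ is the functor $P/\Box:\CB\to\LocSmallCats,\;b\mapsto P/b$, which is the claimed description of the left adjoint.

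I do not anticipate a genuine obstacle, as the argument is essentially formal. The only points deserving care are the bookkeeping behind ``the left adjoint of a composite is the composite of the left adjoints'' at the \emph{2-categorical} level, so that units and counits compose coherently (using that ${\sf Free}\dashv{\rm Incl}$ and $\int_{\circ}\dashv{\sf Fib}_{\circ}$ are genuine adjunctions of 2-functors), together with checking that ${\sf Fib}_{\circ}$ carries the designated cocleavage $\delta^u_{(h,x)}=(u,1_x)$ of ${\rm cod}_P$ to the transition functors $u_!:P/a\to P/b$ of the indexed category $P/\Box$, so that the identification of fibres with slices is compatible with the full $\LocSmallCats//\LocSmallCats$-structure on morphisms and 2-cells.
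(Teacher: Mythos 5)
Your proposal is correct and follows essentially the same route as the paper: the paper also obtains the left adjoint as the composite ${\sf Fib}_{\circ}\circ{\sf Free}$, pasting the 2-adjunction of Theorem \ref{comma} with the adjoint 2-equivalence of Corollary \ref{BigGrothendieckEquivalence}, and then identifies the fibres of ${\rm cod}_P$ with the slice categories $P/b$. Your explicit remark that the quasi-inverse ${\sf Fib}_{\circ}$ serves as a left (not only right) adjoint because the equivalence is an \emph{adjoint} 2-equivalence is exactly the point the paper leaves implicit.
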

    
\subsection{A network of global 2-adjunctions}

With the help of the following diagram we summarize the 2-adjunctions established in this and the previous sections:
\begin{center}
$\xymatrix{
{\EuRoman{SCoFIB}}\ar@/^0.4pc/[rrrr]^{{\sf Fib}_{\circ}\quad}\ar@/^0.4pc/[dd]^{\rm Incl} & & \simeq & & \LocSmallCats//\LocSmallCats\ar@/^0.4pc/[llll]^{{\int}_{\circ}\quad } \\
\scriptstyle{\dashv} & {\EuRoman{SCoFIB}}_{\rm sf}\ar[ld]^{\rm Incl}\ar[lu]_{\rm Incl}\ar@/^0.3pc/[rr]^{{\sf Fib}_{\circ}} & \simeq & \LocSmallCats//\SmallCats\ar@/^0.3pc/[rd]^{\mathbb S{\rm trict}}_{\top}\ar[ru]^{\rm Incl}\ar@/^0.3pc/[ll]^{{\int}_{\circ}} & \\
\LocSmallCats^{\EuRoman 2}\ar@/_0.5pc/[drr]_{\rm Dom}\ar@/^0.4pc/[rrrr]^{{\sf Diag}^{\circ}\quad}\ar@/^0.4pc/[uu]^{\sf Free} & & \scriptstyle{\top} & & \LocSmallCats/\SmallCats\ar@/^0.5pc/[lu]^{\rm Incl}\ar@/^0.4pc/[llll]^{{\int}_{\circ}\quad}\ar[uu]_{\rm Incl}\ar@/^0.5pc/[dll]^{{\int}_{\circ}} \\
& & \;\LocSmallCats\ar@/_0.3pc/[llu]_{!_{\Box}}^{\top}\ar@/^0.3pc/[urr]^{{\sf Diag}^{\circ}}_{\top} & &\\
}$
	\end{center}
	\begin{itemize}
	\item The top horizontal adjunction displays the Grothendieck 2-equivalence between split cofibrations and $\LocSmallCats$-valued functors (Corollary \ref{BigGrothendieckEquivalence}). It restricts to a 2-equivalence between split cofibrations with small fibres and $\SmallCats$-valued functors (Theorem \ref{GrothendieckEquivalence}), as shown by the middle horizontal adjunction. The 2-equivalence ${\sf Fib}_{\circ}$ decomposes a split cofibration into the ``family'' of its fibres, indexed by its base category, while the Grothendieck construction ${\int}_{\circ}$ reassembles such gadgets.
	\item The ``vertical'' 2-functor ${\sf Free}$ modifies a given functor by ``freely adding cocartesian liftings'' to it, showing that the totality of split cofibrations is 2-reflective amongst all functors (Theorem \ref{comma}). The composition of this 2-adjunction with the top horizontal adjunction is described in Corollary \ref{localcomma}.\item The bottom horizontal 2-adjunction relates arbitrary functors (rather than split cofibrations) to $\SmallCats$-valued functors (Theorem \ref{fundamental}). Its left adjoint, ${\int}_{\circ}$, trivially factors through the namesakes above it. Not being able to functorially relate the fibres of an arbitrary functor with each other, the right adjoint, ${\sf Diag}^{\circ}$, relates the totality of all small diagrams over the fibres with each other, rather than the fibres themselves. Regarding categories $\CX$ as functors $!_{\CX}:\CX\to{\EuRoman 1}$, the lower horizontal 2-adjunction reduces to the lower right diagonal 2-adjunction, as first considered in its ordinary form by Guitart (Theorem \ref{Guitart}).
	\item The fibre-representation 2-functor, ${\sf Fib}_{\circ}$, of the middle horizontal equivalence maps morphisms of its domain to lax-commutative diagrams over $\SmallCats$, while the right adjoint of the lower horizontal adjunction, ${\sf Diag}^{\circ}$, maps morphisms to strictly commutative diagrams over $\SmallCats$. In fact, the restriction of the latter 2-functor factors through the former (up to isomorphism), by the strictification 2-functor, ${ \mathbb S}{\rm trict}$, which is right adjoint to a (non-full) inclusion functor (Proposition \ref{Reducing dimension}, Corollary \ref{Diag factors through Fib})	
	\end{itemize}

\section{Diagram categories as 2-(co)fibred categories over $\SmallCats$}
\subsection{Hermida-Buckley 2-fibrations}

In this supplementary section we pay tribute to the fact that $D^{\CX}:{\sf Diag}^{\circ}(\CX)\to{\SmallCats    }$ and $D_{\CX}:{\sf Diag}_{\circ}(\CX)\to {\SmallCats    }^{\op}$ are 2-functors (see Remarks \ref{Ignoring2cells}) and investigate under which conditions on $\CX$ (if any), $D^{\CX}$ or $D_{\CX}$ may be a (co)fibration as such. For that we employ Buckley's \cite{Buckley2014} improved version of Hermida's \cite{Hermida1999} notion of 2-fibration. We recall the relevant definitions:

\begin{defn}
Let $P:\CE\to\CB$ be a 2-functor.
\begin{itemize}
\item[(1)] A 1-cell $f:x\to y$ in the 2-category $\CE$ is {\em $P$-2-cartesian} if, for all objects $C$ in $\CE$, the diagram
\begin{center}
$\xymatrix{\CE(z,x)\ar[r]^{\CE(z,f)}\ar[d]_{P_{z,x}} & \CE(z,y)\ar[d]^{P_{z,y}}\\
            \CB(Pz,Px)\ar[r]_{\CB(\!P\!z\!,P\!f)} & \CB(Pz,Py)}$
            \end{center}
is a pullback in $\Cat$.
\item[(2)] A 2-cell $\al:f\to f':x\to y$ in $\CE$ is $P$-2-cartesian if it is {\em $P_{x,y}$-cartesian}, with respect to the ordinary functor $P_{x,y}:\CE(x,y)\to \CB(Px,Py)$.
\item[(3)] $P$ is a {\em (cloven) 2-fibration}  if
\begin{itemize}
\item[(a)] for all 1-cells $u:a\to b$ in $\CB$ and $y$ objects in $\CE_b$, there is a (chosen) $P$-2-cartesian lifting $f:x\to y$ in $\CE$, so that $Px=a$ and $Pf=u$;
\item[(b)] for all objects $x,y\in\CE$, the ordinary functor $P_{x,y}:\CE(x,y)\to\CB(Px,Py)$ is a (cloven) fibration;
\item[(c)] $P$-2-cartesianness of 2-cells in $\CE$ is preserved by horizontal composition.
\end{itemize}
\item[(4)] $P$ is a {\em 2-cofibration} if $P^{\rm coop}:\CE^{\rm coop}\to \CB^{\rm coop}$ is a 2-fibration.
\end{itemize}
\end{defn}

\begin{rem}\label{2fib explicit}
(1) By definition, the 1-cell $f:x\to y$ in $\CE$ is $P$-2-cartesian if, and only if, for all objects $z\in\CE$, the functor
$$\CE(z,x)\to\CB(Pz,Px)\times_{\CB(Pz,Py)}\CE(z,y),\quad (\tau:t\to t')\mapsto(P\tau:Pt\to Pt', f\tau:ft\to ft'),$$
is an isomorphism of categories. Its bijectivity on objects is equivalent to $f$ being $P$-cartesian in the ordinary sense, while its full faithfulness adds the following condition to the 1-categorical notion:
for all 2-cells $\zeta:w\Ra w':Pz\to Px$ in $\CB$ and $\rho:h\Ra h':z\to y$ in $\CE$ with $(Pf)\zeta=P\rho$, there is a unique 2-cell $\tau:t\Ra t':z\to x$ in $\CE$ with $P\tau=\zeta$ and $f\tau=\rho$.

(2) By definition, $P$-2-cartesianess of a 2-cell $\al:f\Ra f':x\to y$ in $\CE$ means that, for all 1-cells $k:x\to y$, the map
$$\CE(x,y)(k,f)\longrightarrow\CB(Px,Py)(Pk,Pf)\times_{\CB(Px,Py)(Pk,Pf')}\CE(x,y)(k,f'),\quad\mu\mapsto (P\mu, \si\cdot\mu),$$
is bijective, that is: for all 2-cells $\ga:Pk\Ra Pf$ and $\lambda:k\Ra f'$ in, respectively, $\CB$ and $\CE$, with $P\al\cdot \ga=P\lam$, one has $P\mu=\lam$ and $\al\cdot\mu=\lam$, for a unique 2-cell $\mu:k\to f$ in $\CE$.

(3) $P$ is a 2-fibration if, and only if, 
\begin{itemize}
\item[(a)] for every 1-cell  $u:a\to Py$ in $\CB$ with $y$ in $\CE$, there is a $P$-2-cartesian lifting $f:x\to y$ in $\CE$ with $Pf=u$; 
\item[(b)] for every 2-cell $\xi:u\Ra Pf':Px\to Py$ in $\CB$ with a ($P$-2-cartesian) 1-cell $f':x\to y$ in $\CE$, there is a $P$-2-cartesian lifting $\al:f\Ra f':x\to y$ with $P\al=\xi$; 
\item[(c)] for all 1-cells $t:z\to x,\,s:y\to w$ and 2-cells $\al:f\Ra f':x\to y$ in $\CE$, if $\al$ is 2-cartesian, so are $\al t:ft\to f't$ and $s\al:sf\to sf'$. (Of course, since $P$-cartesianess of 2-cells is closed under vertical composition, closure under (horizontal) pre- and post-composition with 1-cells suffices to make the property closed also under horizontal composition.)
\end{itemize}
\end{rem}

\begin{rem}
The definition of (in a quite obvious sense)  split 2-fibration as given above is motivated by the fact that a 2-fibration is, via a 2-categorical Grothendieck construction, 3-equivalently represented by a 2-functor $\CB^{\rm coop}\to \EuRoman{2Cat}$; see \cite{Buckley2014}. In fact, Buckley \cite{Buckley2014} proved a more general result at the bicategorical (rather than the 2-categorical) level.
\end{rem}

\subsection{How to consider diagram categories as 2-fibred or 2-cofibred over $\SmallCats$}

\begin{thm}\label{2fibration} 
{\em (1)} The 2-functor $D_{\CX}^{\op}:({\sf Diag}_{\circ}(\CX))^{\op}\to\SmallCats  $ is a 2-fibration, for every category $\CX$.

{\em (2)} If the category $\CX$ is cocomplete, then $D^{\CX}:{\sf Diag}^{\circ}(\CX)\to \SmallCats   $ is a 2-cofibration.
\end{thm}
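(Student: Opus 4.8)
The plan is to verify directly the defining conditions of a (cloven) 2-fibration as spelled out in Remark \ref{2fib explicit}(3) for part (1), and their $(-)^{\rm coop}$-duals for part (2). The two statements are not formal duals of one another, and the asymmetry of their hypotheses is instructive: by Proposition \ref{DiagasGr}, $D_{\CX}$ is \emph{always} a split cofibration (so $D_{\CX}^{\op}$ is always a split fibration), whereas by Proposition \ref{Diagbifibred} the split fibration $D^{\CX}$ acquires opcartesian liftings (left Kan extensions) only when $\CX$ is cocomplete. Throughout I use the explicit description of 2-cells from Remark \ref{Ignoring2cells}(1): a 2-cell $(F,\varphi)\Ra(F',\varphi')$ in ${\sf Diag}^{\circ}(\CX)$ is a natural transformation $\al:F\to F'$ with $Y\al\cdot\varphi=\varphi'$, and dually in ${\sf Diag}_{\circ}(\CX)$.

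For part (1), condition (a) (cartesian liftings of 1-cells) is supplied by the split-cofibration cleavage $\delta$ of $D_{\CX}$ from Proposition \ref{DiagasGr}: the cartesian lift of a functor $F$ at $(\CJ,Y)$ is the reversal of the cocleavage morphism, of the form $(F,1):(\CI,YF)\to(\CJ,Y)$. I then check it is $D_{\CX}^{\op}$-2-cartesian by testing the comparison functor of Remark \ref{2fib explicit}(1): bijectivity on objects is exactly the 1-dimensional cartesian property, while full faithfulness holds for free because the cleavage transformation is an identity, so a 2-cell upstairs is forced by its image downstairs. For condition (b) I show each hom-functor $(D_{\CX}^{\op})_{x,y}$ is a \emph{discrete} fibration: given a target $(G',\psi')$ and a natural transformation $\beta:G\to G'$, the unique cartesian lift is $\beta$ together with $\psi:=\psi'\cdot Y\beta$. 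Since every morphism of a discrete fibration is cartesian, all 2-cells of $({\sf Diag}_{\circ}(\CX))^{\op}$ are $D_{\CX}^{\op}$-2-cartesian, and condition (c) (closure under horizontal composition) is automatic. No hypothesis on $\CX$ is used.

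For part (2) I verify the dual conditions making $D^{\CX}$ a 2-cofibration, i.e. making $(D^{\CX})^{\rm coop}$ a 2-fibration. Conditions (b) and (c) are again free: each hom-functor $D^{\CX}_{(\CI,X),(\CJ,Y)}$ is a discrete \emph{op}fibration (given $(F,\varphi)$ and $\beta:F\to F'$, the unique opcartesian lift carries $\varphi':=Y\beta\cdot\varphi$), whence all 2-cells are 2-opcartesian and closure is trivial. The content lies in condition (a): since $\CX$ is cocomplete, Proposition \ref{Diagbifibred} makes $D^{\CX}$ a bifibration, so $F:\CI\to\CJ$ has at $(\CI,X)$ the opcartesian lifting $(F,\eta):(\CI,X)\to(\CJ,F_!X)$ with $F_!X={\rm Lan}_FX$ and $\eta$ the Kan unit. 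I must upgrade this to a $D^{\CX}$-2-opcartesian 1-cell, i.e. show that the comparison functor ${\sf Diag}^{\circ}((\CJ,F_!X),(\CK,Z))\to[\CJ,\CK]\times_{[\CI,\CK]}{\sf Diag}^{\circ}((\CI,X),(\CK,Z))$ is an isomorphism of categories.

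This last step is the main obstacle, and it is precisely the \emph{2-dimensional} universal property of the left Kan extension: the natural bijection $\CX^{\CJ}(F_!X,W)\cong\CX^{\CI}(X,WF)$ (given by $\psi\mapsto\psi F\cdot\eta$) yields bijectivity on objects, and naturality of this bijection in $W$ — applied to the morphisms of the hom-category — yields full faithfulness, the uniqueness half of the Kan property converting the identity $Z(\beta F)\cdot\psi F\cdot\eta=\psi'F\cdot\eta$ back into $Z\beta\cdot\psi=\psi'$. Granting this, $(F,\eta)$ is 2-opcartesian and condition (a) holds. The only remaining delicacy is keeping track of the variance of 2-cells under $(-)^{\op}$ in part (1) and under $(-)^{\rm coop}$ in part (2); once the conventions of Remark \ref{Ignoring2cells} are fixed, every other verification is a routine diagram chase.
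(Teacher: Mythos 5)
Your proposal is correct and follows essentially the same route as the paper's proof: the 1-cell liftings come from the split cleavage of Proposition \ref{DiagasGr} (resp.\ the left Kan extensions of Proposition \ref{Diagbifibred}), their 2-cartesianness is forced by the identity cleavage (resp.\ by the uniqueness clause of the Kan extension property), and the 2-cell liftings are the evident ones. Your observation that the hom-functors are \emph{discrete} (op)fibrations, rendering conditions (b) and (c) automatic, is a mild streamlining of the paper's explicit verifications and is consistent with what the paper records separately in Remark \ref{split2fib}.
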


\begin{proof}
(1) Recall that a morphism $(F,\varphi):(\CI,X)\to (\CJ,Y)$ in $({\sf Diag}_{\circ}(\CX))^{\op}$ is given by small categories $\CI,\CJ$, functors $F,X,Y$, and a natural transformation $\varphi$, as in the triangle below on the left, and a 2-cell $\al:(F,\varphi)\Lra(F',\varphi')$ is given by a natural transformation $\al:F\to F'$ with $\varphi=\varphi'\cdot Y\al$\,:

\begin{center}
$\xymatrix{\CI\ar[rr]^F\ar[dr]_X^{\;\;\Lla:\varphi} & & \CJ\ar[dl]^{Y}\\
& \CX}$ \hfil 
$\xymatrix{\CI\ar@/^0.7pc/[rr]^{(F,\varphi)}\ar@/_0.7pc/[rr]_{(F',\varphi')}\ar[dr]_X & {\scriptstyle \al}\!\Downarrow & \CJ\ar[dl]^{Y}\\
& \CX}$
\end{center}
Now, given $F:\CI\to\CJ$ in $\SmallCats$ and  $(\CJ,Y)\in ({\sf Diag}_{\circ}(\CX))^{\op}$, we have the trivial $D_{\CX}^{\op}$-cartesian lifting $(F,1_{YF}):(\CI,YF)\to(\CJ,Y)$ at the 1-category level (Proposition \ref{DiagasGr}). To show that $(F,1_{FY})$ is $D_{\CX}^{\op}$-2-cartesian, it suffices to consider a natural transformation $\zeta: G\Lra G':\CK\to\CI$ and a 2-cell $\rho:(H,\gamma)\Lra(H',\gamma'):(\CK,Z)\to(\CJ,Y)$
with $FG=H,\, FG'=H',\, F\zeta = \rho$, and show that $\zeta:(G,\gamma)\Lra(G',\gamma'):(\CK,Z)\to(\CI,YF)$ is actually a 2-cell in $({\sf Diag}_{\circ}(\CX))^{\op}$.
But this is trivial: the given identity $\gamma'\cdot Y\rho=\gamma$ may just be restated as the needed identity $\gamma'\cdot(YF)\zeta=\gamma$.

Next, in oder to verify property (b) of Remark \ref{2fib explicit}, we consider a 1-cell $(F',\varphi'):(\CI,X)\to(\CJ,Y)$ in $({\sf Diag}_{\circ}(\CX))^{\op}$ 
and a 2-cell $\al:F\Lra F':\CI\to\CJ$ in $\SmallCats    $ and show that the emerging 2-cell $\al:(F,\varphi:=\varphi'\cdot Y\al)\Lra(F',\varphi')$ is $D_{\CX}^{\op}$-2-cartesian. Indeed, given 2-cells $\lam:(K,\kappa)\to(F',\varphi')$ and $\ga:K\to F$ in, respectively, $({\sf Diag}_{\circ}(\CX))^{\op}$ and $\SmallCats    $ with $\al\cdot\ga=\lam$, the given identity $\varphi'\cdot Y\lam=\kappa$ translates to $\varphi\cdot Y\ga=\kappa$, thus making $\ga:(K,\kappa)\to (F,\varphi)$ a 2-cell in ${\sf Diag}^{\star}(\CX))$, as desired.

Finally, to verify property (c), for 1-cells $(T,\eta):(\CK,Z)\to(\CI,X),\;(S,\varepsilon):(\CJ,Y)\to(\CL,W)$ and the $D_{\CX}^{\op}$-2-cartesian 2-cell $\al:(F,\varphi)\Lra(F',\varphi'):(\CI,X)\to(\CJ,Y)$ as above, we must show that the horizontal composites
\begin{center}
$\xymatrix{(FT,\eta\cdot\varphi T)\ar@{=>}[r]^{\al T} & (F'T, \eta\cdot\varphi' T), & (SF,\varphi\cdot\varepsilon F)\ar@{=>}[r]^{S\al} & (SF',\varphi'\cdot\varepsilon F')}$
\end{center}
are $D_{\CX}^{\op}$-2-cartesian as well. Indeed, from $\varphi'\cdot Y\al=\varphi$ one obtains immediately 
$$\eta\cdot\varphi' T\cdot Y(\al T) = \eta\cdot\varphi T,\quad \varphi'\cdot\varepsilon F'\cdot W(S\al)=\varphi'\cdot Y\al\cdot\varepsilon F=\varphi\cdot\varepsilon F,$$
as desired.

(2)  We now consider $(\CI,X)\in{\sf Diag}^{\circ}(\CX)$ and $F:\CI\to\CJ$ in $\SmallCats   $ and form the $D^{\CX}$-cocartesian lifting $(F,\varphi):(\CI,X)\to(\CJ,Y)$ at the 1-categorical level, so that $\varphi:X\to YF$ presents $Y$ as a left Kan extension of $X$ along $F$ (Proposition \ref{Diagbifibred}). 
To show that $(F,\varphi)$ is $D^{\CX}$-2-cocartesian, given any 2-cells $\tau: G\Lra G':\CJ\to\CK$ and $\rho:(H,\gamma)\Lra(H',\gamma'):(\CI,X)\to(\CK,Z)$
with $GF=H, G'F=H', \tau F= \rho$, we let $\be:Y\to ZH,\,\be':Y\to ZH'$ be determined by the identities $\be F\cdot\varphi=\gamma,\,\be'F\cdot \varphi=\gamma'$ and must then confirm that $\tau:(G,\be)\Lra(G',\be'):(\CJ,Y)\to(\CK,Z)$ is a 2-cell in ${\sf Diag}^{\circ}(\CX)$.
But this is straightforward, since from 
$$(Z\tau\cdot\beta)F\cdot \varphi=Z\tau F\cdot\beta F\cdot\varphi=Z\tau F\cdot\gamma=Z\rho\cdot\gamma=\gamma'=\beta' F\cdot\varphi$$
one deduces the desired identity $Z\tau\cdot\beta=\beta'$.

Finding a $D^{\CX}$-2-cocartesian lifting for a 2-cell $\al:F\Lra F'$ in $\SmallCats   $ that comes with a 1-cell $(F,\varphi):(\CI,X)\to(\CJ,Y)$ in ${\sf Diag}^{\circ}(\CX)$ proceeds as in (1): one just puts $\varphi':=Y\al\cdot\varphi$ and easily shows that, given 2-cells $\lam:(F,\varphi)\Lra(K,\kappa)$ in ${\sf Diag}^{\circ}(\CX)$ and $\chi:F'\to K$ in $\SmallCats  $ with $\chi\cdot\al=\lam$, then $\chi:(F',\varphi')\to(K,\kappa)$ actually lives in ${\sf Diag}^{\circ}(\CX)$. Likewise, also the easy proof that pre- and post-composition with 1-cells in ${\sf Diag}^{\circ}(\CX)$ preserves the $D^{\CX}$-2-cocartesianess of $\al:(F,\varphi)\Lra(F',\varphi')$ proceeds as in (1).
\end{proof}

\begin{rem}\label{split2fib}
We note that the 2-fibration $D_{\CX}^{\op}$ is split, in the obvious sense that the induced functor
$$\Pi_{D_{\CX}^{\op}}:{\SmallCats}^{\op}\to{\LocSmallCats}, \;(F:\CI\to\CJ\text{ in }\SmallCats)\longmapsto(F^*:[\CJ,\CX]^{\op}\to[\CI,\CX]^{\op},\,Y\mapsto YF),$$
is actually a 2-functor. It assigns to a small category $\CI$ its fibre in $({\sf Diag}_{\circ}(\CX))^{\op}$, which is precisely the category $[\CI,\CX]^{\op}$. Furthermore, for all objects $(\CI,X),(\CJ,Y)$ in $({\sf Diag}_{\circ}(\CX))^{\op}$, the fibration
$$({\sf Diag}_{\circ}(\CX))^{\op}(X,Y)\lra[\CI,\CJ],\; (F,\varphi)\mapsto F$$
is actually discrete.
\end{rem}

\section{Appendix 1: Grothendieck fibrations and the  Grothendieck construction}\label{appendix}

\subsection{Cartesian morphisms}
\label{subsec:GrothendieckFibrations}

Given a functor $P\from \CatSymbA{E}\to \CatSymbA{B}$, a morphism $f\from x\to y$ in $\CatSymbA{E}$ is a \Defn{lifting} (along $P$) of a morphism $u:a\to b$ in $\CatSymbA{\CB}$   if $Pf=u$. The lifting $f$ is
\Defn{$P$-cartesian}\footnote{Following \cite{Grothendieck1961}, the older literature, such as \cite{Gray1966}, uses {\em strong cartesian} instead.}  if every diagram of solid arrows below can be filled uniquely, as shown:
 \begin{equation*}
 \xymatrix@R=5ex@C=4em{
 {z}\ar@{-->}[r]_-{t}\ar@/^3ex/[rr]^{h} &
 	{x}\ar[r]_-{f} & {y} &
 	\CatSymbA{E} \ar[d]^{P} \\
 {Pz}\ar[r]_-{w}\ar@/^3ex/[rr]^{Ph} &
 	{Px}\ar[r]_-{Pf} & {Py} & \CatSymbA{B}
 }
 \end{equation*}
Thus, if $h\from z\to y$ in $\CatSymbA{E}$ and $w\from Pz\to Px$ in $\CatSymbA{B}$ satisfy $Pf\cdot w=Pg$, then there is exactly one morphism $t\from z\to x$ in $\CatSymbA{E}$ with $f\cdot t=g$ and $Pt=v$; equivalently, for every object $z$ in $\CatSymbA{E}$, the square
\begin{equation*}
\xymatrix@R=5ex@C=5em{
\CatSymbA{E}(z,x)\ar[r]^{\CatSymbA{E}(z,f)}\ar[d]_{P_{z,x}} &
	\CatSymbA{E}(z,y)\ar[d]^{P_{z,y}}\\
	\CatSymbA{B}(Pz,Px)\ar[r]_{\CatSymbA{B}(P\!z,P\!f)}
		& \CatSymbA{B}(Pz,Py)}
\end{equation*}
is a pullback diagram in $\Sets$. 

We note that, for any functor $P$, a $P$-cartesian lifting $f$ of $u$ is an isomorphism if, and only if, $u$ is an isomorphism. The class ${\rm Cart}(P)$ of $P$-cartesian morphisms in $\CE$ contains all isomorphisms of $\CE$, is closed under composition, and satisfies the cancellation condition $(g\cdot f\in {\rm Cart}(P)\Longrightarrow f\in {\rm Cart}(P))$ whenever $g$ is monic or $P$-cartesian. Moreover, the class ${\rm Cart}(P)$ is stable under those pullbacks in $\CE$ which $P$ transforms into monic pairs; in particular, ${\rm Cart}(P)$ is stable under the pullbacks that are preserved by $P$. 

\subsection{Grothendieck fibrations}\label{Grothendieck fibrations}\label{fibrationbasics}

For an object $b$ in $\CB$ we denote by $\CE_b$ the \Defn{fibre} of $P:\CE\to\CB$ at $b$; this is the (non-full) subcategory of $\CE$ of all morphisms in $\CE$ that are liftings of $1_b$. Hence, for the inclusion functor $J_b:\CE_b\to \CE$ the functor $PJ_b=\Delta b$ is constant. The morphisms in the fibres of $P$ are also called $P$-{\em vertical}.  The functor $P$ is a \Defn{(Grothendieck) fibration} if, for every morphism $u\from a\to b$ in $\CatSymbA{B}$ and every object $y$ in $\CatSymbA{E}_b$, there is a $P$-cartesian lifting $f\from x\to y$ in $\CatSymbA{E}$. Since such a lifting is unique up to isomorphism when considered as an object in the slice category $\CE/y$, one may call $f$ {\em the} $P$-cartesian lifting of $u$ {\em at} $y$. In fact, we will assume throughout that our fibrations are \Defn{cloven}; this means, that a choice of $P$-cartesian liftings, also called a \Defn{cleavage}, has been made for all $u$ and $y$. We denote the chosen $P$-cartesian lifting of $u:a\to b$ in $\CB$ at $y\in\CE_b$ by $\theta^u_y:u^*(y)\to  y$. With this notation one sees immediately that a functor $P$ is a fibration if, and only if, for every object $y$ in $\CE$, the induced functor
$$P_y:\CE/y\longrightarrow\CB/Py$$
of the slice categories has a right adjoint right inverse ({\em rari}), namely $\theta_y$ (see \cite{Gray1966}).

For a fibration $P:\CE\to \CB$, one also calls $\CE$ {\em fibred over} $\CB$.  Every morphism $f:x\to y$ in $\CE$ then has a ($P$-vertical, $P$-cartesian)-factorization, as in

\begin{center}
$\xymatrix{x\ar@{-->}[d]_{\epsilon_f}\ar[rd]^{f} &\\
u^*(y)\ar[r]_{\theta^u_y} & y\;,\\
}$	
\end{center}
where $u=Pf$, and where the $P$-vertical morphism $\ep_f$ is uniquely determined by $f$. In fact, there is, for all morphisms $u:a\to b$ and objects $x\in\CE_a,\, y\in\CE_b$ a natural bijective correspondence
$$\CE_u(x,y)\cong\CE_a(x,u^*(y)),$$
where $\CE_u(x,y)=\CE(x,y)\cap P^{-1}(u)$. This correspondence means precisely that, for all $a\in\CB$, the full embedding
$$ I_a:\CE_a\hookrightarrow a\!\downarrow\! P,\quad x\mapsto (1_a: a\to Px), $$
has a right adjoint; it sends $u:a\to Py$, seen as an object of $a\!\downarrow\!P$, to $u^*y$. (An $I_a$-universal arrow at $u:a\to Py$ is also called a $P$-{\em precartesian} lifting of $u$ and $y$; obviously, when $P$ is a fibration, $P$-precartesian liftings are $P$-cartesian; see \cite{Borceux1994}.) We call $I_a$ the {\em comma insertion} of the fibre $\CE_a$.

The $P$-vertical morphisms and, more generally, the $\CE$-morphisms which are mapped by $P$ to isomorphisms, are orthogonal to $P$-cartesian morphisms. As a consequence one obtains that a functor $P:\CE\to\CB$ is a fibration if, and only if, $P$ is an {\em iso-fibration} (that is:, if every isomorphism $u:a\to b$ in $\CB$ admits a $P$-cartesian lifting at every $y\in\CE_b$), and if
$(P^{-1}({\rm Iso}\,\CB),{\rm Cart}(P))$ is an orthogonal factorization system of $\CE$; the second property means equivalently that $P$ is a {\em Street fibration} \cite{Street1980}. 

For every morphism $u:a\to b$ in $\CB$, the domains of the $P$-cartesian liftings of $u$ at the objects of $\CE_b$ give the object assignment of a functor $u^*:\CE_b\to\CE_a$ that makes
$\theta^u:J_au^*\to J_b$ a natural transformation; for a morphism $j:y\to y'$ in $\CE_b$ one has $u^*(j)=\ep_{j\cdot\theta_y^u}$.
\begin{center}
$\xymatrix{\mathcal E_a\ar[rd]_{J_a}^{\; \theta^u:\Longrightarrow} & & \mathcal E_b\ar[ld]^{J_b}\ar[ll]_{u^*}\\
	& \mathcal E &\\
	}$
	\hfil
	$\xymatrix{u^*(y)\ar[r]^{\theta^u_y}\ar[d]_{u^*(j)} & y\ar[d]^j\\
	u^*(y')\ar[r]^{\theta^u_{y'}} & y'\\
	}$
\end{center}
The commutative diagram below shows that the object assignment $b\mapsto \CE_b$ leads to a pseudofunctor
$$\Phi^P:\CB^{\rm op}\longrightarrow\LocSmallCats,\qquad (u:a\to b\text{ in }\CB)\longmapsto (u^*:\CE_b\to\CE_a)$$
and, thus, presents the fibration $P$ as an {\em indexed category} \cite{JohnstonePare1978}.
\begin{equation*}
 \xymatrix{x\ar[d]_{\ep_f}\ar[rd]^f & &  & & &  \\
 u^{*}(y)\ar[r]_{\theta^u_y}\ar[d]_{u^*(\ep_g)} & y\ar[rrd]^g\ar[d]^{\ep_g} & & & &  \\
 (v\cdot u)^*(z)\cong u^*(v^*(z))\ar[r]^{\qquad\qquad\theta^u_{v^*(z)}}\ar@/_1.5pc/[rrr]_{\theta^{v\cdot u}_z} & v^*(z)\ar[rr]^{\theta^v_z} & & z & y\cong 1_b^*(y)\ar[r]^{\quad\theta^{1_b}_y} & y\\
 a\ar[r]^u & b\ar[rr]^{v\quad} & & c=Pz & b\ar[r]^{1_b\quad} & b\\
 }	
 \end{equation*} 
If $\Phi^P$ is actually a functor, with the above canonical isomorphisms becoming identities, so that
$$(v\cdot u)^*=u^*v^*,\quad(1_b)^*={\rm Id}_{\mathcal E_b}, \quad\text{ and }\quad
\theta^{v\cdot u}=\theta^v\cdot\theta^uv^*,\quad\theta^{1_b}=1_{{\rm Id}_{\mathcal E_b}}$$
for all composable morphisms $u, v$ and objects $b$ in $\CB$, then $P$ is called a {\em split fibration}.

A functor $P:\CE\to\CB$ is {\em small-fibred} if all of its fibres are small; in case of a fibration $P$, this means that $\Phi^P$ takes its values in $\SmallCats$. A fibration $P$ is {\em discrete} if all of its fibres are discrete, that is: if $\Phi^P$ takes its values in $\EuRoman{SET}$. Clearly a functor $P$ is a discrete fibration if, and only if there is, for every $u:a\to b$ in $\CB$ and $y\in \CE_b$, exactly one lifting with codomain $y$; the fibration is necessarily split.

Here is how some elementary properties manifest themselves for a fibration $P:\CE\to\CB$: $P$ is faithful (full; essentially surjective on objects) if, and only if, for every $b\in \CB$, the fibre $\CE_b$ is a preordered class (has all of its homs non-empty; is non-empty, respectively). When $\CE$ has a terminal object, a fibration $P$ is an equivalence of categories if, and only if, $P$ preserves the terminal object and reflects isomorphisms. (In the last statement, the preservation of the terminal object is essential: for a monic arrow $f:x\to y$ in a a category $\CC$, the discrete fibration $f\cdot(-):\CC/x\to\CC/y$ is fully faithful, but does not preserve the terminal object $1_x$ of $\CC/x$, unless $f$ is an isomorphism in $\CC$.)

\subsection{Grothendieck cofibrations and bifibrations}\label{cofib and bifib}
For a functor $P:\CE\to \CB$, a morphism $f:x\to y$ in $\CE$ is {\em $P$-cocartesian} if $f$ is $P^{\rm op}$-cartesian in $\CE^{\rm op}$, with $P^{\op}:\CE^{\op}\to\CB^{\op}$. This means that every solid-arrow diagram below on the left can be filled uniquely as shown.
\begin{center}
$\xymatrix@R=5ex@C=4em{
 {x}\ar[r]_-{f}\ar@/^3ex/[rr]^{h} &
 	{y}\ar@{-->}[r]_-{s} & {z}  \\
 {Px}\ar[r]_-{Pf}\ar@/^3ex/[rr]^{Ph} &
 	{Py}\ar[r]_-{v} & {Pz} 
 }
 \hfil 
 	\xymatrix{x\ar[r]^{\delta^u_x\quad} & u_!(x)=y  & \mathcal E\ar[d]^P\\
 	a=Px\ar[r]^u & b & \mathcal B}
 $
 \end{center}
$P$ is a {\em (cloven Grothendieck) cofibration} if $P^{\rm op}:\CE^{\op}\to\CB^{op}$ is a fibration\footnote{Grothendieck cofibrations are now commonly referred to as {\em opfibrations}: see Footnote 1 of the Introduction.}. 
This means that for every morphism $u:a\to b$ in $\CB$ and every object $x$ in $\CE_a$ one has a (chosen) $P$-cocartesian lifting, which we denote by $\delta_x^u:x\to u_!(x)$; this fixes the {\em cocleavage} $\delta^u:J_a\to J_bu_!$. Every morphism $f:x\to y$ in $\CE$ now admits the ($P$-cocartesian,\,$P$-vertical)-factorization $f=\nu_f\cdot\delta_x^u$, with $u=Pf$. 
\begin{center}
	$\xymatrix{\mathcal E_a\ar[rd]_{J_a}^{\; \delta^u:\Longrightarrow}\ar[rr]^{u_!} & & \mathcal E_b\ar[ld]^{J_b}\\
	& \mathcal E &\\
	}$
	\hfil
	$\xymatrix{& & y\\
	x\ar[rru]^f\ar[rr]^{\delta^u_x} && u_!(x)\ar[u]_{\nu_f}\\
	}$	
	\end{center}	
One obtains a pseudofunctor
$$\Phi_P:\CB\longrightarrow\LocSmallCats,\quad(u: a\to b)\longmapsto(u_!:\CE_a\to\CE_b),$$
and the cofibration $P$ is {\em split} if $\Phi_P$ is a functor; more precisely, if

$$(v\cdot u)_!=v_!\,u_!,\quad (1_b)_!={\rm Id}_{\mathcal E_b},\quad\text{ and }\quad \delta^{v\cdot u}=\delta^vu_!\cdot\delta^u,\quad\delta^{1_b}=1_{{\rm Id}_{\mathcal E_b}},$$
for all composable morphisms $u,v$ and objects $b$ in $\CB$. For all $b\in\CB$ we have the {\em dual comma insertion} of the fibre $\CE_b$:
$$ I^b:\CE_b\hookrightarrow P\!\downarrow\! b,\quad y\mapsto (1_b: Py\to b). $$
When $P$ is a cloven cofibration, it has the left adjoint $(u:Px\to b)\longmapsto u_!(x)$.

A functor $P$ is a {\em bifibration} if it is simultaneously a fibration and a cofibration. The following criterion is certainly known but is not easily found and clearly spelled out in the literature:
\begin{thm}\label{bifibrations}
The following assertions are equivalent for a functor $P:\CE\to\CB$ :
\begin{itemize}
\item[{\em(i)}] $P$ is a bifibration;
\item[{\em (ii)}] $P$ is a fibration, and the functor $u^{\ast}$ has a left adjoint $u_!$, for all $u:a\to b$ in $\CB$;
\item[{\em(iii)}] $P$ is a cofibration, and the functor $u_!$ has a right adjoint $u^{\ast}$, for all  $u:a\to b$ in $\CB$.
\end{itemize}
For a bifibration $P$, the units $\eta^u$ and counits $\varepsilon^u$ of the adjunctions $u_! \dashv u^{\ast}$ are determined by the commutative diagrams
\begin{center}
$\xymatrix{
J_a\ar[d]_{J_a\eta^u}\ar[dr]^{\delta^u} & \\
J_au^{\ast}u_!\ar[r]_{\theta^uu_!} & J_bu_!\\
  }$
 \hfil  
$\xymatrix{& J_b  \\
J_au^{\ast}\ar[r]_{\delta^uu^{\ast}}\ar[ur]^{\theta^u} & J_bu_!u^{\ast}\ar[u]_{J_b\varepsilon^u}\\
        }$
\end{center}                    
\end{thm}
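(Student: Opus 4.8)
The plan is to exploit the self-duality of the notion of bifibration. Since $P$ is a bifibration precisely when $P^{\op}$ is one, and since passing from $P$ to $P^{\op}$ interchanges statements (ii) and (iii), it suffices to establish the equivalence (i)$\Leftrightarrow$(ii); the equivalence (i)$\Leftrightarrow$(iii) then follows by applying the former to $P^{\op}$. The single tool I would rely on throughout is the natural bijective correspondence recorded in \ref{Grothendieck fibrations}, namely $\CE_u(x,y)\cong\CE_a(x,u^*(y))$ for a fibration, together with its dual $\CE_u(x,y)\cong\CE_b(u_!(x),y)$ for a cofibration, where $\CE_u(x,y)=\CE(x,y)\cap P^{-1}(u)$.

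For (i)$\Rightarrow$(ii), a bifibration is in particular a fibration, so only the adjunction $u_!\dashv u^*$ must be produced. Composing the cofibration correspondence with the fibration correspondence yields, for $x\in\CE_a$ and $y\in\CE_b$, a chain of bijections $\CE_b(u_!(x),y)\cong\CE_u(x,y)\cong\CE_a(x,u^*(y))$, natural in both variables because each constituent is; by the hom-set characterization of adjunctions this exhibits $u_!\dashv u^*$ between the fibres. For (ii)$\Rightarrow$(i), here $P$ is a fibration and each $u^*\colon\CE_b\to\CE_a$ carries a left adjoint $u_!\colon\CE_a\to\CE_b$ with unit $\eta^u\colon\mathrm{Id}\to u^*u_!$, necessarily $P$-vertical over $a$. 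I would define the candidate cocartesian lifting by $\delta^u_x\DefEq\theta^u_{u_!(x)}\cdot\eta^u_x\colon x\to u_!(x)$, which immediately satisfies $P\delta^u_x=u$ and reproduces the asserted unit triangle $\delta^u=\theta^uu_!\cdot J_a\eta^u$. To check that $\delta^u_x$ is $P$-cocartesian, I would take $h\colon x\to z$ with $Ph=v\cdot u$, use the fibration to factor it as $h=\theta^{vu}_z\cdot\bar h=\theta^v_z\cdot\theta^u_{v^*(z)}\cdot\bar h$ with $\bar h\colon x\to u^*(v^*(z))$ vertical, transpose $\bar h$ across $u_!\dashv u^*$ to a vertical $g\colon u_!(x)\to v^*(z)$, and set $s\DefEq\theta^v_z\cdot g$. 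The identity $s\cdot\delta^u_x=h$ then follows from the naturality square $\theta^u_{v^*(z)}\cdot u^*(g)=g\cdot\theta^u_{u_!(x)}$ together with the unit equation $\bar h=u^*(g)\cdot\eta^u_x$, while uniqueness of $s$ is inherited from the uniqueness in the cartesian factorization and in the adjoint transpose.

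Finally, the counit formula is obtained dually, by reading the cartesian lifting $\theta^u_y\colon u^*(y)\to y$ (for $y\in\CE_b$) as its own $(P\text{-cocartesian},P\text{-vertical})$-factorization $\theta^u_y=\varepsilon^u_y\cdot\delta^u_{u^*(y)}$; componentwise this is exactly the stated triangle $\theta^u=J_b\varepsilon^u\cdot\delta^uu^*$, and a short check against the triangle identities confirms that the $\varepsilon^u_y$ so produced are indeed the counit components. I expect the genuine work to be concentrated in the cocartesianness verification of (ii)$\Rightarrow$(i): one must keep careful track of the canonical comparison isomorphism $(vu)^*\cong u^*v^*$ (available for any fibration, not merely split ones) when splitting $\theta^{vu}_z$, and must invoke precisely the naturality of $\theta^u$ and the unit/triangle identities in the right order. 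The (i)$\Rightarrow$(ii) direction and the bookkeeping for the unit and counit diagrams are, by contrast, routine.
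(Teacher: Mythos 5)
Your proposal is correct. Note that the paper itself states Theorem \ref{bifibrations} without proof, remarking only that the criterion is ``certainly known but is not easily found and clearly spelled out in the literature,'' so there is no in-paper argument to compare against; your proof supplies exactly the standard one. The three ingredients you use are the right ones and are all available from the paper's Appendix: the composite bijection $\CE_b(u_!(x),y)\cong\CE_u(x,y)\cong\CE_a(x,u^*(y))$ for (i)$\Rightarrow$(ii)/(iii), the construction $\delta^u_x=\theta^u_{u_!(x)}\cdot\eta^u_x$ with the cocartesianness check via the comparison isomorphism $(vu)^*\cong u^*v^*$ and the adjoint transpose for (ii)$\Rightarrow$(i), and the $\op$-duality to trade (ii) for (iii); the unit and counit triangles then fall out of the (vertical, cartesian) and (cocartesian, vertical) factorizations exactly as you describe.
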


\begin{cor}\label{bifibrationsagain}
For a bifibration $P:\CE\to\CB$ and every morphism $u:a\to b$ in $\CB$, the functor $u^{\ast}:\CE_b\to\CE_a$ preserves all limits and $u_!:\CE_a\to\CE_b$ preserves all colimits.
\end{cor}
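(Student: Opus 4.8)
The plan is to reduce the statement immediately to the adjunction furnished by Theorem~\ref{bifibrations}. Since $P$ is assumed to be a bifibration, the equivalence of assertions (i), (ii), (iii) in that theorem applies: for every morphism $u:a\to b$ in $\CB$, the functor $u_!:\CE_a\to\CE_b$ between the fibres is left adjoint to $u^*:\CE_b\to\CE_a$, that is $u_!\dashv u^*$. Thus $u_!$ is exhibited as a left adjoint and $u^*$ as the corresponding right adjoint, and the unit/counit description at the end of Theorem~\ref{bifibrations} pins down exactly this variance.

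The second and final step is then the classical fact that left adjoints preserve all colimits and right adjoints preserve all limits. Applying this to the adjunction $u_!\dashv u^*$ yields that $u_!$ preserves every colimit existing in $\CE_a$ and that $u^*$ preserves every limit existing in $\CE_b$, which is precisely the two assertions of the corollary. No construction beyond the invocation of these two ingredients is required, so the proof is a one-line deduction.

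There is essentially no obstacle here: the substantive work has already been carried out in Theorem~\ref{bifibrations}, which is what identifies the cartesian lifting functors $u^*$ and the cocartesian lifting functors $u_!$ as an adjoint pair (rather than merely a pair of functors between the fibres). The only point that warrants a moment's care is the matching of variance---confirming that it is $u^*$, and not $u_!$, that occurs as the right adjoint---but this is settled directly by the orientation of the adjunction $u_!\dashv u^*$ recorded in Theorem~\ref{bifibrations}. Consequently I would present the argument in a single short paragraph citing Theorem~\ref{bifibrations} together with the general adjoint-functor preservation principle.
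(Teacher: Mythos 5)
Your proposal is correct and matches the paper's (implicit) argument exactly: the corollary is stated without proof precisely because it follows immediately from the adjunctions $u_!\dashv u^*$ established in Theorem~\ref{bifibrations} together with the standard fact that left adjoints preserve colimits and right adjoints preserve limits. Nothing further is needed.
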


\subsection{Limits and colimits in a fibred or cofibred category}

It is certainly known how to form ordinary (co)limits of a specified type in a bifibred category $\CE$ from given (co)limits of the same type in the base category $\CB$ and the fibres of the fibration.
(For notions of, and criteria for, fibrational completeness, see \cite[Section 8.5]{Borceux1994}.) We sketch here a detailed but compact proof of this fact in a more general form, as we trace its steps in our main application in Section 4.

\begin{thm}\label{LiftingLimits}
Let $P\from \CatSymbA{E}\to \CatSymbA{B}$ be a fibration. If limits of shape $\CD$ exists in $\CatSymbA{B}$ and in all fibers of $P$, such that their comma insertions preserve them, then also $\CatSymbA{E}$ has $\CD$-limits, and $P$  preserves them.
\end{thm}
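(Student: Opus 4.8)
The plan is to build the limit of a diagram $X\from\CD\to\CE$ in three stages, exploiting the two halves of the bifibration in turn. First I would form $b=\lim PX$ in $\CB$, with limit cone $(\pi_d\from b\to PXd)_{d\in\CD}$. Next, using the fibration structure, I would reindex $X$ into the single fibre $\CE_b$: for each $d$ take the cartesian lifting $\theta^{\pi_d}_{Xd}\from \pi_d^*(Xd)\to Xd$ and set $\widetilde{X}d\DefEq \pi_d^*(Xd)\in\CE_b$. For a morphism $\alpha\from d\to d'$ the composite $X\alpha\cdot\theta^{\pi_d}_{Xd}$ lies over $PX\alpha\cdot\pi_d=\pi_{d'}$, so the cartesian property of $\theta^{\pi_{d'}}_{Xd'}$ yields a unique vertical $\widetilde{X}\alpha\from\widetilde{X}d\to\widetilde{X}d'$ with $\theta^{\pi_{d'}}_{Xd'}\cdot\widetilde{X}\alpha=X\alpha\cdot\theta^{\pi_d}_{Xd}$; uniqueness in the cartesian factorization makes $\widetilde{X}\from\CD\to\CE_b$ a functor. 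Finally I would form $e\DefEq\lim_{\CE_b}\widetilde{X}$ with its (vertical) limit cone $(\rho_d\from e\to\widetilde{X}d)$, which exists by hypothesis.

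I claim $e$, equipped with the legs $\lambda_d\DefEq\theta^{\pi_d}_{Xd}\cdot\rho_d\from e\to Xd$ (each lying over $\pi_d$), is the limit of $X$. That $(\lambda_d)$ is a cone follows by a direct computation: $X\alpha\cdot\lambda_d=\theta^{\pi_{d'}}_{Xd'}\cdot\widetilde{X}\alpha\cdot\rho_d=\theta^{\pi_{d'}}_{Xd'}\cdot\rho_{d'}=\lambda_{d'}$, using the defining equation of $\widetilde{X}\alpha$ and the cone property of $(\rho_d)$ in $\CE_b$. Since $P\lambda_d=\pi_d$ and $(\pi_d)$ was chosen as the limit of $PX$, the preservation statement will be immediate once universality is proved: every limit of $X$ being isomorphic to the one constructed, $P$ carries it to a limit of $PX$.

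The substance of the argument is universality, and this is where the cofibration half of the bifibration becomes indispensable. Given any cone $(\mu_d\from z\to Xd)$ over $X$, applying $P$ and using $b=\lim PX$ produces a unique $g\from Pz\to b$ with $\pi_d\cdot g=P\mu_d$. Each $\mu_d$ then lies over $\pi_d\cdot g$, so the cartesian property of $\theta^{\pi_d}_{Xd}$ supplies a unique $\nu_d\from z\to\widetilde{X}d$ over $g$ with $\theta^{\pi_d}_{Xd}\cdot\nu_d=\mu_d$; a short cartesian-uniqueness check shows $\widetilde{X}\alpha\cdot\nu_d=\nu_{d'}$. To feed this into the fibrewise limit I would transport it into $\CE_b$ along the cocartesian lifting $\delta^g_z\from z\to g_!(z)$: each $\nu_d$ factors uniquely as $\overline{\nu}_d\cdot\delta^g_z$ with $\overline{\nu}_d\from g_!(z)\to\widetilde{X}d$ vertical, and these $\overline{\nu}_d$ form a cone over $\widetilde{X}$ in $\CE_b$. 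The limit $e$ then yields a unique vertical $k\from g_!(z)\to e$ with $\rho_d\cdot k=\overline{\nu}_d$, and I set $h\DefEq k\cdot\delta^g_z\from z\to e$, for which $\lambda_d\cdot h=\mu_d$ follows by unwinding the definitions. Equivalently, one may phrase the whole transport through the adjunction $g_!\dashv g^*$ of Theorem \ref{bifibrations}, i.e. the bijection $\CE_g(z,-)\cong\CE_b(g_!(z),-)$, which is exactly the input that the mere fibration structure fails to provide.

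For uniqueness, any $h'\from z\to e$ with $\lambda_d\cdot h'=\mu_d$ must satisfy $Ph'=g$ (by uniqueness of $g$), whence cartesian uniqueness forces $\rho_d\cdot h'=\nu_d$; factoring $h'$ through the cocartesian $\delta^g_z$ and invoking the uniqueness of $k$ gives $h'=h$. The main obstacle is precisely this universality step: a competing cone need not have its vertex in the fibre $\CE_b$, and reconciling it with the fibrewise limit is impossible using cartesian liftings alone. The cocartesian liftings—equivalently, the left adjoints $g_!$ to reindexing, guaranteed by the bifibration hypothesis via Theorem \ref{bifibrations}—are what bridge an arbitrary vertex to the fibre over $b$; all remaining verifications (functoriality of $\widetilde{X}$, naturality, and the routine diagram chases) are then straightforward.
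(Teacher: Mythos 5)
Your proof is correct and follows essentially the same route as the paper's: form the limit of $PX$ in $\CB$, reindex the diagram into the fibre $\CE_b$ via cartesian liftings, take the fibrewise limit, and use the cocartesian lifting $\delta^g_z$ to transport an arbitrary competing cone into that fibre for the universality argument. The only difference is cosmetic (the paper phrases the uniqueness step by first factoring the comparison morphism through the cocartesian lift and then matching against the intermediate cone $\kappa$, where you match against $\nu_d$ first), so there is nothing to add.
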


\begin{proof} 
We choose a cleavage $\CleavageOf{}{}$ 
for $P$. For a diagram $F\from \CD\to \CatSymbA{E}$, let $b \DefEq \LimOf{PF}$ in $\CatSymbA{B}$, with limit cone $\beta\from \Delta b\to PF$. For every object $d$ in $\CD$ we have the $P$-cartesian lifting of $\beta_d$ at $Fd$,
\begin{equation*}
(\alpha_d\from Ld\to Fd) \DefEq ( \CleavageOf{\beta_d}{Fd} \from  \beta_{d}^{\ast}(Fd)\to Fd)\ ,
\end{equation*}
to obtain a functor $L\from \CD\to \CatSymbA{E}_b$, together with a natural transformation $\alpha\from  J_bL\to F$. By design, $PJ_bL=\Delta b$ and $P\alpha=\beta$. Now let $z\DefEq \LimOf{L}$, with limit cone $\lambda\from \Delta z\to L$. in $\CE_b$. We claim that the composite transformation
\begin{equation*}
\xymatrix@R=5ex@C=4em{
\Delta z\ar[r]^{J_b\lambda} & J_bL\ar[r]^{\alpha} & F
}
\end{equation*}
is a limit cone in $\CatSymbA{E}$.

Consider any cone $\mu\from \Delta x\to F$ in $\CatSymbA{E}$. Its $P$-image factors as $\beta\cdot\Delta u=P\mu$, for a unique $\CatSymbA{B}$-morphism $u\from Px\to b$. As $\alpha_d$ is $P$-cartesian, for every $d\in \CatSymbB{D}$ one has a morphism $\gamma_d\from x\to Ld$, unique with $\alpha_d\cdot\gamma_d=\mu_d$ and $P\gamma_d=u$.  This gives a cone $\gamma\from \Delta x\to J_bL$ in $\CatSymbA{E}$ with $\alpha\cdot\gamma=\mu$ and $P\gamma=\Delta u$. With the comma insertion $I^b:\CE_b\longrightarrow b\downarrow\! P$ preserving the limit cone $\lambda: \Delta z\to L$, we can view $\gamma$ as a cone $\Delta(u:Px\to b)\longrightarrow I^bL$ in $b\!\downarrow P$ and, hence, factor it uniquely through $I^b\lambda: \Delta I^bz\to I^b L$. This means that there is unique morphism $f:x\to z $ in $\CE$ with $Pf=u$ and $\lambda_d\cdot f = \gamma_d$ for all $d\in\CD$, and we obtain the factorization $(\alpha\cdot J_b\lambda)\cdot\Delta f =\alpha\cdot\gamma=\mu$.


If $g\from x\to z$ is any $\CatSymbA{E}$-morphism  with $(\alpha\cdot J_b\lambda)\cdot\Delta g=\mu$, we must confirm $g=f$. An application of $P$ to the given identity shows $\beta\cdot \Delta Pg=P\mu=\beta\cdot\Delta u$ and, hence, $Pg=u$. Now the $P$-cartesianness of  $\alpha_d$ shows $\lambda_d\cdot g=\mu_d$ for every $d\in\CD$, so that $I_b\lambda\cdot\Delta g=\gamma$. This forces $g=f$.

\end{proof}

An application of the theorem to $P^{\op}$ instead of $P$ produces the dual statement:

\begin{cor}\label{LiftingColimits}
\label{thm:BiFibration-LiftingColimits}
Let $P\from \CatSymbA{E}\to \CatSymbA{B}$ be a cofibration. If colimits of shape $\CD$ exist in $\CatSymbA{B}$ and in all fibers of $P$, such that the dual comma insertions preserve them, then $\CD$-colimits exist in $\CatSymbA{E}$, and $P$ preserves them.
\end{cor}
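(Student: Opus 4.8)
The plan is to obtain the statement as a purely formal consequence of Theorem~\ref{LiftingLimits}, by applying that theorem to the opposite functor $P^{\op}\from\CE^{\op}\to\CB^{\op}$ and to the shape $\CD^{\op}$, exactly as announced in the sentence preceding the corollary; no fresh construction is needed.

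First I would check that the hypotheses of Theorem~\ref{LiftingLimits} are met by $P^{\op}$ for the shape $\CD^{\op}$. The essential points, all immediate from the definitions recalled in \ref{cofib and bifib}, are the following. (i) Being a bifibration is a self-dual property: by definition $P$ is a cofibration exactly when $P^{\op}$ is a fibration, and dually $P$ is a fibration exactly when $P^{\op}$ is a cofibration, so $P$ is simultaneously a fibration and a cofibration if, and only if, $P^{\op}$ is, whence $P^{\op}$ is again a bifibration, its cleavage and cocleavage arising by arrow-reversal from the cocleavage and cleavage of $P$. (ii) The fibre of $P^{\op}$ over an object $b$ is canonically $(\CE_b)^{\op}$, since a morphism of $\CE^{\op}$ is $P^{\op}$-vertical precisely when the underlying $\CE$-morphism is $P$-vertical. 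Using then that colimits of shape $\CD$ in any category $\CC$ are the same thing as limits of shape $\CD^{\op}$ in $\CC^{\op}$, the assumed existence of $\CD$-colimits in $\CB$ and in every fibre $\CE_b$ translates precisely into the existence of $\CD^{\op}$-limits in $\CB^{\op}$ and in every fibre $(\CE^{\op})_b$ of $P^{\op}$.

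Finally I would invoke Theorem~\ref{LiftingLimits} for the bifibration $P^{\op}$, which yields that limits of shape $\CD^{\op}$ exist in $\CE^{\op}$ and are preserved by $P^{\op}$, and read this back across the duality as the existence of $\CD$-colimits in $\CE$, preserved by $P$. I do not anticipate any genuine obstacle here; the only thing demanding care is the bookkeeping of the duality simultaneously on the base category, the fibres, and the total category, together with the self-duality of the bifibration notion---but each of these is transparent from the definitions recalled in \ref{Grothendieck fibrations} and \ref{cofib and bifib}.
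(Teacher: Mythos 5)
Your proposal is correct and is exactly the paper's argument: the corollary is stated as the dual of Theorem~\ref{LiftingLimits}, obtained by applying that theorem to $P^{\op}$ (with shape $\CD^{\op}$), using that bifibrations are self-dual and that fibres dualize to fibres. The paper compresses all of this into the single sentence preceding the corollary, so your more careful bookkeeping of the duality is just an expanded version of the same proof.
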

With Corollary \ref{bifibrationsagain} we conclude from Theorem \ref{LiftingLimits}  and Corollary \ref{LiftingColimits}:

\begin{cor}\label{thm:BiFibration-LiftingColimits}	Let $P:\CE\to\CB$ be a bifibration. If (co)limits of shape $\CD$ exist in $\CB$ and in all fibers of $P$, then also $\CE$ has all (co)limits of shape $\CD$, and $P$ preserves them.
\end{cor}

\subsection{The Grothendieck construction for indexed categories}\label{GrothendieckConstruction}
The indexed category $\Phi^P:\CB^{\rm op}\to\LocSmallCats$ of a fibration $P:\CE\to\CB$ preserves all information about the base category $\CB$ and the fibres $\CE_b=\Phi^P b\;(b\in \CB)$, including their pseudo-functorial interaction. The Grothendieck construction shows how one can rebuild the category $\CE$ from that information. Below (on the left) is the definition of the {\em Grothendieck category} (also called {\em total category}) of $\Phi$, usually denoted by $\int^{\circ}\Phi$, 
in the split (=strict) case, that is, for a genuine functor $\Phi:\CB^{\rm op}\to\LocSmallCats$. 

On the right we describe the dual construction, {\em i.e.}, give the definition of the {\em dual Grothendieck category}, 
$\int_{\circ}\Phi$, for a functor $\Phi:\CB\to\LocSmallCats$. In the case $\Phi=\Phi_P$ where $P:\CE\to\CB$ is a cofibration, it recovers the category $\CE$.
\\
\\
{\renewcommand{\arraystretch}{1.5}%
\begin{tabular}{|lc|cr|}
\hline
\begin{minipage}[t]{6.8cm}
The Grothendieck category $\int^{\circ}\Phi$ of a functor $\Phi\from \CatSymbA{B}^{\rm op}\to \LocSmallCats$ is the category with
\begin{enumerate}[$\bullet$]
\item {\em objects} \ pairs $(b,y)$, for $b\in \CatSymbA{B}$ and $y\in \Phi b$;
\item {\em morphisms} \ $(u,f)\from (a,x)\to (b,y)$, for $u\from a\to b$ in $\CatSymbA{B}$ and $f\from x\to (\Phi u)y$ in $\Phi a$;
\begin{equation*}
\xymatrix@R=2ex@C=5em{
(a,x) \ar[rdd]^-{(u,f)}\ar[dd]_{(\OneMapOn{a},f)} & \\ \\
(a,(\Phi u)y) \ar[r]_-{(u,1_{(\Phi u)y})}  &
	(b,y) \\
a \ar[r]_-{u} &
	b
}
\end{equation*}
\item {\em composition} \ $(v,g)\cdot(u,f)=(v\cdot u, (\Phi u)g\cdot f)$.
\end{enumerate}
$\int^{\circ}\Phi$ is fibred over $\CB$, with split fibration
\begin{center}
$\Pi^{\Phi}:\int^{\circ}\Phi\to\CB,\quad(u,f)\mapsto u,$\\ 
\medskip
$u^*(b,y)=(a,(\Phi u)y),$\\
\medskip
$\theta^u_{(b,y)}=(u,1_{(\Phi u)y}),\quad\varepsilon_{(u,f)}=(1_a,f).$
\end{center}
\end{minipage}
 & & &
\begin{minipage}[t]{6.7cm}
The dual Grothendieck category $\int_{\circ}\Phi$ of a functor $\Phi\from \CatSymbA{B}\to \LocSmallCats$ is the category with
\begin{enumerate}[$\bullet$]
\item {\em objects} \ pairs $(a,x)$, for $a\in \CatSymbA{B}$ and $x\in \Phi a$;
\item {\em morphisms} \ $(u,f)\from (a,x)\to (b,y)$, for $u\from a\to b$ in $\CatSymbA{B}$ and $f\from (\Phi u)x\to y$ in $\Phi b$;
\begin{equation*}
\xymatrix@R=2ex@C=5em{
& (b,y)  \\ \\
(a,x) \ar[r]_-{(u,{\OneMapOn{(\Phi u)x}})}  \ar[ruu]^-{(u,f)} &
	(b,(\Phi u)x)\ar[uu]_{(\OneMapOn{b},f)}	 \\
a \ar[r]_-{u} &
	b
}
\end{equation*}
\item {\em composition} \ $(v,g)\cdot (u,f)=\!(v\cdot u, g\cdot(\Phi v)f).$
\end{enumerate}
$\int_{\circ}\Psi$ is fibred over $\CB$, with split cofibration
\begin{center}
$\Pi_{\Phi}:\int_{\circ}\Phi\to\CB,\quad(u,f)\mapsto u,$\\ 
\medskip
$u_!(a,x)=(b,(\Phi u)x),$\\
\medskip
$\delta^u_{(a,x)}=(u,1_{(\Phi u)x}),\quad\nu_{(u,f)}=(1_b,f).$
\end{center}

\end{minipage}\\
\hline
\end{tabular} }
\\

One can make precise in which sense the construction on the right is dual to the construction on the left, as follows.
Given $\Phi:\CB\to{\sf CAT}$	, dualize the ``base" $\CB$  and every ``fibre'' $\Phi b\;(b\in \CB)$, that is: form the indexed category
\begin{center}
$\xymatrix{\Phi^{\circ}:=[\CB=(\CB^{\rm op})^{\rm op}\ar[r]^{\qquad\;\Phi}& {\sf CAT}\ar[r]^{(-)^{\rm op}} &{\sf CAT}].
}$	
\end{center}
Then there is a trivial bijective functor mapping objects and morphisms identically and making
\begin{center}
$\xymatrix{\int^{\circ}(\Phi^{\circ})\ar[rr]^{\cong\;\;}\ar[rd]_{\Pi^{\Phi^{\circ}}} && (\int_{\circ}\Phi)^{\rm op}\ar[ld]^{(\Pi_{\Phi})^{\rm op}}\\
&\CB^{\rm op}&\\
}$	
\end{center}
commute. (We note that 
there is also the Borceux-Kock dualization of a fibration which dualizes the fibres but not the base,
turning the (vertical, cartesian) factorization for a fibration into a (cartesian, vertical) factorization for a ``dual fibration"; see \cite{Kock2015} for details).

An elementary (and quite obvious) rendition of the equivalence of split fibrations and strictly functorial indexed categories reads as follows; as a 2-categorical equivalence it is formulated as Corollary \ref{GrothendieckEquivalenceDual}  .

\begin{thm}\label{FixedBaseGrothEquivalence}
\label{thm:1-FibS(B)<->Func(B^op,CAT)}
\label{equivalent}
\begin{enumerate}[(i)]
\item For every split fibration $P\from \CatSymbA{E}\to \CatSymbA{B}$ with cleavage $\CleavageOf{}{}$, there is a bijective functor $K^P$, satisfying $PK^P=\Pi^{\Phi^P}$  and preserving the cleavages, given by
\begin{equation*}
\xymatrix@R=0.2ex@C=4em{
\int^{\circ}\Phi^P \ar[r]^-{K^P} \ar[ddd]_{\GrothendieckCFibOf{\Phi^P}} &
	\CatSymbA{E} \ar[ddd]^{P} &
	(b,y) \ar@{|->}[r] & y \\
	&& [(u,f)\from (a,x)\to (b,y)] \ar@{|->}[r] &
			[\CleavageOf{u}{y}\cdot f \from x\to y] \\ \\
\CatSymbA{B} \ar@{=}[r] &
	\CatSymbA{B} 
}
\end{equation*}
\item For every functor $\Phi\from \CatSymbA{B}^{\op}\to \LocSmallCats$, there is a natural isomorphism $\Lambda^{\Phi}:\Phi\to\Phi^{\Pi^{\Phi}}$ whose component at $b\in\CB$ is the bijective functor
\begin{equation*}
\Lambda^{\Phi}_b\from \Phi b \longrightarrow {\textstyle (\int^{\circ}{\Phi})_b},\qquad (y \XRA{f} y') \mapsto [(b,y) \XRA{(1_b,f)} (b,y')]\ .
\end{equation*}
\end{enumerate}
\end{thm}
Under the above dualization principle one concludes from the theorem that split cofibrations correspond equivalently to functors $\Phi:\CB\to\LocSmallCats$. Furthermore,
Theorems \ref{bifibrations}, \ref{LiftingLimits} and Corollaries \ref{LiftingColimits}, \ref{bifibrationsagain} may now be formulated in indexed-category form, as follows.
\begin{cor}
\begin{enumerate}[(1)]
\item A functor $\Phi:\CB^{\op}\to\LocSmallCats$ has the property that every functor $\Phi u$ (with $u$ a morphism in $\CB$) has a left adjoint if, and only if, $\Pi^{\Phi}:\int^{\circ}\Phi\to\CB$ is a bifibration. In that case, if $\CB$ and all categories $\Phi b\;(b\in\CB)$ have (co)limits of a specified diagram type $\CD$, so does $\int^{\circ}\Phi$.
\item A functor $\Phi:\CB\to\LocSmallCats$ has the property that every functor $\Phi u$ (with $u$ a morphism in $\CB$) has a right adjoint if, and only if, $\Pi_{\Phi}:\int_{\circ}\Phi\to\CB$ is a bifibration. In that case, if $\CB$ and all categories $\Phi b\;(b\in\CB)$ have (co)limits of a specified diagram type $\CD$, so does $\int_{\circ}\Phi$.\end{enumerate}	
\end{cor}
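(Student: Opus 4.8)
The plan is to read this corollary as a translation of the fibrational theorems into indexed-category language, performed through the Grothendieck equivalence of Theorem \ref{equivalent}. The substance was already proved in Theorems \ref{bifibrations}, \ref{LiftingLimits} and Corollary \ref{LiftingColimits}; all that remains is to set up the precise dictionary between the given functor $\Phi$ and the fibration $\Pi^{\Phi}$ it produces, and then to invoke those results.

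First I would treat part (1). By Theorem \ref{equivalent}, for a functor $\Phi:\CB^{\op}\to\LocSmallCats$ the projection $\Pi^{\Phi}:\int^{\circ}\Phi\to\CB$ is a split fibration, and the natural isomorphism $\Lambda^{\Phi}:\Phi\to\Phi^{\Pi^{\Phi}}$ identifies, for each $b\in\CB$, the given category $\Phi b$ with the fibre $(\int^{\circ}\Phi)_b$. The crucial observation is that, by the naturality of $\Lambda^{\Phi}$, for every $u:a\to b$ in $\CB$ the reindexing functor $u^{*}:(\int^{\circ}\Phi)_b\to(\int^{\circ}\Phi)_a$ of $\Pi^{\Phi}$ is, up to these identifying isomorphisms, conjugate to the functor $\Phi u$; hence $u^{*}$ admits a left adjoint if and only if $\Phi u$ does. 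Now I would invoke Theorem \ref{bifibrations}: since $\Pi^{\Phi}$ is already a fibration, the equivalence of its conditions (i) and (ii) says that $\Pi^{\Phi}$ is a bifibration exactly when every $u^{*}$ has a left adjoint, which by the preceding sentence is exactly the requirement that every $\Phi u$ have a left adjoint. This settles the first claim of (1) in both directions.

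For the (co)limit assertion of (1), I would assume $\Pi^{\Phi}$ is a bifibration (the case just characterized) and apply Theorem \ref{LiftingLimits} and Corollary \ref{LiftingColimits} directly to it (cf.\ also Corollary \ref{bifibrationsagain}): its base is $\CB$ and its fibres are the categories $\Phi b$, so if $\CB$ and all $\Phi b$ admit limits (respectively colimits) of shape $\CD$, then so does the total category $\int^{\circ}\Phi$. Part (2) then follows from part (1) by the dualization principle recorded in Section \ref{GrothendieckConstruction}: for $\Phi:\CB\to\LocSmallCats$ one passes to $\Phi^{\circ}=(-)^{\op}\circ\Phi$ and uses the canonical isomorphism $\int^{\circ}(\Phi^{\circ})\cong(\int_{\circ}\Phi)^{\op}$ over $\CB^{\op}$, under which a split cofibration becomes a split fibration, a left adjoint becomes a right adjoint, and $\CD$-(co)limits in the fibres turn into $\CD$-(co)limits in the dualized fibres; applying (1) and dualizing back yields (2).

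The one step that needs genuine care — and which I regard as the main obstacle — is the identification of the reindexing functors $u^{*}$ of the constructed fibration $\Pi^{\Phi}$ with the originally given functors $\Phi u$. Everything downstream, namely the appeal to Theorem \ref{bifibrations} and the transport of adjoints and (co)limits, is formal once this dictionary is in place; but it must be spelled out that $\Lambda^{\Phi}$ is \emph{natural}, so that the squares relating $\Phi u$ to $u^{*}$ commute and the two functors are conjugate by isomorphisms. Only then does the phrase ``$\Phi u$ has a left (respectively right) adjoint'' transfer verbatim to the corresponding statement for $u^{*}$ (respectively $u_!$). In the split case this naturality is strict, so no coherence subtleties intervene, and the remaining verifications are routine.
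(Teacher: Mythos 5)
Your proposal is correct and follows exactly the route the paper intends: the corollary is stated there without proof as a direct reformulation of Theorems \ref{bifibrations}, \ref{LiftingLimits} and Corollaries \ref{LiftingColimits}, \ref{bifibrationsagain} via the identification of fibres and reindexing functors supplied by Theorem \ref{equivalent}, with part (2) obtained by the dualization principle of Section \ref{GrothendieckConstruction}. You merely make explicit the one point the paper leaves implicit, namely that the naturality of $\Lambda^{\Phi}$ conjugates $\Phi u$ to $u^{*}$ so that adjoints and (co)limits transfer, which is the right thing to spell out.
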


\subsection{Standard examples}\label{Standard}
\begin{enumerate}[(1)]
 \item For any category $\CC$, the functors ${\rm Id}:\CC\to\CC$ and $!:\mathcal C\to \EuRoman 1$ (where $\EuRoman 1$ is terminal in $\LocSmallCats$) are split bifibrations. Every morphism in $\CC$ is ${\rm Id}$-(co)cartesian and $!$-vertical; the $!$-(co)cartesian morphisms are the isomorphisms in $\CC$. 
 The indexed categories induced by $\rm Id$ and $!$ have (up to isomorphism) constant value $\EuRoman 1$ and $\CC$, respectively.
 \item For a fixed object $A$ in a category $\CC$, consider its hom-functor $\CC(-,A):\CC^{\op}\to \Sets$ as having discrete-category values. Then $\int^{\circ}\CC(-,A)$ is the slice category $\CC/A$, presented as a discretely-fibred category over $\CC$.

 \item The slice categories of (2) define a functor $\CC/(-):\CC\to\LocSmallCats,\;A\mapsto\CC/A$ whose dual Grothendieck category $\int_{\circ}\CC/(-)$ is the arrow category $\CC^{\EuRoman 2}$ (where the only non-identical morphism in the category $\EuRoman 2$ is $0\to 1$), equipped with its codomain functor ${\rm cod}= \Pi_{\CC/(-)}:\CC^{\EuRoman 2}\to \LocSmallCats$. Hence, ${\rm cod}$ is a split cofibration, and it is a (cloven) fibration precisely when $\mathcal C$ has (chosen) pullbacks. A morphism $(f,u):x\to y$ in $\CC^{\EuRoman 2}$, represented by the commutative square
  
\begin{center}
$\xymatrix{\bullet\ar[r]^{f}\ar[d]_x & \bullet\ar[d]^{y}\\
a\ar[r]^u & b \\
}$	
\end{center}
in $\CC$, is ${\rm cod}$-cocartesian precisely when $f$ is is an isomorphism, and it is ${\rm cod}$-cartesian precisely when it is a pullback diagram in $\CC$.
 \item 	Bijective functors (isomorphisms in $\LocSmallCats$) are split bifibrations. The composite of two (split) fibrations is again a split fibration, and so is any pullback in  $\LocSmallCats$ of a (split) fibration; likewise for (split) cofibrations. 
 \item   A left action of a group $G$ on a group $N$ is described by a homomorphism $\phi:G\to {\rm Aut}(N)$ or, equivalently, by a functor $\phi:G\to\SmallCats$ which maps the only object of $G$ (seen as a category) to $N$ (seen as a category and, hence, as an object in $\SmallCats$). The dual Grothendieck category $\int_{\circ}\phi$ is (up to switching coordinates) precisely the semidirect product $N\rtimes G$. A right action of $G$ on $N$ is given by a functor $G^{\rm op}\to\SmallCats$ with value $N$ or, equivalently by its Grothendieck category $\int^{\circ}\phi$.
 \item There is a functor $\Phi:{\EuRoman{Rng}}^{\op}\to \LocSmallCats$ which assigns to a ring $R$ the category ${\EuRoman{Mod}_R}$ of (left) $R$-modules; every homomorphism $\varphi:R\to S$ gives the functor $\varphi^*:\EuRoman{Mod}_S\to \EuRoman{Mod}_R$ which considers every $S$-module $N$ as an $R$-module, via $ra=\varphi(r)a$ for all $r\in R,\, a\in N$. The category $\int^{\circ}\Phi$ is the category $\EuRoman{Mod}$ of all modules; its objects are pairs $(R,M)$ where $R$ is a ring and $M$ is an $R$-module, and its morphisms $(\varphi,f):(R,M)\to(S,N)$ are given by a morphism $\varphi:R\to S$ in $\EuRoman{Rng}$ and an $R$-linear map $f:M\to\varphi^*(N)$. The projection $\Pi^{\Phi}:\EuRoman{Mod}\to\EuRoman{Rng}$ is a split fibration.
 \item The functor $\CO:\Sets\to\SmallCats$ assigns to every set $X$ the set of topologies on $X$, ordered by $\supseteq$ and, as such, considered as a small category; for a map $f:X\to Y$ one has the monotone map $f^*:\CO(Y)\to\CO(X)$, defined by taking inverse images. The Grothendieck category $\int^{\circ}\CO $ is the category $\EuRoman{Top}$ of topological spaces, with underlying $\Sets$-functor $\Pi^{\CO}$, which is in fact a split bifibration. More generally, one may characterize topological functors with small fibres (see \cite{AHS1990}) as those fibrations $P$ for which the indexed category $\Phi^P$ takes values in the category of complete lattices and their ${\rm inf}$-preserving maps (see \cite{Wyler1971, Tholen1974}), making $P$ in fact a split bifibration.
 \item Considering the functor 
 ${\rm Id}_{\Sets}: {\EuRoman{ Set}}\to{\EuRoman{ Set}}$ as having discrete-category values, one obtains the category $\Sets_{\bullet}$ of pointed sets as its dual Grothendieck category $\int_{\circ}{\rm Id}_{\EuRoman{ Set}}$. Writing the domain of ${\rm Id}_{\Sets}$ in the form $({\Sets}^{\rm op})^{\rm op}$ we also have the Grothendieck category 
  $\int^{\circ}{\rm Id}_{\Sets}$, which is precisely the category ${\Sets}^{\op}_{\bullet}$.
 \end{enumerate}
 The ``categorification'' of the last (rather trivial) example leads to an important fact, which we describe next.
 
 \subsection{The classifying split (co)fibration}
 The dual Grothendieck category $\int_{\circ}{\rm Id}_{\SmallCats}$  of the functor
  ${\rm Id}_{\SmallCats}: \SmallCats\to{\SmallCats}$ (with its codomain to be embedded into $\LocSmallCats$) is the category $\SmallCats_{\bullet}$ of {\em small lax-pointed categories}. Its objects $(\CC,x)$ are given by a small category $\CC$ equipped with an object $x\in\CC$, and a morphism 
$(F,f):(\CC,x)\to(\CD,y)$ is given by a functor $F:\CC\to\CD$ and a morphism $f:Fx\to y$ in $\CD$. The forgetful functor $\Pi_{\bullet}:=\Pi_{{\rm Id}_{\SmallCats}}  :\SmallCats_{\bullet}\to\SmallCats$ is a small-fibred split cofibration, called {\em classifying}, since one has the following rather obvious fact:
\begin{thm}\label{classifying cofib}
Every small-fibrerd split cofibration is a pullback	(in $\LocSmallCats$) of the classifying split cofibration, as shown in the diagram
\begin{center}
$\xymatrix{\CE\ar[r]\ar[d]_P  & {\SmallCats}_{\bullet}\ar[d]^{\Pi_{\bullet}}\\
\CB\ar[r]_{\Phi_P\;\;} &{\SmallCats   } \\
}$
\hfil
$[f\!:\!x\to y]\longmapsto[((Pf)_!,\nu_f)\!:\!(\CE_{Px},\!x)\to(\CE_{Py},\!y)]\,.	$
\end{center}
\end{thm}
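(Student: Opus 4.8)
The plan is to show that the displayed square is a pullback by identifying the pullback category with the dual Grothendieck category $\int_\circ\Phi_P$ and then invoking the split-cofibration form of the Grothendieck correspondence (Theorem \ref{equivalent} and its dualization). Write $T\colon\CE\to\SmallCats_\bullet$ for the top functor, $f\mapsto((Pf)_!,\nu_f)$. First I would observe that $T$ and $P$ form a cone over the cospan $\Phi_P,\Pi_\bullet$: on a morphism $f\colon x\to y$ both $\Phi_P P$ and $\Pi_\bullet T$ return the functor $(Pf)_!\colon\CE_{Px}\to\CE_{Py}$, so $\Phi_P P=\Pi_\bullet T$ and there is an induced comparison functor $\langle P,T\rangle\colon\CE\to\CP$ into the pullback $\CP$ of $\Pi_\bullet$ along $\Phi_P$. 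The claim then reduces to showing that $\langle P,T\rangle$ is an isomorphism.

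Next I would compute $\CP$ explicitly. Since $\Pi_\bullet=\Pi_{{\rm Id}_{\SmallCats}}$, an object of $\CP$ is a pair $(b,(\CC,x))$ with $\Phi_P b=\CC$, forcing $\CC=\CE_b$ and $x\in\CE_b$; a morphism is a pair $(u,(F,f))$ with $\Phi_P u=F$, forcing $F=u_!$ and leaving $f\colon u_!x\to x'$ in $\CE_{b'}$. Recording such data simply as $(u,f)$, and using that the composite in $\SmallCats_\bullet$ of $(u_!,f)$ followed by $(u'_!,f')$ is $(u'_!u_!,\,f'\cdot u'_!f)$, the splitness identity $u'_!u_!=(u'u)_!$ shows that $\CP$ agrees on the nose---objects, morphisms, and composition---with $\int_\circ\Phi_P$, the projection $\CP\to\CB$ being $\Pi_{\Phi_P}$ and the projection $\CP\to\SmallCats_\bullet$ being $(b,x)\mapsto(\CE_b,x)$, $(u,f)\mapsto(u_!,f)$.

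Finally I would identify $\langle P,T\rangle$ with the inverse of the comparison isomorphism $K_P\colon\int_\circ\Phi_P\to\CE$ of Theorem \ref{equivalent} (dual form; recall $K_P\colon(u,f)\mapsto f\cdot\delta^u_x$, as used in the proof of Theorem \ref{GrothendieckEquivalence}). Indeed, $\langle P,T\rangle$ sends $x$ to $(Px,x)$ and $f$ to $(Pf,\nu_f)$, since $Tf=((Pf)_!,\nu_f)$; and the $(P\text{-cocartesian},\,P\text{-vertical})$-factorization $f=\nu_f\cdot\delta^{Pf}_x$ gives $K_P(Pf,\nu_f)=\nu_f\cdot\delta^{Pf}_x=f$, so $\langle P,T\rangle=K_P^{-1}$. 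As $K_P$ is a bijective functor over $\CB$, so is $\langle P,T\rangle$, and the square is a pullback.

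The only point requiring genuine care is the strict matching of composition in the second step: this is precisely where \emph{splitness} of $P$ enters, through the identity $u'_!u_!=(u'u)_!$, without which the pullback would match only a pseudo-functorial Grothendieck construction. Correspondingly, \emph{small-fibredness} is what guarantees that $\Phi_P$ is $\SmallCats$-valued, so that the classifying cofibration $\Pi_\bullet$, rather than a $\LocSmallCats$-analogue, is the correct object to pull back along.
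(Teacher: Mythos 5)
Your proof is correct. The paper itself offers no argument for this theorem (it is declared a ``rather obvious fact''), and the route you take --- checking commutativity, computing the strict pullback of $\Pi_{\bullet}$ along $\Phi_P$ explicitly and recognizing it as $\int_{\circ}\Phi_P$ on the nose (using splitness for the composition law), then identifying the comparison functor with $K_P^{-1}$ from Theorem \ref{equivalent} via the $(P\text{-cocartesian},P\text{-vertical})$-factorization --- is exactly the intended one. The only point you leave implicit is that $T$ is a functor to begin with, which amounts to the identities $(Pg)_!(Pf)_!=(P(gf))_!$ and $\nu_{gf}=\nu_g\cdot(Pg)_!(\nu_f)$; these follow from splitness and the uniqueness of the vertical factor, and are in any case equivalent to the compatibility of $K_P^{-1}$ with composition that your last step establishes. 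Your closing remarks correctly isolate where splitness and small-fibredness are used.
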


Writing the domain of ${\rm Id}_{\SmallCats}$ as $(\SmallCats^{\op})^{\op}$, we can also form the Grothendieck category $\int^{\circ}{\rm Id}_{\SmallCats}$, which is the category $\SmallCats^{\bullet}$ of {\em small oplax-pointed categories}. Its objects are the same as those of $\SmallCats_{\bullet}$, but its morphisms $(F,f):(\CC,x)\to(\CD,y)$ are now given by functors $F:\CD\to\CC$ equipped with a morphism $f:x\to Fy$ in $\CC$. The forgetful functor $\Pi^{\bullet}:=\Pi^{{\rm Id}_{\SmallCats}}:\SmallCats^{\bullet}\to\SmallCats^{\op}$ classifies the small-fibred split fibrations:
\begin{cor}\label{classifying fib}
	Every small-fibred split fibration is a pullback (in $\LocSmallCats$) of the classifying split fibration, as shown in the diagram
\begin{center}
$\xymatrix{\CE\ar[r]\ar[d]_P  & \SmallCats^{\bullet}\ar[d]^{\Pi^{\bullet}}\\
\CB\ar[r]_{(\Phi^P)^{\op}\;\;} &{\SmallCats}^{\rm op} \\
}$
\hfil
$[f\!:\!x\to y]\longmapsto[((Pf)^*,\nu_f)\!:\!(\CE_{Px},\!x)\to(\CE_{Py},\!y)]\,.	$
\end{center}
\end{cor}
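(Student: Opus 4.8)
The plan is to deduce the statement from its cofibration counterpart, Theorem~\ref{classifying cofib}, by the global dualization already employed throughout this appendix. Recall that a functor $P\from\CE\to\CB$ is a small-fibred split fibration precisely when $P^{\op}\from\CE^{\op}\to\CB^{\op}$ is a small-fibred split cofibration, and that under this passage the indexed data transform fibrewise: one checks directly that $\Phi_{P^{\op}}=\Box^{\op}\circ\Phi^P\from\CB^{\op}\to\SmallCats$, i.e.\ $\Phi_{P^{\op}}$ sends a morphism to the opposite of the corresponding reindexing functor $u^*$ of $P$. First I would apply Theorem~\ref{classifying cofib} to the split cofibration $P^{\op}$, obtaining a pullback square in $\LocSmallCats$ with left edge $P^{\op}$, lower edge $\Phi_{P^{\op}}\from\CB^{\op}\to\SmallCats$, right edge $\Pi_{\bullet}$, and upper edge into $\SmallCats_{\bullet}$.

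Next, since $(-)^{\op}\from\LocSmallCats\to\LocSmallCats$ is an involutive isomorphism of categories, it carries pullbacks to pullbacks; applying it to the square above yields a pullback square whose left edge is again $P$, whose lower edge is $(\Phi_{P^{\op}})^{\op}\from\CB\to\SmallCats^{\op}$, and whose right edge is $\Pi_{\bullet}^{\op}\from\SmallCats_{\bullet}^{\op}\to\SmallCats^{\op}$. It then remains to identify this square with the one asserted in the Corollary. To that end I would introduce the bijective functor
$$d\from\SmallCats_{\bullet}^{\op}\lra\SmallCats^{\bullet},\quad (\CC,x)\mapsto(\CC^{\op},x),\quad (F,f)\mapsto(F^{\op},f^{\op}),$$
which is exactly $\Box^{\op}$ applied fibrewise and is readily checked to be an isomorphism (a morphism $f\from Fy\to x$ in $\CC$ becomes $f^{\op}\from x\to F^{\op}y$ in $\CC^{\op}$, precisely the datum of a $\SmallCats^{\bullet}$-morphism), together with the base dualization $(-)^{\op}\from\SmallCats^{\op}\to\SmallCats^{\op}$. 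One verifies $\Pi^{\bullet}\circ d=(-)^{\op}\circ\Pi_{\bullet}^{\op}$ and $(-)^{\op}\circ(\Phi_{P^{\op}})^{\op}=(\Phi^P)^{\op}$, the latter because the two fibrewise dualizations cancel on objects and leave $u^*$ in place. Transporting the pullback square along these isomorphisms produces exactly the square of the Corollary.

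Finally, I would confirm that the resulting upper edge is the stated map $f\mapsto((Pf)^*,\nu_f)$: under the identification $d$ the (cocartesian, vertical)-factorization used in Theorem~\ref{classifying cofib} becomes the (vertical, cartesian)-factorization of $f$, so that $\nu_f$ is the $P$-vertical factor $\ep_f\from x\to (Pf)^*(y)$, which corresponds to $f$ under the natural bijection $\CE_u(x,y)\cong\CE_a(x,u^*(y))$ recorded in \ref{Grothendieck fibrations}.

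I expect the main obstacle to be purely bookkeeping: keeping the two independent dualities---the one exchanging fibrations with cofibrations (opposite of the \emph{total} categories) and the one exchanging $\SmallCats_{\bullet}$ with $\SmallCats^{\bullet}$ (opposite of the \emph{fibre} categories)---from interfering, and checking that their composite restores $(\Phi^P)^{\op}$ on the nose rather than a fibrewise dual of it. Should this transport become unwieldy, an equally short route is a direct verification: the pullback of $\Pi^{\bullet}$ along $(\Phi^P)^{\op}$ has as objects the pairs $(a,x)$ with $x\in\CE_a$, and as morphisms $(a,x)\to(b,y)$ the pairs $(u,f)$ with $u\from a\to b$ in $\CB$ and $f\from x\to u^*(y)$ in $\CE_a$; the bijection $\CE_u(x,y)\cong\CE_a(x,u^*(y))$ identifies these with the morphisms of $\CE$, and splitness makes the identification compatible with composition, thereby exhibiting $\CE$ as the pullback.
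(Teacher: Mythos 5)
Your proposal is correct and follows essentially the route the paper intends: the paper states Corollary \ref{classifying fib} as the formal dual of Theorem \ref{classifying cofib} (obtained by writing the domain of ${\rm Id}_{\SmallCats}$ as $(\SmallCats^{\op})^{\op}$) and offers no further argument, so your careful transport of the pullback square for $P^{\op}$ along the two dualizations---checking that $\Phi_{P^{\op}}=\Box^{\op}\circ\Phi^{P}$, that the fibrewise and base dualizations compose to restore $(\Phi^{P})^{\op}$, and that the stated $\nu_f$ is the $P$-vertical factor $\ep_f\from x\to (Pf)^{*}(y)$---is precisely the bookkeeping the paper leaves implicit. The direct verification you sketch at the end, using the bijection $\CE_u(x,y)\cong\CE_a(x,u^{*}(y))$ together with splitness, is an equally valid (and arguably cleaner) alternative.
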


Of course, nothing prevents us from dropping the restriction of $P$ being small-fibred.: Theorem \ref{classifying cofib} and Corollary \ref{classifying fib} remain true {\em verbatim} if we delete ``small-fibred'' and replace $\SmallCats,\SmallCats_{\bullet},\SmallCats^{\bullet}$ by $\LocSmallCats,
\LocSmallCats_{\bullet}, \LocSmallCats^{\bullet}$, respectively, and $\LocSmallCats  $ by the {\em colossal category} $\mathbb{CAT}$ which contains $\LocSmallCats$ as an object, with the last exchange only formally needed for the provision of a legitimate home of the amended pullback diagrams.

\section{Appendix 2: Regular epimorphisms in $\LocSmallCats$ and a confinality criterion}
\subsection{Characterizing regular epimorphisms in $\LocSmallCats$}

First we characterize regular epimorphisms in $\LocSmallCats$, {\em i.e.}, functors $F:\CC\to\CD$ which serve as coequalizers of parallel pairs of functors, in terms of suitable relations on the classes of objects and morphisms of their domains. Although parts of the characterization we give may be found, at least in some similar form, in the older literature (such as \cite{Ehresmann1965, Mersch1965, Tholen1974}), we give here a compact description of it, also since these sources may not be easily accessible to the reader. 

We begin by describing the quotient of a category $\CC$ with respect to a suitable pair $(\approx,\sim)$ of relations. The ``object branch'' of the pair is a mere equivalence relation $\approx$ on the class of objects of $\CC$; we denote the $\approx$-equivalence class of $c\in\CC$ by $[c]$. These equivalence classes are the vertices of the directed graph $\CC_{\approx}$; an edge $u:[c]\to[d]$ in $\CC_{\approx}$ is simply a $\CC$-morphism $u:a\to b$ with $c\approx a$ and $b\approx d$. We denote by
$$ \Omega(\CC_{\approx})$$
the category\footnote{It is important to note that the category $\Omega(\CC_{\approx})$ will generally \emph{fail} to have small hom-sets. Indeed, since the equivalence classes $[c]$ may be large, the graph $\CC_{\approx}$ may have a proper class of edges between two fixed vertices, causing its generated category to have large hom-sets. As Remark \ref{totalizer} shows, an unwelcome consequence of this fact is that, unlike $\SmallCats$,   the (huge) category of $\LocSmallCats $ of locally small categories fails to have coequalizers.} freely generated by the graph $\CC_{\approx}$; hence, its objects are the vertices of the graph, and its morphisms $(u_n, ..., u_1):[c]\to[d]$ are given by strings of $\CC$-morphisms
\begin{equation*}
c\approx a_1 \xrightarrow{\ u_1\ } b_1 \approx a_2 \xrightarrow{\ u_2\ }b_2\approx  \cdots  \xrightarrow{\ u_n\ } b_n \approx d,
\end{equation*}
including the empty strings $():[c]\to[c]$ (for $n=0$). We may write the string $(u)$ of length 1 simply as $u$; composition in $\Omega(\CC_{\approx})$ proceeds by juxtaposition. Hence, in $\Omega(\CC_{\approx})$ the morphism $(u_n, ..., u_1)$ is the composite $u_n...u_1$ of strings of length 1.  

The ``morphism branch'' of our pair is simply a relation $\sim$ on the class of morphisms of $\Omega(\CC_{\approx})$ which allows $\alpha\sim\beta$ only if $\alpha$ and $\beta$ belong to the same hom-set of $\Omega(\CC_{\approx})$. 
We then consider the least equivalence relation $\simeq$ on the class of morphisms of $\Omega(\CC_{\approx})$ containing $\sim$ and 
satisfying the following compatibility conditions for all morphisms $u:c\to d, v:d\to e$ in $\CC$ (whose composite in $\CC$ is denoted by $v\cdot u$), and all $\alpha:[a]\to[b],\;\beta,\beta':[b]\to[c],\;\gamma:[c]\to[k]$ in $\Omega(\CC_{\approx})$:
\begin{enumerate}
\item $1_c\;\simeq\;():[c]\to[c]\;;$
\item $v\cdot u\;\simeq\;(v,u):[c]\to[e]\;;$
\item $\beta\;\simeq\;\beta'\;\;\Longrightarrow\;\;\gamma\beta\alpha\;\simeq\;\gamma\beta'\alpha$\;.
\end{enumerate}
There is then only one way of making the $\simeq$-equivalence classes of morphisms in $\Omega(\CC_{\approx})$ the morphisms of a category $\Omega(\CC_{\approx})/\!\simeq $ with the same objects as $\Omega(\CC_{\approx})$ , so that the projection $\Omega(\CC_{\approx})\to  \Omega(\CC_{\approx})/\!\simeq $ that maps objects identically and assigns to morphisms their  $\simeq$-equivalence classes, becomes a functor. We note that, by condition 1, the relation $\approx$ is actually determined by $\sim$, since 
$$a\approx b\iff 1_a\simeq 1_b,$$
for all objects $a,b\in\CC$. Moreover, because of conditions 1 and 2, the composite graph morphism
$$P:\CC\to \Omega(\CC_{\approx})\to\Omega(\CC_{\approx})/\!\simeq,\quad (u:a\to b)\mapsto (u:[a]\to[b])\mapsto (Pu:[a]\to [b]),$$
becomes a functor, where $Pu$ is the $\simeq$-equivalence class of the morphism $u:[a]\to[b]$ in $\Omega(\CC_{\approx})$. 

\medskip

We call $P$ the {\em projection} of the {\em quotient structure} on $\CC$ as given by the relations $\approx$ and $\sim$. The quotient structure is \emph{locally small} if the category $\Omega(\CC_{\approx})/\simeq$ has small hom-sets.
We now prove that the projection of a locally small quotient structure is the prototypical regular epimorphism in $\LocSmallCats$.

\begin{prop}\label{quotient}  The following assertions for a functor $F:\CC\to\CD$ are equivalent:
\begin{enumerate}[(i)]
\item $F$ is a regular epimorphism in $\LocSmallCats$;
\item every morphism in $\CD$ may be written as a composite morphism $Fu_n\cdot...\cdot Fu_1$ with morphisms $u_1,..., u_n$ in $\CC,\; n\geq 1$; furthermore, if $Fu_n\cdot...\cdot Fu_1=Fv_m\cdot...\cdot Fv_1$ with $v_1,..., v_m$ in $\CC,\; m\geq 1$, then also $Gu_n\cdot...\cdot Gu_1=Gv_m\cdot...\cdot Gv_1$, for any functor $G:\CC\to\CB$ satisfying the condition $(Fu=Fv\Longrightarrow Gu=Gv)$ for all morphisms $u,v$ in $\CC$\;;
\item there is a locally small quotient structure $(\approx,\sim)$ on $\CC$ with projection $P$ and a bijective functor $J:\Omega(\CC_{\approx})/\!\simeq\;
\longrightarrow \CD$, such that $F=JP$.
\end{enumerate}
\end{prop}
\begin{proof}
(iii)$\Rightarrow$(ii): Every morphism in $\Omega(\CC_{\approx})$ has the form $Pu_n\cdot...\cdot Pu_1$, including $P(\;()\!:\![c]\to[c]\;)	=P1_c$\;), which confirms the first statement of (ii), by bijectivity and functoriality of $J$. For any functor $G:\CC\to\CB$ satisfying $(Fu=Fv\Longrightarrow Gu=Gv)$ for all morphisms $u,v$ in $\CC$, one has in particular $(a\approx b\Longrightarrow P1_a=P1_b\Longrightarrow Ga=Gb)$ for all objects $a,b\in\CC$. Consequently, $G$ induces a morphism $\CC_{\approx}\to\CB$ of graphs, which extends uniquely to a functor $\overline{G}:\Omega(\CC_{\approx})\to\CB$, via $\overline{G}(u_n...u_1)=Gu_n\cdot...\cdot Gu_1$. By functoriality of $G$ and $\overline{G}$, the equivalence relation $\sim_G$ on $\Omega(\CC_{\approx})$ with $(\alpha\sim_G \beta\iff \overline{G}\alpha=\overline{G}\beta)$ satisfies conditions 1-3, and it contains $\sim$ by hypothesis on $G$:
$$ u\sim v\Longrightarrow Pu=Pv\Longrightarrow Fu=Fv\Longrightarrow Gu=Gv\;.$$
Hence, $\simeq$ is contained in $\sim_G$, making $\overline{G}$ factor through $\Omega(\CC_{\approx})/\!\simeq$, by a unique functor $\widetilde{G}:\Omega(\CC_{\approx})/\!\simeq\to\CB$.
Consequently, assuming $Fu_n\cdot...\cdot Fu_1=Fv_m\cdot...\cdot Fv_1$, one first has $Pu_n\cdot...\cdot Pu_1=Pv_m\cdot...\cdot Pv_1$ and therefore $u_n...u_1\simeq v_m...v_1$. Now the application of $\widetilde{G}$ to the $\simeq$-equivalence classes of these morphisms gives the desired equality $Gu_n\cdot...\cdot Gu_1=Gv_m\cdot...\cdot Gv_1$.

(ii)$\Rightarrow$(i): The conditions given allow one to show routinely that any functor $G:\CC\to \CB$ that coincides on the projections $\CC\times_{\CB}\CC\rightrightarrows C$ of the kernel pair of $F$ factors uniquely through $F$, so that $F$ is in fact the coequalizer of its kernel pair.

(i)$\Rightarrow$(iii): As a regular epimorphism, $F$ is the coequalizer of its kernel pair. Let $\approx_F\,=\,\approx$ be the equivalence relation induced by $F$ on the class of objects of $\CC$. As in (iii)$\Rightarrow$(i), with $G$ traded for $F$, one obtains the functor $\overline{F}:\Omega(\CC_{\approx})\to\CB$. The equivalence relation $\sim_F\,=\,\simeq$ induced by $\overline{F}$ via $(\alpha\sim\beta\iff \overline{F}\alpha=\overline{F}\beta)$ actually satisfies the conditions 1-3, so that $\sim$ equals $\simeq$. Hence, in the notation of (iii)$\Rightarrow$(ii), $F$ factors through $P$, via $J:=\widetilde{F} :\Omega(\CC_{\approx})/\!\sim\;\longrightarrow\CB$. Since for every morphism $(u,v)$ in $\CC\times_{\CD}\CC$ one has $Fu=Fv$, the functor $P$ coincides on the projections of the kernel pair of $F$ and must therefore factor through the coequalizer $F$, by what then turns out to be the inverse of $J$, thanks to the uniqueness part of the universal property of a coequalizer. The bijection $J$ makes the category $\Omega(\CC_{\approx})/\!\sim$ inherit the local smallness from the given category $\CD$, so that the quotient structure $(\approx,\sim)$ is indeed locally small.
\end{proof}

\begin{rem}\label{totalizer}
(1) As a quotient structure $(\approx,\sim)$ on any given category $\CC$, one may make the extreme choice and let $\approx$ be the all-relation on the class of objects of $\CC$ and $\sim$ be empty, so that $\simeq$ will be the least relation on the class of morphisms of $\Omega(\CC_{\approx})$ satisfying the conditions 1-3 above. Then $T(\CC)=\Omega(\CC_{\approx})/\!\simeq$ has been called the {\em totalizer} of (the partially defined composition rule for the morphisms of) $\CC$; its projection $P:\CC\to T(\CC)$ serves as a reflection of $\CC$ into the (colossal) category $\EuRoman{MON}$ of large monoids and their homomorphisms. Even for finite $\CC$, $T(\CC)$ may be infinite: $T(\{0\to 1\})\cong (\mathbb N,+)$.  Remarkably, as shown in \cite{Borger1977}, while identifying all objects, $P$ preserves the morphism structure of $\CC$ to the largest extent possible: for any morphism $f,g$ in $\CC$, one has $Pf=Pg$ if, and only if, $f=g$ or both, $f$ and $g$, are identity morphisms.

(2) The classical (and most important) instance of the formation of a quotient category arrises when one forms the {\em category of fractions} $\CC[\CS^{-1}]$ for a class $\CS$ of morphisms in a category $\CC$: formally adjoin to $\CC$ a designated inverse $s'$ of each morphism $s$ in $\CS$ and then impose the relations that insure that $s'$ becomes an actual inverse of $s$ in $\CC[\CS^{-1}]$. Again, it is important to note that, local smallness of $\CC$ does not guarantee at all local smallness of  $\CC[\CS^{-1}]$.
\end{rem}

 \subsection{Confinality vs. conneted fibres for regular epimorphisms and fibrations} 
 
We begin with a sufficient condition for the recognition of confinal\footnote{In order to avoid ambiguity, we follow Gabriel and Ulmer \cite{GabrielUlmer1971} (Definition 2.12) and use the traditional term \emph{confinal} for functors $F:\CC\to\CD$ for which the comma categories $z\downarrow F\;(z\in\CD)$ are (non-empty and) connected. More recent authors use ``cofinal'', ``final'', or even ``initial'' for the same notion.}  regular epimorphisms in $\LocSmallCats$. We did not find this criterion in the literature.

\begin{thm}\label{quotientscofinal}
A regular epimorphism in $\LocSmallCats$ is confinal if each of its fibres is a connected category.
\end{thm}
\begin{proof}
	
	By Proposition \ref{quotient} we may assume that the given regular epimorphism is the projection $P$ of a quotient structure $(\approx,\sim)$ on $\CC$ and, when $P$ has connected fibres, must show that, for every object $c\in \CC$, the comma category $[c]\downarrow P$ is non-empty and connected. As we have the object  $(1_{[c]}=P1_c,\;c)$ in $[c]\downarrow P$, it suffices to show that every object $(\langle\alpha\rangle, d)$ in $[c]\downarrow P$ is connected to $(P1_c,c)$ by a zig-zag of morphisms in $[c]\downarrow P$;
	here $\langle\alpha\rangle$ denotes the $\simeq$-equivalence class of a morphism $\alpha=(u_n,...,u_1):[c]\to[d]$ in $\Omega(\CC_{\approx})$, given by 
	$\CC$-morphisms
\begin{equation*}
c\approx a_1 \xrightarrow{\ u_1\ } b_1 \approx a_2 \xrightarrow{\ u_2\ }b_2\approx  \cdots  \xrightarrow{\ u_n\ } b_n \approx d\;.
\end{equation*}
Then $u_1$ becomes a morphism $(1_{Pc},\;a_1)\longrightarrow (Pu_1,\;b_1)$ in $[c]\downarrow P$, and every $u_i$ with $i\geq 2$ gives a morphism $(Pu_{i-1}\cdot...\cdot  Pu_1,\;a_i)\longrightarrow (Pu_i\cdot ...\cdot Pu_1,\; b_i)$, as shown in
\begin{center}
$\xymatrix{&&& [c]\ar@{=}[llldd]\ar@{=}[lldd]\ar[ldd]^{Pu_1} \ar[dd] \ar[rdd]^{\!\!Pu_2 \cdot Pu_1\qquad}\ar[rrrdd]\ar[rrrrdd]^{\quad Pu_n\cdot...\cdot Pu_1=\langle\alpha\rangle} &&&&\\
&&&&&&&&\\
Pc\ar@{=}[r] & Pa_1\ar[r]_{Pu_1} & Pb_1\ar@{=}[r] &Pa_2\ar[r]_{Pu_2} &Pb_2\ar@{=}[r] & ...\ar[r]_{Pu_n}&  Pb_n\ar@{=}[r] & Pd\;. \\
}$
\end{center}
It therefore suffices to show that the horizontal equalities of the diagram may be replaced by fitting zig-zags of morphisms in $[c]\downarrow P$. In fact, it is easy to see that any two objects $(t, a), (t,b)$ in $[c]\downarrow P$ are connected by a zig-zag of morphisms in that category; it is provided by any zig-zag connecting the objects $a,b$ in the connected fibre of $P$ at $Pa=Pb$. This completes the proof.
\end{proof}

A confinal regular epimorphism in $\sf Cat$ (or $\sf CAT$) may have disconnected fibres. In deed, if we consider the category $\{a\to c\leftarrow b\}$ and identify its two non-identical arrows, we obtain the category $\sf 2=\{0\to 1\}$ as its quotient category; the projection is confinal, but its fibre at 0 is the discrete category $\{a,b\}$.
 
For fibrations, rather than regular epimorphisms, we obtain a stronger recognition criterion for confinality:

\begin{prop}
A fibration is confinal if, and only if, all of its fibres are non-empty and connected.
\end{prop}
\begin{proof}
Given a fibration $P:\CE\to\CB$ and an object $a\in \CB$, the embedding $\CE_a\hookrightarrow a\!\downarrow\!P$ is full and has a right adjoint, by \ref{fibrationbasics}. Since $P$ is confinal if and only if every category $a\!\downarrow\!P$ is non-empty and connected, the claim follows from the fact that a reflective or coreflective subcategory is connected if, and only if, its parent category is connected.
\end{proof}

\vfill


\end{document}